\newcounter{pcounter}
\newcommand{\R}{\mathcal{R}}
\newcommand{\ZZ}{\Bbb Z}
\newcommand{\RR}{\Bbb R}
\newcommand{\TT}{\Bbb T}
\newcommand{\NN}{\Bbb N}
\newcommand{\CC}{\Bbb C}
\newcommand{\ip}[1]{\langle #1 \rangle}
\newcommand{\widetidle}{\widetilde}
\newcommand{\ovelrine}{\overline}
\newcommand{\varpesilon}{\varepsilon}
\newcommand{\varespilon}{\varepsilon}
\newcommand{\vaerpsilon}{\varepsilon}
\newtheorem{?}{Question}
\newtheorem{theorem}{Theorem}
\newtheorem{question}[theorem]{Question}
\newtheorem{conjecture}[theorem]{Conjecture}
\newtheorem{definition}[theorem]{Definition}
\newtheorem{proposition}[theorem]{Proposition}
\newtheorem{cor}[theorem]{Corollary}
\newtheorem{lemma}[theorem]{Lemma}
\newcommand{\FF}{\Bbb F}
\DeclareMathOperator{\Gr}{Gr}
\DeclareMathOperator{\Span}{Span}
\DeclareMathOperator{\tr}{tr}
\DeclareMathOperator{\Ima}{Im}
\DeclareMathOperator{\rea}{re}
\DeclareMathOperator{\ima}{ima}
\DeclareMathOperator{\Rea}{Re}
\DeclareMathOperator{\Hom}{Hom}
\DeclareMathOperator{\alg}{alg}
\DeclareMathOperator{\Tr}{Tr}
\DeclareMathOperator{\op}{op}
\DeclareMathOperator{\Ball}{Ball}
\numberwithin{theorem}{section}
\begin{document}

\title{$1$-Bounded Entropy and Regularity Problems in von Neumann Algebras}      % Enter your title between curly braces
\author{Ben Hayes}
\address{Stevenson Center\\
         Nashville, TN 37240}
\email{benjamin.r.hayes@vanderbilt.edu}\thanks{This work is partially supported by  NSF Grant  DMS-1362138.}
\date{\today}
\maketitle

\begin{abstract} We define and investigate the singular subspace $\mathcal{H}_{s}(N\subseteq M)$ of an inclusion of tracial von Neumann algebras. The singular subspace is a canonical $N$-$N$ subbimodule of $L^{2}(M)$ containing the quasinormalizer introduced  in \cite{PopaQR}, the one-sided quasinormalizer introduced  in \cite{FangGaoSmith}, and the wq-normalizer introduced in \cite{GalatanPopa} (following upon work in \cite{IPP} and \cite{PopaCohomologyOE}). We then obtain a weak notion of regularity (called spectral regularity) by demanding that the singular subspace of $N\subseteq M$ generates $M.$ By abstracting Voiculescu's original proof of absence of Cartan subalgebras in \cite{FreeEntropyDimensionIII} we show that there can be no diffuse, hyperfinite subalgebra of $L(\FF_{n})$ which is spectrally regular. Our techniques are robust enough to repeat this process by transfinite induction and rule out chains of spectrally regular inclusions of algebras starting from a diffuse, hyperfinite subalgebra and ending in $L(\FF_{n}).$ We use this to prove some conjectures made by Galatan-Popa in their study of smooth cohomology of $\textrm{II}_{1}$-factors (see \cite{GalatanPopa}). Our results may be regarded as a consistency check for the possibility of existence of a ``good" cohomology theory of $\textrm{II}_{1}$-factors. We can also use our techniques to show that if $U_{t}$ is a one-parameter orthogonal group on a real Hilbert space $\mathcal{H}$ and the spectral measure of its generator is singular with respect to the Lebesgue measure, then the continuous core of the free Araki-Woods factor $\Gamma(\mathcal{H},U_{t})''$ is not isomorphic to $L(\FF_{t}\overline{\otimes} B(\ell^{2}(\NN))$ for any $t\in (1,\infty].$ In particular, $\Gamma(\mathcal{H},U_{t})''\not\cong \Gamma(L^{2}(\RR,m),\lambda_{t})''$ where $m$ is Lebesgue measure and $\lambda$ is the left regular representation. This was previously only know when the spectral measure of the generator of $U_{t}$ had all of its convolution powers singular with respect to Lebesgue measure. We give similar applications to crossed products by free  Bogoliubov actions in the spirit of \cite{HoudayerShlyakhtenko}.
\end{abstract}
\tableofcontents

\section{Introduction}
	This paper is concerned with using Voiculescu's microstates free entropy dimension to prove indecomposability results in von Neumann algebras. Recall that if $N\subseteq M$ is an inclusion of von Neumann algebras, we define the \emph{normalizer} of $N$ inside $M$ by
	\[\mathcal{N}_{M}(N)=\{u\in \mathcal{U}(M):u^{*}Nu=N\}.\]
We say that $N$ is \emph{regular} in $M$ (or that $N\subseteq M$ is a regular inclusion) if $\mathcal{N}_{M}(N)''=M.$

 	 Perhaps the most famous application of Voiculescu's microstates free entropy dimension is Voiculescu's fundamental result (see Theorem 7.3 of \cite{FreeEntropyDimensionIII}) that if $M$ is a von Neumann algebra with a faithful, normal, tracial state $\tau$ and $x_{1},\dots,x_{n}$ are self-adjoint generators of $M$ which have microstates free entropy dimension bigger than $1,$ then $M$ has no Cartan subalgebras. In fact, Voiculescu proves the stronger statement that such an $M$ does not have a regular, diffuse, hyperfinite subalgebra. This applies in particular when $M=L(\FF_{n})$ for $n\in\NN$ or more generally the interpolated free group factors $L(\FF_{t}),t\in (1,\infty]$ defined by Dykema and R\v{a}dulescu (see \cite{DykemaRadulescu}).

 	 	The absence of Cartan subalgebras  in $L(\FF_{n})$ was later proved by Ozawa and Popa (see \cite{OzPopaCartan}) using Popa's deformation/rigidty theory. They additionally proved that $L(\FF_{n})$ is strongly solid, i.e. that the normalizer of any diffuse amenable subalgebra of $L(\FF_{n})$ remains amenable. This result in particular rules out chains of inclusions
\begin{equation}\label{C:ChainIntro!@RW}
Q=Q_{0}\subseteq Q_{1}\subseteq \cdots \subseteq Q_{\alpha}=L(\FF_{n})
\end{equation}
where $Q$ is hyperfinite and $Q_{\beta-1}\subseteq Q_{\beta}$ is a regular inclusion. The absence of such a chain as in (\ref{C:ChainIntro!@RW}) was also obtained by Hadwin-Li in \cite{HadwinLi}. The result of Ozawa-Popa proves the much stronger statement that $Q_{\beta}$ is in fact hyperfinite for all $\beta.$ Popa's deformation/rigidity theory has since been used to achieve several landmark achievements in this field as well as deduce \emph{uniqueness} of Cartan subalgebras in crossed product algebras by various groups (just to name a few examples see e.g. \cite{PopaL2Betti},\cite{OzPopaCartan},\cite{OzPopaII},\cite{ChifanSinclair},\cite{IPP},\cite{ChifanPeterson},\cite{AdrianBern},\cite{AdrianL2Betti},\cite{PopaVaesGMS},\cite{PopaVaesFree},\cite{PopaVaesHyp}).

	In this paper, we will generalize the absence of chains such as $(\ref{C:ChainIntro!@RW})$  by considering weakened versions of the normalizer. For example we will consider the quasi-normalizer and one-sided quasi-normalizer:
\[q\mathcal{N}_{M}(N)=\left\{x\in M:\mbox{ there exists $x_{1},\dots,x_{n}\in N$ with } xN\subseteq \sum_{j=1}^{n}Nx_{j}\mbox{ and }Nx\subseteq \sum_{j=1}^{n}x_{j}N \right\},\]
\[q^{1}\mathcal{N}_{M}(N)=\left\{x\in M:\mbox{ there exists $x_{1},\dots,x_{n}\in N$ with } xN\subseteq \sum_{j=1}^{n}Nx_{j}\right\}.\]
 We say that $N$ is quasi-regular (respectively one-sided quasi-regular) in $M$ if $W^{*}(q\mathcal{N}_{M}(N))=M$ (respectively $W^{*}(q^{1}\mathcal{N}_{M}(N))=M)$.  Quasiregularity was introduced by Izumi-Longo-Popa in \cite{IzumiLongoPopa} (see Definition 3.7) under the term \emph{discreteness} (see also \cite{PimnserPopa} Proposition 1.3) the term ``quasiregular" was first introduced by Popa in \cite{PopaQR}. The one-sided quasi-normalizer was defined by  Izumi-Longo-Popa in \cite{IzumiLongoPopa}. All of these notions are inspired by, and can be traced back to, the work of Popa on Orthogonal Masas in \cite{PopaOMasas}. In private communication with Popa, we have learned that Popa defined, in a 2005 mini-course in the conference ``Noncommmutative Geometry and Operator algebras at Vanderbilt Univeristy \cite{PopaMC}, a related notion of an interwining space of $Q$ into $P$ inside of $M,$ when $P,Q$ are subalgebra of $M$ (even if $P$ is not a subset of $Q$). We define the quasi-normalizing algebra of $N\subseteq M$ and the one-sided quasi-normalizing subalgebra of $N\subseteq M$ by
\[W^{*}(q\mathcal{N}_{M}(N)),\]
\[W^{*}(q^{1}\mathcal{N}_{M}(N)),\]
respectively. Motivated by the strong solidity results of Ozawa and Popa we can inductively define chains of algebras
\begin{equation}\label{E:OSQN}
N_{0}=N_{1}\subseteq N_{2}\subseteq\cdots \subseteq N_{\alpha}\subseteq M
\end{equation}
by saying that
\begin{equation}\label{E:OSQN1}
N_{\beta}=W^{*}(q^{1}\mathcal{N}_{M}(N_{\beta-1})).
\end{equation}
It is then reasonable to investigate for what $M$ and $N$ we can guarantee that $N_{\alpha}\ne M$ for all $\alpha.$ For example it is natural to ask if this is true if $N$ is hyperfinite and $M=L(\FF_{t})$ with $t\in (1,\infty].$ We note that a similar chain of algebras was considered in Definition 1.2.2 of \cite{IPP}.

	A similar question has already been asked by Galatan-Popa  \cite{GalatanPopa}. If $N\subseteq M$ are diffuse von Neumann algebras, we define the step-$1$ wq-normalizer by
\[N_{M}^{wq}(N)=\{u\in \mathcal{U}(M):u^{*}Nu\cap N\mbox{is diffuse}\}.\]
In \cite{GalatanPopa}, Galatan-Popa define  (in the spirit of definition 1.2.2 of \cite{IPP} and Definition 2.3 of \cite{PopaCohomologyOE}) the wq-normalizing subalgebra as the smallest von Neumann subalgebra $Q$ of $M$ containing $N$ for which $\mathcal{N}^{wq}_{M}(Q)=\mathcal{U}(Q).$ Equivalently, one considers a chain (indexed by ordinals $\alpha$) of algebras
\[N_{0}=N\subseteq N_{1}\subseteq N_{2}\subseteq \cdots \subseteq N_{\alpha}\subseteq M\]
defined by
\[N_{\beta}=W^{*}(\mathcal{N}^{wq}_{M}(N_{\beta-1}))\mbox{ if $\beta$ is a succesor ordinal},\]
\[N_{\beta}=\overline{\bigcup_{\beta'<\beta}N_{\beta'}}^{SOT}\mbox{ if $\beta$ is a limit ordinal},\]
then $Q=N_{\alpha}$ where $\alpha$ is the first index at which the chain stabilizes, i.e. $N_{\alpha}=N_{\alpha+1}.$ In \cite{GalatanPopa} (see Remark 3.9) Galatan-Popa conjecture that the wq-normalizing subalgebra of $N\subseteq L(\FF_{2})$ is never $L(\FF_{2})$ if $N$ is diffuse and hyperfinite. In this paper, we shall answer this question affirmatively by analyzing the structure of $\ell^{2}(\FF_{2})$ as an $N$-$N$ bimodule.

	Let $M$ be a von Neumann algebra and recall that two  $M$-$M$ bimodules $\mathcal{H},\mathcal{K}$ are \emph{disjoint} if there is no non-zero, bounded, $M$-$M$ bimodular map $T\colon \mathcal{H}\to \mathcal{K}.$ Since adjoints of $M$-$M$ bimodular maps are $M$-$M$ bimodular, this notion is symmetric. For our purposes, a tracial von Neumann algebra is a pair $(M,\tau)$ where $M$ is a von Neumann algebra and $\tau$ is a faithful, normal, tracial state on $M.$ A von Neumann subalgebra $N$ of a von Neumann algebra $M$ is a weak operator topology closed $*$-subalgebra of $M$ which shares the same multiplicative identity.  Let $(M,\tau)$ be a tracial von Neumann algebra and $N\subseteq M$ a von Neumann subalgebra. For $\xi\in L^{2}(M)$ we use $L^{2}(N\xi N)$ for
\[\overline{\left\{\sum_{j=1}^{k}x_{j}\xi y_{j}:x_{j},y_{j}\in N,j=1,\dots,k\right\}}^{\|\cdot\|_{2}}.\]
The following $N$-$N$ subbimodule of $L^{2}(M)$ will be the crucial object of study in the paper.

\begin{definition}\emph{Let $(M,\tau)$ be a tracial von Neumann algebra and $N\subseteq M$ a von Neumann subalgebra. Define the} singular subspace of $L^{2}(M)$ over $N$ \emph{by}
\[\mathcal{H}_{s}(N\subseteq M)=\{\xi \in L^{2}(M):L^{2}(N\xi N) \mbox{\emph{ is disjoint from }} L^{2}(N)\otimes L^{2}(N^{op})\mbox{\emph{ as an $N$-$N$ bimodule}}\}.\]
\end{definition}

To motivate this definition we show that the singular subspace contains every version of normalizer in our discussion.
\begin{proposition}\label{P:spectralregularintro} Let $(M,\tau)$ be a tracial von Neumann algebra and let $N\subseteq M$ be a diffuse von Neumann subalgebra. We then have the following inclusions:
\[q^{1}\mathcal{N}_{M}(N)\subseteq \mathcal{H}_{s}(N\subseteq M),\]
\[\mathcal{N}_{M}^{wq}(N)\subseteq \mathcal{H}_{s}(N\subseteq M).\]
\end{proposition}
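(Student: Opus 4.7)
The plan is to argue by contradiction in each case, supposing that there exists a nonzero bounded $N$-$N$ bimodular map $T\colon L^{2}(N\xi N) \to L^{2}(N) \otimes L^{2}(N^{op})$ (replacing $T$ by its adjoint if necessary to get this direction) and exhibiting an obstruction.

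For the one-sided quasinormalizer, fix $x \in q^{1}\mathcal{N}_{M}(N)$ with $xN \subseteq \sum_{j=1}^{n} N x_{j}$. Then $NxN \subseteq \sum_{j=1}^{n} Nx_{j}$, so $L^{2}(NxN)$ is contained in the finitely generated left $N$-module $\sum_{j=1}^{n} L^{2}(N) x_{j}$ and hence has finite left $N$-Hilbert-module dimension. By contrast, when $N$ is diffuse the coarse bimodule $L^{2}(N) \otimes L^{2}(N^{op})$ has infinite left $N$-dimension, and so does every nonzero $N$-$N$ subbimodule of it: subbimodules correspond to nonzero projections in the bimodule commutant, and a short computation with the left $N$-dimension formula shows this dimension is infinite because the operator trace on $B(L^{2}(N))$ of any nonzero projection in $N$ is infinite (the cut-down of a diffuse algebra by a nonzero projection is still diffuse, hence infinite-dimensional as a Hilbert space). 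The polar decomposition of a nonzero $T\colon L^{2}(N) \otimes L^{2}(N^{op}) \to L^{2}(NxN)$ yields an $N$-$N$ bimodular partial isometry with nonzero initial subbimodule inside the coarse and image inside $L^{2}(NxN)$; since the isometry preserves left $N$-dimension, this embeds an infinite-dimensional bimodule into a finite-dimensional one, a contradiction.

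For the wq-normalizer, set $P = u^{*}Nu \cap N$ (diffuse by hypothesis), $Q = uPu^{*} = N \cap uNu^{*} \subseteq N$, and let $\theta\colon P \to Q$ be the isomorphism $\theta(p) = upu^{*}$. Assume $T\colon L^{2}(NuN) \to L^{2}(N) \otimes L^{2}(N^{op})$ is a nonzero $N$-$N$ bimodular map and set $\xi = T(u)$; since $u$ cyclically generates $L^{2}(NuN)$ as an $N$-$N$ bimodule, $\xi \neq 0$. The identity $up = \theta(p) u$ in $M$, together with the bimodularity of $T$, forces $\theta(p) \xi = \xi p$ in $L^{2}(N) \otimes L^{2}(N^{op})$ for every $p \in P$. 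Identifying $L^{2}(N) \otimes L^{2}(N^{op})$ with the Hilbert-Schmidt operators on $L^{2}(N)$ and letting $X$ be the Hilbert-Schmidt operator corresponding to $\xi$, the relation becomes $\theta(p) X = Xp$ for all $p \in P$; consequently $X^{*} X$ is a positive trace-class operator on $L^{2}(N)$ that commutes with the left multiplication action of $P$.

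The main obstacle is to conclude from these data that $X = 0$. I would argue spectrally: each nonzero spectral projection $E$ of $X^{*} X$ is finite-rank and lies in $(X^{*}X)''$, hence commutes with $P$; its range is a nonzero finite-dimensional subspace of $L^{2}(N)$ stable under the left $P$-action, and the restricted action is a normal finite-dimensional $*$-representation of $P$. However, any diffuse tracial von Neumann algebra admits no nonzero finite-dimensional normal representation: its kernel would have to be of the form $(1-z)P$ for a central projection $z \in P$ with $zP$ finite-dimensional, and every finite-dimensional direct summand of a von Neumann algebra contains minimal projections, contradicting diffuseness of $P$. Therefore every spectral projection of $X^{*}X$ vanishes, $X = 0$, and $\xi = 0$, contradicting $\xi \neq 0$ and completing the proof.
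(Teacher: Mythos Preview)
Your argument for the one-sided quasinormalizer is essentially the same as the paper's: both note that $NxN\subseteq\sum_{j}Nx_{j}$ forces $L^{2}(NxN)$ to have finite $N$-dimension, while any nonzero $N$-$N$ subbimodule of the coarse bimodule has infinite $N$-dimension when $N$ is diffuse.

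For the wq-normalizer your route is correct but genuinely different from the paper's. You exploit the identification of $L^{2}(N)\otimes L^{2}(N^{op})$ with Hilbert--Schmidt operators, turn the bimodularity relation $\theta(p)\xi=\xi p$ into an operator intertwining relation, and then use that the spectral projections of the trace-class operator $X^{*}X$ are finite-rank and $P$-invariant, which contradicts diffuseness of $P$. The paper instead runs a short mixing computation: it picks unitaries $v_{k}\in u^{*}Nu\cap N$ tending to $0$ weakly, writes $v_{k}=u^{*}w_{k}u$ with $w_{k}\in N$, and computes
\[
\|T(u)\|_{2}^{2}=\|w_{k}T(u)\|_{2}^{2}=\langle w_{k}T(u)v_{k}^{*},T(u)\rangle,
\]
which tends to $0$ because $\langle w_{k}\zeta v_{k}^{*},\zeta\rangle\to 0$ for every $\zeta$ in the coarse bimodule when $w_{k},v_{k}\to 0$ weakly. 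The paper's argument is shorter and avoids the Hilbert--Schmidt identification altogether; yours is more structural and makes transparent exactly which compactness obstruction is at play (no finite-dimensional normal representations of a diffuse algebra). Both are valid; the paper's is the slicker of the two here.
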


 Regard $L^{2}(M)$ as a collection of closed densely-defined operators affiliated to $M.$ Thus if $X\subseteq L^{2}(M)$ and $\xi=u_{\xi}|\xi|$ for $\xi\in X$ is the polar decomposition (as an unbounded operator), we use $W^{*}(X)$ for
\[W^{*}(\{u_{\xi}:\xi\in X\}\cup \{f(|\xi|):\xi\in X, f\mbox{ is a bounded Borel function on $[0,\infty)$}\}).\]
We can now state our main theorem.

\begin{theorem}\label{T:mainintro1} Let $t\in (1,\infty]$ and let $N$ be a diffuse, hyperfinite von Neumann subalgebra of $L(\FF_{t}).$ Inductively define the following algebras $N_{\alpha}$ for every ordinal $\alpha:$
\[N_{0}=N,\]
\[N_{\alpha}=W^{*}(\mathcal{H}_{s}(N_{\alpha-1}\subseteq M))\mbox{ if $\alpha$ is a sucessor ordinal},\]
\[N_{\alpha}=\overline{\bigcup_{\alpha'<\alpha}N_{\alpha'}}^{SOT}\mbox{ if $\alpha$ is a limit ordinal.}\]
Then for any ordinal $\alpha$ we have $N_{\alpha}\ne L(\FF_{t}).$
\end{theorem}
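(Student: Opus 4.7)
The plan is to prove the theorem by transfinite induction on $\alpha$, using Hayes's $1$-bounded entropy (a refinement of Voiculescu's microstates free entropy dimension, whose stability under various operations is the paper's namesake) as the obstruction that separates hyperfinite-built towers from $L(\FF_{t})$. Concretely, one attaches to every finitely generated tracial von Neumann algebra (or, relatively, to every inclusion $P\subseteq M$) a number $h(P)\in[0,+\infty]$ which is $0$ when $P$ is diffuse and hyperfinite and which satisfies $h(L(\FF_{t}))=+\infty$ for $t\in(1,\infty]$. The theorem then reduces to showing $h(N_{\alpha})=0$ for every ordinal $\alpha$.

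The key technical step, and the main obstacle, is a \emph{bimodule absorption} statement at successor ordinals: if $P\subseteq M$ is an inclusion of tracial von Neumann algebras with $P$ diffuse, then
\[h(W^{*}(\HS_{s}(P\subseteq M)))\leq h(P).\]
I would establish this by abstracting Voiculescu's original argument from Theorem 7.3 of \cite{FreeEntropyDimensionIII}. The input is that disjointness of $L^{2}(P\xi P)$ from the coarse bimodule $L^{2}(P)\otimes L^{2}(P^{op})$ forces, at the level of microstates, the existence of many approximate $P$-$P$ intertwiners applied to $\xi$; this in turn lets one cover microstates of any finite tuple $\xi_{1},\dots,\xi_{k}\in\HS_{s}(P\subseteq M)$ by orbits (under unitary conjugation) of a small packing of microstates for a generating set of $P$. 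Passing from elements of $\HS_{s}(P\subseteq M)$ to the polar parts $u_{\xi}$ and the spectral projections $f(|\xi|)$ that actually generate $W^{*}(\HS_{s}(P\subseteq M))$ is a standard functional calculus argument, provided one works with an appropriate orbit/packing version of $h$ rather than with $\delta_{0}$ directly.

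Granting the absorption bound, the induction runs as follows. The base case $h(N_{0})=h(N)=0$ is the definition, since $N$ is diffuse and hyperfinite. At a successor ordinal $\alpha$, the absorption bound gives $h(N_{\alpha})\leq h(N_{\alpha-1})=0$. At a limit ordinal $\alpha$, one invokes the continuity of $h$ under SOT-increasing unions: since $h$ is defined through a covering / orbit-microstates limit, any generating tuple of $N_{\alpha}$ can be approximated in $\|\cdot\|_{2}$ by tuples in some $N_{\alpha'}$ with $\alpha'<\alpha$, and standard semicontinuity of the microstates covering numbers yields $h(N_{\alpha})\leq \sup_{\alpha'<\alpha}h(N_{\alpha'})=0$.

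Finally, since $h(L(\FF_{t}))=+\infty$ for $t\in(1,\infty]$ (this is Voiculescu's computation that $\delta_{0}$ of the free semicircular generators is $t$, repackaged via Jung's orbit description), one cannot have $N_{\alpha}=L(\FF_{t})$. The only genuinely nontrivial piece is the bimodule absorption step; Proposition~\ref{P:spectralregularintro} guarantees this is strong enough to recover, as special cases, the absence of chains (\ref{C:ChainIntro!@RW}) and of the Galatan--Popa wq-normalizing tower. The limit-ordinal step and the base case are then routine bookkeeping.
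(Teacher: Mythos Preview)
Your plan is essentially the paper's: use $1$-bounded entropy $h$ as the invariant, prove a ``bimodule absorption'' inequality at successor steps by abstracting Voiculescu's argument, handle limit steps by semicontinuity under increasing unions, and conclude from $h(L(\FF_{t}))=\infty$ versus $h(\mbox{hyperfinite})=0$.

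One organizational point is worth flagging. The quantity the microstates argument naturally controls is the \emph{relative} entropy $h(\,\cdot\,:M)$, not the absolute $h(\,\cdot\,)$: the paper's Theorem~\ref{T:main} reads $h(W^{*}(\mathcal{H}_{s}(N\subseteq M)):M)=h(N:M)$, and the transfinite induction in Corollary~\ref{C:main} tracks $h(N_{\alpha}:M)$ throughout, converting to the absolute statement only at the end via $h(M)=h(M:M)=h(N_{\alpha}:M)\leq h(N)$. Your formulation $h(W^{*}(\mathcal{H}_{s}(P\subseteq M)))\leq h(P)$ is true but is not what the covering argument gives directly; to recover it you would need the extra observation that $W^{*}(\mathcal{H}_{s}(P\subseteq Q))=Q$ for $Q=W^{*}(\mathcal{H}_{s}(P\subseteq M))$, so that the relative theorem can be applied with ambient algebra $Q$. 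Also, the mechanism in the absorption step is not covering by unitary conjugation orbits per se: it is Lemma~\ref{L:disjointness} producing an element of $N\otimes_{\mathrm{alg}}N^{op}$ that nearly fixes each $\xi_{i}$ while having small $L^{2}(N\otimes N^{op})$-norm, which at the microstate level (via Lemma~\ref{L:volumepackingestimate}) forces the image of the microstate ball under the corresponding $\#$-action to have small covering numbers.
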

 The proof  of Theorem \ref{T:mainintro1} may be regarded as an abstraction of Voiculescu's original proof of absence of Cartan subalgebras in \cite{FreeEntropyDimensionIII}. The  new feature of the proof is that it is essentially ``linear'' and only relies on a basic fact about representations of $C^{*}$-algebras. It appears that these techniques have not been exploited so far in the study of free entropy dimension. The main exception is the case where $N$ is abelian, where  Voiculescu proved  in Corollary 7.6 of \cite{FreeEntropyDimensionIII}   that $\mathcal{H}_{s}(N\subseteq L(\FF_{t}))\ne L^{2}(M)$ (but not in this language). Note that Proposition \ref{P:spectralregularintro} shows that all previous notations of normalizers are contained in the singular subspace. So Theorem \ref{T:mainintro1} proves, in a very strong sense, the absence of chains of regular subalgebras  (even under a very weak notion of regularity) starting from a hyperfinite algebra and ending in $L(\FF_{t})$. In particular, we may easily deduce the conjecture of Galatan and Popa from Theorem \ref{T:mainintro1}.

\begin{cor}  Fix $t\in (1,\infty].$ For any diffuse, hyperfinite von Neumann subalgebra $N$ of $L(\FF_{t})$ the wq-normalizing algebra of $N$ is not $L(\FF_{t}).$
\end{cor}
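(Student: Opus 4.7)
The plan is to reduce the corollary to Theorem \ref{T:mainintro1} by running the two transfinite chains in parallel and showing that the wq-normalizer chain is dominated at every stage by the singular-subspace chain.

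Set $M=L(\FF_{t})$ and, starting from $N_{0}^{wq}=N_{0}^{s}=N$, define simultaneously
\[
N_{\beta}^{wq}=W^{*}(\mathcal{N}^{wq}_{M}(N_{\beta-1}^{wq})),\qquad N_{\beta}^{s}=W^{*}(\mathcal{H}_{s}(N_{\beta-1}^{s}\subseteq M))
\]
at successor ordinals, and take $\overline{\bigcup_{\beta'<\beta}N_{\beta'}^{wq}}^{\,\textrm{SOT}}$, resp.\ $\overline{\bigcup_{\beta'<\beta}N_{\beta'}^{s}}^{\,\textrm{SOT}}$, at limit ordinals. Both chains stabilize (by cardinality of $M$), and the wq-normalizing algebra of $N\subseteq M$ is, by definition, $N_{\alpha}^{wq}$ at the first stabilizing ordinal $\alpha$. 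I claim by transfinite induction that $N_{\beta}^{wq}\subseteq N_{\beta}^{s}$ for every $\beta$.

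The base case is the equality $N=N$. Suppose $\beta$ is a successor and $N_{\beta-1}^{wq}\subseteq N_{\beta-1}^{s}$. Both algebras are diffuse because they contain the diffuse algebra $N$. If $u\in \mathcal{N}^{wq}_{M}(N_{\beta-1}^{wq})$, then $u^{*}N_{\beta-1}^{wq}u\cap N_{\beta-1}^{wq}$ is diffuse, and the inclusion
\[
u^{*}N_{\beta-1}^{wq}u\cap N_{\beta-1}^{wq}\subseteq u^{*}N_{\beta-1}^{s}u\cap N_{\beta-1}^{s}
\]
forces the right-hand side to be diffuse as well. Hence $u\in \mathcal{N}^{wq}_{M}(N_{\beta-1}^{s})$, and Proposition \ref{P:spectralregularintro} (applied to the diffuse subalgebra $N_{\beta-1}^{s}\subseteq M$) gives $u\in \mathcal{H}_{s}(N_{\beta-1}^{s}\subseteq M)$. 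Passing to the generated von Neumann algebras yields $N_{\beta}^{wq}\subseteq N_{\beta}^{s}$. At a limit ordinal $\beta$, the inductive hypothesis $N_{\beta'}^{wq}\subseteq N_{\beta'}^{s}$ for all $\beta'<\beta$ is preserved by taking unions and SOT-closures, completing the induction.

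Applying Theorem \ref{T:mainintro1} to the chain $(N_{\beta}^{s})_{\beta}$, we have $N_{\beta}^{s}\ne L(\FF_{t})$ for every ordinal $\beta$; combined with the inclusion $N_{\beta}^{wq}\subseteq N_{\beta}^{s}$, this yields $N_{\beta}^{wq}\ne L(\FF_{t})$ for every $\beta$, and in particular for the stabilizing ordinal that produces the wq-normalizing algebra. There is essentially no serious obstacle here beyond verifying the monotonicity of $\mathcal{N}^{wq}_{M}(\cdot)$ under enlargement of the subalgebra, which is routine once one notes that a containing algebra of a diffuse algebra is diffuse. The real content is packaged into Theorem \ref{T:mainintro1} and Proposition \ref{P:spectralregularintro}.
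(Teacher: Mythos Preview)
Your argument is correct and is essentially the deduction the paper has in mind at this point of the introduction: use Proposition~\ref{P:spectralregularintro} to show the wq-chain is dominated stage by stage by the singular-subspace chain, then invoke Theorem~\ref{T:mainintro1}. The paper's later, more detailed proof (Corollary~\ref{C:wqreg} via Corollary~\ref{C:weakercorollaries}) phrases this through the $1$-bounded entropy invariant $h(N_\alpha:M)=h(N:M)$, but the underlying mechanism---the containment $\mathcal{N}^{wq}_{M}(N)\subseteq \mathcal{H}_{s}(N\subseteq M)$ propagated transfinitely---is the same.
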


The key tool in the proof of these theorems is Voiculescu's free entropy dimension.  In fact we may replace $L(\FF_{t})$ with any algebra which has microstates free-entropy dimension bigger than $1$ with respect to some set of generators. We can also replace``hyperfinite" in all of our theorems with any algebra which has microstates free entropy dimension one with respect to every set of generators in a suitably strong sense. The strong version of having  microstates free entropy dimension one with respect to every set of generators we will use is the concept of being strongly $1$-bounded developed by Kenley Jung in \cite{JungSB}. This led to the development of other related notions  in \cite{HadwinShen},\cite{HadwinLi},\cite{Shen}. These notions are numerical invariants of a von Neumann algebra which, when finite, imply that it has microstates free entropy dimension at most one with respect to every set of generators. We remark that being strongly $1$-bounded implicitly comes with a numerical invariant though this was not spelled out in  \cite{JungSB}. Specifically, it is clear from the proof of Theorem 3.2 in \cite{JungSB} that if $(M,\tau)$ is a finite von Neumann algebra, and $F,G\subseteq M_{sa}$ are finite sets with $W^{*}(F)=W^{*}(G)=M,$ then in the notation of Section $2$ of \cite{JungSB} we have
\[\sup_{\varepsilon>0}K_{\varepsilon}(\Xi(F))=\sup_{\varepsilon>0}K_{\varepsilon}(\Xi(G)).\]
We will call this common quantity the $1$-bounded entropy of $M$. We feel that this is justified because the above quantity can be regarded as a limit of relative entropies (i.e. it is computed in terms of packing numbers of microstates relative to a fixed self-adjoint). We show in Lemma \ref{L:orbit} that Shen's $1$-Embedding Dimension developed in \cite{Shen} agrees with $1$-bounded entropy.  We will need a version of $1$-bounded entropy which works for not necessarily finitely generated algebras. The techniques we use to define the $1$-bounded entropy for not necessarily finitely generated algebras are well known to experts in free entropy dimension (see e.g.  \cite{DimaCost},\cite{DimeFullIII}). We use $h(M)$ for the $1$-bounded entropy of $M$ (the precise definition is given in Definition \ref{D:1bbsection1ldga}), the $h$ being used because it is some form of entropy.  The proof that $h(M)$ is an invariant, even in the case of an infinite set of generators for $M,$ follows easily from the techniques of Jung. However, since this is  not present in the literature we have decided to include it in the appendix. It is clear from our definitions of the $1$-bounded entropy as well as section 2 of \cite{JungSB} that if $M$ is a finitely-generated von Neumann algebra, then $h(M)<\infty$ if and only if $M$ is strongly $1$-bounded. We prove this explicitly in Proposition \ref{P:equivstrongly1bddappendix}. From these comments it follows that if $h(M)<\infty,$ then $\delta_{0}(F)\leq 1$ for any finite $F\subseteq M_{sa}$ with $M=W^{*}(M)$ (for example, $h(L(\FF_{n}))=\infty$). The methods for defining microstates free entropy with respect to an infinite family of generators are well known (see e.g. Section 4 of \cite{DimeFullIII}) and it is clear from the definitions that if $h(M)<\infty,$ then $\delta_{0}(F)\leq 1$ for any set $F$ of self-adjoint generators of $M.$ This implies, for example, that $h(L(\FF_{\infty}))=\infty.$ A byproduct of our methods  shows that if $F$ is a $1$-bounded  set of self-adjoint elements and if $F$ is a nonamenability set, then $W^{*}(F)$ is strongly $1$-bounded (previously Jung required that $F$ contain an element with finite free entropy).

 For the reader's convenience, we mention more examples of algebras with microstates free entropy dimension bigger than one with respect to some set of generators.  By Proposition 6.8 of \cite{FreeEntropyDimensionIII}, any free product of diffuse, finitely generated tracial von Neumann algebras which embed into an ultrapower of $\R$ have free entropy dimension larger than $1$ with respect to some set of generators. Additionally, by Brown-Dykema-Jung (see \cite{DykemaJungBrown}), we can exhibit amalgamated free products which have free entropy dimension bigger than $1$ with respect to some set of generators.  Thus our theorems apply to each of these examples.

We now restate our main theorems using $1$-bounded entropy in a somewhat more abstract way.

\begin{theorem}\label{T:mainintro} Let $(M,\tau)$ be a tracial von Neumann algebra and suppose that $X\subseteq L^{2}(M)$ is a set of (not necessarily self-adjoint) generators for $M.$ Let $\omega\in\beta\NN\setminus\NN$ be a non-principal ultrafilter and let $N$  be a diffuse von Neumann subalgebra of $M^{\omega}.$ Set
\[\mathcal{H}=\overline{\sum_{x\in X}L^{2}(NxN)}^{\|\cdot\|_{2}}.\]
If $\mathcal{H}$ is disjoint from $L^{2}(N)\otimes L^{2}(N^{op})$ as an $N$-$N$ bimodule, then
\[h(M)\leq h(N).\]
Thus if $h(N)=0$ (e.g. $N$ is hyperfinite), then $h(M)=0$ (so for example $M$ cannot be isomorphic to $L(\FF_{n})$).
\end{theorem}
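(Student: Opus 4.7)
The plan is to bound, at the matricial level, the $\varepsilon$-packing numbers of microstates for a finite subset $F \subseteq X$ by the corresponding packing numbers of microstates for a finite subset $S \subseteq N$, up to factors that vanish in the limit defining $h$. By the extension of $h$ to possibly infinite generating sets developed in the appendix, this will yield $h(M) \leq h(N)$.

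First, I would set up a standard microstate realization. Given a sequence of microstates $(A^{(k)}_x)_{x \in F}$ in $M_{n_k}(\CC)$ for a finite $F \subseteq X$, after refining along $\omega$ I may enlarge to joint microstates $(A^{(k)}_x, B^{(k)}_s)_{x \in F, s \in S}$ for any prescribed finite $S \subseteq N$, where the $B^{(k)}_s$ approximate the joint distribution of $S$ in $N \subseteq M^\omega$. This is possible because any sequence of microstates for $F$ defines, via the tracial ultraproduct, an embedding $W^*(F) \hookrightarrow \prod_{k \to \omega} M_{n_k}(\CC)$, and after a diagonal selection one can produce matrix lifts for a countable dense subset of $N$ that match the inclusion $N \subseteq M^\omega$ along $\omega$.

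The heart of the proof is converting the bimodule disjointness into a matrix-level approximation. The basic $C^*$-algebra fact I would use is that disjoint representations admit no nonzero bounded intertwiner; applied to the representations of $C^*(N) \otimes_{\max} C^*(N^{op})$ on $\mathcal{H}$ and on the coarse bimodule $L^2(N) \otimes L^2(N^{op})$, this should imply: for each $x \in X$, each finite $S_0 \subseteq N$, and each $\varepsilon > 0$, there exist $a_1,\dots,a_m,\; b_1,\dots,b_m \in W^*(S_0)$ and vectors $\eta_1,\dots,\eta_m$ in a finite-dimensional auxiliary $N$-$N$ bimodule with no coarse content so that $x$ is $\varepsilon$-close in $\|\cdot\|_2$ to $\sum_i a_i \eta_i b_i$. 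Translated back to microstates, this forces each matrix $A^{(k)}_x$ to lie within $\varepsilon$ of $\sum_i a_i(B^{(k)}) C_i b_i(B^{(k)})$ once the $B^{(k)}_s$ are fixed, with the matricial ``core'' coefficients $C_i$ ranging over a set whose covering dimension grows only polynomially in $n_k$.

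Combining these, the $\varepsilon$-packing number of microstates for $F$ is at most polynomial in $n_k$ times the $\varepsilon'$-packing number of microstates for $S$. Taking logarithms, dividing by $n_k^2$, and letting $k \to \infty$ yields $h(M) \leq h(N)$. The step I expect to be the main obstacle is the third one: producing the finite-dimensional approximation with genuinely quantitative control over the dimension of the core. The bimodule disjointness is a qualitative functional-analytic statement, and extracting from it a dimension-controlled microstate approximation requires the $C^*$-dichotomy to be used in a quantitatively effective way. This is presumably the novel linear ingredient alluded to in the introduction, and it is where Voiculescu's Cartan argument must be abstracted.
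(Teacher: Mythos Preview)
Your overall architecture is right, and you have correctly located the crux: turning the qualitative disjointness into a quantitative matrix-level covering bound. But the mechanism you propose in step three is not the one that works, and as stated it cannot be made to work.

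You propose to approximate each $x\in F$ by a finite sum $\sum_i a_i\eta_i b_i$ with $a_i,b_i\in N$ and $\eta_i$ living in a finite-dimensional auxiliary $N$-$N$ bimodule ``with no coarse content,'' and then to argue that at the matrix level the corresponding $C_i$ range over a set of polynomial covering dimension. This is problematic for two reasons. First, the disjointness of $\mathcal{H}$ from the coarse bimodule does not produce any such finite-dimensional approximation of $x$; it is a statement about the absence of intertwiners, not about $x$ lying close to a small bimodule. Second, and more fatally, at the matrix level there is no such thing as an $M_{n_k}$-$M_{n_k}$ bimodule ``with no coarse content'': since $M_{n_k}(\CC)\otimes M_{n_k}(\CC)^{op}$ is simple, every $M_{n_k}$-$M_{n_k}$ bimodule is a direct sum of copies of the coarse one. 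So your auxiliary $\eta_i$ have nowhere to go in the microstate picture, and the claimed polynomial bound on the $C_i$ has no source.

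The paper's mechanism is different and, in a sense, dual to what you wrote. From Lemma~\ref{L:disjointness}(iv) one obtains, for each $\kappa>0$, a single element $P\in \CC\langle Y_y:y\in G\rangle\otimes_{\alg}\CC\langle Y_y:y\in G\rangle^{op}$ (for a finite $G\subseteq N_{sa}$) with $\|P\|_{R}\leq 1$, which is an approximate identity on the vectors $x_i$ (namely $\|P\# x_i - x_i\|_2<\kappa$) while being small on the coarse bimodule (namely $\|P\#(1\otimes 1)\|_{L^2(N)\otimes L^2(N^{op})}<\kappa$). Now comes the key observation: when one plugs matrix microstates $C\in M_{n_k}(\CC)^G$ into $P$, the resulting operator $\Psi_C=P(C)\#(\cdot)$ on $(M_{n_k}(\CC),\|\cdot\|_2)$ has Hilbert--Schmidt norm $\|\Psi_C\|_{L^2(\tr)}<\kappa$, precisely because $M_{n_k}(\CC)\otimes M_{n_k}(\CC)^{op}\to B(L^2(M_{n_k}(\CC)))$ is a trace-preserving isomorphism --- i.e.\ the matricial bimodule \emph{is} coarse. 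On the other hand, the same $\Psi_C$ approximately fixes the microstates $A^{(k)}_x$. So the microstates for $F$ lie $O(\kappa)$-close to $\Psi_C(R\cdot\Ball)$ for some $C$ drawn from the microstate space of $G$; Lemma~\ref{L:volumepackingestimate} then bounds the covering number of this image by $\exp(O(\kappa^2/\varepsilon^2)\cdot n_k^2)$, and one lets $\kappa\to 0$ after taking the infimum over $m,\gamma,G$. This, together with Proposition~\ref{P:approximate} to pass to $N\subseteq M^\omega$, gives $h(M)\leq h(N)$.

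In short: you should not look for a low-dimensional approximation of $x$. You should look for an approximate unit on $\mathcal{H}$ that is $\|\cdot\|_2$-small on the coarse bimodule; the matricial coarseness of $L^2(M_{n_k})$ then converts that smallness into a small Hilbert--Schmidt norm, which is exactly the quantitative input the volume-packing lemma needs.
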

When $N$ is abelian, and $N\subseteq M,$ and $\mathcal{H}=L^{2}(M)$ the preceding theorem is covered by Corollary 7.6 in \cite{FreeEntropyDimensionIII}, with the statement $h(M)=0$ being replaced by $\delta_{0}(F)\leq 1$ for any finite $F\subseteq M_{sa}$ with $W^{*}(F)=M.$
\begin{cor}\label{T:introchains} Let $(M,\tau)$ be a tracial von Neumann algebra and suppose that $h(M)>0$ (e.g. if $\delta_{0}(F)>1$ for a finite $F\subseteq M_{sa}$ which generates $M$). Let $N\subseteq M$ be a diffuse von Neumann subalgebra with $h(N)=0.$ Inductively define algebras $N_{\alpha}$ for all ordinals $\alpha$ as follows:
\[N_{0}=N,\]
\[N_{\alpha}=W^{*}(\mathcal{H}_{s}(N_{\alpha-1}\subseteq M))\mbox{ if $\alpha$ is successor ordinal},\]
\[N_{\alpha}=\ovelrine{\bigcup_{\alpha'<\alpha}N_{\alpha'}}^{SOT},\mbox{ if $\alpha$ is a limit ordinal}.\]
Then for all $\alpha$ we have that $N_{\alpha}\ne M.$ In particular if $N$ hyperfinite and $M=L(\FF_{t}),t>1,$ then $N_{\alpha}\ne M$ for any ordinal $\alpha.$

\end{cor}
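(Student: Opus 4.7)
The plan is to prove by transfinite induction on $\alpha$ that $h(N_\alpha) = 0$; since $h(M) > 0$ by hypothesis, this immediately forces $N_\alpha \neq M$ for every ordinal. Specialization to $M = L(\FF_t)$ with $t > 1$ (where $h(L(\FF_t)) = \infty$) and $N$ diffuse hyperfinite (where $h(N) = 0$) then gives the final assertion. The base case $h(N_0) = h(N) = 0$ holds by hypothesis, and at every stage $N_\alpha$ remains diffuse because it contains $N$.

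For a successor ordinal $\alpha$, assume inductively that $h(N_{\alpha-1}) = 0$. The algebra $N_\alpha = W^*(\mathcal{H}_s(N_{\alpha-1} \subseteq M))$ is generated, via the convention for $W^*$ of a subset of $L^2(M)$, by polar parts and spectral projections of vectors $\xi \in \mathcal{H}_s(N_{\alpha-1} \subseteq M)$. I would select a generating set $X$ for $N_\alpha$ consisting of such elements, lying in $N_\alpha \subseteq L^2(N_\alpha)$ and (as one must check) in the singular subspace, and then apply Theorem \ref{T:mainintro} to the inclusion $N_{\alpha-1} \subseteq N_\alpha$ with this generating set. The crucial disjointness hypothesis holds because, by the very definition of the singular subspace, each individual $L^2(N_{\alpha-1} x N_{\alpha-1})$ for $x \in X$ is disjoint from $L^2(N_{\alpha-1}) \otimes L^2(N_{\alpha-1}^{op})$ as an $N_{\alpha-1}$-$N_{\alpha-1}$ bimodule, and a bounded bimodular map out of the closed sum $\overline{\sum_{x \in X} L^2(N_{\alpha-1} x N_{\alpha-1})}^{\|\cdot\|_2}$ restricts to zero on each summand, hence vanishes on a dense subspace. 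Theorem \ref{T:mainintro} then yields $h(N_\alpha) \leq h(N_{\alpha-1}) = 0$.

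For a limit ordinal $\alpha$, $N_\alpha$ is the SOT-closure of the directed union $\bigcup_{\beta < \alpha} N_\beta$, and I would invoke the standard continuity property $h(N_\alpha) = \sup_{\beta < \alpha} h(N_\beta)$ of the $1$-bounded entropy along SOT-dense directed unions, which is built into the definition of $h$ given in the paper's appendix; this immediately gives $h(N_\alpha) = 0$. The main obstacle I anticipate is the technical verification at the successor step that the singular subspace contains a subset $X$ which generates $N_\alpha$, that is, that $\mathcal{H}_s(N_{\alpha-1} \subseteq M)$ is suitably closed under passage to polar parts and spectral projections so that the canonical generators of $W^*(\mathcal{H}_s(N_{\alpha-1} \subseteq M))$ themselves lie in the singular subspace. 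Once this closure property is established, the propagation of disjointness to closed sums is elementary, and the limit-ordinal step requires nothing beyond the standard behavior of $h$ on directed unions.
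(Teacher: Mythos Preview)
Your strategy is correct in outline, but the paper takes a slightly different and cleaner route: it runs the transfinite induction on the \emph{relative} quantity $h(N_\alpha : M)$ rather than on $h(N_\alpha)$. Theorem~\ref{T:main} gives $h(W^{*}(\mathcal{H}_{s}(N_{\alpha-1}\subseteq M)):M)=h(N_{\alpha-1}:M)$ directly, so the successor step is immediate without ever changing the ambient algebra, and at limit ordinals Lemma~\ref{L:increasingunions} gives an exact equality $h(N_\alpha:M)=\lim_{\beta<\alpha}h(N_\beta:M)$. One then concludes: if some $N_\alpha=M$, then $h(M)=h(M:M)=h(N:M)\leq h(N)=0$, a contradiction. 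Your version instead applies the main theorem to the inclusion $N_{\alpha-1}\subseteq N_\alpha$ at each successor step; this works (since $\mathcal{H}_s(N_{\alpha-1}\subseteq M)\cap L^2(N_\alpha)\subseteq \mathcal{H}_s(N_{\alpha-1}\subseteq N_\alpha)$ trivially), but you only get the inequality $h(N_\alpha)\leq\liminf_{\beta<\alpha}h(N_\beta)$ at limit ordinals (Corollary~\ref{C:increasingunions}), not the equality you claim. The inequality still suffices.

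Your stated ``main obstacle'' is not actually an obstacle. You do not need the singular subspace to be closed under polar parts or spectral projections, because Theorem~\ref{T:mainintro} (and the underlying Theorem~\ref{T:main}) is proved using the formulation of $1$-bounded entropy for \emph{unbounded} generators (Definitions~\ref{D:measuremicrostates} and~\ref{D:1bddentrunbddop1}, and Appendix~\ref{S:unbounded}). One simply takes $X=\mathcal{H}_s(N_{\alpha-1}\subseteq M)$ itself as the generating set for $N_\alpha$; there is no need to pass to bounded elements first.
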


%Since $\Sigma\mathcal{N}_{M}(N)$ gives an extremely weak notion of regularity, the above Theorem may be regarded as a significant generalization of Theorem 4.9 in \cite{HadwinLi}.

The conjecture of Galatan-Popa is motivated by their study of cohomology of $\textrm{II}_{1}$-factors in \cite{GalatanPopa},   in Section 5 of which they speculate on whether a``good'' cohomology theory for $\textrm{II}_{1}$ factors is possible. Such a theory should be nonvanishing, calculable in interesting cases, and in the case of group von Neumann algebras should reflect the cohomology of the group. Their discussion implies that if one can find a nice cohomology theory of $\textrm{II}_{1}$ factors, then it would follow that $L(\FF_{n})$ does not had a wq-regular, diffuse, hyperfinite von Neumann subalgebra. It thus becomes reasonable to try and prove that $L(\FF_{n})$ does not have a wq-regular, diffuse, hyperfinite von Neumann subalgebra as a ``consistency'' check for the postulate that there is a ``good'' cohomology theory for $\textrm{II}_{1}$ factors. As the preceding Theorem proves this fact, it still seems reasonable to  conjecture that there is a ``good'' cohomology theory for $\textrm{II}_{1}$-factors. In \cite{GalatanPopa} Galatan-Popa also introduce Property (C') for an inclusion $N\subseteq M$ of tracial von Neumann algebras. Property (C') is a version of asymptotic commutativity weaker than Property (C) as defined by Popa in \cite{PopaPropC} (which is itself weaker than having Property $(\Gamma)$). They show that if $M$ is a $\textrm{II}_{1}$-factor which has a diffuse, wq-regular von Neumann subalgebra having Property (C') in $M$, then $M$ has vanishing $1$-cohomology with values in any smooth bimodule. In particular, if $L(\FF_{n})$ had a smooth bimodule with non-vanishing $1$-cohomology, then $L(\FF_{n})$ could not have such a diffuse, wq-regular von Neumann subalgebra having property $(C')$ in $L(\FF_{n})$. We prove the absence of such a subalgebra of $L(\FF_{n})$ using $1$-bounded entropy. We remark here that Dykema already used free entropy dimension to prove that $L(\FF_{n})$ does not have Property (C) in \cite{DykemaFreeEntropy}.

\begin{cor} Let $(M,\tau)$ be a tracial von Neumann algebra. Suppose that $N$ is a diffuse, wq-regular subalgebra of $M.$ If $N\subseteq M$ has Property (C') as defined by Galatan-Popa in \cite{GalatanPopa}, then $h(M)\leq 0.$ In particular, $L(\FF_{n})$ does not have a diffuse wq-regular subalgebra $N$ so that $N\subseteq L(\FF_{n})$ has Property (C').
\end{cor}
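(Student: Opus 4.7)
The plan is to reduce the statement to Theorem \ref{T:mainintro} by using Property (C') to produce, inside an ultrapower $M^\omega$, a diffuse hyperfinite subalgebra commuting with $N$, and then transferring the singular-subspace structure of $N\subseteq M$ to it. First, spectral regularity of $N\subseteq M$: since $N$ is wq-regular, $W^*(\mathcal{N}^{wq}_M(N))=M$, and Proposition \ref{P:spectralregularintro} gives $\mathcal{N}^{wq}_M(N)\subseteq \mathcal{H}_s(N\subseteq M)$, so $W^*(\mathcal{H}_s(N\subseteq M))=M$. Equivalently, there is a family of vectors in the singular subspace whose generated $*$-subalgebra is strongly dense in $M$.

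Second, Galatan-Popa's Property (C') for $N\subseteq M$ is an asymptotic commutativity condition weaker than Popa's Property (C); unwinding the definition, for some free ultrafilter $\omega\in\beta\NN\setminus\NN$ it yields a diffuse abelian (hence hyperfinite) subalgebra $Q\subseteq N'\cap M^\omega$ commuting with the diagonal copy of $N$ in $M^\omega$. Since $Q$ is diffuse and hyperfinite, $h(Q)=0$.

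We then apply Theorem \ref{T:mainintro} with the diffuse $Q\subseteq M^\omega$ and a set $X\subseteq L^2(M)$ of generators of $M$. The required check is that $\mathcal{H}:=\overline{\sum_{x\in X} L^2(QxQ)}^{\|\cdot\|_2}$ is disjoint from $L^2(Q)\otimes L^2(Q^{op})$ as a $Q$-$Q$ bimodule. The key point is that $Q$ commutes with $N$ in $M^\omega$: any nonzero bounded $Q$-$Q$ bimodular map $T\colon \mathcal{H}\to L^2(Q)\otimes L^2(Q^{op})$ can be coupled with the commuting $N$-$N$ action on $L^2(M)\subseteq L^2(M^\omega)$ to produce a nonzero bounded $N$-$N$ bimodular map $L^2(NxN)\to L^2(N)\otimes L^2(N^{op})$ for some $x\in X$, contradicting the spectral regularity established in step one. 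Granted this disjointness, Theorem \ref{T:mainintro} gives $h(M)\leq h(Q)=0$, and the $L(\FF_n)$ statement follows from $h(L(\FF_n))=\infty$. The main obstacle is the bimodule transfer just described: rigorously coupling the commuting $N$- and $Q$-actions on $L^2(M^\omega)$ to descend the $Q$-bimodular map to an $N$-bimodular map, which will likely proceed via a direct integral decomposition along the abelian $Q$, combined with a density argument exploiting that $X$ generates $M$ and the spectral regularity of $N\subseteq M$.
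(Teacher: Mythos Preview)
Your proposal has two genuine gaps. First, you misread Property (C'). It does \emph{not} produce a diffuse abelian $Q\subseteq N'\cap M^{\omega}$; that would be a relative Property~$\Gamma$ and is much stronger. In the paper's definition, for each finite tuple $v_{1},\dots,v_{k}$ of generating unitaries one obtains mutually commuting diffuse unitaries $u_{1},\dots,u_{k}\in M^{\omega}$ with $[u_{j},v_{j}]=0$ \emph{only for the matching index $j$}; in general $u_{j}$ need not commute with $v_{i}$ for $i\ne j$, and there is no single algebra $Q$ commuting with all of $N$. Second, even granting such a $Q$, your ``bimodule transfer'' step is not a proof: knowing that $L^{2}(NxN)$ is disjoint from $L^{2}(N)\otimes L^{2}(N^{op})$ says nothing about disjointness of $L^{2}(QxQ)$ from $L^{2}(Q)\otimes L^{2}(Q^{op})$. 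The target $L^{2}(Q)\otimes L^{2}(Q^{op})$ carries no natural $N$-$N$ action, so there is no mechanism to ``descend'' a $Q$-bimodular map to an $N$-bimodular one; the direct-integral suggestion does not address this mismatch of target modules.

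The paper's argument is quite different and avoids any bimodule transfer. It first proves (Corollary~\ref{C:PropertyC'}) that Property (C') alone forces $h(N:M)\le 0$, using only the elementary additivity lemma $h(N_{1}\vee N_{2}:M)\le h(N_{1}:M)+h(N_{2}:M)$ when $N_{1}\cap N_{2}$ is diffuse (Lemma~\ref{L:diffuseintersection}): for each finite $v_{1},\dots,v_{n}$ one sets $A=W^{*}(u_{1},\dots,u_{n})$ and $P_{k}=W^{*}(u_{k},v_{k})$, all abelian with $A\cap P_{k}\supseteq W^{*}(u_{k})$ diffuse, so $h(A\vee\bigvee_{k}P_{k})\le 0$, and then $h(W^{*}(v_{1},\dots,v_{n}):M^{\omega})\le h(A\vee\bigvee_{k}P_{k})\le 0$; Proposition~\ref{P:approximate} and Lemma~\ref{L:increasingunions} finish. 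Only \emph{after} $h(N:M)\le 0$ is established does wq-regularity enter, via Corollary~\ref{C:weakercorollaries} (which rests on Theorem~\ref{T:main}), giving $h(M)=h(N:M)\le 0$.
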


In fact, in essentially every case, if Galatan-Popa show in \cite{GalatanPopa} that a  tracial von Neumann algebra $(M,\tau)$ has vanishing $1$-cohomology with values in any smooth bimodule, then we can show that $M$ has $1$-bounded entropy at most zero. The fact that algebras with  $1$-bounded entropy at most zero cannot be wq-regular, one-sided quasi-regular of step $\alpha$, etc. in algebras which have microstates free entropy bigger than $1$ with respect to some set of generators (e.g. $L(\FF_{n})$) may again be regarded as a good ``consistency'' check for the existence of a good cohomology theory of $\textrm{II}_{1}$-factors. The analogy between having vanishing $1$-cohomology and $1$-bounded entropy at most zero we view as similar to the theory of cost (as defined by Levitt in \cite{Levitt}) and $\ell^{2}$-Betti numbers (as defined by Atiyah in \cite{Atiyah}). For example, one knows by \cite{Gab2} that a group (or equivalence relation) with cost $1$  has vanishing first $\ell^{2}$-Betti number. The condition that a von Neumann algebra has Property (C') is analogous to Gaboriau's criterion (see \cite{Gab1} Critères VI.24.)  for a group to have cost $1$ if it can be generated by elements $s_{1},\dots,s_{k}$ with the property that $[s_{j},s_{j-1}]=1$ for all $j=2,\dots,k.$ We remark here that our techniques  have the defect that arguments involving free entropy dimension always have: any von Neumann algebra which has microstates free entropy dimension bigger than $1$ with respect to some set of generators must embed into an ultrapower of the hyperfinite $\textrm{II}_{1}$-factor. It is possible that the techniques of Galatan-Popa give more general conclusions since they do not require an algebra to embed into an ultrapower of the hyperfinite $\textrm{II}_{1}$-factor.

We can use our results  to give other examples of algebras $M$ with $h(M)\leq 0.$ In many cases by results of \cite{HoudayerShlyakhtenko} as well as \cite{Houdayerexotic} these algebras have the complete metric approximation property and are strongly solid. Thus they share many properties with free group factors, but are not isomorphic to them.

\begin{theorem}\label{T:qHoudShl} Let $q\in [-1,1]$ and let $\mathcal{H}$ be a real Hilbert space. Let $\Gamma_{q}(\mathcal{H})$ be the $q$-deformed free group factor defined by Bozjeko-Speicher in \cite{BozjekoSpeicher}. Suppose that $G$ is a countably infinite amenable group (more generally assume $h(L(G))<\infty$). Let $\pi\colon G\to \mathcal{O}(\mathcal{H})$ be an orthogonal representation and let $\alpha_{\pi}$ be the induced Bogulibov action of $G$ on $\Gamma_{q}(\mathcal{H}).$ Let
\[\pi_{\CC}\colon G\to \mathcal{U}(\mathcal{H}\otimes_{\RR}\CC)\]
be the complexification of $\pi.$ Let $\lambda_{C}\colon G\to \mathcal{U}(\ell^{2}(G))$ be the  representation induced by the conjugation action of $G$ on itself. Suppose that $\pi_{\CC}\otimes \lambda_{C}$ has no nonzero subrepresentation which embeds into the left regular representation. Then
\[h(\Gamma_{q}(\mathcal{H})\rtimes_{\alpha_{\pi}}G)\leq 0.\]
In particular, $\Gamma_{q}(\mathcal{H})\rtimes_{\alpha_{\pi}}G$  is not isomorphic to $L(\FF_{t})$ for any $t\in (1,\infty].$

\end{theorem}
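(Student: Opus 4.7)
The plan is to apply Theorem \ref{T:mainintro} to $M := \Gamma_{q}(\mathcal{H}) \rtimes_{\alpha_{\pi}} G$, taking as the diffuse subalgebra $N := L(G) \subseteq M \subseteq M^{\omega}$ (diffuse because $G$ is infinite), and as generating set $X := \{s_{q}(e_{i}) : i \in I\} \cup \{u_{g} : g \in G\}$, where $(e_{i})_{i \in I}$ is an orthonormal basis of $\mathcal{H}$ and the $u_{g}$ are the canonical crossed-product unitaries. Since $G$ is amenable (or more generally $h(L(G))<\infty$), we have $h(N) \leq 0$, so once the disjointness hypothesis of Theorem \ref{T:mainintro} is verified we obtain $h(M) \leq h(N) \leq 0$, precluding $M \cong L(\FF_{t})$ for any $t > 1$. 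The heart of the proof is to identify the $N$-$N$-bimodule $\mathcal{K} := \overline{\sum_{x \in X} L^{2}(NxN)}$ and show it is disjoint from the coarse bimodule $L^{2}(N) \otimes L^{2}(N^{\op})$.

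Using the commutation $u_{g} s_{q}(\xi) u_{h} = s_{q}(\pi(g)\xi) u_{gh}$ together with the $q$-Fock orthogonality $\ip{s_{q}(\eta) u_{k}, s_{q}(\eta') u_{k'}} = \ip{\eta, \eta'}_{\mathcal{H}_{\CC}} \delta_{k, k'}$, I would identify $L^{2}(N s_{q}(e_{i}) N)$ with the Hilbert space $\overline{\Span}(\pi_{\CC}(G) e_{i}) \otimes \ell^{2}(G)$, carrying the $N$-$N$-bimodule structure $u_{g} \cdot (\eta \otimes \delta_{k}) \cdot u_{h} = \pi_{\CC}(g) \eta \otimes \delta_{gkh}$. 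Summed over $i$, and together with the trivial contribution $L^{2}(N u_{g} N) = L^{2}(N)$, this yields $\mathcal{K} \cong L^{2}(N) \oplus \bigl(\mathcal{H}_{\CC} \otimes \ell^{2}(G)\bigr)$ as an $N$-$N$-bimodule. A direct computation shows that $N$-central vectors in the coarse bimodule $\ell^{2}(G) \otimes \ell^{2}(G)$ correspond to $\ell^{2}$-functions constant along the orbits of $(k_{1}, k_{2}) \mapsto (gk_{1}, k_{2} g^{-1})$; since every orbit has size $|G| = \infty$, no nonzero such vector exists, so the trivial summand $L^{2}(N)$ is already disjoint from the coarse bimodule.

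It remains to treat the summand $\mathcal{H}_{\CC} \otimes \ell^{2}(G)$, and this is where the representation-theoretic hypothesis enters. Any bounded $N$-$N$-bimodular map $T: \mathcal{H}_{\CC} \otimes \ell^{2}(G) \to L^{2}(N) \otimes L^{2}(N^{\op})$ is determined by $F(\eta) := T(\eta \otimes \delta_{e})$ via $T(\eta \otimes \delta_{k}) = F(\eta) u_{k}$, and bimodularity forces the equivariance $F(\pi_{\CC}(g)\eta) = u_{g} F(\eta) u_{g}^{-1}$; hence $F$ intertwines $\pi_{\CC}$ with the conjugation $G$-representation $\mathrm{Ad}$ on the coarse bimodule. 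Decomposing the $\mathrm{Ad}$-action on $\ell^{2}(G) \otimes \ell^{2}(G)$ into orbits via the invariant $[k_{1}k_{2}]$ (conjugacy class of the product) identifies this $G$-representation with $\lambda \otimes \lambda_{C}$. By the adjunction $\Hom_{G}(\pi_{\CC}, \lambda \otimes \lambda_{C}) \cong \Hom_{G}(\pi_{\CC} \otimes \lambda_{C}, \lambda)$ (using self-duality of the permutation representation $\lambda_{C}$), this Hom-space vanishes precisely when $\pi_{\CC} \otimes \lambda_{C}$ has no nonzero subrepresentation embedding into $\lambda$---exactly our hypothesis. Therefore $F = 0$, so $T = 0$, and the opposite direction coarse $\to \mathcal{H}_{\CC} \otimes \ell^{2}(G)$ is handled by a symmetric (adjoint) argument. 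The main technical step is the orbit decomposition identifying $\mathrm{Ad}$ on the coarse bimodule with $\lambda \otimes \lambda_{C}$; once this is in hand the proof concludes via Theorem \ref{T:mainintro}.
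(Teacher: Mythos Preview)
Your overall strategy and the identification of the generating $N$-$N$-bimodule coincide with the paper's proof (Corollary~\ref{C:HoudSH}): set $N=L(G)$, note $\mathcal{K}\cong L^{2}(N)\oplus(\mathcal{H}_{\CC}\otimes\ell^{2}(G))$ with the stated bimodule action, and show $\mathcal{K}$ is disjoint from the coarse. The difference is in how disjointness is established, and your final step has a genuine gap.

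Your reduction from a bimodular $T$ to the map $F(\eta)=T(\eta\otimes\delta_{e})$ is correct, and $F$ indeed intertwines $\pi_{\CC}$ with the diagonal conjugation representation $\mathrm{Ad}$ on $\ell^{2}(G)\otimes\ell^{2}(G)$. Your identification $\mathrm{Ad}\cong\lambda\otimes\lambda_{C}$ via $(k_{1},k_{2})\mapsto(k_{1},k_{1}k_{2})$ is also correct. The problem is the claimed adjunction
\[
\Hom_{G}(\pi_{\CC},\lambda\otimes\lambda_{C})\cong\Hom_{G}(\pi_{\CC}\otimes\lambda_{C},\lambda).
\]
This tensor--Hom adjunction fails for bounded intertwiners of infinite-dimensional unitary representations; the natural candidates for the unit/counit are not bounded in general. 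Worse, the two sides need not even vanish simultaneously: by Fell absorption $\lambda\otimes\lambda_{C}\cong\lambda^{\oplus\infty}$, so the left side vanishes iff $\pi_{\CC}\perp\lambda$, whereas the right side vanishes iff $\pi_{\CC}\otimes\lambda_{C}\perp\lambda$. For nonabelian $G$ containing an element with finite centralizer and $\pi_{\CC}=1$, one has $\pi_{\CC}\perp\lambda$ but $\lambda_{C}$ contains a quasi-regular summand embedding in $\lambda$, so $\pi_{\CC}\otimes\lambda_{C}\not\perp\lambda$.

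The fix is short: since $\delta_{e}$ is $\lambda_{C}$-invariant, $\pi_{\CC}$ embeds in $\pi_{\CC}\otimes\lambda_{C}$, so the hypothesis $\pi_{\CC}\otimes\lambda_{C}\perp\lambda$ forces $\pi_{\CC}\perp\lambda$, hence $\Hom_{G}(\pi_{\CC},\mathrm{Ad})=\Hom_{G}(\pi_{\CC},\lambda^{\oplus\infty})=0$ and $F=0$. The paper's route is cleaner still: regard the bimodular $T$ itself as a $G\times G$-intertwiner $\widetilde{\pi}\to\lambda_{G\times G}$ and restrict to the diagonal. This gives a $G$-intertwiner $\widetilde{\pi}\big|_{\Delta}=\pi_{\CC}\otimes\lambda_{C}\to\lambda_{G\times G}\big|_{\Delta}\cong\lambda_{G}^{\oplus\infty}$, which is zero by hypothesis; no adjunction or slicing to $F$ is needed.
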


Note that if there is a finite subset $F\subseteq\mathcal{H}$ with
\[\mathcal{H}=\overline{\Span\{\pi(g)\xi:g\in G,\xi\in F\}},\]
and if $G$ is finitely-generated, then $\Gamma_{q}(\mathcal{H})\rtimes_{\alpha_{\pi}}G$
is finitely-generated. Since finite generation and $h(\Gamma_{q}(\mathcal{H})\rtimes_{\alpha_{\pi}}G)<\infty$ is equivalent to being strongly $1$-bounded, we can use the above theorem to find new examples of strongly $1$-bounded algebras as defined by Jung.

	Consider the preceding theorem with $G=\ZZ,$ and $U=\pi(1)$ and let $U_{\CC}$ be the complexification of $U$.  Theorem \ref{T:qHoudShl} reduces to the statement that if the spectral measure of $U_{\CC}$ is singular with respect to the Lebesgue measure on $\TT=\RR/\ZZ,$ then
\[h(\Gamma_{q}(\mathcal{H})\rtimes_{\alpha_{\pi}}\ZZ)\leq 0.\]
In this case the non-isomorphism part of the previous theorem was only known (see Corollary A of  \cite{HoudayerShlyakhtenko}) when $q=0$ and there is a scalar measure $\nu$ in the same absolute continuity class of the spectral measure $U_{\CC}$ so that \emph{all} of the convolution powers of $\nu$ are singular with respect to Lebesgue measure. Here we are able to reduce to only the first convolution power because of our ability in Theorem \ref{T:mainintro1} to only assume that $\mathcal{H}_{s}(N\subseteq M)$ generates $M$ (and is not necessarily all of $L^{2}(M)$). One striking feature of our techniques is that the parameter $q$ plays \emph{no role whatsoever} in the proof of the preceding Theorem. Again, this is because the  parameter $q$ only plays a role in the higher-order structure of $\Gamma_{q}(\mathcal{H})$ and does not appear in the specific generating submodule we take. This illustrates the flexibility of our results.

	Let us mention how the preceding Theorem gives new examples of algebras which are strongly solid and have CMAP but are \emph{not} interpolated free group factors. Fix $p\in (2,\infty),$ by the Ivashev-Musatov theorem (see \cite{IvashevMusatov}), there is a symmetric probability measure $\mu$ on $\TT$ which is singular with respect to Lebesgue measure and has $\widehat{\mu}\in \ell^{p}(\ZZ).$ Consider the real Hilbert space:
\[\mathcal{H}=\{f\in L^{2}(\TT,\mu):f(-\theta)=\overline{f(\theta)}\}\]
and let $\pi\colon\ZZ\to \mathcal{O}(\mathcal{H})$
be the orthogonal representation given by
\[(\pi(n)f)(\theta)=e^{2\pi i n\theta}f(\theta).\]
Setting $U=\pi(1),$ it is easy to see that $U_{\CC}$ is unitarily equivalent to multiplication by $e^{2\pi i\theta}$ on $L^{2}(\TT,\mu).$ Thus $\mu$ is a measure in the same absolute continuity class as the spectral measure of $U_{\CC}.$
Since $\widehat{\mu}\in \ell^{p}(\ZZ),$ we know that $\pi$ is mixing and hence by \cite{HoudayerShlyakhtenko} we know that $\Gamma_{q}(\mathcal{H})\rtimes_{\alpha_{\pi}}\ZZ$
is strongly solid and has CMAP. As $\mu$ is singular with respect to Lebesgue measure we have
\[h(\Gamma_{q}(\mathcal{H})\rtimes_{\alpha_{\pi}}\ZZ)=0\]
and thus $\Gamma_{q}(\mathcal{H})\rtimes_{\alpha_{\pi}}\ZZ$ is not isomorphic to an interpolated free group factor.  Since $\widehat{\mu}\in l^{p}(\ZZ),$ we know that some convolution power of $\mu$ is absolutely continuous with respect to Lebesgue measure. Thus the fact that $\Gamma_{q}(\mathcal{H})\rtimes_{\alpha_{\pi}}\ZZ$ is not an interpolated free group factor is not covered by the results of \cite{HoudayerShlyakhtenko} (even when $q=0$).

Despite the fact that for most of the paper we stick to tracial von Neumann algebras (in order to use results on microstates free entropy dimension), we can deduce nontrivial consequences for semifinite von Neumann algebras. By applying Tomita-Takesaki theory, we can deduce consequences for certain type $\textrm{III}$ factors including the free Araki woods factors introduced by Shlyakhtenko in \cite{DimaFAW} and their $q$-deformations defined by Hiai in \cite{Hiai}. I thank Dimitri Shlyakhtenko for alerting me to this application.

Let $M$ be a von Neumann algebra with a faithful, normal, semifinite trace $\tau$. We can still make sense of what it means for a  set $X$ of measurable operators affiliated to $(M,\tau)$ to generate $M,$ mutatis mutandis, as in the case when $\tau$ is a finite trace.

\begin{theorem} Let $M$ be a semifinite von Neumann algebra and fix a faithful, normal, semifinite trace $\tau$ on $M.$ Suppose that $N\subseteq M$ is a diffuse von Neumann subalgebra so that $\tau\big|_{N}$ is still semifinite. If $p$ is a projection in $M$ with $\tau(p)<\infty,$ define $\tau_{p}\colon pMp\to\CC$ by
\[\tau_{p}(x)=\frac{\tau(x)}{\tau(p)}.\]
Suppose that for every projection $q\in N$ with $\tau(q)<\infty,$ we have $h(qNq,\tau_{q})\leq 0.$
Suppose additionally that there exists $X\subseteq L^{2}(M,\tau)$ such that
\[W^{*}(X)=M,\]
\[\overline{\sum_{x\in X}L^{2}(NxN)}\mbox{ is disjoint from $L^{2}(N)\otimes L^{2}(N^{op})$ as an $N$-$N$ bimodule}.\]
Then $h(pMp,\tau_{p})\leq 0$ for every $p\in M$ with $\tau(p)<\infty.$

\end{theorem}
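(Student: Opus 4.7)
The plan is to reduce the semifinite statement to the tracial Theorem \ref{T:mainintro} by cutting down with a finite-trace projection in $N$. Fix $p \in M$ with $\tau(p) < \infty$. Since $\tau|_N$ is semifinite and $N$ is diffuse, we choose a projection $q \in N$ with $\tau(q) < \infty$ as large as desired; by comparison of projections (working one central summand of $M$ at a time), we arrange $p \preceq q$ in $M$, and after replacing $p$ by an equivalent subprojection of $q$ we may assume $p \leq q$. The standard compression behavior of $1$-bounded entropy on tracial algebras then reduces the goal to showing $h(qMq, \tau_q) \leq 0$ for this $q$.

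We apply Theorem \ref{T:mainintro} to the tracial algebra $(qMq, \tau_q)$ with diffuse subalgebra $qNq$. We already have $h(qNq, \tau_q) \leq 0$ by hypothesis. The task is to produce a set $Y \subseteq L^2(qMq)$ with $W^*(Y) = qMq$ such that
\[\overline{\sum_{y \in Y} L^2(qNq \cdot y \cdot qNq)} \quad \text{is disjoint from} \quad L^2(qNq) \otimes L^2((qNq)^{op})\]
as a $qNq$-$qNq$ bimodule. Take $Y$ to be the set of compressed elements $\{q w q\}$ where $w$ runs over finite products of elements of $X \cup X^*$ (viewed as bounded operators through the polar decompositions of elements of $L^2(M)$). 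Since $W^*(X) = M$, such products are weakly dense in $M$, so their compressions span a weakly dense subalgebra of $qMq$, giving $W^*(Y) = qMq$. Using that $q \in N$ commutes with $N$, one computes $L^2(qNq \cdot y \cdot qNq) = q\,L^2(NwN)\,q$ for $y = qwq$, so the bimodule in question has the form $q\widetilde{\mathcal{H}}q$, where $\widetilde{\mathcal{H}}$ is the $N$-$N$ subbimodule of $L^2(M)$ generated by the words, containing the original $\overline{\sum_{x \in X} L^2(NxN)}$.

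The main obstacle is verifying that $q\widetilde{\mathcal{H}}q$ remains disjoint from the coarse bimodule of $qNq$. This splits into two points: first, compression by a projection in $N$ preserves disjointness from the coarse bimodule (a standard property of Connes correspondences, since bounded bimodular intertwiners restrict compatibly to corners, and the coarse bimodule cuts to the coarse bimodule of $qNq$); and second, the extension from the letters $\{x \in X\}$ (where disjointness is hypothesized) to the full bimodule of words $\widetilde{\mathcal{H}}$ is controlled using the stability properties of the singular subspace $\mathcal{H}_s(N\subseteq M)$ developed earlier in the paper together with the hypothesis $W^*(X) = M$, which guarantees that the operations used to build words out of letters (products and polar parts) keep the relevant bimodules inside $\mathcal{H}_s$. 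Once both points are in place, Theorem \ref{T:mainintro} yields $h(qMq, \tau_q) \leq h(qNq, \tau_q) \leq 0$, and the compression of $1$-bounded entropy from the first paragraph then gives $h(pMp, \tau_p) \leq 0$, completing the proof.
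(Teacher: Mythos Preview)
Your overall strategy---reduce to a finite-trace corner, cut by a projection in $N$, verify that the compressed generating bimodule is still disjoint from the coarse, and invoke Theorem~\ref{T:mainintro}---is exactly the right shape, and the compression-of-disjointness step is indeed the content of Lemma~\ref{L:simpledisjointness} in the paper. The genuine gap is the second of your ``two points'': you assert that the bimodule generated by \emph{words} in $X\cup X^{*}$ (your $\widetilde{\mathcal H}$) is still disjoint from the coarse, appealing to ``stability properties of $\mathcal H_{s}(N\subseteq M)$ developed earlier in the paper''. No such stability is proved. The paper never shows that $\mathcal H_{s}(N\subseteq M)$ is closed under products; in fact the need for the transfinite chain $N_{\alpha}=W^{*}(\mathcal H_{s}(N_{\alpha-1}\subseteq M))$ in Corollary~\ref{C:main} is precisely because $W^{*}(\mathcal H_{s}(N\subseteq M))$ can be strictly larger than $\mathcal H_{s}(N\subseteq M)$. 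So once you pass from letters $x\in X$ to words $w$, the hypothesis gives you no control over $L^{2}(NwN)$, and your disjointness claim for $q\widetilde{\mathcal H}q$ is unsupported.

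The paper avoids this obstacle by \emph{not} trying to realize $qMq$ in a single step. Instead it fixes $p\in N$ (using factoriality, after reducing to the factor case via central decomposition), chooses an increasing sequence $p_{n}\in N$ with $p\leq p_{n}\to 1$, and sets $M_{n}=W^{*}(p_{n}Np_{n},\,p_{n}\mathcal H p_{n})$ using only the \emph{first-order} bimodule $\mathcal H=\overline{\sum_{x\in X}L^{2}(NxN)}$---no products. Lemma~\ref{L:simpledisjointness} gives disjointness of $p_{n}\mathcal H p_{n}$ from the coarse over $p_{n}Np_{n}$, so Theorem~\ref{T:main} yields $h(M_{n})\leq 0$. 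The point is that although $M_{n}$ is typically much smaller than $p_{n}Mp_{n}$, one has $pMp=\overline{\bigcup_{n}pM_{n}p}^{SOT}$ because $p_{n}\to 1$; Lemma~\ref{L:increasingunions} and Proposition~\ref{P:compressionfact} then finish. In short: rather than enlarging the bimodule to capture all of $qMq$ at once (which forces the unjustified product-stability), the paper enlarges the projection and takes a limit. You should rework your argument along these lines; the single-$q$ approach cannot close without a stability result that is neither in the paper nor, as far as is known, true in general.
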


\begin{cor} Let $q\in [-1,1]$ and let $\mathcal{H}$ be a real Hilbert space. Let $t\mapsto U_{t},t\in\RR$ be a one-parameter orthogonal group on $\mathcal{H}.$ Let $\Gamma_{q}(\mathcal{H},U_{t})''$ be the $q$-deformed free Araki-Woods algebra. Let $U_{t,\CC}$ be the complexification of $U_{t}$ and let $U_{t,\CC}=e^{itA}$ with $A$ a closed, self-adjoint operator. Suppose that the spectral measure of $A$ is singular with respect to Lebesgue measure. Then the continuous core of $\Gamma_{q}(\mathcal{H},U_{t})''$ is not isomorphic to $L(\FF_{s})\overline{\otimes}B(\mathcal{K})$ for any $s\in (1,\infty]$ and any separable Hilbert space $\mathcal{K}.$ In particular, $\Gamma_{q}(\mathcal{H},U_{t})''$ is not isomorphic to $\Gamma(L^{2}(\RR,m),\lambda_{t})''$ where $\lambda$ is the left regular representation and $m$ is Lebesgue measure.

\end{cor}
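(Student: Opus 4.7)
The plan is to identify the continuous core $c(M)$ of $M := \Gamma_q(\mathcal{H}, U_t)''$ as a crossed product, locate a canonical diffuse abelian subalgebra inside it, and then invoke the preceding semifinite theorem. Let $\varphi$ be the free quasi-free state on $M$; its modular automorphism group is $\sigma^\varphi_t = \Gamma_q(U_t)$, so
\[c(M) = M \rtimes_{\sigma^\varphi} \RR\]
carries a faithful normal semifinite trace $\tau$. Let $N := \{\lambda_s : s \in \RR\}'' \subseteq c(M)$ be the von Neumann subalgebra generated by the implementing unitaries. Then $N$ is diffuse abelian, $\tau$ restricts to a semifinite trace on $N$, and via Plancherel $N \cong L^\infty(\RR)$ with Lebesgue measure. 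In particular $h(qNq, \tau_q) = 0$ for every projection $q \in N$ with $\tau(q) < \infty$.

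Next I take as generating set $X := \{s(\xi) : \xi \in \mathcal{H}\} \subseteq M \subseteq c(M)$, where $s(\xi)$ is the $q$-Gaussian generator associated with $\xi$. Since $M = W^*(X)$ and $c(M) = W^*(M \cup N)$, we have $W^*(X \cup N) = c(M)$, and it remains to show that
\[\mathcal{L} := \overline{\sum_{x \in X} L^2(N x N)}^{\|\cdot\|_2}\]
is disjoint from $L^2(N) \otimes L^2(N^{op})$ as an $N$-$N$ bimodule. Since $N$ is abelian, $N$-$N$ bimodules are classified up to disjointness by mutually singular classes of measures on $\widehat\RR \times \widehat\RR$; the coarse bimodule corresponds to Lebesgue $\times$ Lebesgue. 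Using the covariance relation $\lambda_s\, s(\xi)\, \lambda_s^* = s(U_s \xi)$ together with the spectral decomposition $U_s = e^{isA}$ on the complexification $\mathcal{H}_\CC$, a direct Fourier-side computation shows that $L^2(N s(\xi) N)$ is classified by a measure on $\widehat\RR \times \widehat\RR$ whose pushforward under $(a,b)\mapsto a-b$ is absolutely continuous with respect to the spectral measure $\mu_{A,\xi}$ of $A$ at $\xi$. Consequently, singularity of $\mu_A$ with respect to Lebesgue translates into singularity of the classifying measure of $\mathcal{L}$ with respect to $m \times m$, which is the required disjointness.

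Applying the preceding semifinite theorem then yields $h(p\, c(M)\, p, \tau_p) \le 0$ for every projection $p \in c(M)$ with $\tau(p) < \infty$. If one had $c(M) \cong L(\FF_s)\, \overline{\otimes}\, B(\mathcal{K})$ for some $s \in (1,\infty]$ and separable $\mathcal{K}$, then by Dykema-R\u{a}dulescu interpolation the compressions by finite-trace projections would be interpolated free group factors $L(\FF_r)$ with $r \in (1,\infty]$, which have $h = \infty$, contradicting the previous bound. For the ``in particular'' clause, Shlyakhtenko's description identifies the continuous core of $\Gamma(L^2(\RR,m), \lambda_t)''$ with $L(\FF_\infty)\, \overline{\otimes}\, B(\ell^2(\NN))$ (the case in which the spectral measure of the generator is Lebesgue); under our singularity hypothesis the two continuous cores are therefore non-isomorphic, and hence so are the two algebras themselves.

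The main obstacle is the $N$-$N$ bimodule computation in the second paragraph: one must carry out the Fourier-theoretic bookkeeping carefully so that singularity of $\mu_A$ on $\RR$ becomes singularity of the corresponding measure on $\RR^2$ with respect to $m \times m$. A pleasant feature, as emphasized in the introduction, is that this analysis is insensitive to the $q$-deformation parameter, since $q$ only affects the higher-order structure of $\Gamma_q(\mathcal{H}, U_t)''$ and not the $N$-$N$ bimodule generated over $N$ by the first-order generators $s(\xi)$.
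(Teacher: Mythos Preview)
Your proof is correct and follows essentially the same approach as the paper: identify the continuous core as $\Gamma_q(\mathcal{H},U_t)''\rtimes_{\sigma^\varphi}\RR$, take $N=L(\RR)$, show that the $N$-$N$ bimodule generated by the first-order Gaussians $s_q(\xi)$ is disjoint from the coarse bimodule, and then invoke the semifinite theorem together with the compression behavior of interpolated free group factors. The only difference is one of language in the disjointness step: the paper argues via the associated unitary representation $\pi(t,s)=U_t\otimes\lambda(t-s)$ of $\RR\times\RR$ (restricting to the diagonal gives a representation with spectral class $\mu_A$, hence disjoint from $\lambda_\RR$), whereas you phrase the Fourier-dual version of the same computation in terms of spectral measures on $\widehat\RR\times\widehat\RR$ and a pushforward to $\widehat\RR$.
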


As in Theorem \ref{T:qHoudShl} the preceding Corollary was only known when $q=0$ and when there is a scalar measure $\nu$ in the absolute continuity class of the spectral measure of $A$ so that \emph{all} of its convolution powers are singular with respect to Lebesgue measure (see \cite{DimeFullIII}). Also, as in Theorem \ref{T:qHoudShl}, the parameter $q$ plays no role in the proof at all. By applying the same remarks after Theorem \ref{T:qHoudShl}, and in Section 4 of \cite{HoudayerShlyakhtenko} we find new examples of one-parameter orthogonal groups $U_{t}$ so that $\Gamma_{0}(\mathcal{H},U_{t})''\not\cong \Gamma_{0}(L^{2}(\RR,m),\lambda_{t})''.$

We mention one last application, related to the following question of Peterson.

\begin{question}\label{C:JPintro1}\emph{If $t\in(1,\infty]$ and $N$ is a finitely-generated, nonamenable von Neumann subalgebra of $L(\FF_{t}),$ does there exist a finite $F\subseteq N_{sa}$ so that $N=W^{*}(F)$ and $\delta_{0}(F)>1?$}\end{question}

Motivated by our results, we make a conjecture.

\begin{conjecture}\label{C:MeIntro1}\emph{If $t\in (1,\infty]$ and $N\subseteq L(\FF_{t})$ is a maximal amenable von Neumann subalgebra, then as $N$-$N$ bimodules:}
\[L^{2}(L(\FF_{t}))\ominus L^{2}(N)\leq [L^{2}(N)\otimes L^{2}(N)]^{\oplus \infty}.\]
\end{conjecture}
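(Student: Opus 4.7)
The plan is to begin by translating the conjecture into the singular-subspace language introduced in the paper. The standard fact about Hilbert bimodules over a finite von Neumann algebra is that for separable $N$-$N$ bimodules $\HS,\K$, one has $\HS$ embeds as a sub-$N$-$N$-bimodule of $\K^{\oplus\infty}$ if and only if $\HS$ is quasi-contained in $\K$, which in turn holds if and only if no non-zero sub-$N$-$N$-bimodule of $\HS$ is disjoint from $\K$. One direction uses that the orthogonal projection onto an invariant closed subspace is bimodular, so subbimodules inside an ambient bimodule are direct summands; the other uses the central/quasi-support decomposition of bimodules. Applying this with $\HS=L^{2}(L(\FF_{t}))\ominus L^{2}(N)$ and $\K=L^{2}(N)\otimes L^{2}(N^{op})$, together with the observation that for $\xi\in L^{2}(M)\ominus L^{2}(N)$ the bimodule $L^{2}(N\xi N)$ is automatically contained in $L^{2}(M)\ominus L^{2}(N)$ (since $E_{N}$ is $N$-$N$-bimodular), and that a bounded bimodule map out of $L^{2}(N\xi N)$ extends by zero to any enveloping bimodule, the conjecture becomes equivalent to the vanishing
\[\HS_{s}(N\subseteq L(\FF_{t}))\cap \big(L^{2}(L(\FF_{t}))\ominus L^{2}(N)\big)=\{0\}.\]

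To establish this vanishing I would argue by contradiction using maximal amenability. Suppose there is a non-zero $\xi\in\HS_{s}(N\subseteq L(\FF_{t}))$ with $E_{N}(\xi)=0$. Take its polar decomposition $\xi=u|\xi|$ as a closed, densely defined operator affiliated to $L(\FF_{t})$, and form
\[P=W^{*}\big(N\cup\{u\}\cup\{f(|\xi|):f\text{ bounded Borel on }[0,\infty)\}\big)\subseteq L(\FF_{t}),\]
which strictly contains $N$ because $E_{N}(\xi)=0$ and $\xi\neq 0$. The goal is to show that $P$ is amenable, contradicting maximal amenability of $N$. Heuristically, the disjointness of $L^{2}(N\xi N)$ from the coarse $N$-$N$ bimodule should force $L^{2}(N\xi N)$ to be "close to the trivial bimodule" in an averaging sense, so that $L^{2}(P)$ remains weakly contained in copies of the trivial $N$-$N$ bimodule; combined with amenability of $N$ and Connes' characterization of injectivity as approximate invariance of the identity correspondence, this would deliver amenability of $P$.

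The hardest step, which I view as the main obstacle, is exactly this last implication: disjointness from the coarse bimodule is only the \emph{failure} of weak containment of the coarse bimodule, and by itself does not force weak containment in the trivial one. Circumventing this will likely require either (i) strengthening the hypothesis on $N$—for example, imposing Popa's asymptotic orthogonality property, which is known to hold for many classes of maximal amenable subalgebras in $L(\FF_{t})$—or (ii) combining the disjointness datum with Theorem \ref{T:mainintro} of the present paper (iterated through the transfinite hierarchy of the singular subspace, as in Corollary \ref{T:introchains}) to rule out the existence of such a $\xi$ directly. As a consistency check, the conjecture is already verifiable for Popa's generator MASA $A=L(a)\subseteq L(\FF_{n})$ via the explicit reduced-word decomposition of $L^{2}(L(\FF_{n}))\ominus L^{2}(A)$ into copies of the coarse bimodule $L^{2}(A)\otimes L^{2}(A^{op})$, which both supports the plausibility of the general statement and suggests that the strategy outlined above is on the right track.
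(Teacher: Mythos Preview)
The statement you are attempting to prove is a \emph{conjecture}; the paper does not prove it unconditionally. What the paper establishes is only the conditional implication (the corollary immediately following the conjecture): assuming Peterson's Question~\ref{C:JPintro1} has an affirmative answer, the conjecture holds. So any unconditional argument you produce would go beyond the paper.

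Your reformulation of the conjecture as the vanishing of $\HS_{s}(N\subseteq L(\FF_{t}))\cap\bigl(L^{2}(L(\FF_{t}))\ominus L^{2}(N)\bigr)$ is correct and is exactly the translation the paper uses (via Proposition~\ref{P:Lebesgue}). Your contradiction setup is also the same as the paper's: take a nonzero $\xi$ in the singular subspace orthogonal to $L^{2}(N)$, and generate a strictly larger algebra $P=W^{*}(N,\xi)$.

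The divergence, and the genuine gap, is in how you try to obtain the contradiction. You aim to show $P$ is amenable directly, appealing to a heuristic that disjointness from the coarse bimodule should force $L^{2}(N\xi N)$ to be ``close to trivial.'' As you yourself flag, this does not follow: disjointness from the coarse is far weaker than weak containment in the trivial bimodule, and there is no mechanism in sight to upgrade it. The paper does \emph{not} attempt this. Instead, it argues via $1$-bounded entropy: by Theorem~\ref{T:main}, $h(P)=h(N:P)\le h(N)=0$; on the other hand, $P$ is nonamenable by maximal amenability of $N$, and \emph{assuming Peterson's question} this forces $h(P)=\infty$. The contradiction thus rests on the conditional hypothesis, which is precisely why the paper leaves the statement as a conjecture.

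Your option (ii), invoking Theorem~\ref{T:mainintro} or Corollary~\ref{T:introchains}, does not close the gap either: those results give $h(P)\le 0$, but without Peterson's question you have no lower bound on $h(P)$ for a generic nonamenable subalgebra of $L(\FF_{t})$, so no contradiction arises. In short, your outline correctly identifies the structure of the argument but cannot be completed unconditionally with the tools in the paper; the paper's own contribution here is the conditional reduction, not a proof of the conjecture.
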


We can use Theorem \ref{T:main} to relate these two questions.

\begin{cor} If Question \ref{C:JPintro1} has an affirmative answer, then Conjecture \ref{C:MeIntro1} is true.
\end{cor}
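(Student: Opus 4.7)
The plan is to argue by contrapositive: assume Conjecture \ref{C:MeIntro1} fails for some maximal amenable $N\subseteq L(\FF_{t})$ (note $N$ is automatically diffuse, as a $\textrm{II}_{1}$ factor admits no maximal amenable subalgebra with an atomic central summand: any such summand $zN$ could be enlarged inside $zMz$ using a diffuse MASA of $(zNz)'\cap zMz$), and derive a contradiction from the hypothesis that Question \ref{C:JPintro1} has an affirmative answer. The first step is to reformulate failure of the conjecture in terms of the singular subspace: since $N$ is diffuse, $L^{2}(N)$ is disjoint from the coarse bimodule $L^{2}(N)\otimes L^{2}(N^{op})$, and hence $L^{2}(N)\subseteq \mathcal{H}_{s}(N\subseteq L(\FF_{t}))$; conversely, $\mathcal{H}_{s}(N\subseteq L(\FF_{t}))$ coincides with the orthogonal complement in $L^{2}(L(\FF_{t}))$ of the maximal $N$-$N$-subbimodule embedding into $[L^{2}(N)\otimes L^{2}(N^{op})]^{\oplus\infty}$ (the bimodular projection from a ``disjoint'' piece onto a ``coarsely-embedded'' piece, composed with the embedding and projection to a single summand, would contradict disjointness). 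Hence Conjecture \ref{C:MeIntro1} is equivalent to $\mathcal{H}_{s}(N\subseteq L(\FF_{t}))=L^{2}(N)$, and its failure is equivalent to $N_{1}:=W^{*}(\mathcal{H}_{s}(N\subseteq L(\FF_{t})))\supsetneq N$.

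Assuming this failure, I would apply Theorem \ref{T:mainintro} to the inclusion $N\subseteq N_{1}$ with generating set $X:=\mathcal{H}_{s}(N\subseteq L(\FF_{t}))$. The associated $\mathcal{H}=\overline{\sum_{x\in X}L^{2}(NxN)}^{\|\cdot\|_{2}}$ is disjoint from the coarse, since each $L^{2}(NxN)$ with $x\in \mathcal{H}_{s}(N\subseteq L(\FF_{t}))$ is disjoint from the coarse by the very definition of the singular subspace, and disjointness from the coarse passes to closed sums. Theorem \ref{T:mainintro} then yields $h(N_{1})\leq h(N)=0$, the last equality because $N$ is amenable.

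To contradict this, I would exploit the assumed affirmative answer to Question \ref{C:JPintro1}. Maximal amenability of $N$ forces $N_{1}$ to be nonamenable; writing the separable algebra $N_{1}$ as the SOT-closure of an increasing union of finitely-generated von Neumann subalgebras $N_{1}^{(k)}$, and using that amenability is preserved under such increasing unions, some $N_{2}:=N_{1}^{(k_{0})}$ is finitely-generated and nonamenable. Question \ref{C:JPintro1} then produces a finite $F\subseteq(N_{2})_{sa}$ with $W^{*}(F)=N_{2}$ and $\delta_{0}(F)>1$; by the contrapositive of the implication ``$h(M)<\infty\Rightarrow\delta_{0}(F)\leq 1$ for every finite generating $F$'' recalled in the introduction, this forces $h(N_{2})=\infty$. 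Monotonicity of $h$ under von Neumann subalgebra inclusions, which is built into the definition of $h$ for not-necessarily-finitely-generated algebras given in the appendix, then forces $h(N_{1})\geq h(N_{2})=\infty$, contradicting $h(N_{1})\leq 0$.

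The main technical point to verify carefully is the bimodule-theoretic identification of $\mathcal{H}_{s}(N\subseteq L(\FF_{t}))$ with the orthogonal complement of the maximal $N$-$N$-subbimodule of $L^{2}(L(\FF_{t}))$ embedding into $[L^{2}(N)\otimes L^{2}(N^{op})]^{\oplus\infty}$: this is what converts failure of Conjecture \ref{C:MeIntro1} into the strict inclusion $N_{1}\supsetneq N$ driving the entropy argument. Once that identification is in hand, the proof is a clean interplay between the upper bound on $h(N_{1})$ supplied by Theorem \ref{T:mainintro} and the lower bound supplied by Question \ref{C:JPintro1}, with no further free entropy dimension estimates required.
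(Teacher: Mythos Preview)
Your overall strategy is the same as the paper's: argue by contradiction, use Proposition \ref{P:Lebesgue} to convert failure of Conjecture \ref{C:MeIntro1} into the existence of a nonzero singular vector, and then play an upper bound on $1$-bounded entropy coming from Theorem \ref{T:main} against the lower bound coming from Question \ref{C:JPintro1}. Your bimodule-theoretic reformulation of the conjecture and your application of Theorem \ref{T:mainintro} to conclude $h(N_{1})\leq 0$ are both correct.

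The gap is in your final step. The claimed ``monotonicity of $h$ under von Neumann subalgebra inclusions,'' namely that $N_{2}\subseteq N_{1}$ forces $h(N_{2})\leq h(N_{1})$, is false in general and is not established anywhere in the appendix. A concrete counterexample is $N_{2}=L(\FF_{2})\subseteq N_{1}=L(\FF_{2})\,\overline{\otimes}\,\R$: here $h(N_{2})=\infty$, while $h(N_{1})=0$ since $1\otimes\R$ is a diffuse hyperfinite regular subalgebra of $N_{1}$ (apply Corollary \ref{C:main}, or recall that tensor products of diffuse algebras are strongly $1$-bounded from \cite{JungSB}). The properties listed after Definition \ref{D:1bbsection1ldga} give only $h(N:M)\leq h(P:M)$ and $h(N:M)\leq h(N:P)$ for $N\subseteq P\subseteq M$; neither yields $h(N_{2})\leq h(N_{1})$. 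Consequently, your passage from $h(N_{2})=\infty$ to $h(N_{1})=\infty$ is unjustified, and the contradiction does not close.

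The paper avoids this issue by never working with the full $N_{1}$. It picks a single nonzero $\xi$ in the singular part of $L^{2}(L(\FF_{t}))\ominus L^{2}(N)$ and sets $Q=W^{*}(N,\xi)$. Since $N$ is amenable, hence hyperfinite, hence singly generated, $Q$ is finitely generated; since $Q\supsetneq N$ it is nonamenable, so Question \ref{C:JPintro1} applies directly to $Q$ and yields $h(Q)=\infty$. On the other hand $\xi\in\mathcal{H}_{s}(N\subseteq Q)$, so $W^{*}(\mathcal{H}_{s}(N\subseteq Q))=Q$, and Theorem \ref{T:main} applied to the inclusion $N\subseteq Q$ gives $h(Q)=h(N:Q)\leq h(N)=0$, producing the contradiction. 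In other words, the fix for your argument is not to descend from $N_{1}$ to an arbitrary finitely-generated $N_{2}$, but to build the finitely-generated algebra from the start as $W^{*}(N,\xi)$ so that both the entropy upper bound and Peterson's hypothesis apply to the \emph{same} algebra.
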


The above corollary reveals that  it may be important to investigate the validity of Conjecture \ref{C:MeIntro1} in order to understand maximal amenable subalgebras of $L(\FF_{t}),t\in (1,\infty].$

	We remark that every known example of a MASA  $N\subseteq L(\FF_{n})$ which is also maximal amenable the conclusion of the preceding corollary is known.  Conjecture \ref{C:MeIntro1} is straightforward for the generator MASA (proven to be maximal amenable by Popa in \cite{PopaMaximalAmenable}). The radial MASA is known to be maximal amenable by work of  Cameron, Fang, Ravichandran and White (see \cite{CFRW}) using Popa's asymptotic orthogonality property developed in \cite{PopaMaximalAmenable}. Conjecture \ref{C:MeIntro1} for the radial MASA was verified by Sinclair-Smith in \cite{SinclairSmithLapalcian}. It is known by \cite{GJS},\cite{Hartglass} that planar algebras always complete to interpolated free group factors and, using Popa's asymptotic orthogonality property,  Brothier proved  in \cite{Brothier} that the cup subalgebra of a planar algebra is maximal amenable.  Conjecture \ref{C:MeIntro1} is known for the cup subalgebra of a  planar algebra by Theorem 4.9 of \cite{JSW}.  To the best of our knowledge, these are all known cases of maximal amenable subalgebras of interpolated free group factors. We mention that the introduction to  \cite{DykemaMurkherjee} discusses questions similar to Conjecture \ref{C:MeIntro1}.

\textbf{Acknowledgments} I thank Arnaud Brothier, Ionut Chifan, Marius Junge, Brent Nelson, Sorin Popa, Thomas Sinclair and Dimitri Shlyakhtenko for stimulating discussions related to this work. I thank Thomas Sinclair for suggesting Kenley Jung's work to me, this ultimately led to the foundations of this paper. I thank Stephanie Lewkiewicz for comments on an earlier version of this paper. I thank Chenxu Wen for alerting me to some very confusing typos in a previous version of this paper. Part of this work was done when I was visiting University of Iowa, as well as University of California, Los Angeles. I thank the University of Iowa and the University of California, Los Angeles for their hospitality and providing a stimulating environment in which to work. I would also like to thank the anonymous referees for their numerous comments which greatly improved the paper.

\subsection{Notational Remarks}\label{S:notation}

For a $*$-algebra $A$ we use $A_{sa}$ for the self-adjoint elements of $A.$ We use the phrase \emph{tracial von Neumann algebra} to mean a pair $(M,\tau)$ where $M$ is a von Neumann algebra and $\tau$ is a faithful, normal, tracial state on $M.$ For $x\in M,$ we use $\|x\|_{2}^{2}=\tau(x^{*}x),$ and we use $\|x\|_{\infty}$ for the operator norm on $x.$ If $A$ is a $C^{*}$-algebra (which is not endowed with a trace), we use $\|a\|$ for the norm of an element $a\in A.$ We thus reserve the notation $\|a\|_{\infty}$ for the case that $a$ is an element in some tracial von Neumann algebra (to distinguish it from one of the other noncommutative $L^{p}$-norms). This is similar to using $\|f\|$ for an element $f\in C(X)$ (when $C(X)$ has no given measure), and using $\|f\|_{\infty}$ for the $L^{\infty}$ norm of $f\in L^{\infty}(X,\mu)$ for some measure $\mu$ on $X.$ We will say that a normal element $a$ in a von Neumann algebra has \emph{diffuse spectrum} if $W^{*}(a)$ is diffuse. If $(a_{i})_{i\in I}$ are elements in some $C^{*}$-algebra $A$ a function $R\in [0,\infty)^{I}$ will be called a \emph{cutoff parameter} for $(a_{i})_{i\in I}$ if $\|a_{i}\|<R_{i}$ for all $i\in I.$
If $X\subseteq A,$ a function $R\in [0,\infty)^{X}$ will be called a cutoff parameter if it is a cutoff parameter for $(x)_{x\in X}.$  If $F,G$ are sets and $R\in [0,\infty)^{F},S\in [0,\infty)^{G}$ we let $R\vee S\in [0,\infty)^{F\sqcup G}$ be defined by
\[(R\vee S)_{x}=\begin{cases}
R_{x},& \textnormal{ if $x\in F$}\\
S_{x},& \textnormal{if $x\in G$.}
\end{cases}\]

	For a set $I,$ we will use $\CC\ip{X_{i}:i\in I}$ for the algebra of noncommutative complex polynomials in the $X_{i}$ (i.e. the free $\CC$-algebra in the variables $X_{i}$). We call elements of $\CC\ip{X_{i}:i\in I}$ \emph{noncommutative polynomials}. We give $\CC\ip{X_{i}:i\in I}$ the unique $*$-algebra structure which makes the $X_{i}$ self-adjoint. For self-adjoint elements $a_{i},i\in I$ in some $*$-algebra $A$, and $P\in \CC\ip{X_{i}:i\in I}$ we will denote by $P(a_{i}:i\in I)$
the image of $P$ under the unique $*$-homomorphism $\CC\ip{X_{i}:i\in I}\to A$
sending $X_{i}\to a_{i}.$ Note that this makes sense if $I$ is a subset of $A$ itself. Thus the expression $P(x:x\in I)$
is sensible if  $I\subseteq A_{sa},$ $P\in \CC\ip{X_{x}:x\in I}.$
 For $A\in M_{k}(\CC),$ we use
\[\tr(A)=\frac{1}{k}\sum_{j=1}^{k}A_{jj},\]
we shall also use
\[\|A\|_{2}=\tr(A^{*}A)^{1/2}.\]
Note that we can also make sense of $\|A\|_{2}$ for $A\in B(\mathcal{H}),$ with $\mathcal{H}$ a finite-dimensional Hilbert space. In case of potential confusion, for example when $\mathcal{H}=M_{k}(\CC)$ with the above Hilbert space norm, for a finite-dimensional Hilbert space $\mathcal{H}$ and $A\in B(\mathcal{H})$ we will sometimes use $\|A\|_{L^{2}(\tr_{\mathcal{H}})}$ instead of $\|A\|_{2}.$
For $I$ a finite set and $A\in M_{k}(\CC)^{I}$ we let
\[\|A\|_{2}=\left(\sum_{x\in I}\|A_{x}\|_{2}^{2}\right)^{1/2}.\]
and
\[\|A\|_{\infty}=\max_{x\in I}\|A_{x}\|_{\infty}.\]

	Suppose that $(X,d)$ is a metric space and $A,B\subseteq X.$ If $\delta>0$ we say that $A$ is \emph{$\delta$-contained} in $B,$ and write $A\subseteq_{\delta}B,$ if for every $x\in A$ there is a $y\in B$ with $d(x,y)\leq \delta.$ We say that $A\subseteq X$ is \emph{$\delta$-dense} if $X\subseteq_{\delta}A.$ We say that $A\subseteq X$ is \emph{$\delta$-separated} if for all $x\ne y$ in $A$ we have $d(x,y)>\delta.$ We let $K_{\delta}(X,d)$ be the minimal cardinality of a $\delta$-dense subset of $X.$ We leave it as an exercise to the reader to verify that if $A,B\subseteq X,$ if $\varpesilon,\delta>0$ and $A\subseteq_{\delta}B,$ then
\[K_{2(\varepsilon+\delta)}(A,d)\leq K_{\varepsilon}(B,d).\]
If $X$ is a Banach space, and $d$ is the metric induced by its norm and $A\subseteq X,$ we will typically use $K_{\varepsilon}(A,\|\cdot\|)$
instead of
$K_{\varepsilon}(A,d).$
Additionally, in case of ambiguity we will use
\[A\subseteq_{\delta,d} B\]
to specify the metric (with similar notation as above for when $d$ is induced by a norm).
If $(V,\|\cdot\|)$ is a normed vector space we use $\Ball(V,\|\cdot\|)=\{v\in V:\|v\|\leq 1\}.$

\section{Definition of $1$-Bounded Entropy and Some Technical Lemmas}

Let us first recall the definition of Voiculescu's microstates space.
\begin{definition}\label{D:1bbsection1ldga}\emph{Let $(M,\tau)$ be a tracial von Neumann algebra and $F$  a finite subset of $M_{sa}.$ For $m\in\NN,k\in\NN,\gamma>0$ define} Voiculescu's microstates space for $F,$ \emph{denoted $\Gamma(F;m,\gamma,k),$ to be the set of all $A\in M_{k}(\CC)_{sa}^{F}$ so that}
\[|\tr(P(A_{x}:x\in F))-\tau(P(x:x\in F))|<\gamma\]
\emph{for all monomials $P\in \CC\ip{X:X\in F}$ of degree at most $m.$  Let $R_{F}\in [0,\infty)^{F}$ be a cutoff parameter, we then set} $\Gamma_{R_{F}}(F;m,\gamma,k)$
\emph{to be the set of all $A\in\Gamma(F;m,\gamma,k)$ so that} $\|A_{x}\|_{\infty}\leq R_{F,x}$
\emph{for all $x\in F.$ If $G$ is another finite subset of $M_{sa}$ and $R_{G}\in [0,\infty)^{G}$  is a cutoff parameter we shall often write} $\Gamma_{R_{F}\vee R_{G}}(F,G;m,\gamma,k)$
\emph{instead of} $\Gamma_{R_{F}\vee R_{G}}(F\sqcup G;m,\gamma,k).$
\emph{We define} Voiculescu's microstates space for $F$ in the presence of $G$, \emph{denoted} $\Gamma_{R_{F}\vee R_{G}}(F:G;m,\gamma,k)$,
\emph{to be the set of all $A\in (M_{k}(\CC)_{sa})^{F}$ so that there is a $B\in (M_{k}(\CC)_{sa})^{G}$ with}
\[(A,B)\in \Gamma_{R_{F}\vee R_{G}}(F,G;m,\gamma,k).\]
\emph{We adopt similar notation for} $\Gamma(F,G;m,\gamma,k),$ $\Gamma(F:G;m,\gamma,k).$
\emph{For notational convenience, if $F\subseteq F_{0}\subseteq N_{sa},G\subseteq G_{0}\subseteq M_{sa}$ and $R_{F_{0}}\in [0,\infty)^{F_{0}},R_{G_{0}}\in [0,\infty)^{G_{0}}$ we will often use} $\Gamma_{R_{F_{0}}\vee R_{G_{0}}}(F:G;m,\gamma,k)$
\emph{for}
\[\Gamma_{R_{F_{0}}\big|_{F}\vee R_{G_{0}}\big|_{G}}(F:G;m,\gamma,k).\]
\end{definition}

Given a tracial von Neumann algebra $(M,\tau)$ and $a\in M_{sa},$ a sequence $(A_{k})_{k=1}^{\infty}$ with $A_{k}\in M_{k}(\CC)_{sa}$ is said to be a sequence of microstates for $a$ if $\sup_{k}\|A_{k}\|_{\infty}<\infty$ and for all $P\in \CC[x]$
\[\lim_{k\to\infty}\tr(P(A_{k}))=\tau(P(a)).\]

We now turn to our definition of $1$-bounded entropy.

\begin{definition}\emph{Let $(M,\tau)$ be a diffuse tracial von Neumann algebra. Fix a self-adjoint $a\in M$ with diffuse spectrum and a sequence $(A_{k})_{k=1}^{\infty}$ of microstates for $a.$ Let $X,Y\subseteq M_{sa}$ be  given and $R\in [0,\infty)^{X},R'\in [0,\infty)^{Y}$ be cutoff parameters and let $L=\sup_{k}\|A_{k}\|_{\infty}.$}
\emph{For finite $F\subseteq X,G\subseteq Y,$ natural numbers $m,k,$ and $\gamma>0$ we set}
\[\Xi_{A_{k},R\vee R'}(F:G;m,\gamma,k)=\{x\in M_{k}(\CC)_{sa}^{F}:(A_{k},x)\in \Gamma_{L\vee R\vee R'}(a,F:G;m,\gamma,k)\}.\]
\emph{For positive real numbers $\varepsilon,\gamma$ and natural numbers $m,k$ we then let}
\[K_{\varepsilon}(\Xi_{(A_{k})_{k=1}^{\infty},R\vee R'}(F:G;m,\gamma),\|\cdot\|_{2})=\limsup_{k\to\infty}\frac{1}{k^{2}}\log K_{\varepsilon}(\Xi_{A_{k}, R\vee R'}(F:G;m,\gamma,k),\|\cdot\|_{2}),\]
\[K_{\varepsilon}(\Xi_{(A_{k})_{k=1}^{\infty},R\vee R'}(F:G),\|\cdot\|_{2})=\inf_{\substack{m\in\NN,\\ \gamma>0}}K_{\varepsilon}(\Xi_{(A_{k})_{k=1}^{\infty},R\vee R'}(F:G;m,\gamma),\|\cdot\|_{2}),\]
\[K_{\varepsilon}(\Xi_{(A_{k})_{k=1}^{\infty},R\vee R'}(F:Y),\|\cdot\|_{2})=\inf_{G\subseteq Y\textnormal{ finite}}K_{\varepsilon}(\Xi_{(A_{k})_{k=1}^{\infty},R\vee R'}(F:G),\|\cdot\|_{2}),\]
\[h_{(A_{k})_{k=1}^{\infty}}(F:Y)=\sup_{\varepsilon>0}K_{\varepsilon}(\Xi_{(A_{k})_{k=1}^{\infty},R\vee R'}(F:Y),\|\cdot\|_{2}),\]
\[h_{(A_{k})_{k=1}^{\infty}}(X:Y)=\sup_{\textnormal{ finite} F\subseteq X}h_{(A_{k})_{k=1}^{\infty}}(F:Y).\]
\end{definition}
We also let $h_{(A_{k})_{k=1}^{\infty}}(F:G,\|\cdot\|_{\infty})$ be the number obtained by replacing every instance of $K_{\varepsilon}(\cdots,\|\cdot\|_{2})$ in the above definition with $K_{\varepsilon}(\cdots,\|\cdot\|_{\infty}).$ We will often write $(A_{k})$ instead of $(A_{k})_{k=1}^{\infty}.$  In the appendix, it is shown that if $a,b\in W^{*}(X)$ have diffuse spectrum and $(A_{k})_{k=1}^{\infty},(B_{k})_{k=1}^{\infty}$ are microstates for $a,b$ respectively, then
\[h_{(A_{k})_{k=1}^{\infty}}(X:Y)=h_{(B_{k})_{k=1}^{\infty}}(X:Y).\]
Thus we will simply write $h(X:Y)$
instead of
$h_{(A_{k})_{k=1}^{\infty}}(X:Y).$
In the appendix it is also shown that
\[h(X:Y)=h(W^{*}(X):W^{*}(Y)),\]
if $W^{*}(X)\subseteq W^{*}(Y),$ we will use this frequently without mention. All these facts are proved in a similar manner to \cite{JungSB},\cite{HadwinLi},\cite{Shen}. Lastly, we set
\[h(M)=h(M:M).\]
We call $h(M)$ the  $1$-bounded entropy of $M$ and $h(N:M)$ the $1$-bounded entropy of $N$ in the presence of $M.$  It is clear from our definition and \cite{JungSB} that if $M$ is finitely generated, then $h(M)<\infty$ if and only if $M$ is strongly $1$-bounded (see  Proposition \ref{P:equivstrongly1bddappendix} for a precise proof). We use the notation $h(N:M)$ because our definition is ``entropic" and we prefer to think of $h(M)$ as a  reasonable  entropy for strongly $1$-bounded algebras (since necessarily the free entropy dimension of such algebras is at most one with respect to every set of generators).

We list here some basic properties of $1$-bounded entropy for a tracial von Neumann algebra $(M,\tau)$ which will frequently be used throughout the paper (most of which are proved in Appendix \ref{S:properties1bddent}).

\begin{list}{Property \arabic{pcounter}: ~ }{\usecounter{pcounter}}
\item If $N\subseteq P\subseteq M$ are von Neumann algebras, then
\[h(N:M)\leq h(P:M) \mbox{ and }  h(N:M)\leq h(N:P).\]
\item If $N_{\alpha}$ is an increasing net of von Neumann subalgebras of $M$ and $N=\overline{\bigcup_{\alpha}N_{\alpha}}^{SOT},$ then $h(N:M)=\lim_{\alpha}h(N_{\alpha}:M)$ and $h(N)\leq \liminf_{\alpha}h(N_{\alpha}),$ (see Lemma \ref{L:increasingunions} and Corollary \ref{C:increasingunions}),
\item If $N_{j},j=1,2$ are von Neumann subalgebras of $M$ and $N_{1}\cap N_{2}$ is diffuse, then
\[h(N_{1}\vee N_{2}:M)\leq h(N_{1}:M)+h(N_{2}:M),\]
 (see Lemma \ref{L:diffuseintersection}).
\item If $(M_{j},\tau_{j})$ are tracial von Neumann algebras and $\mu_{j},j=2,\dots$ are such that $M=\oplus_{j=1}^{\infty}M_{j},$ $\sum_{j}\mu_{j}=1$ and if we define $\tau((x_{j})_{j})=\sum_{j}\mu_{j}\tau_{j}(x_{j}),$ then
\[h(M)\leq \sum_{j=1}^{\infty}\mu_{j}^{2}h(M_{j})\]
with the convention that if one of the terms on the right hand side is $-\infty,$ then the sum is $-\infty$  (see Proposition \ref{P:compressionformula}).
\item If $p\in M$ is a nonzero projection and $\tau_{p}\colon pMp\to\CC$ is defined by $\tau_{p}(x)=\frac{\tau(x)}{\tau(p)},$ then
\[h(pMp)\leq \frac{1}{\tau(p)^{2}}h(M)\]
(see Proposition \ref{P:compressionformula}).
\item If $F\subseteq M_{sa}$ is a finite set with $W^{*}(F)=M$ and $\delta_{0}(F)>1,$ then $h(M)=\infty$ (follows from the original definition of strongly $1$-bounded in \cite{JungSB}).
\end{list}

 	The fact that $h(M)$ makes sense for von Neumann algebras which are not a priori finitely generated will be particularly useful in this paper. Our main goal is to show-- even under very weak notions of regularity-- that algebras which  have microstates free entropy dimension bigger than $1$ with respect to some set of generators (e.g. $L(\FF_{n})$)  cannot have  subalgebras with finite $1$-bounded entropy which are regular. Moreover, we will want to rule out chains of algebras
\begin{equation}\label{E:chinssection2}
N_{0}\subseteq N_{1}\subseteq N_{2}\subseteq \cdots N_{\alpha}=M
\end{equation}
so that $N_{0}$ is strongly $1$-bounded and $N_{\beta}\subseteq N_{\beta+1}$ satisfies some weak regularity condition. When we consider chains of algebras as in $(\ref{E:chinssection2}),$ we will have no a priori control as to whether $N_{1},N_{2},\cdots$ are finitely-generated. However, we will still want to prove nonexistence of such a chain, by showing that $h(N_{\alpha})<\infty$ for every $\alpha$ (in fact one will have to get a uniform bound on $h(N_{\alpha})$ for our inductive arguments to work). Thus it will be useful to drop the standard assumption of finite generation and work with this extended notion of being strongly $1$-bounded. This is similar to the point of view of Hadwin-Li in \cite{HadwinLi}.

We need an elementary lemma. If $A$ is a $C^{*}$-algebra, if $\mathcal{H},\mathcal{K}$ are Hilbert spaces, and $\pi\colon A\to B(\mathcal{H}),\rho\colon A\to B(\mathcal{K})$ are $*$-homomorphisms we say that $\pi$ and $\rho$ are disjoint if $\pi,\rho$ do not have nonzero, isomorphic subrepresentations. We write $\pi\perp\rho$ if $\pi,\rho$ are disjoint. Lastly, we let $\Hom_{A}(\pi,\rho)$ be the space of bounded, linear, $A$-equivariant maps $T\colon \mathcal{H}\to \mathcal{K}.$ The following lemma is well known, but we include the short proof for completeness.

\begin{lemma}\label{L:disjointness} Let $A$ be a $C^{*}$-algebra, and $\mathcal{H}_{j},j=1,2$ Hilbert spaces. Let $\pi_{j}\colon A\to B(\mathcal{H}_{j}),j=1,2$ be $*$-homomorphisms. The following are equivalent:

(i): $\pi_{1}\perp \pi_{2},$

(ii): $\Hom_{A}(\pi_{1},\pi_{2})=\{0\},$

(iii): $\Hom_{A}(\pi_{2},\pi_{1})=\{0\},$

(iv): For any $\varepsilon>0$ and any $\xi_{1},\dots,\xi_{k}\in\mathcal{H}_{1},\eta_{1},\dots,\eta_{l}\in \mathcal{H}_{2},$  there is an $a\in A$ such that $\|a\|\leq 1$ and
\[\max_{1\leq j\leq  k}\|\pi_{1}(a)\xi_{j}-\xi_{j}\|<\varepsilon,\]
\[\max_{1\leq j\leq l}\|\pi_{2}(a)\eta_{j}\|<\varepsilon.\]

\end{lemma}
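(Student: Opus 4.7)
The plan is to prove the implications $(ii) \Leftrightarrow (iii)$, $(i) \Leftrightarrow (ii)$, $(iv) \Rightarrow (ii)$, and finally $(ii) \Rightarrow (iv)$, with the last being the only substantive step.

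First, $(ii) \Leftrightarrow (iii)$ is immediate: taking adjoints gives a conjugate-linear bijection $\Hom_{A}(\pi_{1},\pi_{2}) \to \Hom_{A}(\pi_{2},\pi_{1})$, $T \mapsto T^{*}$, so one space is zero iff the other is. For $(i) \Leftrightarrow (ii)$, assume $\pi_{1} \not\perp \pi_{2}$ with a unitary $U\colon \mathcal{K}_{1}\to \mathcal{K}_{2}$ between nonzero subrepresentations; extending $U$ by zero on $\mathcal{K}_{1}^{\perp}$ gives a nonzero element of $\Hom_{A}(\pi_{1},\pi_{2})$. Conversely, given $0\neq T\in \Hom_{A}(\pi_{1},\pi_{2})$, take the polar decomposition $T=V|T|$. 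Since $T$ commutes with the representations, so do $|T|$ and $V$ (the partial isometry $V$ lies in the von Neumann algebra generated by $T$ and its adjoint, which is in the commutant of $\pi_{1}\oplus \pi_{2}$ composed with the obvious inclusions). Then $V^{*}V$ and $VV^{*}$ are nonzero projections in the commutants of $\pi_{1}$ and $\pi_{2}$ respectively, and $V$ restricts to a unitary equivalence between the corresponding subrepresentations, giving $\pi_{1}\not\perp\pi_{2}$.

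For $(iv)\Rightarrow (ii)$: suppose $0\neq T\in \Hom_{A}(\pi_{1},\pi_{2})$, and pick $\xi\in \mathcal{H}_{1}$ with $T\xi\neq 0$. Fix $0<\varepsilon<\|T\xi\|/(2\max(\|T\|,1))$. By $(iv)$ applied to $\xi_{1}=\xi$ and $\eta_{1}=T\xi$, there is $a\in A$ with $\|a\|\leq 1$, $\|\pi_{1}(a)\xi-\xi\|<\varepsilon$ and $\|\pi_{2}(a)T\xi\|<\varepsilon$. But $\pi_{2}(a)T\xi = T\pi_{1}(a)\xi$, and so
\[
\|T\xi\| \leq \|T(\xi-\pi_{1}(a)\xi)\| + \|T\pi_{1}(a)\xi\| < \|T\|\varepsilon + \varepsilon < \|T\xi\|,
\]
a contradiction. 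Hence $\Hom_{A}(\pi_{1},\pi_{2})=\{0\}$.

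The main work is $(ii)\Rightarrow (iv)$, for which I would use the universal enveloping von Neumann algebra $A^{**}$ together with Kaplansky density. Each $\pi_{j}$ extends uniquely to a normal unital $*$-homomorphism $\widetilde{\pi}_{j}\colon A^{**}\to B(\mathcal{H}_{j})$. Let $z_{j}\in Z(A^{**})$ be the central support of $\ker(\widetilde{\pi}_{j})^{\perp}$ (equivalently, the central cover of $\widetilde{\pi}_{j}$); equivalently $z_{j}$ is the smallest central projection on which $\widetilde{\pi}_{j}$ is faithful. A standard fact is that $\pi_{1}\perp\pi_{2}$ is equivalent to $z_{1}z_{2}=0$. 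Indeed, if $z_{1}z_{2}\neq 0$ then restricting both $\widetilde{\pi}_{j}$ to the nonzero central projection $z_{1}z_{2}$ gives faithful normal representations of the same von Neumann algebra $A^{**}z_{1}z_{2}$, and any such two representations have isomorphic subrepresentations (as follows from the classification of representations of a von Neumann algebra by amplifications). Conversely, if $z_{1}z_{2}=0$, any intertwiner $T\in \Hom_{A}(\pi_{1},\pi_{2})$ satisfies $T\widetilde{\pi}_{1}(z_{1})=\widetilde{\pi}_{2}(z_{1})T=\widetilde{\pi}_{2}(z_{1}z_{2})T=0$, but $\widetilde{\pi}_{1}(z_{1})=1_{\mathcal{H}_{1}}$, so $T=0$. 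Assuming now $(ii)$, so $z_{1}z_{2}=0$, the element $z_{1}\in A^{**}$ is a self-adjoint contraction with $\widetilde{\pi}_{1}(z_{1})=1_{\mathcal{H}_{1}}$ and $\widetilde{\pi}_{2}(z_{1})=0$. By the Kaplansky density theorem applied in $A^{**}$, there is a net $(a_{\alpha})$ in $A$ with $\|a_{\alpha}\|\leq 1$ and $a_{\alpha}\to z_{1}$ in the ultrastrong-$*$ topology of $A^{**}$. Since $\widetilde{\pi}_{j}$ is normal, $\pi_{j}(a_{\alpha})=\widetilde{\pi}_{j}(a_{\alpha})$ converges to $\widetilde{\pi}_{j}(z_{1})$ in the strong operator topology on $B(\mathcal{H}_{j})$. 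Hence $\pi_{1}(a_{\alpha})\xi_{i}\to \xi_{i}$ and $\pi_{2}(a_{\alpha})\eta_{i}\to 0$ for every $i$, and any sufficiently advanced $a_{\alpha}$ witnesses $(iv)$.

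The step I expect to be most delicate is the identification $\pi_{1}\perp\pi_{2}\Leftrightarrow z_{1}z_{2}=0$; this is a standard but nontrivial fact from the representation theory of $W^{*}$-algebras, and everything else is routine once the universal enveloping von Neumann algebra is in play.
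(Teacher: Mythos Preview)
Your proof is correct, and the treatment of $(i)\Leftrightarrow(ii)\Leftrightarrow(iii)$ and $(iv)\Rightarrow(ii)$ matches the paper's (which also dismisses these as routine). The genuine difference is in $(ii)\Rightarrow(iv)$. You pass to the enveloping von Neumann algebra $A^{**}$, invoke the orthogonality of central covers $z_{1}z_{2}=0$, and then Kaplansky-approximate the central projection $z_{1}$ by contractions in $A$. The paper instead works directly in $B(\mathcal{H}_{1}\oplus\mathcal{H}_{2})$: the commutant of $(\pi_{1}\oplus\pi_{2})(A)$ consists of $2\times 2$ block matrices whose off-diagonal entries lie in $\Hom_{A}(\pi_{i},\pi_{j})$ and hence vanish by $(ii)$ and $(iii)$, so every element of the commutant commutes with $\begin{bmatrix}1&0\\0&0\end{bmatrix}$; the Double Commutant Theorem then puts this projection in $\overline{(\pi_{1}\oplus\pi_{2})(A)}^{SOT}$, and Kaplansky density (with a quotient-norm trick to secure $\|a\|\leq 1$ rather than merely $\|(\pi_{1}\oplus\pi_{2})(a)\|\leq 1$) finishes. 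Your route imports more machinery---central covers and the fact that faithful normal representations of a von Neumann algebra are never disjoint---while the paper's argument is more elementary and self-contained, needing only the bicommutant theorem. Both ultimately hinge on Kaplansky density, but the paper avoids $A^{**}$ entirely.
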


\begin{proof} The equivalence of $(i)$ and $(ii)$ is a standard exercise in using the polar decomposition. The equivalence of $(ii)$ and $(iii)$ is clear by taking adjoints. It is also easy to show that $(iv)$ implies $(ii).$ It thus remains to show $(ii)$ and $(iii)$ imply $(iv).$

So assume that $(ii)$ and $(iii)$ hold. We claim that
\begin{equation}\label{E:closureclaimsection2alshgla}
\begin{bmatrix}
1& 0\\
0& 0
\end{bmatrix}\in \overline{(\pi_{1}\oplus \pi_{2})(A)}^{SOT}.
\end{equation}
Suppose that we can show (\ref{E:closureclaimsection2alshgla}). Given  $\xi_{1},\dots,\xi_{k}\in\mathcal{H}_{1},\eta_{1},\dots,\eta_{l}\in \mathcal{H}_{2},$   Kaplansky's density theorem implies that we can find an $a\in A$ so that
\[\max(\|\pi_{1}(a)\|,\|\pi_{2}(a)\|)\leq 1,\]
\[\|\pi_{1}(a)\xi_{j}-\xi_{j}\|<\varepsilon\mbox{ for $j=1,\dots,k$,}\]
\[\|\pi_{2}(a)\eta_{j}\|<\varepsilon\mbox{ for $j=1,\dots,l.$}\]
Let $J=\ker(\pi_{1}\oplus \pi_{2}).$ Because injective $*$-homomorphisms are isometric, $\pi_{1}\oplus \pi_{2}$ induces an isometric inclusion $A/J\to B(\mathcal{H}_{1}\oplus \mathcal{H}_{2}).$
 So by definition of the quotient norm, we can choose $a$ as above with $\|a\|<1+\varepsilon.$
Setting $b=\frac{a}{1+\varespilon}$ we see that $\|b\|\leq 1$ and
\[\|\pi_{1}(b)\xi_{j}-\xi_{j}\|<\varespilon+\left|1-\frac{1}{1+\varepsilon}\right|\|\xi_{j}\|\mbox{ for $j=1,\dots,k$,}\]
\[\|\pi_{2}(b)\eta_{j}\|<\varepsilon+\left|1-\frac{1}{1+\varepsilon}\right|\|\eta_{j}\|\mbox{ for $j=1,\dots,l$},\]
and since $\varepsilon$ is arbitrary, $(iv)$ is an easy consequence of this. So it is enough to prove (\ref{E:closureclaimsection2alshgla}).

Let $T\in (\pi_{1}\oplus\pi_{2})(A)',$ by the Double Commutant Theorem it is enough to show that
\[\begin{bmatrix}
1& 0\\
0& 0
\end{bmatrix}\]
commutes with $T.$ Write
\[T=\begin{bmatrix}
T_{11}&T_{12}\\
T_{21}&T_{22}
\end{bmatrix},\]
with $T_{ij}\in B(\mathcal{H}_{i},\mathcal{H}_{j}).$ Since $T\in (\pi_{1}\oplus\pi_{2})(A)',$ we have $T_{ij}\in \Hom_{A}(\pi_{i},\pi_{j})$ and thus $T_{12},T_{21}=0.$ We now know that
\[T=\begin{bmatrix}
T_{11}&0\\
0&T_{22}
\end{bmatrix}.\]
It is now easy to see that $T$ commutes with
\[\begin{bmatrix}
1& 0\\
0& 0
\end{bmatrix}.\]

\end{proof}

It is trivial to see that we can take $a$ as in $(iv)$ to live in a prescribed dense subset of $A.$ We will use this in the sequel without comment. We will also need the following volume-packing estimate. We use $\ell^{2}(k)$ for $\CC^{k}$ with the $\ell^{2}$-norm:
\[\|z\|_{2}^{2}=\sum_{j=1}^{k}|z_{j}|^{2}.\]
We also use $\Ball(\ell^{2}(k))=\{\xi\in\ell^{2}(k):\|\xi\|_{2}\leq 1\}.$

\begin{lemma}\label{L:volumepackingestimate} Let $R,\eta,\varepsilon>0.$ Then for any $T\in M_{k}(\CC)$ with $\|T\|_{\infty}\leq 1$
we have
\[K_{2\varepsilon}(T(R\Ball(\ell^{2}(k))),\|\cdot\|_{2})\leq \left(\frac{3R+\varepsilon}{\varpesilon}\right)^{2kR^{2}\frac{\|T\|_{2}^{2}}{\varepsilon^{2}}}.\]

\end{lemma}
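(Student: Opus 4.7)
The plan is to exploit the singular value decomposition of $T$ together with the Hilbert-Schmidt information encoded by $\|T\|_2$ to reduce the problem to a volume estimate on a ball in a low-dimensional subspace. Throughout, let $\sigma_1 \geq \sigma_2 \geq \cdots \geq \sigma_k \geq 0$ denote the singular values of $T$. Because $\|T\|_2^2$ is computed with respect to the normalized trace on $M_k(\CC)$, we have $\sum_{i=1}^k \sigma_i^2 = k\|T\|_2^2$, and each $\sigma_i \leq \|T\|_\infty \leq 1$.

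First I would write $T = U\Sigma V^{*}$ with $U,V$ unitary and $\Sigma$ diagonal with diagonal entries $\sigma_1,\ldots,\sigma_k$. Since $V^{*}$ preserves $R\Ball(\ell^2(k))$ and $U$ is an $\ell^2$-isometry,
\[K_{2\varepsilon}(T(R\Ball(\ell^2(k))),\|\cdot\|_2) \;=\; K_{2\varepsilon}(\Sigma(R\Ball(\ell^2(k))),\|\cdot\|_2),\]
so we may assume $T = \Sigma$.

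Next, set $m = \#\{i : \sigma_i > \varepsilon/R\}$. A Chebyshev-style bound gives
\[m \cdot \frac{\varepsilon^2}{R^2} \;\leq\; \sum_{i : \sigma_i > \varepsilon/R} \sigma_i^2 \;\leq\; \sum_{i=1}^{k}\sigma_i^2 \;=\; k\|T\|_2^2,\]
and hence $m \leq kR^2\|T\|_2^2/\varepsilon^2$. Let $E_1$ be the span of the first $m$ standard basis vectors, let $P_1$ be the orthogonal projection onto $E_1$, and set $P_2 = 1 - P_1$. For any $v \in R\Ball(\ell^2(k))$, the choice of $m$ forces
\[\|P_2 \Sigma v\|_2^2 \;=\; \sum_{i>m}\sigma_i^2|v_i|^2 \;\leq\; \frac{\varepsilon^2}{R^2}\|v\|_2^2 \;\leq\; \varepsilon^2,\]
so $\Sigma v$ lies within $\ell^2$-distance $\varepsilon$ of $P_1\Sigma v$, which sits inside $E_1 \cap R\Ball(\ell^2(k))$ since $\|P_1 \Sigma v\|_2 \leq \|v\|_2 \leq R$.

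Finally, a standard volume-packing argument (a maximal $\varepsilon$-separated subset of the $R$-ball has disjoint open $\varepsilon/2$-balls all contained in the $(R+\varepsilon/2)$-ball, so its cardinality is at most $((2R+\varepsilon)/\varepsilon)^{\dim_\RR}$) shows
\[K_\varepsilon(R\Ball(E_1),\|\cdot\|_2) \;\leq\; \left(\frac{2R+\varepsilon}{\varepsilon}\right)^{2m} \;\leq\; \left(\frac{3R+\varepsilon}{\varepsilon}\right)^{2m},\]
using that $E_1$ has real dimension $2m$. An $\varepsilon$-net for $R\Ball(E_1)$ then furnishes, by the triangle inequality, a $2\varepsilon$-net of the same cardinality for $T(R\Ball(\ell^2(k)))$. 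Substituting $2m \leq 2kR^2\|T\|_2^2/\varepsilon^2$ yields the claimed estimate. There is no real obstacle here beyond carefully book-keeping the split of singular-value directions into those above and below $\varepsilon/R$; the rest is routine linear algebra and the usual Euclidean packing bound.
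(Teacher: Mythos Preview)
Your proof is correct and follows essentially the same approach as the paper's: split off the spectral subspace where the singular values exceed $\varepsilon/R$, bound its dimension via Chebyshev, cover an $R$-ball in that subspace by volume packing, and observe that the complementary piece contributes at most $\varepsilon$. The only cosmetic difference is that you first diagonalize via the SVD, while the paper works directly with the spectral projection $P=\chi_{(\varepsilon/R,1]}(|T|)$; these are the same argument in different notation.
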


\begin{proof} Set $\eta=\|T\|_{2}$ and let $P=\chi_{(\frac{\varepsilon}{R},1]}(|T|).$
Then
\[\tr(P)\leq \frac{R^{2}}{\varepsilon^{2}}\tr(T^{*}T)=\frac{R^{2}\eta^{2}}{\varepsilon^{2}}.\]
Thus $P\ell^{2}(k)$ is a space of real dimension at most $2\frac{R^{2}\eta^{2}}{\varepsilon^{2}}k.$ Let $S$ be a maximal subset of $R\Ball(P\ell^{2}(k)),\|\cdot\|_{2})$ satisfying the condition that $\|A-B\|_{2}\geq \varespilon$ for all $A\ne B$ with $A,B\in S.$ By the triangle inequality we have
\[\left(R+\frac{\varepsilon}{3}\right)\Ball(P\ell^{2}(k),\|\cdot\|_{2})\supseteq \bigcup_{\xi\in S}\xi+\frac{\varepsilon}{3}\Ball(P\ell^{2}(k),\|\cdot\|_{2})\]
and since $S$ is $\varepsilon$-separated the right-hand side of the above equation is a disjoint union.  Computing volumes shows that
\[\left(R+\frac{\varepsilon}{3}\right)^{2\tr(P)k}\geq |S|\left(\frac{\varepsilon}{3}\right)^{2\tr(P)k},\]
so
\[|S|\leq \left(\frac{3R+\varepsilon}{\varepsilon}\right)^{2\tr(P)k}\leq\left(\frac{3R+\varepsilon}{\varepsilon}\right)^{2\frac{R^{2}\eta^{2}}{\varepsilon^{2}}k}.\]
Now let $\xi\in R\Ball(\ell^{2}(k))$ and choose a $\xi'\in S$ so that
\[\|P\xi-\xi'\|_{2}\leq \varepsilon.\]
Then
\[\|T\xi-T\xi'\|_{2}\leq \|TP\xi-T\xi'\|+\|T(1-P)\xi\|_{2}\leq\varpesilon+\varepsilon=2\varepsilon,\]
so
\[K_{2\varpesilon}(T(R\Ball(\ell^{2}(k))),\|\cdot\|_{2})\leq |S|\leq \left(\frac{3R+\varepsilon}{\varpesilon}\right)^{2\frac{R^{2}\eta^{2}}{\varepsilon^{2}}k}.\]

\end{proof}
We will need a technical lemma which will allow us to switch from a $\varepsilon$-dense set with respect to $\|\cdot\|_{2}$ to a $\varepsilon$-dense with respect to $\|\cdot\|_{\infty}.$ We will use this to show that
\[h(X:Y,\|\cdot\|_{\infty})=h(X:Y),\]
which will be important in the proof of Theorem \ref{T:mainintro1}.
\begin{lemma}\label{L:switching} There is a universal constant $C>0$ with the following property. For any $\varepsilon>0$ there is  a $t_{0}>0$ depending only upon $\varepsilon$ so that for any $0<t<t_{0},$  any $k\in\NN,$  and any  $\Omega\subseteq R\Ball(M_{k}(\CC),\|\cdot\|_{\infty})$ we have
\[K_{2(2R+2)\varespilon}(\Omega,\|\cdot\|_{\infty})\leq k\frac{t^{2}}{\varepsilon^{2}}\left(\frac{C}{t}\right)^{2k^{2}\frac{t^{2}}{\varepsilon^{2}}(1-\frac{t^{2}}{\varepsilon^{2}})}\left(\frac{3R+\varepsilon}{\varepsilon}\right)^{2k^{2}\frac{t^{2}}{\varepsilon^{2}}}K_{t}(\Omega,\|\cdot\|_{2}).\]

\end{lemma}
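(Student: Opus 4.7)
The plan is the standard route from an $L^{2}$-covering to an operator-norm covering. Let $S\subseteq\Omega$ realize $K_{t}(\Omega,\|\cdot\|_{2})$; every $B\in\Omega$ lies within $\|\cdot\|_{2}$-distance $t$ of some $A\in S$, and $X:=B-A$ then satisfies $\|X\|_{\infty}\leq 2R$ and $\|X\|_{2}\leq t$. It therefore suffices to cover the residual set
\[\Delta=\{X\in M_{k}(\CC):\|X\|_{\infty}\leq 2R,\ \|X\|_{2}\leq t\}\]
in operator norm at a scale $\lesssim(2R+2)\varepsilon$; then $\{A+Y:A\in S,\ Y\in\mathcal{N}\}$ is a $\|\cdot\|_{\infty}$-cover of $\Omega$, where $\mathcal{N}$ is such a cover of $\Delta$. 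The threshold $t_{0}$ will be chosen small enough (depending only on $\varepsilon$) to close up the final error estimate.

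To cover $\Delta$ I would first spectrally truncate and then Grassmannian-parametrize. Set $r=\lceil kt^{2}/\varepsilon^{2}\rceil$, and for $X\in\Delta$ let $P=\chi_{(\varepsilon,\infty)}(|X|)$; Chebyshev applied to the spectrum of $|X|$ gives $\rank P\leq k\|X\|_{2}^{2}/\varepsilon^{2}\leq r$, while $\|X-XP\|_{\infty}\leq\varepsilon$. Hence every $X\in\Delta$ is $\varepsilon$-close in $\|\cdot\|_{\infty}$ to a member of $\mathcal{R}_{r}:=\{Y\in M_{k}(\CC):\rank Y\leq r,\ \|Y\|_{\infty}\leq 2R\}$. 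Each $Y\in\mathcal{R}_{r}$ factors as $Y=QY$ with $Q$ the projection onto $\im Y$. I would approximate $Q$ by a $\|\cdot\|_{\infty}$-net element $Q'$ among rank-$\leq r$ projections at scale $t$ (producing $\|Y-Q'Y\|_{\infty}\leq 2Rt$), and then approximate $Q'Y$ inside the real $(2kr)$-dimensional subspace $Q'M_{k}(\CC)$ by an element of an $\varepsilon$-net of that bounded subspace.

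The three resulting net sizes match the three factors in the target bound. First, summing over $r'=0,\dots,r$ produces a rank prefactor of $r+1\asymp kt^{2}/\varepsilon^{2}$. Second, each Grassmannian $\Gr_{r'}(\CC^{k})$ is a smooth manifold of real dimension $2r'(k-r')$, admitting a $t$-net in $\|\cdot\|_{\infty}$ of size at most $(C_{0}/t)^{2r'(k-r')}$ for a universal $C_{0}$; if $t_{0}$ is small enough to force $r\leq k/2$, the quantity $r'(k-r')$ is maximized at $r'=r$, and one checks $2r(k-r)=2k^{2}(t^{2}/\varepsilon^{2})(1-t^{2}/\varepsilon^{2})$. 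Third, the operator-norm ball of radius $2R$ in a $(2kr)$-real-dimensional normed space has an $\varepsilon$-net in $\|\cdot\|_{\infty}$ of size at most $((3R+\varepsilon)/\varepsilon)^{2kr}$ by the standard volume-packing argument. The total $\|\cdot\|_{\infty}$-displacement is $\varepsilon+2Rt+\varepsilon\leq 2(2R+2)\varepsilon$ as soon as $t<t_{0}\leq\varepsilon$, and multiplying $|S|=K_{t}(\Omega,\|\cdot\|_{2})$ by the three counts yields the stated bound.

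The main obstacle is the Grassmannian step, specifically securing the sharp exponent $2r(k-r)$ (the Grassmannian dimension) rather than the larger Stiefel dimension $2rk-r^{2}$ that a naive ``lift to isometries and then project'' argument would produce. I would either invoke a standard metric-entropy estimate for compact homogeneous spaces (in the style of Szarek's work), or establish it directly via the local bi-Lipschitz chart $P\mapsto(1-P_{0})PP_{0}$ at each $P_{0}\in\Gr_{r'}(\CC^{k})$, which identifies a $\|\cdot\|_{\infty}$-neighborhood of $P_{0}$ with a ball in the complex $r'(k-r')$-dimensional space $(1-P_{0})M_{k}(\CC)P_{0}$ with bi-Lipschitz constants independent of $k$.
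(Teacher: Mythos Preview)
Your approach is essentially the paper's: Chebyshev on the singular values to bound the rank of the ``large'' spectral projection, Szarek's Grassmannian entropy estimate $K_\delta(\Gr(l,k-l),\|\cdot\|_\infty)\leq (C/\delta)^{2l(k-l)}$ for that projection, and volume packing on the resulting low-rank piece. The paper invokes exactly the Szarek bound you flag as the main obstacle, so there is no gap there.

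One constant does not come out as stated in your setup. You cover $Q'Y$ inside the $2R$-ball of $Q'M_k(\CC)$ (since $\|Y\|_\infty\leq\|X\|_\infty\leq 2R$), and the standard volume argument then gives $((6R+\varepsilon)/\varepsilon)^{2kr'}$, not $((3R+\varepsilon)/\varepsilon)^{2kr'}$ as you write. The paper gets the sharper base by organizing the approximation slightly differently: instead of covering the difference $X=B-A'$, it approximates $B$ by $A'(1-P)+C$, where $C$ is taken from an $\varepsilon$-net of $R\Ball(M_k(\CC)P)$ approximating $BP$ itself (which has norm at most $R$), while $B(1-P)\approx A'(1-P)$ is controlled via the spectral cutoff plus the Grassmannian perturbation. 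This is only a cosmetic rearrangement; your version still proves a bound that suffices for the sole application (the corollary immediately following the lemma), where only the limit $t\to 0$ with $\varepsilon$ fixed is used.
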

\begin{proof} By a result of S. Szarek (see \cite{Szarek}), there is a $C>0$ so that for all $k,l\in \NN$ and any $\delta>0$ we have
\[K_{\delta}(\Gr(l,k-l),\|\cdot\|_{\infty})\leq \left(\frac{C}{\delta}\right)^{l(k-l)},\]
here $\Gr(l,k-l)$ is the space of orthogonal projections in $M_{k}(\CC)$ of rank $l.$ Choose $t_{0}$ with $0<t_{0}<\min(C,\varepsilon)$ and so that $0<t<t_{0}$ implies $\frac{t^{2}}{\varepsilon^{2}}<\frac{1}{2}.$ Fix $k\in\NN,$ and $\Omega\subseteq R\Ball(M_{k}(\CC),\|\cdot\|_{\infty}).$   Choose a $t$-dense set $S\subseteq\Omega$ with respect to $\|\cdot\|_{2}$ with $|S|=K_{t}(\Omega,\|\cdot\|_{2}).$
 Given $A\in \Omega,$ we may choose an $A'\in S$ so that $\|A-A'\|_{2}\leq t.$
Set $P'=\chi_{[0,\varepsilon]}(|A-A'|)$ and observe that
\[\tr(1-P')\leq \frac{1}{\varepsilon^{2}}\tr(|A-A'|^{2})\leq \frac{t^{2}}{\varepsilon^{2}}.\]
For every integer $l$ with $1\leq l\leq\frac{t^{2}}{\varespilon^{2}}k$ choose a $\varepsilon$-dense subset $G_{l}$ of $\Gr(l,k-l)$ with \[|G_{l}|=K_{\varepsilon}(\Gr(l,k-l),\|\cdot\|_{\infty}).\]
As the function $\phi(x)=x(1-x)$ is increasing for $0<x<1/2,$ it follows that for all integers $1\leq l\leq k\frac{t^{2}}{\varepsilon^{2}}$ we have
\[|G_{l}|\leq\left(\frac{C}{t}\right)^{2k^{2}\frac{l}{k}\left(1-\frac{l}{k}\right)}\leq \left(\frac{C}{t}\right)^{2k^{2}\frac{t^{2}}{\varepsilon^{2}}(1-\frac{t^{2}}{\varepsilon^{2}})}.\]
For every integer $1\leq l\leq k\frac{t^{2}}{\varepsilon^{2}},$ and every $P\in G_{l}$ choose a maximal subset $D_{P}$ of $R\Ball(M_{k}(\CC)P,\|\cdot\|_{\infty})$ subject to the condition that $\|A-B\|_{\infty}\geq \varepsilon$ for all $A,B\in D_{P}$ with $A\ne B.$
By the triangle inequality we have
\[\left(R+\frac{\varepsilon}{3}\right)\Ball(M_{k}(\CC)P,\|\cdot\|_{\infty})\supseteq \bigcup_{\xi\in D_{P}}\xi+\frac{\varepsilon}{3}\Ball(M_{k}(\CC)P,\|\cdot\|_{\infty}),\]
since $D_{P}$ is $\varepsilon$-separated the right-hand side of the above equation is a disjoint union.  Computing volumes shows that
\[\left(R+\frac{\varepsilon}{3}\right)^{2\tr(P)k^{2}}\geq |D_{P}|\left(\frac{\varepsilon}{3}\right)^{2\tr(P)k^{2}}.\]
As $\tr(P)=\frac{l}{k},$ we have
\[|D_{P}|\leq \left(\frac{3R+\varepsilon}{\varepsilon}\right)^{2kl}\leq\left(\frac{3R+\varepsilon}{\varespilon}\right)^{2k^{2}\frac{t^{2}}{\varepsilon^{2}}}.\]
Note that, by maximality, we have that $D_{P}$ is $\varepsilon$-dense in $R\Ball(M_{k}(\CC)P,\|\cdot\|_{\infty})$ with respect to $\|\cdot\|_{\infty}.$

	Now choose an integer $l$ with $1\leq l\leq \frac{t^{2}}{\varepsilon^{2}}k$ so that $k\tr(1-P')=l$ and let $P\in G_{l}$ be such that
\[\|P-(1-P')\|_{\infty}\leq \varepsilon.\]
Thus
\[\|P'-(1-P)\|_{\infty}\leq \varepsilon,\]
so
\[\|A(1-P)-A'(1-P)\|_{\infty}\leq 2R\varepsilon+\|(A-A')P'\|_{\infty}\leq (2R+1)\varepsilon.\]
Choose a $C\in D_{P}$ so that
\[\|AP-C\|_{\infty}\leq\varepsilon,\]
since $CP=C$ we have
\[\|A-A'(1-P)-C\|_{\infty}\leq \|AP-C\|_{\infty}+\|A(1-P)-A'(1-P)\|_{\infty}\leq (2R+2)\varepsilon.\]
Since $A$ was arbitrary we see that
\[\Omega\subseteq_{(2R+2)\varepsilon,\|\cdot\|_{\infty}}\bigcup_{l=1}^{\lfloor{k\frac{t^{2}}{\varepsilon^{2}}\rfloor}}\bigcup_{P\in G_{l}}S(1-P)+D_{P}\]
and thus
\begin{align*}
K_{2(2R+2)\varepsilon}(\Omega,\|\cdot\|_{\infty})&\leq\sum_{l=1}^{\lfloor{k\frac{t^{2}}{\varepsilon^{2}}\rfloor}}\sum_{P\in G_{l}}|S||D_{P}|\\
&\leq k\frac{t^{2}}{\varepsilon^{2}}\left(\frac{C}{t}\right)^{2k^{2}\frac{t^{2}}{\varpesilon^{2}}(1-\frac{t^{2}}{\varepsilon^{2}})}\left(\frac{3R+\varepsilon}{\varepsilon}\right)^{2k^{2}\frac{t^{2}}{\varepsilon^{2}}}|S|\\
&=k\frac{t^{2}}{\varepsilon^{2}}\left(\frac{C}{t}\right)^{2k^{2}\frac{t^{2}}{\varepsilon^{2}}(1-\frac{t^{2}}{\varepsilon^{2}})}\left(\frac{3R+\varepsilon}{\varepsilon}\right)^{2k^{2}\frac{t^{2}}{\varepsilon^{2}}}K_{t}(\Omega,\|\cdot\|_{2})).
\end{align*}

\end{proof}

The relevant fact about Lemma \ref{L:switching} will be that
\[\lim_{t \to 0}\lim_{k\to \infty}\frac{1}{k^{2}}\log\left(k\frac{t^{2}}{\varepsilon^{2}}\left(\frac{C}{t}\right)^{2k^{2}\frac{t^{2}}{\varepsilon^{2}}(1-\frac{t^{2}}{\varepsilon^{2}})}\left(\frac{3R+\varepsilon}{\varepsilon}\right)^{2k^{2}\frac{t^{2}}{\varepsilon^{2}}}\right)=0\]
when $\varepsilon$ is fixed. We use this to show that the computation of $h(M:N)$ can be done by replacing $K_{\varepsilon}(\cdots,\|\cdot\|_{2})$ with $K_{\varepsilon}(\cdots,\|\cdot\|_{\infty}).$ More precisely, we have the following.
\begin{cor}\label{C:switching} Let $(M,\tau)$ be a tracial von Neumann algebra, and let $N\subseteq P$ be diffuse von Neumann subalgebras. Then
\[h(N:P)=h(N:P,\|\cdot\|_{\infty}).\]
\end{cor}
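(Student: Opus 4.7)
\emph{Proof proposal.} The direction $h(N:P) \leq h(N:P,\|\cdot\|_\infty)$ is immediate: on $M_k(\CC)$ with normalized trace we have $\|A\|_2 \leq \|A\|_\infty$, so any subset that is $\varepsilon$-dense with respect to $\|\cdot\|_\infty$ is also $\varepsilon$-dense with respect to $\|\cdot\|_2$. Extending coordinatewise to $(M_k(\CC))^F$ (where $\|A\|_\infty = \max_x\|A_x\|_\infty$ and $\|A\|_2 = (\sum_x\|A_x\|_2^2)^{1/2}$) loses at worst a factor of $\sqrt{|F|}$, which is absorbed after taking $\sup_{\varepsilon>0}$. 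The real content is the reverse inequality, which I extract directly from Lemma \ref{L:switching} and the limiting observation recorded just before the corollary.

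To prove $h(N:P,\|\cdot\|_\infty) \leq h(N:P)$, fix a self-adjoint $a\in N$ with diffuse spectrum and a microstate sequence $(A_k)$, finite $F \subseteq N_{sa}$, $G \subseteq P_{sa}$, and cutoff parameters $R, R'$ with $R_0 := \max_{x\in F} R_x$. Apply Lemma \ref{L:switching} coordinate by coordinate to $\Omega_k := \Xi_{A_k,R\vee R'}(F:G;m,\gamma,k) \subseteq (R_0\Ball(M_k(\CC),\|\cdot\|_\infty))^F$: given a $t$-net for $\Omega_k$ in $\|\cdot\|_2$, the small-rank-projection refinement of the lemma applied in each of the $|F|$ coordinates produces a $(2(2R_0+2)\varepsilon)$-net in $\|\cdot\|_\infty$ of cardinality at most
\[\left[k\tfrac{t^2}{\varepsilon^2}\left(\tfrac{C}{t}\right)^{2k^2\frac{t^2}{\varepsilon^2}(1-\frac{t^2}{\varepsilon^2})}\left(\tfrac{3R_0+\varepsilon}{\varepsilon}\right)^{2k^2\frac{t^2}{\varepsilon^2}}\right]^{|F|}K_t(\Omega_k,\|\cdot\|_2).\]
Taking logarithms, dividing by $k^2$, and sending $k\to\infty$, the first factor contributes $|F|\cdot E(t,\varepsilon,R_0)$ where $E(t,\varepsilon,R_0) = 2\tfrac{t^2}{\varepsilon^2}(1-\tfrac{t^2}{\varepsilon^2})\log(C/t) + 2\tfrac{t^2}{\varepsilon^2}\log\tfrac{3R_0+\varepsilon}{\varepsilon}$, and by the remark immediately following the statement of Lemma \ref{L:switching} we have $E(t,\varepsilon,R_0)\to 0$ as $t\to 0^+$.

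Consequently, for every $t > 0$ small,
\[K_{2(2R_0+2)\varepsilon}(\Xi_{(A_k)}(F:G;m,\gamma),\|\cdot\|_\infty) \leq |F|E(t,\varepsilon,R_0) + K_t(\Xi_{(A_k)}(F:G;m,\gamma),\|\cdot\|_2).\]
Taking the infimum over $m,\gamma$ and then over finite $G\subseteq Y$, the right-hand side is dominated by $|F|E(t,\varepsilon,R_0) + h_{(A_k)}(F:Y)$; letting $t \to 0^+$ gives $K_{2(2R_0+2)\varepsilon}(\Xi_{(A_k)}(F:Y),\|\cdot\|_\infty) \leq h_{(A_k)}(F:Y)$. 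Taking $\sup_{\varepsilon>0}$ and then $\sup$ over finite $F\subseteq X$ delivers $h(N:P,\|\cdot\|_\infty)\leq h(N:P)$. The only delicate point is the coordinatewise bookkeeping when converting $\|\cdot\|_2$-nets into $\|\cdot\|_\infty$-nets on $(M_k(\CC))^F$, but since $|F|$ is fixed and $k$-independent the error term still vanishes in the iterated $k\to\infty$, $t\to 0$ limits, making the rest of the argument routine.
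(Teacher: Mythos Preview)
Your proposal is correct and follows essentially the same strategy as the paper: reduce the nontrivial inequality $h(N:P,\|\cdot\|_\infty)\leq h(N:P)$ to Lemma~\ref{L:switching} and the observation that its error prefactor vanishes in the limit $t\to 0$. The only difference is packaging. The paper embeds $M_k(\CC)^F$ as block-diagonal matrices inside $B(\ell^2(\{1,\dots,k\}\times F))\cong M_{|F|k}(\CC)$, notes that under this embedding the two norms are related by $\|A\|_2=\sqrt{|F|}\,\|A\|_{L^2(\tr)}$, and then applies Lemma~\ref{L:switching} \emph{once} to the embedded microstate set as a subset of a single matrix algebra. You instead reach into the proof of Lemma~\ref{L:switching} and run its small-rank-projection construction separately in each of the $|F|$ coordinates, starting from one common $\|\cdot\|_2$-net $S$ for $\Omega_k$; this is what keeps $K_t(\Omega_k,\|\cdot\|_2)$ to the first power in your bound (a straight black-box application of the lemma to each coordinate projection $\pi_x(\Omega_k)$ would have produced $K_t(\Omega_k,\|\cdot\|_2)^{|F|}$ and failed). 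Both routes give the same exponent bookkeeping up to harmless factors of $|F|$, and both yield the result after the routine chain of infima and suprema you describe. The paper's block-diagonal trick is a bit cleaner since it lets you cite the lemma verbatim, whereas your version implicitly re-runs its proof; either way the content is identical.
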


\begin{proof} Let $F,G$ be finite sets of self-adjoint elements in $N,P$ with $W^{*}(F)\subseteq W^{*}(G)$ and fix cutoff parameters $R_{F}\in [0,\infty)^{F},R_{P}\in [0,\infty)^{P}.$
Fix an element $a\in N_{sa}$ so that $W^{*}(a)$ is diffuse and a sequence $(A_{k})_{k=1}^{\infty}$ of microstates for $a.$ Let $L=\sup_{k}\|A_{k}\|_{\infty}$
and $\varespilon>0$ and set
\[R=\max(\max_{a\in F}R_{F,a},\max_{b\in G}R_{P,b},L).\]
Let $C,t_{0}>0$ be as in the preceding Lemma for $\varepsilon>0.$ Note that we may regard
\[\Xi_{(A_{k}),R_{F}\vee R_{P}}(F:G;m,\gamma,k)\subseteq M_{k}(\CC)^{F}\subseteq B(\ell^{2}(\{1,\dots,k\}\times F)).\]
Under this identification, for any $A\in M_{k}(\CC)^{F}$ we have that $\|A\|_{2}$ (as defined in Section \ref{S:notation}) equals $\sqrt{|F|}\|A\|_{L^{2}(\tr_{\ell^{2}(\{1,\dots,k\}\times F)})}.$ Using these remarks it follows from Lemma \ref{L:volumepackingestimate} that for $0<t<t_{0}$ and every $m,k\in\NN,\gamma>0$ we have
\begin{align*}
K_{2(2R+2)\varepsilon}(\Xi_{(A_{k}),R_{F}\vee R_{P}}(F:G;m,\gamma,k),\|\cdot\|_{\infty})&\leq k\frac{t^{2}}{|F|\varepsilon^{2}}\left(\frac{C\sqrt{|F|}}{t}\right)^{2k^{2}\frac{t^{2}}{|F|\varepsilon^{2}}\left(1-\frac{t^{2}}{|F|\varepsilon^{2}}\right)}\left(\frac{3R+\varepsilon}{\varepsilon}\right)^{2k^{2}\frac{t^{2}}{|F|\varepsilon^{2}}}\\
&\times K_{t}(\Xi_{(A_{k}),R_{F}\vee R_{P}}(F:G;m,\gamma,k),\|\cdot\|_{2}).
\end{align*}
Taking $\log$ of both sides, dividing by $k^{2}$ and letting $k\to \infty$ and then taking the infimum over $m,\gamma$ shows that
\begin{align*}
K_{2(2R+2)\varepsilon}(\Xi_{(A_{k}),R_{F}\vee R_{P}}(F:G),\|\cdot\|_{\infty})&\leq 2\frac{t^{2}}{|F|\varepsilon^{2}}\left(1-\frac{t^{2}}{|F|^{2}\varepsilon^{2}}\right)\log\left(\frac{C\sqrt{|F|}}{t}\right)+2\frac{t^{2}}{\varepsilon^{2}|F|}\log\left(\frac{3R+\varepsilon}{\varepsilon}\right)\\
&+K_{t}(\Xi_{(A_{k}),R_{F}\vee R_{P}}(F:G),\|\cdot\|_{2})\\
&\leq  2\frac{t^{2}}{|F|\varepsilon^{2}}\left(1-\frac{t^{2}}{|F|^{2}\varepsilon^{2}}\right)\log\left(\frac{C\sqrt{|F|}}{t}\right)+2\frac{t^{2}}{\varepsilon^{2}|F|}\log\left(\frac{3R+\varepsilon}{\varepsilon}\right)\\
&+K_{t}(\Xi_{(A_{k}),R_{F}\vee R_{P}}(F:P),\|\cdot\|_{2})
\end{align*}
for all sufficiently small $t.$ Letting $t\to 0$ shows that
\[K_{2(2R+2)\varepsilon}(\Xi_{(A_{k}),R_{F}\vee R_{G}}(F:G),\|\cdot\|_{\infty})\leq h(F:P,\|\cdot\|_{2}).\]
Taking the infimum over $G$ and letting $\varepsilon\to 0$ proves that $h(F:P)\leq h(F:P,\|\cdot\|_{\infty}).$
We can now take the supremum over all $F$ to see that $h(N:P)\leq h(N:P,\|\cdot\|_{\infty}).$
Since it is trivial that $h(N:P,\|\cdot\|_{\infty})\leq h(N:P),$ the proof is complete.

\end{proof}

\section{Proof of  The Main Result}

For the proof of the main result we need some more terminology.  Let $I$ be a set and $ P\in \CC\ip{X_{i}:i\in I}\otimes_{\textnormal{alg}} \CC\ip{X_{i}:i\in I}^{\op}.$ For any $C^{*}$-algebra $A$ and $(a_{i})_{i\in I}\in (A_{sa})^{I},$ let $P(a_{i}:i\in I)\in A\otimes_{\textnormal{alg}} A^{\op}$
be the image of $P$ under the unique $*$-homomorphism sending $X_{i}\otimes 1$ to $a_{i}\otimes 1$ and $1\otimes X_{j}^{\op}$ to $1\otimes a_{j}^{\op}$ for $i,j\in I.$
 For a given $R\in [0,\infty)^{I}$ we set
 \[\|P\|_{R,\infty}=\sup_{a_{i}} \|P(a_{i}:i\in I)\|,\]
where the supremum is over all $(a_{i})_{i\in I}\in (A_{sa})^{I}$ where
\begin{itemize}
\item  $A$ is some $C^{*}$-algebra, \\
\item $\|a_{i}\|_{\infty}\leq R_{i}$ for all $i\in I,$\\
\item $A\otimes_{\textnormal{alg}} A^{\op}$ is endowed with the maximal tensor product norm.
\end{itemize}
 It is easy to see that $\|P\|_{R,\infty}$ is finite for every $P$ and that $\|P\|_{R,\infty}$ is a $C^{*}$-norm. We let \[C_{R}\ip{X_{i}\otimes X_{j}^{\op}:i,j\in I}\]
be the completion of $\CC\ip{X_{i}:i\in I}\otimes \CC\ip{X_{i}:i\in I}^{\op}$ under this norm, so
$C_{R}\ip{X_{i}\otimes X_{j}^{\op}:i,j\in I}$
is naturally a $C^{*}$-algebra. One can easily see that
\[C_{R}\ip{X_{i}\otimes X_{j}^{\op}:i,j\in I})\cong (\Large{*}_{i\in I}C([-R_{i},R_{i}]))\otimes_{\textnormal{max}}(\Large{*}_{i\in I}C([-R_{i},R_{i}])),\]
where the free product in question is the \emph{full} free product, but this be irrelevant for us. We only care about the universal property of $C_{R}\ip{X_{i}\otimes X_{j}^{\op}:i,j\in I},$ that given self-adjoints elements $a_{i}$ in some $C^{*}$-algebra $A$ with $\|a_{i}\|\leq R_{i},$  there exists a unique homomorphism
\[C_{R}\ip{X_{i}\otimes X_{j}^{\op}:i,j\in I}\to A\otimes_{\textnormal{max}}A^{\op}\]which sends  $X_{i}\otimes 1$ to $a_{i}\otimes 1$ and $1\otimes X_{j}^{\op}$ to $1\otimes a_{j}^{\op}$ for $i,j\in I.$

Let $(M,\tau)$ be a tracial von Neumann algebra and $\mathcal{H}$ an $M-M$ bimodule. For $x\in M\otimes_{\textnormal{alg}}M^{\op}$ and $\xi\in\mathcal{H},$ we let
\[x\# \xi=\sum_{j=1}^{k}a_{j}\xi b_{j}\]
if
\[x=\sum_{j=1}^{k}a_{j}\otimes b_{j}^{\op}.\]
Note that if $(x_{i})_{i\in I}\in (M_{sa})^{I}$ and  $R\in [0,\infty)^{I}$ is a cutoff constant for the $x_{i},$ then for any \[P\in \CC\ip{X_{i}:i\in I}\otimes_{\textnormal{alg}} \CC\ip{X_{i}:i\in I}^{\op}\]
 we have
\[\|P(x_{i}:i\in I)\#\xi\|\leq \|P\|_{R,\infty}\|\xi\|,\]
this is automatic from the definition of $\|P\|_{R,\infty}.$

Our goal is to show that if $N\subseteq M$ are von Neumann algebras, if $N$ has $1$-bounded entropy at most zero, and $N$ is regular in $M$ in a very weak sense, then $M$  has $1$-bounded entropy at most zero. To state our results nicely we introduce the following canonical $N$-$N$ submodule of $L^{2}(M).$
\begin{definition}\emph{Let $(M,\tau)$ be a tracial von Neumann algebra and let $N$ be a von Neumann subalgebra of $M.$ We define the} singular subspace of $L^{2}(M)$ over $N$\emph{ by}
\[\mathcal{H}_{s}(N\subseteq M)=\{\xi\in L^{2}(M):L^{2}(N\xi N)\mbox{ \emph{is disjoint from} } L^{2}(N)\otimes L^{2}(N^{op})\mbox{ \emph{as an $N$-$N$ bimodule}}\}.\]
\end{definition}In this definition, we are regarding $N$-$N$ bimodules as representations of $N\otimes_{\max}N^{op}$ and using the definition of disjointness introduced before Lemma \ref{L:disjointness}. In order to motivate the definition we prove the following proposition, which shows that $\mathcal{H}_{s}(N\subseteq M)$ contains all of the other weak versions of the normalizer we have discussed.

\begin{proposition}\label{P:weaknormalizer} Let $(M,\tau)$ be a tracial von Neumann algebra and let $N\subseteq M$ be a diffuse von Neumann subalgebra. Viewing $M\subseteq L^{2}(M,\tau)$ we have
\[q^{1}\mathcal{N}_{M}(N)\subseteq \mathcal{H}_{s}(N\subseteq M),\]
\[\mathcal{N}^{wq}_{M}(N)\subseteq \mathcal{H}_{s}(N\subseteq M).\]

\end{proposition}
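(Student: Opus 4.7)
My plan is to prove both inclusions by means of the disjointness criterion of Lemma \ref{L:disjointness}(ii): it suffices to show that for any $\eta$ in either $q^1\mathcal{N}_M(N)$ or $\mathcal{N}^{wq}_M(N)$, every bounded $N$-$N$ bimodular map $T\colon L^2(N\eta N) \to L^2(N)\otimes L^2(N^{op})$ vanishes. Throughout I identify $L^2(N)\otimes L^2(N^{op})$ with the Hilbert-Schmidt operators $HS(L^2(N))$, so that the bimodule action becomes $(a,b)\cdot\xi = L_a \xi R_b$ with $L_a,R_b$ left/right multiplication on $L^2(N)$.

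For the wq-normalizer inclusion, let $u\in\mathcal{N}^{wq}_M(N)$ and put $Q = u^*Nu \cap N$, which is diffuse. The inclusion $Q\subseteq u^*Nu$ yields an injective $*$-homomorphism $\phi\colon Q\to N$, $\phi(q)=uqu^*$, and the algebraic identity $uq = \phi(q)u$ in $M$. For any bounded bimodular $T$, bimodularity and this identity give $\phi(q)\xi = \xi q$ in $HS$ for all $q\in Q$, where $\xi := T(u)$. Taking adjoints one obtains $\xi^*\phi(q) = q\xi^*$, and composing, $q\xi^*\xi = \xi^*\phi(q)\xi = \xi^*\xi q$, so the positive trace-class operator $\xi^*\xi$ commutes with the left action $L|_Q$ on $L^2(N)$. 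Any nonzero eigenvalue of $\xi^*\xi$ has a finite-dimensional eigenspace, which is then an $L|_Q$-invariant finite-dimensional subspace of $L^2(N)$; but a diffuse von Neumann algebra admits no nonzero finite-dimensional normal representation (the image would be both diffuse and finite-dimensional, hence $\{0\}$). So $\xi^*\xi = 0$, thus $\xi=0$, and cyclicity of $u$ in $L^2(NuN)$ forces $T=0$.

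For the $q^1$-normalizer inclusion, let $x\in q^1\mathcal{N}_M(N)$ with $xN\subseteq\sum_{j=1}^n Nx_j$ (taking the $x_j$ in $M$ as is standard). I first replace each $x_j$ by its orthogonal projection $\tilde x_j$ onto $L^2(NxN)$: since the left $N$-action on $L^2(M)$ is by unitaries when restricted to unitaries of $N$, the subspace $L^2(NxN)^\perp$ is left-$N$-invariant, so the decomposition $xn = \sum_j n_j(n) x_j$ projects to $xn = \sum_j n_j(n)\tilde x_j$; thus I may assume $x_j\in L^2(NxN)$. Given bimodular $T$ with $\xi = T(x)$ and $\xi_j = T(x_j)$, one obtains $\xi n = \sum_j n_j(n)\xi_j$ for every $n\in N$, placing the right-$N$-orbit of $\xi$ inside the finitely generated left-$N$-submodule $\sum_j N\xi_j$ of $HS$. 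I then plan to invoke a Pimsner-Popa basis of the Hilbert left-$N$-module $\overline{xN}$ (whose finite generation is exactly the $q^1$ hypothesis) to write $x = \sum_k e_k E_N(e_k^*x)$, whence $\xi = \sum_k T(e_k)E_N(e_k^*x)$. Combining this reconstruction with the boundedness-of-$T$ inequalities $\Tr(L_c \xi\xi^*)\leq C\tau(c)$ and $\Tr(L_c \xi^*\xi) \leq C\tau(c)$ for $c\in N_+$, the finite-generation condition on $\xi N$ should force all nonzero spectral projections of $\xi^*\xi$ to define finite-dimensional $L|_N$-invariant subspaces of $L^2(N)$, which vanish by the same diffuseness argument as in the wq case.

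The main obstacle is the $q^1$ case: the wq argument is rigid because the algebraic identity $uq = \phi(q)u$ translates cleanly into the commutation $\xi^*\xi \in L(Q)'$, whereas in the $q^1$ case one has only a finite-generation statement $\xi N \subseteq \sum N\xi_j$, not a commutation. Extracting trace-class rigidity from this weaker hypothesis will likely require the Pimsner-Popa / basic-construction perspective and the characterization of $q^1\mathcal{N}_M(N)$ as those $x\in M$ whose associated right-module projection in $\langle M, e_N\rangle$ has finite canonical trace.
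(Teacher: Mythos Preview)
Your wq-normalizer argument is correct and takes a genuinely different route from the paper's. From $uq=\phi(q)u$ and bimodularity you derive $L_{\phi(q)}\xi=\xi R_q$ for $\xi=T(u)\in HS(L^2(N))$; taking adjoints and composing gives $[\xi^*\xi,R_q]=0$ (so it is the \emph{right} action of $Q$, not the left, that commutes with $\xi^*\xi$---but the rest of the argument is unchanged, since $R|_Q$ is still a faithful normal antirepresentation of the diffuse algebra $Q$). The paper instead picks unitaries $v_k\in u^*Nu\cap N$ with $v_k\to0$ weakly, writes $v_k=u^*w_ku$, and computes directly
\[
\|T(u)\|_2^2=\langle w_kT(u)v_k^*,\,T(u)\rangle\to0,
\]
using that $\langle w_k\zeta v_k^*,\zeta\rangle\to0$ for every $\zeta$ in the coarse bimodule. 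Your compactness argument avoids this mixing computation; the paper's version makes the mixing nature of the coarse bimodule explicit.

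For the $q^1$-normalizer, your proposal has a genuine gap. The relation $\xi R_n=\sum_j L_{n_j(n)}\xi_j$ does \emph{not} yield any commutation $[\xi^*\xi,R_n]=0$ or $[\xi^*\xi,L_n]=0$: the coefficients $n_j(n)$ mix the several $\xi_j$'s and depend on $n$, so there is no intertwining identity analogous to $uq=\phi(q)u$. The inequalities $\Tr(L_c\xi\xi^*)\le C\tau(c)$ you invoke have no evident source, and even granting them it is unclear how to produce $N$-invariant eigenspaces of $\xi^*\xi$. The Pimsner--Popa/basic-construction detour you sketch is not carried out and is heavier than necessary. The paper's argument is one line and bypasses all of this: from $xN\subseteq\sum_j Nx_j$ one gets $NxN\subseteq\sum_j Nx_j$, so $L^2(NxN)$ sits inside the finitely generated left-$N$-module $\overline{\sum_j Nx_j}$ and hence has finite one-sided $N$-dimension; since $N$ is diffuse, no nonzero $N$-$N$ sub-bimodule of the coarse has this property, so $L^2(NxN)$ is disjoint from the coarse. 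If you insist on staying in your $HS$ framework, the honest route is to first pass to this finite-dimension picture (realizing the image of $T$ inside $pL^2(N)^{\oplus m}$) and then run a matrix version of your wq trace-class argument---but that is just a repackaging of the paper's observation, and you should simply quote it.
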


\begin{proof}

 Given $x\in q^{1}\mathcal{N}_{M}(N),$ choose $x_{1},\dots,x_{k}\in M$ so that
\[xN\subseteq\sum_{j=1}^{n}Nx_{j}.\]
Notice that
\[NxN\subseteq\sum_{j=1}^{n}Nx_{j}\]
and this clearly implies that  $L^{2}(NxN)$ has finite right dimension over $N.$ Thus any $N$-$N$ subbimodule of $L^{2}(NxN)$ must have finite right dimension over $N$ and, as $N$ is diffuse, clearly no such subbimodule can be embedded as an $N$-$N$ bimodule into $L^{2}(N)\otimes L^{2}(N^{op})$ unless it is zero.

	For the second statement, let $u\in \mathcal{N}^{wq}_{M}(N),$ and let $T\colon L^{2}(NuN)\to L^{2}(N)\otimes L^{2}(N^{op})$
be a bounded $N$-$N$ bimodular map. Since $u^{*}Nu\cap N$ is diffuse, we can find a sequence $v_{k}\in \mathcal{U}(u^{*}Nu\cap N)$ so that $v_{k}\to 0$ in the weak operator topology. Let $v_{k}=u^{*}w_{k}u$
with $w_{k}\in N$ and observe that
\begin{align*}
\|T(u)\|_{2}^{2}=\|w_{k}T(u)\|_{2}^{2}&=\ip{w_{k}T(u),w_{k}T(u)}\\
&=\ip{w_{k}T(u),T(w_{k}u)}\\
&=\ip{w_{k}T(u),T(uv_{k})}\\
&=\ip{w_{k}T(u)v_{k}^{*},T(u)}.
\end{align*}
Because $v_{k}$ and $w_{k}$ tend to $0$ weakly, it is easy to see that for every $\xi\in L^{2}(N)\otimes L^{2}(N^{op})$ we have $\ip{w_{k}\xi v_{k}^{*},\xi}\to 0.$ Applying this observation to the above string of equalities shows that $T(u)=0.$ Since $T$ is $N$-$N$ bimodular we have that $T=0$ and this implies that $u\in \mathcal{H}_{s}(N\subseteq M)$ by Lemma \ref{L:disjointness}.

\end{proof}
For later use, let us note the following.

\begin{proposition}\label{P:Lebesgue} Let $(M,\tau)$ be a tracial von Neumann algebra and let $N$ be a von Neumann subalgebra of $M.$ Then
\[\mathcal{H}_{s}(N\subseteq M)\]
is a closed $N$-$N$ submodule of $L^{2}(M,\tau).$ Moreover, if we let
\[\mathcal{H}_{a}=L^{2}(M)\ominus (\mathcal{H}_{s}(N\subseteq M)),\]
then $\mathcal{H}_{a}$ embeds into an infinite direct sum of $L^{2}(N)\otimes L^{2}(N^{op})$ as an $N$-$N$ bimodule.

\end{proposition}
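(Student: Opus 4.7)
The plan is to produce a ``multiplicity-free'' description of the alternate side of the decomposition. Introduce
\[
\mathcal{H}_{a}' = \overline{\Span}\bigl\{ T(\eta) : T \in \Hom_{N\otimes_{\max} N^{\op}}(L^{2}(N)\otimes L^{2}(N^{\op}), L^{2}(M)),\ \eta \in L^{2}(N)\otimes L^{2}(N^{\op}) \bigr\}.
\]
This is manifestly a closed $N$-$N$ subbimodule of $L^{2}(M)$. The key step will be to show $\mathcal{H}_{s}(N\subseteq M) = (\mathcal{H}_{a}')^{\perp}$. Granting this, $\mathcal{H}_{s}(N\subseteq M)$ is automatically closed, and it is $N$-$N$ invariant because orthogonal complements of $N$-$N$ subbimodules in $L^{2}(M)$ are $N$-$N$ subbimodules (using the trace identity $\langle a\xi b,\eta\rangle_{\tau} = \langle \xi,a^{*}\eta b^{*}\rangle_{\tau}$); also $\mathcal{H}_{a} = \mathcal{H}_{a}'$, placing us in good shape for the embedding claim.

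The inclusion $\mathcal{H}_{a}' \subseteq \mathcal{H}_{s}^{\perp}$ is the easier half: given $\xi \in \mathcal{H}_{s}(N\subseteq M)$ and an $N$-$N$ bimodular $T$ as above, the adjoint $T^{*}$ restricts on $L^{2}(N\xi N)$ to an $N$-$N$ bimodular map, which must vanish by the defining disjointness together with Lemma~\ref{L:disjointness}. In particular $T^{*}\xi = 0$, so $\xi \perp T(\eta)$ for every $\eta$. For the reverse $(\mathcal{H}_{a}')^{\perp} \subseteq \mathcal{H}_{s}$, take $\xi \in (\mathcal{H}_{a}')^{\perp}$ and a bounded $N$-$N$ bimodular $S\colon L^{2}(N\xi N) \to L^{2}(N)\otimes L^{2}(N^{\op})$. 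Extend $S$ to all of $L^{2}(M)$ by precomposing with the projection onto $L^{2}(N\xi N)$, obtaining $\tilde{S}$; then $\tilde{S}^{*}$ has range in $\mathcal{H}_{a}'$, so $\langle \xi,\tilde{S}^{*}\eta\rangle = 0$ for every $\eta$, forcing $\tilde{S}\xi = S\xi = 0$. Bimodularity then yields $S(a\xi b) = aS(\xi)b = 0$ for all $a,b\in N$, so $S$ vanishes on $L^{2}(N\xi N)$; another application of Lemma~\ref{L:disjointness} shows $\xi\in \mathcal{H}_{s}(N\subseteq M)$.

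For the embedding of $\mathcal{H}_{a}$ into an infinite direct sum of copies of $L^{2}(N)\otimes L^{2}(N^{\op})$, I would use Zorn's lemma to select a maximal family $\{\mathcal{K}_{\alpha}\}_{\alpha \in A}$ of pairwise orthogonal nonzero $N$-$N$ subbimodules of $\mathcal{H}_{a}$, each of which embeds as an $N$-$N$ bimodule into $L^{2}(N)\otimes L^{2}(N^{\op})$. Set $\mathcal{K} = \bigoplus_{\alpha} \mathcal{K}_{\alpha}$. Suppose for contradiction that $\mathcal{K}\neq \mathcal{H}_{a}$; then $\mathcal{K}^{\perp} \cap \mathcal{H}_{a}$ is a nonzero $N$-$N$ subbimodule, and any nonzero $\xi$ in it satisfies $\xi \notin \mathcal{H}_{s}(N\subseteq M)$. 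By Lemma~\ref{L:disjointness} there is a nonzero $N$-$N$ bimodular $S\colon L^{2}(N\xi N) \to L^{2}(N)\otimes L^{2}(N^{\op})$; polar decomposing $S = U|S|$ and using the fact that $U$ intertwines the $N$-$N$ actions produces an $N$-$N$ subbimodule of $L^{2}(N\xi N) \subseteq \mathcal{K}^{\perp}\cap \mathcal{H}_{a}$ isometrically bimodule-isomorphic to a submodule of $L^{2}(N)\otimes L^{2}(N^{\op})$. This contradicts maximality of $\{\mathcal{K}_{\alpha}\}$, so $\mathcal{K} = \mathcal{H}_{a}$, yielding the desired embedding (padding $A$ with trivial zero summands if necessary to realize the target as a genuinely infinite direct sum).

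The argument is entirely representation-theoretic and uses no special structure of $M$ beyond the trace. The main nuisance rather than a serious obstacle is keeping track of what ``$N$-$N$ bimodular'' means as a representation of $N\otimes_{\max} N^{\op}$, so that Lemma~\ref{L:disjointness} applies and adjoints/polar decompositions stay within the bimodule category; these are standard when working with $L^{2}$ of a tracial von Neumann algebra.
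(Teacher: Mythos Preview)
Your proof is correct and follows essentially the same approach as the paper. The paper phrases the first part as $\mathcal{H}_{s}(N\subseteq M)=\bigcap_{T}\ker(T)$ with $T$ ranging over $\Hom_{N\text{-}N}(L^{2}(M),L^{2}(N)\otimes L^{2}(N^{\op}))$, which is exactly your identity $\mathcal{H}_{s}=(\mathcal{H}_{a}')^{\perp}$ read through adjoints; for the second part the paper runs Zorn on cyclic subbimodules $L^{2}(N\xi_{j}N)$ rather than arbitrary subbimodules $\mathcal{K}_{\alpha}$, but the contradiction via polar decomposition of a nonzero intertwiner is identical.
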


\begin{proof} For Hilbert $N$-$N$ bimodules $\mathcal{H},\mathcal{K},$ we let $\Hom_{N-N}(\mathcal{H},\mathcal{K})$ be all bounded, $N$-$N$ bimodular maps $\mathcal{H}\to\mathcal{K}.$ Then by Lemma \ref{L:disjointness}
\[\mathcal{H}_{s}(N\subseteq M)=\bigcap_{T\in \Hom_{N-N}(L^{2}(M),L^{2}(N)\otimes L^{2}(N^{op}))}\ker(T)\]
and is thus a closed $N$-$N$ submodule of $L^{2}(M).$

For the second part, let $(\xi_{j})_{j\in J}$ be a maximal family of vectors in $\mathcal{H}_{a}$ so that
\[L^{2}(N\xi_{j}N)\perp L^{2}(N\xi_{k}N)\mbox{ if $j,k\in J$ and $j\ne k$},\]
\[L^{2}(N\xi_{j}N)\leq L^{2}(N)\otimes L^{2}(N^{op})\mbox{ as $N$-$N$ bimodules, for all $j\in J.$}\]
If
\[\zeta\in \mathcal{H}_{a}\ominus \left(\sum_{j\in J}L^{2}(N\xi_{j}N)\right),\]
then we must have that
\[L^{2}(N\zeta N)\perp \sum_{j\in J}L^{2}(N\xi_{j}N).\]
We claim that $L^{2}(N\zeta N)$ is disjoint from $L^{2}(N)\otimes L^{2}(N^{op})$ as an $N$-$N$ bimodule. Indeed, if
\[T\in \Hom_{N-N}(L^{2}(N\zeta N),L^{2}(N)\otimes L^{2}(N^{op}))\]
 and $T=U|T|$ is the polar decomposition, then $L^{2}(N|T|(\zeta)N)$ embeds into $L^{2}(N)\otimes L^{2}(N^{op})$ as an $N$-$N$ bimodule via $U$. Moreover $L^{2}(N|T|(\zeta )N)\perp L^{2}(N\xi N)$ so by maximality $|T|(\zeta)=0$ and so $T(\zeta)=0.$ By $N$-$N$ bimodularity we know that $T=0.$ So $L^{2}(N\zeta N)$ is disjoint from $L^{2}(N)\otimes L^{2}(N^{op})$ as an $N$-$N$ bimodule and thus
\[\zeta\in \mathcal{H}_{a}\cap \mathcal{H}_{s}(N\subseteq M)=\{0\},\]
which proves the proposition.

\end{proof}

\begin{definition}\emph{Let $(M,\tau)$ be a tracial von Neumann algebra and let $N$ be a von Neumann subalgebra of $M.$ Define the} spectral normalizing algebra of $N$ inside $M$\emph{ to be $W^{*}(\mathcal{H}_{s}(N\subseteq M)).$ In general, for all ordinals $\alpha$ define the von Neumann algebras $N_{\alpha}$ by transfinite recursion as follows:}
\[N_{0}=N,\]
\[N_{\alpha}=W^{*}(\mathcal{H}_{s}(N_{\alpha-1}\subseteq M))\mbox{ \emph{if $\alpha$ is a sucessor ordinal}},\]
\[N_{\alpha}=\overline{\bigcup_{\alpha'<\alpha}N_{\alpha'}}^{SOT} \mbox{\emph{ if $\alpha$ is a limit ordinal}}.\]
\emph{We call $N_{\alpha}$ the step-$\alpha$-spectral normalizing algebra of $N$ inside $M.$ We say that $N\subseteq M$ is} spectrally regular of step $\alpha$ \emph{if $N_{\alpha}=M.$}
\end{definition}
Similarly, one can define what it means for a subalgebra $N\subseteq M$ to be regular, quasi-regular, one-sided quasi-regular etc of step $\alpha.$ Of all these notions, being spectrally regular of step $\alpha$ for some ordinal $\alpha$ is the weakest. We now prove Theorem \ref{T:mainintro} after stating it in this new language. If $A$ is a $*$-algebra, we define $\widetidle{x}$ for $x\in A\otimes_{\alg}A^{op}$ by saying that $(a\otimes b^{\op})^{\widetilde{}}=b^{*}\otimes (a^{*})^{op}$
and extending by linearity.  We leave it as an exercise to verify that if $I$ is any set, and $P\in \CC\ip{X_{i}\otimes X_{j}^{\op}:i,j\in I}$ then for any $R\in [0,\infty)^ {I}$ we have $\|P\|_{R,\infty}=\|\widetidle{P}\|_{R,\infty}.$
We will need the definition of  $1$-bounded entropy with respect to unbounded generators. We start with the definition of the microstates space with respect to unbounded operators.
%We refer the reader to Appendix \ref{S:unbounded} for the appropriate definition.
\begin{definition}\label{D:measuremicrostates}\emph{Let $(M,\tau)$ be a tracial von Neumann algebra, and let $(x_{i})_{i\in I}$ a  finite collection of self-adjoint measurable operators affiliated to $(M,\tau).$ For natural numbers $m,k,$  positive real numbers $\gamma,R,\eta,$ and a finite $F\subseteq C_{b}(\RR,\RR)$ let}
\[\Gamma^{\eta}_{R}((x_{i})_{i\in I};F,m,\gamma,k),\]
\emph{be the set of all $X\in M_{k}(\CC)_{sa}^{I}$ so that}
\[(\phi(X_{i}))_{i\in I,\phi\in F}\in\Gamma((\phi(x_{i}))_{i\in I,\phi\in F};m,\gamma,k)\]
\emph{and}
\[\max_{i\in I}\tr(\chi_{[R,\infty)}(|X_{i}|))<\eta.\]
\emph{Given a finite subset $G$ of self-adjoint measurable operators affiliated to $(M,\tau)$ we let} $\Gamma^{\eta}_{R}((x_{i})_{i\in I}:G;F,m,\gamma,k)$
\emph{be the set of all $C\in M_{k}(\CC)_{sa}^{I}$ so that there is a $B\in M_{k}(\CC)_{sa}^{G}$ with}
\[(C,B)\in \Gamma^{\eta}_{R}((x_{i})_{i\in  I},G;F,m,\gamma,k)).\]
\emph{Given $a\in M$ with diffuse spectrum and $A\in M_{k}(\CC)$ we let} $\Xi^{\eta}_{A,R}((x_{i})_{i\in I}:G;F,m,\gamma,k)$
\emph{be the set of all $C\in M_{k}(\CC)^{I}$ so that} $(C,A)\in\Gamma^{\eta}_{R}((x_{i})_{i\in I},a:G;F,m,\gamma,k).$
\end{definition}
Recall that if $(M,\tau)$ is a tracial von Neumann algebra, then the \emph{measure topology} on $M$ is the unique vector space topology defined by saying that the sets
\[U(\varepsilon)=\{x\in M:\mbox{ there is a projection $p\in M$ with } \tau(p)\geq 1-\varepsilon,\|px\|_{\infty}<\varepsilon\}, \mbox{ $\varespilon\in (0,\infty)$,}\]
form a basis of neighborhoods of zero. Let $I$ be a finite set If $\Omega\subseteq M_{n}(\CC)^{I},$ we say that $S\subseteq \Omega$ is a $\varepsilon$-dense subset with respect to the measure topology if for all $A\in\Omega,$ there is a $B\in S$ and a projection $P\in M_{n}(\CC)^{I}$ so that for all $i\in I$
\[\|P_{i}(A_{i}-B_{i})\|_{\infty}<\varepsilon,\]
\[\tau(P_{i})\geq 1-\varepsilon.\]
We will write  $\Omega\subseteq_{\varepsilon,\textnormal{meas}}S$ to mean that $S$ is $\varespilon$-dense in $\Omega$ with respect to the measure topology. We let $K_{\varepsilon}(\Omega,\mbox{meas})$ be the smallest cardinality of an $\varepsilon$-dense subset of $\Omega$ with respect to the measure topology.

\begin{definition}\label{D:1bddentrunbddop1}\emph{Let $(M,\tau)$ be a tracial von Neumann algebra,  let $(x_{i})_{i\in I}$ be collections of self-adjoint measurable operators affiliated to $(M,\tau)$ and $G\subseteq M_{sa}.$  Fix $a\in M$ with diffuse spectrum and let $(A_{k})_{k=1}^{\infty}$ be a sequence of microstates for $a.$  For $m\in\NN,\gamma,R,\eta,\varepsilon>0$ and a finite $I_{0}\subseteq I$,$G_{0}\subseteq G$,$F\subseteq C_{b}(\RR,\RR)$ define}
\[K_{\varepsilon}(\Xi_{(A_{k})_{k=1}^{\infty},R}^{\eta}((x_{i})_{i\in I_{0}}:G_{0};F,m,\gamma),\mbox{\emph{meas}})=\limsup_{k\to\infty}\frac{1}{k^{2}}\log K_{\varepsilon}(\Xi_{(A_{k})_{k=1}^{\infty},R}^{\eta}((x_{i})_{i\in I}:G_{0};F,m,\gamma,k),\mbox{\emph{meas}}),\]
\[K_{\varepsilon}(\Xi_{(A_{k})_{k=1}^{\infty},R}^{\eta}((x_{i})_{i\in I}:G_{0}),\mbox{\emph{meas}})=\inf_{\substack{\textnormal{$F\subseteq C_{b}(\RR)$ finite},\\ m\in\NN,\\ \gamma>0}}K_{\varepsilon}(\Xi_{(A_{k})_{k=1}^{\infty},R}^{\eta}((x_{i})_{i\in I_{0}}:G_{0};F,m,\gamma),\mbox{\emph{meas}}),\]
\[K_{\varespilon}(\Xi_{(A_{k})_{k=1}^{\infty},R}^{\eta}((x_{i})_{i\in I}:G),\mbox{\emph{meas}})=\inf_{G_{0}\subseteq G\textnormal{ finite }}K_{\varepsilon}(\Xi_{(A_{k})_{k=1}^{\infty},R}^{\eta}((x_{i})_{i\in I}:G_{0}),\mbox{\emph{meas}})\]
\[K_{\varepsilon}(\Xi_{(A_{k})_{k=1}^{\infty}}^{\eta}((x_{i})_{i\in I_{0}}:G),\mbox{\emph{meas}})=\sup_{R>0}K_{\varepsilon}(\Xi_{(A_{k})_{k=1}^{\infty},R}^{\eta}((x_{i})_{i\in I}:G),\mbox{\emph{meas}}),\]
\[K_{\varepsilon}(\Xi_{(A_{k})_{k=1}^{\infty}}((x_{i})_{i\in I_{0}:}G),\mbox{\emph{meas}})=\inf_{\eta>0}K_{\varepsilon}(\Xi_{(A_{k})_{k=1}^{\infty}}^{\eta}((x_{i})_{i\in I_{0}}:G),\mbox{\emph{meas}}),\]
\[h(((x_{i})_{i\in I_{0}}:G),\mbox{\emph{meas}})=\sup_{\varepsilon>0}K_{\varepsilon}(\Xi_{(A_{k})_{k=1}^{\infty}}^{\eta}((x_{i})_{i\in I_{0}}:G),\mbox{\emph{meas}}),\]
\[h((x_{i})_{i\in I}:G,\mbox{\emph{meas}})=\sup_{\textnormal{finite } I_{0}\subseteq I}h((x_{i})_{i\in I_{0}}:G),\mbox{\emph{meas}}).\]

\end{definition}
As in the case of $h(N:M),$ we will abuse notation and use $h((x_{i})_{i\in I}:M,\mbox{meas})$
for $h((x_{i})_{i\in I}:M_{sa},\mbox{meas}).$
It is shown in Appendix \ref{S:unbounded} that $h((x_{i})_{i\in I}:M,\mbox{meas})=h(W^{*}((x_{i})_{i\in I}):M)$
and we will use this in the proof of Theorem \ref{T:mainintro}. We need the following elementary facts about almost containment in the measure topology, whose proofs are entirely direct and will be left to the reader.

\begin{proposition}\label{P:itseasymeas} Let $I$ be a finite set and $k\in \NN.$ Suppose that $\Xi_{j}\subseteq M_{k}(\CC)^{I},j=1,2,3$ and $\varepsilon,\delta>0.$We then have the following properties of almost containment in the measure topology:

(a)If $\Xi_{1}\subseteq_{\varespilon,\textnormal{meas}}\Xi_{2}\mbox{ and } \Xi_{2}\subseteq_{\delta,\textnormal{meas}}\Xi_{3},$
then $\Xi_{1}\subseteq_{\varepsilon+\delta,\textnormal{meas}}\Xi_{3}.$

(b): If  $\Xi_{1}\subseteq_{\varespilon,\textnormal{meas}}\Xi_{2},$ then we have \[K_{2(\varepsilon+\delta)}(\Xi_{1},\textnormal{meas})\leq K_{\delta}(\Xi_{2},\textnormal{meas}).\]

(c):  If $\Xi_{1}\subseteq_{\varepsilon,\|\cdot\|_{2}}\Xi_{2},$ then $\Xi_{1}\subseteq_{\sqrt{\varepsilon},\textnormal{meas}}\Xi_{2}.$

\end{proposition}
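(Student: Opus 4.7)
The plan is to verify each of (a), (b), (c) by direct manipulation of projections, using the standard facts that $\tau(P \wedge Q) \geq \tau(P) + \tau(Q) - 1$ and Chebyshev's inequality for the normalized trace. None of these should present a real obstacle; the only mildly delicate point is the factor of $2$ in part (b), which comes from the need to replace an almost-dense set by one actually contained in $\Xi_1$.

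For (a), given $A \in \Xi_1$, pick $B \in \Xi_2$ and projections $P_i$ witnessing the first almost-containment, then pick $C \in \Xi_3$ and projections $Q_i$ witnessing the second. Set $R_i = P_i \wedge Q_i$; then $\tau(R_i) \geq \tau(P_i) + \tau(Q_i) - 1 \geq 1-(\varepsilon+\delta)$, and since $R_i \leq P_i$ and $R_i \leq Q_i$,
\[
\|R_i(A_i - C_i)\|_\infty \leq \|R_i(A_i - B_i)\|_\infty + \|R_i(B_i - C_i)\|_\infty < \varepsilon + \delta.
\]

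For (b), let $S \subseteq \Xi_2$ be $\delta$-dense in the measure topology with $|S| = K_\delta(\Xi_2, \textnormal{meas})$. By (a), $S$ is $(\varepsilon+\delta)$-dense in $\Xi_1$ in the measure topology, but $S$ need not be contained in $\Xi_1$. For each $B \in S$ for which there exists some $A \in \Xi_1$ that is $(\varepsilon+\delta)$-close to $B$ (in the measure sense), select one such $A$ and collect these into $S' \subseteq \Xi_1$; so $|S'| \leq |S|$. Given any $A \in \Xi_1$, find $B \in S$ within $(\varepsilon+\delta)$ of $A$, and then $A' \in S'$ within $(\varepsilon+\delta)$ of $B$; by part (a), $A'$ is within $2(\varepsilon+\delta)$ of $A$ in the measure topology, which yields $K_{2(\varepsilon+\delta)}(\Xi_1, \textnormal{meas}) \leq |S'| \leq K_\delta(\Xi_2, \textnormal{meas})$.

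For (c), suppose $A \in \Xi_1$ and pick $B \in \Xi_2$ with $\|A - B\|_2 \leq \varepsilon$, so that $\|A_i - B_i\|_2 \leq \varepsilon$ for each $i \in I$. Set $P_i = \chi_{[0,\sqrt{\varepsilon}]}(|A_i - B_i|)$. By construction $\|P_i(A_i - B_i)\|_\infty \leq \sqrt{\varepsilon}$, while Chebyshev's inequality gives
\[
\tau(1 - P_i) = \tau\bigl(\chi_{(\sqrt{\varepsilon}, \infty)}(|A_i - B_i|)\bigr) \leq \frac{1}{\varepsilon} \tau(|A_i - B_i|^2) \leq \varepsilon,
\]
so $\tau(P_i) \geq 1 - \varepsilon \geq 1 - \sqrt{\varepsilon}$ (assuming $\varepsilon \leq 1$; otherwise the statement is vacuous). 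This witnesses $A \subseteq_{\sqrt{\varepsilon}, \textnormal{meas}} \Xi_2$. All three facts are thus routine, and the only even mildly interesting step is the doubling in (b).
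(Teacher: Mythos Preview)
The paper itself gives no proof of this proposition (it states that the proofs ``are entirely direct and will be left to the reader''), so there is nothing to compare against; your direct arguments are exactly the sort the author intends.

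Parts (a) and (b) are correct as written. In part (c) there is one small slip: with $P_i=\chi_{[0,\sqrt{\varepsilon}]}(|A_i-B_i|)$ you do not in general get $\|P_i(A_i-B_i)\|_\infty\le\sqrt{\varepsilon}$, because $P_i$ is a spectral projection of $|A_i-B_i|=\bigl((A_i-B_i)^*(A_i-B_i)\bigr)^{1/2}$ and hence controls $\|(A_i-B_i)P_i\|_\infty$, not $\|P_i(A_i-B_i)\|_\infty$. (Concretely, for $T=\begin{psmallmatrix}0&1\\0&0\end{psmallmatrix}$ one has $|T|=\begin{psmallmatrix}0&0\\0&1\end{psmallmatrix}$, so $P=\chi_{[0,c]}(|T|)=\begin{psmallmatrix}1&0\\0&0\end{psmallmatrix}$ for $c<1$, yet $PT=T$.) The fix is immediate: take instead $P_i=\chi_{[0,\sqrt{\varepsilon}]}\bigl(|(A_i-B_i)^*|\bigr)$; then $\|P_i(A_i-B_i)\|_\infty^2=\|P_i|(A_i-B_i)^*|^2P_i\|_\infty\le\varepsilon$, and the Chebyshev bound on $\tau(1-P_i)$ is unchanged since $\tau(|T^*|^2)=\tau(|T|^2)$. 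With that adjustment the argument is complete.
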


\begin{theorem}\label{T:main} Let $(M,\tau)$ be a tracial von Neumann algebra. If $N$ is a diffuse von Neumann subalgebra of $M,$ then $h(W^{*}(\mathcal{H}_{s}(N\subseteq M)):M)=h(N:M).$
\end{theorem}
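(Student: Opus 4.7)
The easy inequality $h(N:M) \le h(W^{*}(\mathcal{H}_{s}(N\subseteq M)):M)$ is immediate from Property~1, since $N \subseteq W^{*}(\mathcal{H}_{s}(N\subseteq M))$. All the content is in the reverse inequality, so set $P = W^{*}(\mathcal{H}_{s}(N\subseteq M))$ and aim to prove $h(P:M) \le h(N:M)$.

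The plan is to reduce to a finite-subset statement and then execute a microstates packing estimate driven by the disjointness that defines $\mathcal{H}_{s}$. Since $h(\,\cdot\,:M)$ is defined as a supremum over finite subsets of generators, and $P$ is generated (via polar decomposition and bounded Borel functional calculus) by bounded operators built from elements of $\mathcal{H}_{s}(N\subseteq M)$, it suffices to fix finitely many bounded self-adjoint $\xi_{1},\dots,\xi_{n} \in \mathcal{H}_{s}(N\subseteq M)$ and show
\begin{equation*}
h(N \cup \{\xi_{1},\dots,\xi_{n}\} : M) \le h(N:M).
\end{equation*}
(For unbounded $\xi \in \mathcal{H}_{s}(N\subseteq M) \cap L^{2}(M)$ one works through the measure-topology formulation of Definition~\ref{D:1bddentrunbddop1} and uses the identification with the ordinary $h$ from the appendix; the spectral data $u_{\xi}$ and $f(|\xi|)$ still belong to the singular $N$-$N$ subbimodule generated by $\xi$.)

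The heart of the proof is the following matricial estimate. Fix finite $F \subseteq N_{sa}$ with a cutoff parameter $R_{F}$, a cutoff $R$ for $\xi_{1},\dots,\xi_{n}$, and $\varepsilon > 0$. By the definition of $\mathcal{H}_{s}(N\subseteq M)$, each $L^{2}(N\xi_{j}N)$ is disjoint from $L^{2}(N)\otimes L^{2}(N^{\op})$ as an $N$-$N$ bimodule, so Lemma~\ref{L:disjointness}(iv), applied to the $C^{*}$-algebra $N \otimes_{\max} N^{\op}$ acting on the two bimodules, yields an element $a = a_{\varepsilon,L} \in \CC\langle F\rangle \otimes_{\alg} \CC\langle F\rangle^{\op}$ with $\|a\|_{R_{F},\infty}\le 1$ that acts approximately as the identity on any prescribed finite family of vectors in the coarse bimodule, while $\|a \# \xi_{j}\|_{2} < \varepsilon$ for each $j$. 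The universal property of $C_{R_{F}}\langle X_{i}\otimes X_{j}^{\op}\rangle$ and continuity of polynomial evaluation on bounded microstates then transfer these estimates: for $m$ large and $\gamma$ small, whenever $(A,X) \in \Xi_{(A_{k}),R_{F}\vee R}(F,\xi_{1},\dots,\xi_{n}:G;m,\gamma,k)$, the operator $T := a(A)\# \in B(M_{k}(\CC))$ is a contraction satisfying $\|T X_{j}\|_{2} < 2\varepsilon$ while acting as near-identity on a prescribed subspace of $M_{k}(\CC)$ (spanned by matricial realizations of the coarse vectors chosen in the application of Lemma~\ref{L:disjointness}).

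The main obstacle, and the step I expect to be most delicate, is promoting this system of linear constraints into a genuine $\exp(o(k^{2}))$ bound on the $\|\cdot\|_{2}$-packing number of the fiber $\{X : (A,X)\in \Xi\}$ lying above a microstate $A$ for $F$. I expect this is accomplished via a Hilbert-Schmidt-type analysis of $T$ in the spirit of Lemma~\ref{L:volumepackingestimate}, exploiting that $\|T\|_{2} \approx \|a\|_{L^{2}(N\otimes N^{\op})}$ by the moment matching of microstates, and that by iterating the construction (choosing $a = a_{\varepsilon, L}$ with $L$ tending to infinity along with the number of vectors separated in the coarse bimodule) one drives the effective dimension of the near-kernel down by a quantity whose normalized logarithm vanishes with $L$. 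Once the fiber bound
\begin{equation*}
\limsup_{k\to\infty} \frac{1}{k^{2}} \log \sup_{A} K_{\varepsilon'}\bigl(\{X:(A,X)\in \Xi\},\|\cdot\|_{2}\bigr) \le \eta(\varepsilon,L) \xrightarrow[L\to\infty]{} 0
\end{equation*}
is established, a Fubini-style packing inequality
\begin{equation*}
K_{C\varepsilon}\bigl(\Xi(F\cup\{\xi_{j}\}:G;m,\gamma,k)\bigr) \le K_{\varepsilon}\bigl(\Xi(F:G;m,\gamma,k)\bigr)\cdot \sup_{A} K_{\varepsilon'}\bigl(\{X:(A,X)\in \Xi\}\bigr)
\end{equation*}
combined with Corollary~\ref{C:switching} yields $h(F\cup\{\xi_{1},\dots,\xi_{n}\}:M) \le h(F:M)$. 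Taking the supremum over finite $F\subseteq N_{sa}$, then over finite subsets of $\mathcal{H}_{s}(N\subseteq M)$, and invoking Property~2 for the resulting increasing union converging to $P$ completes the proof.
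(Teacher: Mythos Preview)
Your overall architecture is right—invoke disjointness via Lemma~\ref{L:disjointness}(iv) to produce an element of the algebraic tensor product, transfer to microstates, and feed the resulting operator into Lemma~\ref{L:volumepackingestimate}—but you have applied Lemma~\ref{L:disjointness} in the wrong direction, and the mechanism you sketch to close the resulting gap does not work.

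The paper takes $P$ with $P\#\xi_{j}\approx\xi_{j}$ (near-identity on the \emph{singular} vectors) and $P\#(1\otimes 1)\approx 0$ (near-zero on the single coarse vector $1\otimes 1$). The second condition is the whole point: since $P\#(1\otimes 1)=P$ as an element of $L^{2}(N\otimes N^{\op})$, it forces $\|P\|_{L^{2}(N\otimes N^{\op})}<\kappa$. In microstates this becomes $\|P(C)\|_{L^{2}(\tr\otimes\tr)}<\kappa$, and because the canonical map $\theta\colon M_{k}(\CC)\otimes M_{k}(\CC)^{\op}\to B(M_{k}(\CC),\|\cdot\|_{2})$ is a trace-preserving $*$-isomorphism, the operator $\Psi=\theta(P(C))$ has Hilbert--Schmidt norm $<\kappa$. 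Combined with $\Psi X_{j}\approx X_{j}$, this puts each $X_{j}$ inside $\Psi(R\,\Ball)$, whose covering number is bounded by Lemma~\ref{L:volumepackingestimate}; letting $\kappa\to 0$ kills the error.

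Your choice is the reverse: $a$ is near-identity on prescribed coarse vectors and $a\#\xi_{j}\approx 0$, so $X_{j}$ lands in the near-kernel of $T=\theta(a(A))$. But nothing in your setup bounds the dimension of that kernel. The proposed fix—take $L$ coarse vectors on which $a$ acts as identity and let $L\to\infty$—has no mechanism: those conditions live in $L^{2}(N)\otimes L^{2}(N^{\op})$, and there is no ``matricial realization'' carrying a generic coarse vector to an element of $M_{k}(\CC)$ on which $T$ is forced to be near-identity (the action of $a(A)$ on $M_{k}\otimes M_{k}^{\op}$ and the action $T$ on $M_{k}$ are different objects). The only coarse vector whose control transfers to the matricial side is $1\otimes 1$, precisely because $a\#(1\otimes 1)$ computes $\|a\|_{L^{2}}$ and hence $\|T\|_{HS}$ via $\theta$. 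If you take $1\otimes 1$ as the sole coarse vector and then pass to $I-T$ (which now has small Hilbert--Schmidt norm and satisfies $(I-T)X_{j}\approx X_{j}$), you recover exactly the paper's argument; but the ``$L\to\infty$'' plan as written is a dead end.
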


\begin{proof} Fix an $a\in N_{sa}$ with diffuse spectrum and $(A_{k})_{k=1}^{\infty}$ a sequence of microstates for $a.$  Let $(x_{i})_{i\in I}$ be an enumeration of $\mathcal{H}_{s}(N\subseteq M)).$ Since the $x_{i}$'s are (a priori) unbounded operators we will use the formulation of $1$-bounded entropy for unbounded operators given in Definition \ref{D:1bddentrunbddop1}. Let $\varepsilon\in \left(0,\frac{1}{16}\right)$ and fix finite $I_{0}\subseteq I,\Omega\subseteq M_{sa}$ and an $\eta\in (0,\varepsilon).$ Let $R_{M}\in [0,\infty)^{M_{sa}}$ be a cutoff parameter and set $R_{N}=R_{M}\big|_{N_{sa}}.$

	Under the canonical homomorphism $\CC\ip{Y_{y}:y\in N_{sa}}\to N$
which sends $Y_{y}$ to $y,$ we may regard $L^{2}(M),L^{2}(N)\otimes L^{2}(N^{op})$ as $C_{R_{N}}\ip{Y_{y}\otimes Y_{x}^{\op}:x,y\in N_{sa}}$ modules. It is easy (but crucial!) to observe that for every $\xi\in\mathcal{H}_{s}(N\subseteq M)$ the $C_{R_{N}}\ip{Y_{y}\otimes Y_{x}^{\op}:x,y\in N_{sa}}$ modules $L^{2}(N\xi N)$ and $L^{2}(N)\otimes L^{2}(N^{op})$ are disjoint. Let $\kappa\in (0,\frac{\varespilon}{\sqrt{|I_{0}|}}),$ by Lemma \ref{L:disjointness} applied to the vectors $(x_{i})_{i\in I_{0}}\in \mathcal{H}_{s}(N\subseteq M)$ and $1\otimes 1\in L^{2}(N)\otimes L^{2}(N^{op})$ we may choose a $P\in \CC\ip{Y_{y}:y\in N_{sa}}\otimes_{\textnormal{alg}} \CC\ip{Y_{y}:y\in N_{sa}}^{\op}$ so that
\[\|P(y:y\in N_{sa})\# x_{i}-x_{i}\|_{2}<\kappa\mbox{ for all $i\in I_{0}$},\]
\[\|P(y:y\in N_{sa})\|_{2}=\|P(y:y\in N_{sa})(1\otimes 1)\|_{2}<\kappa,\]
\[\|P\|_{R_{N}}\leq 1.\]
Set
\[P_{\rea}=\frac{P+\widetilde{P}}{2},\]
\[P_{\ima}=\frac{P-\widetilde{P}}{2i},\]
then
\[\|P_{\rea}(y:y\in N_{sa})\# \Rea(x_{i})-P_{\ima}(y:y\in N_{sa})\#\Ima(x_{i})-\Rea(x_{i})\|_{2}<\kappa\mbox{ for all $i\in I_{0}$},\]
\[\|P_{\ima}(y:y\in N_{sa})\# \Rea(x_{i})+P_{\rea}(y:y\in N_{sa})\#\Ima(x_{i})-\Ima(x_{i})\|_{2}<\kappa\mbox{ for all $i\in I_{0}$},\]
\[\max(\|P_{\rea}\|_{R_{N}},\|P_{\ima}\|_{R_{N}})\leq 1,\]
\[\max(\|P_{\rea}\|_{2},\|P_{\ima}\|_{2})<\kappa.\]
Let $G\subseteq N_{sa}$ be a sufficiently large finite set so that
\[P_{\rea},P_{\ima}\in \CC\ip{Y_{y}:y\in G}\otimes_{\textnormal{alg}} \CC\ip{Y_{y}:y\in G}^{\op}.\]
We may choose a $R_{0}>0$ so that for all $R\geq R_{0}$ there is a $\phi\in C_{c}(\RR,\RR)$ with
\[\|\phi\|_{C_{b}(\RR)}\leq R,\]
\[\mbox{$\phi(t)=t$ for all $t$ with $|t|\leq R,$}\]
 and for all $i\in I_{0}$
\[\|P_{\rea}(y:y\in N_{sa})\# \phi(\Rea(x_{i}))-P_{\ima}(y:y\in N_{sa})\# \phi(\Ima(x_{i}))-\phi(\Rea(x_{i}))\|_{2}<2\kappa,\]
\[\|P_{\ima}(y:y\in N_{sa})\# \phi(\Rea(x_{i}))+P_{\rea}(y:y\in N_{sa})\#\phi(\Ima(x_{i}))-\phi(\Ima(x_{i}))\|_{2}<2\kappa.\]
We may assume that
\[R_{0}>\max\left(\max_{y\in G}R_{N,y},\sup_{k}\|A_{k}\|_{\infty}\right).\]
Now fix $R\geq R_{0}$ and let $\phi$ be as above.

	We now use that the above inequalities are approximately satisfied in our microstate space. Precisely, we may find a finite subset $F'$ of $C_{b}(\RR,\RR)$ with $\phi\in F',$ an $m'\in\NN$  and a $\gamma'>0$ so that for any
\[(T,B,C)\in \Gamma_{R}^{\eta}((\Rea(x_{i}))_{i\in I_{0}},(\Ima(x_{i}))_{i\in I_{0}},G;F',m',\gamma',k)\]
 with $\|C_{y}\|_{\infty}<R_{N,y}\mbox{ for all $y\in G$},$ we have
\[\|P_{\rea}(C_{y}:y\in G)\# \phi(T_{i})-P_{\ima}(C_{y}:y\in G)\#\phi(B_{i})-\phi(T_{i})\|_{2}<2\kappa\mbox{ for all $i\in I_{0}$,}\]
\[\|P_{\ima}(C_{y}:y\in G)\# \phi(T_{i})+P_{\rea}(C_{y}:y\in G)\#\phi(B_{i})-\phi(B_{i})\|_{2}<2\kappa\mbox{ for all $i\in I_{0},$}\]
\[\max(\|P_{\rea}(C_{y}:y\in G)\|_{2},\|P_{\ima}(C_{y}:y\in G)\|)<\kappa.\]
We may choose  finite $F\subseteq C_{b}(\RR,\RR),\Omega\subseteq  M_{sa},$ a natural number  $m\in\NN$ and a $\gamma>0$ so that if $(T,B)\in\Gamma_{R}^{\eta}((\Rea(x_{i}))_{i\in I_{0}},(\Ima(x_{i}))_{i\in I_{0}}:G,\Omega;F,m,\gamma,k),$ then there is a $C\in M_{k}(\CC)^{G}$ with
\[(T,B,C)\in \Gamma_{R}^{\eta}((\Rea(x_{i}))_{i\in I_{0}},(\Ima(x_{i}))_{i\in I_{0}},G:\Omega,F',m',\gamma',k),\]
\[\|C_{y}\|_{\infty}<R_{N,y}\mbox{ for all $y\in G$.}\]
For $(T,B)\in M_{k}(\CC)_{sa}^{I_{0}}\oplus M_{k}(\CC)_{sa}^{I_{0}},$ we use $\phi(T,B)=((\phi(T_{i}))_{i\in I_{0}},(\phi(B_{i}))_{i\in I_{0}}).$ Observe that we have the following approximate containment in the measure topology:
\begin{equation}\label{E:almostcontain12ladg}\Xi_{A_{k},R}^{\eta}((\Rea(x_{i}))_{i\in I_{0}},(\Ima(x_{i}))_{i\in I_{0}}:G,\Omega;F,m,\gamma,k))\subseteq_{\eta,\mbox{meas}}\phi(\Xi_{A_{k},R}^{\eta}((\Rea(x_{i}))_{i\in I_{0}},(\Ima(x_{i}))_{i\in I_{0}}:G,\Omega;F,m,\gamma,k)).
\end{equation}
From the above almost containment, we can estimate the size of an almost dense (in the measure topology) subset of
\[\Xi_{A_{k},R}^{\eta}((\Rea(x_{i}))_{i\in I_{0}},(\Ima(x_{i}))_{i\in I_{0}}:G,\Omega;F,m,\gamma,k),\mbox{meas}),\]
by finding a set $\Sigma\subseteq M_{k}(\CC)^{\oplus I}\oplus M_{k}(\CC)^{\oplus}$ where we have a good control on the size of an almost dense (in $\|\cdot\|_{2}$) subset of $\Sigma$ and so that $\Sigma$ almost contains (in the measure topology) the right hand side of (\ref{E:almostcontain12ladg}).

	Let $1<D$ be such that for any $k\in \NN$ and any $L,L'\in M_{k}(\CC)^{G}$ we have
\[\|P_{\rea}(L_{y}:y\in G)-P_{\rea}(L'_{y}:y\in G)\|_{\infty}\leq D\|L-L'\|_{\infty},\]
\[\|P_{\ima}(L_{y}:y\in G)-P_{\ima}(L'_{y}:y\in G)\|_{\infty}\leq D\|L-L'\|_{\infty}.\]
Fix an $\varepsilon'\in \left(0,\frac{\varepsilon}{DR\sqrt{|I_{0}|}}\right)$ and choose a $S\subseteq \Xi_{A_{k},R_{N}\vee R_{M}}(G:\Omega;m',\gamma',k)$ which is $\varepsilon'
$-dense with respect to $\|\cdot\|_{\infty}$ and which has
\[|S|=K_{\varepsilon'}(\Xi_{A_{k},R_{N}\vee R_{M}}(G:\Omega;m,\gamma,k),\|\cdot\|_{\infty}).\]
Let $H_{I_{0}}=M_{k}(\CC)^{I_{0}}\oplus M_{k}(\CC)^{I_{0}}$ with the Hilbert space norm $\|\cdot\|_{2},$ i.e.
\[\|(L,E)\|_{2}^{2}=\sum_{i\in I_{0}}\|L_{i}\|_{2}^{2}+\sum_{i\in I_{0}}\|E_{i}\|_{2}^{2}.\]
Suppose we are given $(T,B)\in \Xi_{A_{k},R}^{\eta}((\Rea(x_{i}))_{i\in I_{0}},(\Ima(x_{i}))_{i\in I_{0}}:G, \Omega;F,m,\gamma,k)).$ Choose a $C\in M_{k}(\CC)^{G}$ with
\[(T,B,C)\in \Xi_{A_{k},R}^{\eta}((\Rea(x_{i}))_{i\in I_{0}},(\Ima(x_{i}))_{i\in I_{0}},G: \Omega;F',m',\gamma',k),\]
\[\|C_{y}\|_{\infty}<R_{N,y}\mbox{ for all $y\in G$}.\]
Let $C'\in S$ be such that $\|C-C'\|_{\infty}\leq \varepsilon'.$
We then have for all $i\in I_{0}:$
\[\|P_{\rea}(C'_{y}:y\in G)\# \phi(T_{i})-P_{\ima}(C_{y}':y\in G)\#\phi(B_{i})-\phi(T_{i})\|_{2}<2\kappa+2\varepsilon'DR<\frac{6\varepsilon}{\sqrt{|I_{0}|}},\]
\[\|P_{\ima}(C'_{y}:y\in G)\#\phi(T_{i})+P_{\rea}(C_{y}':y\in G)\#\phi(B_{i})-\phi(B_{i})\|_{2}<2\kappa+2\varepsilon'DR<\frac{6\varepsilon}{\sqrt{|I_{0}|}}.\]
For each $C'\in S,$ define linear transformations $\Psi_{j,C'}\colon M_{k}(\CC)^{I_{0}}\oplus M_{k}(\CC)^{I_{0}}\to M_{k}(\CC)^{I_{0}}\oplus M_{k}(\CC)^{I_{0}},j=1,2$ by
\[\Psi_{1,C'}(L,E)=((P_{\rea}(C'_{y}:y\in G)\# L_{i})_{i\in I_{0}},(P_{\rea}(C'_{y}:y\in G)\# E_{i})_{i\in I_{0}}),\]
\[\Psi_{2,C'}(L,E)=((-P_{\ima}(C'_{y}:y\in G)\# E_{i})_{i\in I_{0}},(P_{\ima}(C'_{y}:y\in G)\# L_{i})_{i\in I_{0}}),\]
and set $\Psi_{C'}=\Psi_{1,C'}+\Psi_{2,C'}.$
We have then shown that
\[\phi(\Xi_{A_{k},R}^{\eta}((\Rea(x_{i}))_{i\in I_{0}},(\Ima(x_{i}))_{i\in I_{0}}:G,\Omega;F,m,\gamma,k)))\subseteq_{6\varepsilon,\|\cdot\|_{2}}\bigcup_{C'\in S}\Psi_{C'}(\{(L,E):\|(L,E)\|_{2}\leq \sqrt{2}R|I_{0}|^{1/2}\})
\]
and part (c) of Proposition \ref{P:itseasymeas} shows that
\begin{equation}\label{E:almostcontainldkaghal}
\phi(\Xi_{A_{k},R}^{\eta}((\Rea(x_{i}))_{i\in I_{0}},(\Ima(x_{i}))_{i\in I_{0}}:G,\Omega;F,m,\gamma,k)))\subseteq_{\sqrt{6\varepsilon},\textnormal{meas}}\bigcup_{C'\in S}\Psi_{C'}(\{(L,E):\|(L,E)\|_{2}\leq \sqrt{2}R|I_{0}|^{1/2}\}).
\end{equation}
We will use (\ref{E:almostcontain12ladg}),(\ref{E:almostcontainldkaghal}) to estimate the size of an almost dense subset of
\[\Xi_{A_{k},R}^{\eta}((\Rea(x_{i}))_{i\in I_{0}},(\Ima(x_{i}))_{i\in I_{0}}:G,\Omega;F,m,\gamma,k),\]
by estimating $K_{\varespilon}(\Psi_{C'}(\{(L,E):\|(L,E)\|_{2}\leq R\}),\mbox{meas})$ for each fixed $C'\in S.$

	Let
\[\theta\colon M_{k}(\CC)\otimes M_{k}(\CC)^{op}\to B((M_{k}(\CC),\|\cdot\|_{2}))\]
be defined by $\theta(A)(B)=A\# B.$  Since $M_{k}(\CC)\otimes M_{k}(\CC)^{op}$ is simple and has a unique trace, we know that $\theta$ is a trace-preserving isomorphism. Thus for all $C'\in S$
\[\|\theta(P_{\rea}(C_{y}':y\in G))\|_{2}=\|P_{\rea}(C_{y}':y\in G)\|_{2}<\kappa,\]
\[\|\theta(P_{\ima}(C_{y}':y\in G))\|_{2}=\|P_{\ima}(C_{y}':y\in G)\|_{2}<\kappa.\]
It follows that with respect to our given Hilbert space structure on $M_{k}(\CC)^{I_{0}}\oplus M_{k}(\CC)^{I_{0}}$ we have
\[\|\Psi_{C'}\|_{L^{2}(\tr_{H_{I_{0}}})}\leq \|\Psi_{1,C'}\|_{L^{2}(\tr_{H_{I_{0}}})}+\|\Psi_{2,C'}\|_{L^{2}(\tr_{H_{I_{0}}})}<2\kappa.\]
Fix a $C'\in S$ and choose a
\[\Delta_{C'}\subseteq \{\Psi_{C'}(L,E):L\in M_{k}(\CC)_{sa}^{I_{0}},E\in M_{k}(\CC)_{sa}^{I_{0}},\|L\|_{\infty},\|E\|_{\infty}\leq R\}\]
which is $\varespilon$-dense with respect to $\|\cdot\|_{2}.$
By Lemma \ref{L:volumepackingestimate} we may choose such a $\Delta_{C'}$ which has
\begin{align*}
|\Delta_{C'}|&\leq K_{\frac{\varepsilon}{2}}(\{\Psi_{C'}(L,E):\|(L,E)\|_{2}\leq \sqrt{2}R|I_{0}|^{1/2}\},\|\cdot\|_{2})\\
&\leq \left(\frac{3\sqrt{2}R|I_{0}|^{1/2}+\frac{\varepsilon}{2}}{\frac{\varepsilon}{2}}\right)^{512|I_{0}|^{2}R^{2}k^{2}\frac{\kappa^{2}}{\varepsilon^{2}}}.
\end{align*}
Part (c) of Proposition \ref{P:itseasymeas} shows that
\begin{equation}\label{E:almostcontainment3ladghlah}
\bigcup_{C'\in S}\Psi_{C'}(\{(L,E):\|(L,E)\|_{2}\leq \sqrt{2}R|I_{0}|^{1/2}\})\subseteq_{\sqrt{\varespilon},\textnormal{meas}}\bigcup_{C'\in S}\Delta_{C'}.
\end{equation}
Combining (\ref{E:almostcontain12ladg}),(\ref{E:almostcontainldkaghal}),(\ref{E:almostcontainment3ladghlah}) and using (a) of Proposition \ref{P:itseasymeas} shows that
\[\Xi_{A_{k},R}^{\eta}((\Rea(x_{i}))_{i\in I_{0}},(\Ima(x_{i}))_{i\in I_{0}}:G,\Omega;F,m,\gamma,k))\subseteq_{(2+\sqrt{6})\sqrt{\varespilon},\mbox{meas}}\bigcup_{C'\in S}\Delta_{C'}.\]
So (b) of Proposition \ref{P:itseasymeas} shows that
\begin{align*}
K_{2(2+\sqrt{6})\sqrt{\varepsilon}}(\Xi_{(A_{k}),R}^{\eta}((\Rea (x_{i}))_{i\in I_{0}},(\Ima(x_{i}))_{i\in I_{0}}:G,\Omega;F,m,\gamma,k),\mbox{meas})&\leq \sum_{C'\in S}|\Delta_{C'}|\\
&\leq \left(\frac{3\sqrt{2}R|I_{0}|^{1/2}+\frac{\varepsilon}{2}}{\frac{\varepsilon}{2}}\right)^{512|I_{0}|^{2}R^{2}k^{2}\frac{\kappa^{2}}{\varepsilon^{2}}}\\
&\times K_{\varepsilon'}(\Xi_{A_{k}, R_{N}\vee R_{M}}(G:\Omega;m',\gamma',k),\|\cdot\|_{\infty}).
\end{align*}
Taking $\frac{1}{k^{2}}\log$ of both sides and letting $k\to \infty$ we have
\begin{align*}
K_{2(2+\sqrt{6})\sqrt{\varepsilon}}(\Xi_{(A_{k}),R}^{\eta}((\Rea (x_{i}))_{i\in I_{0}},(\Ima(x_{i}))_{i\in I_{0}}:G,\Omega;F,m,\gamma),\mbox{meas})\leq &\frac{512|I_{0}|^{2}R^{2}\kappa^{2}}{\varepsilon^{2}}\log\left(\frac{3\sqrt{2}R|I_{0}|^{1/2}+\frac{\varespilon}{2}}{\frac{\varepsilon}{2}}\right)\\
&+K_{\varepsilon'}(\Xi_{(A_{k}),R_{N}\vee R_{M}}(G:\Omega;m',\gamma'),\|\cdot\|_{\infty}).
\end{align*}
A fortiori,
\begin{align*}
K_{2(2+\sqrt{6})\sqrt{\varepsilon}}(\Xi_{(A_{k}),R}^{\eta}((\Rea (x_{i}))_{i\in I_{0}},(\Ima (x_{i}))_{i\in I_{0}}:M_{sa}),\mbox{meas})\leq &\frac{512|I_{0}|^{2}R^{2}\kappa^{2}}{\varepsilon^{2}}\log\left(\frac{3\sqrt{2}R|I_{0}|^{1/2}+\frac{\varespilon}{2}}{\frac{\varepsilon}{2}}\right)\\
&+K_{\varepsilon'}(\Xi_{(A_{k}),R_{N}\vee R_{M}}(G:\Omega;m',\gamma'),\|\cdot\|_{\infty})
\end{align*}
and since $I_{0},\kappa,R$ do not depend upon $\Omega,\gamma',m',$ we can take the infimum over all $\Omega,\gamma',m'$ to see that
\begin{align*}
K_{2(2+\sqrt{6})\sqrt{\varepsilon}}(\Xi_{(A_{k}),R}^{\eta}((\Rea (x_{i}))_{i\in I_{0}},(\Ima (x_{i}))_{i\in I_{0}}:M_{sa}),\mbox{meas})\leq &\frac{512|I_{0}|^{2}R^{2}\kappa^{2}}{\varepsilon^{2}}\log\left(\frac{3R\sqrt{2}|I_{0}|^{1/2}+\frac{\varespilon}{2}}{\frac{\varepsilon}{2}}\right)\\
&+K_{\varepsilon'}(\Xi_{(A_{k}),R_{N}\vee R_{M}}(G:M_{sa}),\|\cdot\|_{\infty}).
\end{align*}
Since the above inequality holds for all sufficiently small $\varespilon'>0$ we can let $\varepsilon'\to 0$ to see that:
\begin{align*}
K_{2(2+\sqrt{6})\sqrt{\varepsilon}}(\Xi_{(A_{k}),R}^{\eta}((\Rea (x_{i}))_{i\in I_{0}},(\Ima (x_{i}))_{i\in I_{0}}:M_{sa}),\mbox{meas})\leq &\frac{512|I_{0}|^{2}R^{2}\kappa^{2}}{\varepsilon^{2}}\log\left(\frac{3R\sqrt{2}|I_{0}|^{1/2}+\frac{\varespilon}{2}}{\frac{\varepsilon}{2}}\right)\\
&+h(G:M).
\end{align*}
 A fortiori,
\begin{align*}K_{2(2+\sqrt{6})\sqrt{\varepsilon}}(\Xi_{(A_{k}),R}^{\eta}((\Rea (x_{i}))_{i\in I_{0}},(\Ima (x_{i}))_{i\in I_{0}}:M_{sa}),\mbox{meas})\leq &\frac{512|I_{0}|^{2}R^{2}\kappa^{2}}{\varepsilon^{2}}\log\left(\frac{3R\sqrt{2}|I_{0}|^{1/2}+\frac{\varespilon}{2}}{\frac{\varepsilon}{2}}\right)\\
&+h(N:M).
\end{align*}
Since the second term of the right-hand side of this inequality is now independent of $\kappa,$ we can let $\kappa\to 0$ to show that
\[K_{2(2+\sqrt{6})\sqrt{\varepsilon}}(\Xi_{(A_{k}),R}^{\eta}((\Rea (x_{i}))_{i\in I_{0}},(\Ima (x_{i}))_{i\in I_{0}}:M_{sa}),\mbox{meas})\leq h(N:M).\]
Taking the supremum over all $R>0,$ and then taking the infimum over all $\eta>0$ we have
\[K_{2(2+\sqrt{6})\sqrt{\varepsilon}}(\Xi_{(A_{k})}((\Rea (x_{i}))_{i\in I_{0}},(\Ima (x_{i}))_{i\in I_{0}}:M_{sa}),\mbox{meas})\leq h(N:M;\|\cdot\|_{\infty})\]
and we can now take the supremum over $\varepsilon>0$ and $I_{0}$ to complete the proof.
\end{proof}

\section{Applications of The Main Result}

\begin{cor}\label{C:main} Let $(M,\tau)$ be a tracial von Neumann algebra and $N$ a diffuse von Neumann subalgebra of $M.$ For any ordinal $\alpha$ let $N_{\alpha}$ be the step-$\alpha$ spectral normalizing algebra of $N$ inside $M.$ We then have that
\[h(N_{\alpha}:M)=h(N:M).\]
In particular if $N$ is spectrally regular in $M$ of step $\alpha,$ then
\[h(M)\leq h(N).\]
Thus if $M=W^{*}(F)$ for some finite $F\subseteq M_{sa}$ and $\delta_{0}(F)>1$, then no diffuse subalgebra $N\subseteq M$ with $h(N)<\infty$ (e.g. if $N$ is hyperfinite) is spectrally regular in $M$ of step $\alpha.$
\end{cor}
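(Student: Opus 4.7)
The plan is to prove the equality $h(N_\alpha:M)=h(N:M)$ by transfinite induction on $\alpha$, using Theorem \ref{T:main} at successor steps and the continuity of $h(\,\cdot\,:M)$ under increasing SOT-unions (Property 2) at limit steps. The remaining statements then follow from the basic monotonicity properties (Properties 1 and 6).

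For the base case $\alpha=0$ there is nothing to prove. For a successor ordinal $\alpha$, I would first note that $N\subseteq N_{\alpha-1}$, so $N_{\alpha-1}$ is diffuse, which is the hypothesis needed to apply Theorem \ref{T:main}. By the inductive hypothesis $h(N_{\alpha-1}:M)=h(N:M)$, and Theorem \ref{T:main} applied to the inclusion $N_{\alpha-1}\subseteq M$ gives
\[
h(N_\alpha:M)=h(W^*(\mathcal{H}_s(N_{\alpha-1}\subseteq M)):M)=h(N_{\alpha-1}:M)=h(N:M).
\]
For a limit ordinal $\alpha$, the chain $(N_{\alpha'})_{\alpha'<\alpha}$ is increasing with SOT-closed union $N_\alpha$, so Property 2 yields
\[
h(N_\alpha:M)=\lim_{\alpha'<\alpha}h(N_{\alpha'}:M)=h(N:M),
\]
where the last equality uses the inductive hypothesis at each $\alpha'<\alpha$. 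This closes the transfinite induction.

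For the second assertion, suppose $N$ is spectrally regular of step $\alpha$, i.e.\ $N_\alpha=M$. Then by what was just proved and by Property 1,
\[
h(M)=h(M:M)=h(N_\alpha:M)=h(N:M)\leq h(N:N)=h(N),
\]
which is exactly $h(M)\leq h(N)$. Finally, if $M=W^*(F)$ with $\delta_0(F)>1$, Property 6 forces $h(M)=\infty$, so any diffuse subalgebra $N\subseteq M$ that is spectrally regular of some step $\alpha$ must satisfy $h(N)\geq h(M)=\infty$; in particular, hyperfinite (or any other $h(N)<\infty$) diffuse subalgebras cannot be spectrally regular of any step.

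There is essentially no technical obstacle here since the heavy lifting is done by Theorem \ref{T:main}; the only small point to be careful about is verifying diffuseness of $N_{\alpha-1}$ at successor steps (immediate from $N\subseteq N_{\alpha-1}$) and quoting the correct form of the limit-ordinal continuity from Property 2, which is formulated precisely for the quantity $h(\,\cdot\,:M)$ under SOT-increasing unions.
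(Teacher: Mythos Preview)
Your proof is correct and follows essentially the same approach as the paper: transfinite induction using Theorem \ref{T:main} at successor ordinals and Lemma \ref{L:increasingunions} (Property 2) at limit ordinals, followed by the same chain $h(M)=h(N_\alpha:M)=h(N:M)\leq h(N:N)=h(N)$ for the ``in particular'' part. You even make explicit the diffuseness of $N_{\alpha-1}$ needed to invoke Theorem \ref{T:main}, which the paper leaves implicit.
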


\begin{proof}  We prove that $h(N_{\alpha}:M)=h(N:M)$
 by transfinite induction, the case $\alpha=0$ being tautological. The case of a successor ordinal follows from Theorem \ref{T:main} and the case of a limit ordinal is handled by Lemma \ref{L:increasingunions}.
The ``in particular'' part follows since if $N_{\alpha}=M,$ then
\begin{align*}
h(M)&=h(M:M)\\
&=h(N_{\alpha}:M)\\
&=h(N:M)\\
&\leq h(N:N)\\
&=h(N)\\
&<\infty.
\end{align*}
Whereas if $F\subseteq M_{sa}$ is such that $\delta_{0}(F)>1$ and $M=W^{*}(F),$ then $h(M)=\infty.$

\end{proof}

\begin{cor}\label{C:weakercorollaries} Let $(M,\tau)$ be a tracial von Neumann algebra and $N\subseteq M$  a diffuse von Neumann subalgebra.

(i): Let $N_{\alpha}$  be the step-$\alpha$ one-sided quasi-normalizing algebra of $N$ inside $M,$ then \[h(N_{\alpha}:M)=h(N:M).\]
In particular, if $M=W^{*}(F)$ for a finite $F\subseteq M_{sa}$ with $\delta_{0}(F)>1$ and $N$ has $h(N)<\infty$ (e.g. if $N$ is hyperfinite), then $N_{\alpha}\ne M$ for any ordinal $\alpha.$

(ii): Let $N_{\alpha}$ be the step-$\alpha$ wq-normalizing algebra of $N$ inside $M.$ We then have that \[h(N_{\alpha}:M)=h(N:M).\]
In particular, if $M=W^{*}(F)$ for a finite $F\subseteq M_{sa}$ with $\delta_{0}(F)>1$ and $N$ has $h(N)<\infty$ , then $N_{\alpha}\ne M$ for any ordinal $\alpha.$

\end{cor}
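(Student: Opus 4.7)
The plan is to prove both (i) and (ii) by transfinite induction on $\alpha$, leveraging Proposition \ref{P:weaknormalizer} together with Theorem \ref{T:main} (packaged as Corollary \ref{C:main}). The key observation is that at each stage, neither the step-wise one-sided quasi-normalizing algebra nor the step-wise wq-normalizing algebra is necessarily contained in the corresponding step-wise spectral normalizing algebra, but a single-step comparison suffices: by Proposition \ref{P:weaknormalizer},
\[q^{1}\mathcal{N}_{M}(Q) \subseteq \mathcal{H}_{s}(Q\subseteq M), \qquad \mathcal{N}^{wq}_{M}(Q) \subseteq \mathcal{H}_{s}(Q\subseteq M)\]
for every diffuse subalgebra $Q\subseteq M$, so we can bootstrap Theorem \ref{T:main} inside each inductive step.

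To prove part (i), I would set up a transfinite induction with the inductive hypothesis $h(N_{\alpha'}:M)=h(N:M)$ for all $\alpha'<\alpha$. The base case $\alpha=0$ is tautological, and trivially $h(N:M)\leq h(N_{\alpha}:M)$ by monotonicity (Property 1) since $N\subseteq N_{\alpha}$. For a successor ordinal $\alpha$, the relevant step is
\[N_{\alpha} = W^{*}(q^{1}\mathcal{N}_{M}(N_{\alpha-1})) \subseteq W^{*}(\mathcal{H}_{s}(N_{\alpha-1}\subseteq M)),\]
where the inclusion uses Proposition \ref{P:weaknormalizer} together with the fact that $N_{\alpha-1}$ is diffuse (it contains the diffuse $N$). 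Monotonicity then gives
\[h(N_{\alpha}:M)\leq h(W^{*}(\mathcal{H}_{s}(N_{\alpha-1}\subseteq M)):M) = h(N_{\alpha-1}:M) = h(N:M),\]
the middle equality being exactly Theorem \ref{T:main} and the last being the inductive hypothesis. For a limit ordinal $\alpha$, since $N_{\alpha}=\overline{\bigcup_{\alpha'<\alpha}N_{\alpha'}}^{SOT}$ and the $N_{\alpha'}$ form an increasing net, Property 2 gives
\[h(N_{\alpha}:M) = \lim_{\alpha'<\alpha} h(N_{\alpha'}:M) = h(N:M),\]
completing the induction. Part (ii) is identical with $q^{1}\mathcal{N}_{M}$ replaced by $\mathcal{N}^{wq}_{M}$; the second inclusion in Proposition \ref{P:weaknormalizer} is all that changes.

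For the ``in particular'' statements, suppose for contradiction that $N_{\alpha}=M$ for some $\alpha$. Then
\[h(M) = h(M:M) = h(N_{\alpha}:M) = h(N:M) \leq h(N:N) = h(N) < \infty,\]
again by monotonicity in the second argument (Property 1). On the other hand, the hypothesis $\delta_{0}(F)>1$ for some finite generating $F\subseteq M_{sa}$ forces $h(M)=\infty$ by Property 6, a contradiction. Hence no such $\alpha$ exists.

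The main obstacle I would anticipate is a purely notational one: one might be tempted to try to compare $N_{\alpha}$ (the step-$\alpha$ quasi-normalizing or wq-normalizing algebra) with the step-$\alpha$ spectral normalizing algebra globally, but this comparison is delicate because the assignment $Q\mapsto \mathcal{H}_{s}(Q\subseteq M)$ is not obviously monotone in $Q$. The argument above avoids this issue entirely by only invoking Theorem \ref{T:main} one step at a time — the inductive hypothesis already pins down $h(N_{\alpha-1}:M)$, so we never need to identify $N_{\alpha-1}$ with the corresponding spectral iterate.
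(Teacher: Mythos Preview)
Your proof is correct and takes essentially the same approach as the paper: the paper's own proof is simply the one-line remark that both statements follow from the inclusions $q^{1}\mathcal{N}_{M}(N)\subseteq \mathcal{H}_{s}(N\subseteq M)$ and $\mathcal{N}_{M}^{wq}(N)\subseteq \mathcal{H}_{s}(N\subseteq M)$ of Proposition~\ref{P:weaknormalizer}, leaving the transfinite induction (identical to that in Corollary~\ref{C:main}) implicit. You have spelled out exactly this induction, and your closing remark about avoiding any global comparison of the iterated algebras is a useful clarification of why the one-step argument is the right way to read the paper's terse proof.
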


\begin{proof} Both statements are automatic from the inclusions
\[q^{1}\mathcal{N}_{M}(N)\subseteq \mathcal{H}_{s}(N\subseteq M),\]
\[\mathcal{N}_{M}^{wq}(N)\subseteq \mathcal{H}_{s}(N\subseteq M).\]

\end{proof}

 We thus automatically affirmatively answer the conjecture of Galatan-Popa.
\begin{cor}\label{C:wqreg} Let $(M,\tau)$ be a tracial von Neumann algebra. Let $N$ be a diffuse von Neumann subalgebra of $M$ and let $Q$ be the wq-normalizing algebra of $N\subseteq M.$ If $h(N:M)=0,$ then $h(Q:M)=0.$ Thus if $M=W^{*}(x_{1},\dots,x_{n})$ where $x_{j}\in M_{sa}$ and $\delta_{0}(x_{1},\dots,x_{n})>1$ (e.g. $M=L(\FF_{t})$ for some $t>1$), then $M$ has no diffuse, hyperfinite, wq-regular von Neumann subalgebra.
\end{cor}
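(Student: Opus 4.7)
The plan is to deduce this corollary directly from Corollary \ref{C:weakercorollaries}(ii), which already contains the essential content, and then add a short argument handling the ``in particular'' clause. The key input is Proposition \ref{P:weaknormalizer}, giving $\mathcal{N}^{wq}_{M}(N) \subseteq \mathcal{H}_{s}(N\subseteq M)$, so that the chain defining the wq-normalizing algebra is contained termwise in the chain defining the spectral normalizing algebra.

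First I would set up notation: let $(Q_{\alpha})$ be the chain defining the wq-normalizing algebra of $N\subseteq M$ and $(N_{\alpha})$ the chain defining the spectral normalizing algebra, both with $Q_{0}=N_{0}=N$. I claim $Q_{\alpha}\subseteq N_{\alpha}$ for every ordinal $\alpha$, by transfinite induction. The base case is trivial. For a successor ordinal, assuming $Q_{\alpha-1}\subseteq N_{\alpha-1}$, I use Proposition \ref{P:weaknormalizer} applied to the inclusion $N_{\alpha-1}\subseteq M$ together with $\mathcal{N}^{wq}_{M}(Q_{\alpha-1})\subseteq \mathcal{N}^{wq}_{M}(N_{\alpha-1})$ (the wq-normalizer is monotone in the subalgebra when it remains diffuse), so $\mathcal{N}^{wq}_{M}(Q_{\alpha-1})\subseteq \mathcal{H}_{s}(N_{\alpha-1}\subseteq M)$, giving $Q_{\alpha}\subseteq N_{\alpha}$. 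The limit ordinal case is immediate by taking SOT-closures of increasing unions.

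Given this, by Property 1 of $1$-bounded entropy and Corollary \ref{C:main},
\[h(Q_{\alpha}:M)\leq h(N_{\alpha}:M)=h(N:M)=0\]
for every ordinal $\alpha$. Since $Q=Q_{\alpha}$ for the stabilization ordinal, this yields $h(Q:M)=0$, proving the first assertion.

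For the ``in particular'' statement, suppose for contradiction that $M=W^{*}(x_{1},\dots,x_{n})$ with $\delta_{0}(x_{1},\dots,x_{n})>1$ and $N\subseteq M$ is a diffuse, hyperfinite, wq-regular subalgebra. Then $Q=M$, so by the first part applied with $N$ hyperfinite (hence $h(N)=0$ and thus $h(N:M)\leq h(N)=0$ by Property 1),
\[h(M)=h(M:M)=h(Q:M)=0.\]
On the other hand, Property 6 gives $h(M)=\infty$ from $\delta_{0}(x_{1},\dots,x_{n})>1$, a contradiction. The only subtlety is verifying the monotonicity $\mathcal{N}^{wq}_{M}(Q_{\alpha-1})\subseteq \mathcal{N}^{wq}_{M}(N_{\alpha-1})$; this is immediate because if $u^{*}Q_{\alpha-1}u\cap Q_{\alpha-1}$ is diffuse and $Q_{\alpha-1}\subseteq N_{\alpha-1}$, then $u^{*}N_{\alpha-1}u\cap N_{\alpha-1}$ contains this diffuse algebra and is therefore diffuse. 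The main conceptual obstacle has already been dealt with in the proof of Theorem \ref{T:main}; here I am merely repackaging Corollary \ref{C:weakercorollaries}(ii) into the form stated.
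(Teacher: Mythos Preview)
Your proposal is correct and follows essentially the same route as the paper, which dispatches the corollary in one line as a rephrasing of Corollary~\ref{C:weakercorollaries}(ii). Note that your termwise comparison of the wq-chain $(Q_{\alpha})$ with the spectral chain $(N_{\alpha})$ is unnecessary extra work: Corollary~\ref{C:weakercorollaries}(ii) already asserts $h(Q_{\alpha}:M)=h(N:M)$ directly for the wq-chain, so once you observe that $Q$ is $Q_{\alpha}$ at the stabilization ordinal you are done with the first assertion without invoking the spectral chain or the monotonicity of $\mathcal{N}^{wq}_{M}(\cdot)$ at all.
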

\begin{proof} This is just a rephrasing of part $(ii)$ of the preceding corollary.
\end{proof}

We want to prove that if $h(M)>0$ (e.g. $M=L(\FF_{n})$) then no algebra with $h(N)=0$ can be spectrally regular inside $M,$ even if it exists in only an approximate sense.

\begin{definition}\label{D:approxspecreg}\emph{Let $(M,\tau)$ be a tracial von Neumann algebra. For a tracial von Neumann algebra $(N,\tau_{N})$ and an ordinal $\alpha,$ we say that} $N$ is approximately $\alpha$-spectrally regular in \emph{$M$ if there is a set $I$, a free ultrafilter $\omega$ on $I$ and a trace-preserving embedding $N\subseteq M^{\omega}$ so that  the step $\alpha$-spectral normalizer of $N$ inside $M^{\omega}$ contains $M.$}
\end{definition}

To ease our goal, we will prove the following proposition.

\begin{proposition}\label{P:approximate} Let $(M,\tau)$ be a tracial von Neumann algebra.  Fix a free ultrafilter $\omega$ on  a set $I.$  Then for any diffuse von Neumann subalgebra $Q$ of $M$ we have
\[h(Q:M)=h(Q:M^{\mathcal{\omega}}).\]
\end{proposition}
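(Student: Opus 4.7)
The direction $h(Q : M^{\omega}) \leq h(Q : M)$ is immediate from the monotonicity Property 1 applied to the chain $Q \subseteq M \subseteq M^{\omega}$: enlarging the ambient algebra never increases $h(Q : \cdot)$. So the content of the proposition is the reverse inequality.

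For the reverse, I fix a finite $F \subseteq Q_{sa}$, an $a \in Q_{sa}$ with diffuse spectrum, a sequence $(A_{k})_{k=1}^{\infty}$ of microstates for $a$, and cutoff parameters for $F$. The plan is: given an arbitrary finite $G = \{y_{1},\dots,y_{n}\} \subseteq M^{\omega}_{sa}$ with cutoff $R_{G}$, and given $m \in \NN$, $\delta > 0$, to construct a finite $G' = G'(m,\delta) \subseteq M_{sa}$ (with the same cutoff) satisfying the inclusion
\[
\Xi_{(A_{k}),R_{F}\vee R_{G}}(F : G'; m, \gamma, k) \;\subseteq\; \Xi_{(A_{k}),R_{F}\vee R_{G}}(F : G; m, \gamma+\delta, k)
\]
for every $k \in \NN$ and every $\gamma > 0$. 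The construction uses only the defining property of the tracial ultrapower: choose representatives $y_{j} = (y_{j}^{(i)})_{i \in I}$ with $\|y_{j}^{(i)}\|_{\infty} \leq R_{G,j}$ for every $i$ (which can be arranged by applying the spectral cutoff $t \mapsto \max(-R_{G,j},\min(R_{G,j},t))$ to any initial representative). For each of the finitely many monomials $P \in \CC\ip{X_{u} : u \in \{a\}\sqcup F \sqcup G}$ of degree at most $m$,
\[
\tau_{M^{\omega}}\bigl(P(a, F, G)\bigr) \;=\; \lim_{i \to \omega} \tau_{M}\bigl(P(a, F, (y_{j}^{(i)})_{j})\bigr),
\]
so one can pick a single index $i_{0} = i_{0}(m,\delta) \in I$ inside the $\omega$-large set on which every such moment is within $\delta$ of its $M^{\omega}$-value; set $G'(m,\delta) = \{y_{j}^{(i_{0})}\}_{j=1}^{n}$.

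The verification is then a one-line triangle inequality: if $x \in \Xi_{(A_{k}),R_{F}\vee R_{G}}(F : G'; m, \gamma, k)$ with witness $B \in M_{k}(\CC)^{G'}$, then re-indexing $B$ as $\widetilde{B} \in M_{k}(\CC)^{G}$, we have $(A_{k}, x, \widetilde{B}) \in \Gamma_{L \vee R_{F} \vee R_{G}}(a, F, G; m, \gamma+\delta, k)$, hence $x \in \Xi(F : G; m, \gamma+\delta, k)$. This gives
\[
K_{\varepsilon}\bigl(\Xi(F : G'(m,\delta); m, \gamma)\bigr) \;\leq\; K_{\varepsilon}\bigl(\Xi(F : G; m, \gamma+\delta)\bigr)
\]
uniformly in $k$, hence after $\limsup_{k}$. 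Since $G'(m,\delta)$ is a legitimate finite subset of $M_{sa}$, it is admissible in the infimum defining $K_{\varepsilon}(\Xi(F : M))$, so for every $m, \gamma, \delta$,
\[
K_{\varepsilon}(\Xi(F : M)) \;\leq\; K_{\varepsilon}(\Xi(F : G'(m,\delta))) \;\leq\; K_{\varepsilon}(\Xi(F : G; m, \gamma+\delta)).
\]
Taking $\inf_{m,\gamma,\delta}$ on the right yields $K_{\varepsilon}(\Xi(F : M)) \leq K_{\varepsilon}(\Xi(F : G))$; then $\inf_{G \subseteq M^{\omega}_{sa}}$ gives $K_{\varepsilon}(\Xi(F : M)) \leq K_{\varepsilon}(\Xi(F : M^{\omega}))$; and finally $\sup_{\varepsilon}$ together with $\sup$ over finite $F \subseteq Q_{sa}$ produces $h(Q : M) \leq h(Q : M^{\omega})$, finishing the proof. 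The argument is essentially bookkeeping; the only non-trivial step is the ultrafilter selection of $i_{0}$, and the only thing one needs to be careful about is that cutoff parameters transfer cleanly from $M^{\omega}$-representatives to $M$-representatives, which is handled by the functional-calculus truncation above.
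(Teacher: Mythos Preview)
Your proof is correct and follows essentially the same approach as the paper's: pick representatives of the ultrapower elements, use the ultrafilter to select an index $i_{0}$ at which the finitely many relevant mixed moments are close, and then pass the resulting $M$-elements into the infimum defining $K_{\varepsilon}(\Xi(F:M))$. The paper phrases the moment tolerance as $\gamma/2$ rather than introducing a separate $\delta$, and carries a factor $K_{2\varepsilon}$ rather than $K_{\varepsilon}$ (since its $\varepsilon$-covers are required to be subsets of the set being covered, so $A\subseteq B$ only gives $K_{2\varepsilon}(A)\le K_{\varepsilon}(B)$), but these are cosmetic differences that wash out after $\sup_{\varepsilon}$. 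One small point: to match the paper's definitions you should take $a\in W^{*}(F)_{sa}$ with diffuse spectrum (not just $a\in Q_{sa}$), which is always possible once one restricts to finite $F$ generating a diffuse subalgebra; this is implicit in the paper's use of Lemma~\ref{L:increasingunions}.
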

\begin{proof} By Lemma \ref{L:increasingunions}, it suffices to show that for any finite $F\subseteq Q_{sa}$ we have
\[h(F:M)=h(F:M^{\mathcal{\omega}}).\]
 We automatically have
\[h(F:M^{\omega})\leq h(F:M),\]
so it suffices to show that
\[h(F:M)\leq h(F:M^{\mathcal{\omega}}).\]
Fix $a\in W^{*}(F)$ with diffuse spectrum and let $A_{k}$ be a sequence of microstates for $a.$ Let $R_{M^{\mathcal{\omega}}}\in [0,\infty)^{M^{\mathcal{\omega}}},R_{F}\in [0,\infty)^{F}$ be cutoff parameters. Let $G\subseteq M_{sa}^{\mathcal{\omega}}$ be a finite set and write $G=\{y_{1},\dots,y_{r}\}.$ Choose $y_{j,i},1\leq j\leq r,i\in I$ so that
\[y_{j}=(y_{j,i})_{i\to\mathcal{\omega}},1\leq j\leq r,\]
\[\sup_{i}\|y_{j,i}\|_{\infty}< R_{M^{\mathcal{\omega}},y_{j}}.\]
Set $G_{i}=\{y_{1,i},\dots,y_{j,i}\}.$
Given $m\in\NN,\gamma>0$ it is easy to see that the set of $i$ for which
\[\Xi_{A_{k},R_{F}\vee R_{M^{\omega}}}(F:G_{i};m,\gamma/2,k)\subseteq \Xi_{A_{k},R_{F}\vee R_{M^{\omega}}}(F:G;m,\gamma,k)\]
is in $\omega.$ For such $i,$
\[K_{2\varespilon}(\Xi_{A_{k},R_{F}\vee R_{M^{\omega}}}(F:G_{i};m,\gamma/2,k),\|\cdot\|_{2})\leq K_{\varpesilon}(\Xi_{A_{k},R_{F}\vee R_{M^{\omega}}}(F:G;m,\gamma,k),\|\cdot\|_{2}).\]
Taking $\frac{1}{k^{2}}\log$ of both sides and letting $k\to\infty$ we have
\begin{align*}
K_{2\varespilon}(\Xi_{(A_{k}),R_{F}\vee R_{M^{\omega}}}(F:G_{i}),\|\cdot\|_{2})&\leq K_{2\varespilon}(\Xi_{(A_{k}),R_{F}\vee R_{M^{\omega}}}(F:G_{i};m,\gamma/2),\|\cdot\|_{2})\\
&\leq  K_{\varpesilon}(\Xi_{(A_{k}),R_{F}\vee R_{M^{\omega}}}(F:G;m,\gamma),\|\cdot\|_{2}).
\end{align*}
A fortiori,
\[K_{2\varepsilon}(\Xi_{(A_{k}),R_{F}\vee R_{M^{\omega}}}(F:M),\|\cdot\|_{2})\leq K_{\varpesilon}(\Xi_{(A_{k}),R_{F}\vee R_{M^{\omega}}}(F:G;m,\gamma),\|\cdot\|_{2}).\]
Taking the infimum over all $G,m,\gamma$ we see that
\[K_{2\varepsilon}(\Xi_{(A_{k}),R_{F}\vee R_{M^{\omega}}}(F:M),\|\cdot\|_{2})\leq  K_{\varpesilon}(\Xi_{(A_{k}),R_{F}\vee R_{M^{\omega}}}(F:M^{\omega}),\|\cdot\|_{2})\]
and taking the supremum over all $\varespilon>0$ completes the proof.

\end{proof}

As an application we present a corollary which implies Theorem \ref{T:mainintro}.

\begin{cor}\label{C:main2} Suppose that $(M,\tau)$ is a tracial von Neumann algebra and that $N$ is a diffuse von Neumann subalgebra of $M.$
%Let $\mathcal{U}$ be a nontrivial ultrafilter on some set $I,$ and let $N$ be a von Neumann subalgebra of $M^{\mathcal{U}}.$  Suppose that $N$ is a von Neumann subalgebra of $M^{\mathcal{U}}.$
 If, for some ordinal $\alpha,$ we have that $N$ is approximately $\alpha$-spectrally regular in $M$ in the sense of Definition \ref{D:approxspecreg}, then
\[h(M)\leq h(N).\]
So if $h(M)>0,$ then there are no approximately $\alpha$-spectrally regular hyperfinite subalgebras in $M.$

\end{cor}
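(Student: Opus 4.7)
The plan is to reduce to the setting of Corollary \ref{C:main} inside the ultrapower $M^{\omega}$ and then transport the conclusion back to $M$ using Proposition \ref{P:approximate}. Concretely, the hypothesis of approximate $\alpha$-spectral regularity furnishes a set $I$, a free ultrafilter $\omega$ on $I$ and a trace-preserving embedding $N\hookrightarrow M^{\omega}$ so that, if one builds the transfinite chain
\[
N_{0}=N,\quad N_{\beta}=W^{*}(\mathcal{H}_{s}(N_{\beta-1}\subseteq M^{\omega}))\;\text{(successor)},\quad N_{\beta}=\overline{\bigcup_{\beta'<\beta}N_{\beta'}}^{SOT}\;\text{(limit)}
\]
inside $M^{\omega}$, then $M\subseteq N_{\alpha}$.

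First I would apply Corollary \ref{C:main} to the diffuse inclusion $N\subseteq M^{\omega}$: since $N_{\alpha}$ is by construction the step-$\alpha$ spectral normalizing algebra of $N$ inside $M^{\omega}$, that corollary yields
\[
h(N_{\alpha}:M^{\omega})=h(N:M^{\omega}).
\]
Next I would invoke Property 1 (monotonicity of $1$-bounded entropy) twice. Since $M\subseteq N_{\alpha}\subseteq M^{\omega}$, the first form of the property gives $h(M:M^{\omega})\leq h(N_{\alpha}:M^{\omega})$; and since $N\subseteq N\subseteq M^{\omega}$, the second form gives $h(N:M^{\omega})\leq h(N:N)=h(N)$.

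Then I would use Proposition \ref{P:approximate} with $Q=M$ (which is legitimate because $M$ is diffuse, as it contains the diffuse subalgebra $N$) to identify
\[
h(M)=h(M:M)=h(M:M^{\omega}).
\]
Chaining these four relations together yields $h(M)\leq h(N)$, which is the main inequality. The ``in particular'' clause is then immediate: if $h(M)>0$ and $N$ were hyperfinite, we would have $h(N)=0$, contradicting $h(M)\leq h(N)$.

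There is essentially no obstacle beyond bookkeeping; the deep content has already been absorbed into Theorem \ref{T:main} (hence Corollary \ref{C:main}) and into Proposition \ref{P:approximate}. The only point requiring a moment's care is the legitimacy of applying Proposition \ref{P:approximate} to $Q=M$ itself, and of reading off $M\subseteq N_{\alpha}\subseteq M^{\omega}$ as a genuine chain of diffuse subalgebras so that Property 1 applies on both sides.
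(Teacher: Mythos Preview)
Your proof is correct and follows essentially the same route as the paper: both use Proposition \ref{P:approximate} to identify $h(M)=h(M:M^{\omega})$, monotonicity to pass to $h(N_{\alpha}:M^{\omega})$, the transfinite result (your Corollary \ref{C:main}, the paper's reference to Theorem \ref{T:main}) to get $h(N_{\alpha}:M^{\omega})=h(N:M^{\omega})$, and monotonicity again to bound by $h(N)$. The only cosmetic difference is that you cite Corollary \ref{C:main} where the paper cites Theorem \ref{T:main} directly, but the content is identical.
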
\begin{proof}  Suppose that $N$ is approximately $\alpha$-spectrally regular inside $M.$ Let $I$ be a set and $\omega$ a free ultrafilter on $I$ and, for any ordinal $\alpha,$ let $N_{\alpha}$ be the step-$\alpha$ spectral normalizing algebra of $N$ inside $M^{\omega}.$  If $N_{\alpha}\supseteq M,$ then by Proposition \ref{P:approximate} and Theorem \ref{T:main},
\begin{align*}
h(M)=h(M:M)&=h(M:M^{\omega})\\
&\leq h(N_{\alpha}:M^{\omega})\\
&= h(N:M^{\omega})\\
&\leq h(N:N)\\
&=h(N).
\end{align*}

\end{proof}

Galatan-Popa conjecture in fact that $L(\FF_{t})$ cannot have a von Neumann subalgebra which is wq-regular satisfying a weaker property than hyperfiniteness  called Property (C'), we will show that this conjecture is true as well.

\begin{definition}\emph{Let $(M,\tau)$ be a tracial von Neumann algebra. A finite sequence of unitaries $v_{1},\dots,v_{k}$ each with diffuse spectrum is said to have} Property (C') in $M$ \emph{if for some (equivalently any) free ultrafilter $\omega\in\beta\NN\setminus\NN,$ there are mutually commuting unitaries $u_{1},\dots,u_{k}\in M^{\omega}$  with diffuse spectrum so that $[u_{j},v_{j}]=0$ for $1\leq j\leq k.$ Let $N$ be a von Neumann subalgebra of $M.$ We say that $N\subseteq M$ has} Property $(C')$ \emph{if there is a $V\subseteq \mathcal{U}(M)$ with $W^{*}(V)=M$ and so that every finite $F\subseteq V$ has Property $(C')$ in $M.$}
\end{definition}

In \cite{GalatanPopa} Remark 3.9 it is shown that if a sufficiently good cohomology theory is developed, then $L(\FF_{t})$ for $t>1$ does not have a wq-regular von Neumann subalgebra with Property (C'). We now show this using $1$-bounded entropy. This may be regarded as another consistency check for the postulate that there is a good cohomology theory for $\textrm{II}_{1}$-factors.

\begin{cor}\label{C:PropertyC'} Let $(M,\tau)$ be a tracial von Neumann algebra and suppose that $N$ is a diffuse von Neumann subalgebra of $M.$ If $N\subseteq M$ has Property (C'), then $h(N:M)\leq 0.$\end{cor}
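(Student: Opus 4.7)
The plan is to apply Theorem \ref{T:main} with the abelian algebra generated by the commuting unitaries that Property (C') produces. Fix a finite $F=\{v_{1},\dots,v_{k}\}\subseteq V$, where $V$ is the generating set of unitaries in the definition of Property (C'). By Property (C'), choose mutually commuting unitaries $u_{1},\dots,u_{k}\in M^{\omega}$, each with diffuse spectrum, such that $[u_{j},v_{j}]=0$ for every $j$, and set $A=W^{*}(u_{1},\dots,u_{k})\subseteq M^{\omega}$. Since the $u_{j}$'s mutually commute and each has diffuse spectrum, $A$ is diffuse and abelian, hence hyperfinite, so $h(A)=0$ and in particular $h(A:M^{\omega})\leq 0$.

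The crucial observation is that $v_{j}\in \mathcal{H}_{s}(A\subseteq M^{\omega})$ for every $j$. Indeed, $[u_{j},v_{j}]=0$ gives $v_{j}^{*}u_{j}v_{j}=u_{j}$, so $W^{*}(u_{j})\subseteq v_{j}^{*}Av_{j}\cap A$, which is diffuse; hence $v_{j}\in \mathcal{N}_{M^{\omega}}^{wq}(A)$, and Proposition \ref{P:weaknormalizer} places $v_{j}$ in $\mathcal{H}_{s}(A\subseteq M^{\omega})$. Therefore $W^{*}(F)\subseteq W^{*}(\mathcal{H}_{s}(A\subseteq M^{\omega}))$, and Theorem \ref{T:main} together with monotonicity of $h(\cdot:M^{\omega})$ yields
\[h(W^{*}(F):M^{\omega})\leq h(W^{*}(\mathcal{H}_{s}(A\subseteq M^{\omega})):M^{\omega})=h(A:M^{\omega})\leq 0.\]
Proposition \ref{P:approximate} then gives $h(W^{*}(F):M)=h(W^{*}(F):M^{\omega})\leq 0$.

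To finish, since $W^{*}(V)=M$ and $W^{*}(F)$ is directed (over finite $F\subseteq V$) with SOT-closed union equal to $M$, Lemma \ref{L:increasingunions} forces $h(M:M)=\lim_{F}h(W^{*}(F):M)\leq 0$, whence $h(N:M)\leq h(M:M)\leq 0$ by monotonicity. The only substantive step is the identification of $A$: Property (C') has been engineered exactly so that each generator $v_{j}$ of the ambient algebra normalizes (in the wq sense) a single diffuse abelian subalgebra, which in turn converts via Proposition \ref{P:weaknormalizer} into the bimodule disjointness hypothesis of Theorem \ref{T:main}. Everything else is bookkeeping with the ultrapower-invariance result of Proposition \ref{P:approximate} and the SOT-continuity of $h(\cdot:M)$ from Lemma \ref{L:increasingunions}.
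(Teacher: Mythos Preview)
Your proof is correct, but it takes a different route from the paper's. Both arguments reduce, via Lemma~\ref{L:increasingunions} and Proposition~\ref{P:approximate}, to showing $h(W^{*}(F):M^{\omega})\leq 0$ for each finite $F=\{v_{1},\dots,v_{k}\}\subseteq V$, and both build the abelian algebra $A=W^{*}(u_{1},\dots,u_{k})$ from the Property~(C') unitaries. The divergence is in how the $v_{j}$'s are absorbed. You observe that $[u_{j},v_{j}]=0$ forces $W^{*}(u_{j})\subseteq v_{j}^{*}Av_{j}\cap A$, so each $v_{j}$ lies in $\mathcal{N}^{wq}_{M^{\omega}}(A)\subseteq \mathcal{H}_{s}(A\subseteq M^{\omega})$ by Proposition~\ref{P:weaknormalizer}, and then invoke the main Theorem~\ref{T:main} to conclude. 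The paper instead sets $P_{j}=W^{*}(u_{j},v_{j})$, notes that each $P_{j}$ is abelian with $A\cap P_{j}\supseteq W^{*}(u_{j})$ diffuse, and repeatedly applies the elementary join inequality of Lemma~\ref{L:diffuseintersection} to get $h\bigl(A\vee\bigvee_{j}P_{j}\bigr)\leq 0$; monotonicity then finishes.

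Your argument showcases Theorem~\ref{T:main} as the engine and is conceptually cleaner once that theorem is in hand; the paper's argument is more self-contained, relying only on the lighter-weight Lemma~\ref{L:diffuseintersection} rather than the full singular-subspace machinery. Either way the bookkeeping with Proposition~\ref{P:approximate} and Lemma~\ref{L:increasingunions} is identical, and your final monotonicity step $h(N:M)\leq h(M:M)$ is fine since the definition gives $W^{*}(V)=M$.
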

\begin{proof} Our hypothesis implies that there exists $V\subseteq \mathcal{U}(M)$  with $N\subseteq W^{*}(V)$ and so that $F\subseteq V$  has Property (C') in $M$ for all $F\subseteq V$ finite.
It suffices to show that $h(W^{*}(V):M)\leq 0.$
Fix $v_{1},\dots,v_{n}\in V$ and set
\[Q=W^{*}(v_{1},\dots,v_{n}).\]
By Lemma \ref{L:increasingunions} it suffices to show  that $h(Q:M)\leq 0.$
Fix a free ultrafilter $\omega\in\beta\NN\setminus\NN,$ by Proposition \ref{P:approximate} it suffices to show that
$h(Q:M^{\omega})\leq 0.$

	 Since $\{v_{1},\dots,v_{n}\}$ has Property (C') in $M$, we may choose unitaries $u_{j}\in M^{\omega}$  for $1\leq j\leq n,$ so that
\begin{itemize}
\item $u_{j}$ has diffuse spectrum for $1\leq j\leq n,$
\item    $[u_{j},v_{j}]=0$ for $1\leq j\leq n,$
\item  $[u_{l},u_{j}]=0$ for all $1\leq l,j\leq n.$
\end{itemize}
Set $A=W^{*}(u_{1},\dots,u_{n})$
and for $1\leq k\leq n,$ let $P_{k}=W^{*}(u_{k},v_{k}).$
As $A$ and $P_{k}$ are abelian we have that $h(A)=h(P_{k})=0$ for $1\leq k\leq n.$ Moreover $A\cap P_{k}\supseteq W^{*}(u_{k})$ which is diffuse. By repeated applications of Lemma \ref{L:diffuseintersection} we see that
$h\left(A\vee\bigvee_{k=1}^{n}P_{k}\right)\leq 0.$
Setting $P=A\vee \bigvee_{k=1}^{n}P_{k}$
we now see  that
\[h(Q:M^{\omega})\leq h(P:M^{\omega})\leq h(P:P)=h(P)\leq 0\]
and this completes the proof.

\end{proof}

\begin{cor}\label{C:GalaPopa} Let $M=L(\FF_{n})$ for some $n\in\NN,n\geq 2$ (or more generally $M$ could be an interpolated free group factor) and let $N\subseteq M$ be a diffuse von Neumann subalgebra. If $N\subseteq M$ has property (C'), then $N$ is not wq-regular in $M.$

\end{cor}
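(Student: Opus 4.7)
The plan is to derive this corollary as an immediate consequence of the two preceding corollaries. I would argue by contradiction: suppose $N \subseteq M$ has Property (C') and $N$ is wq-regular in $M$, so that the wq-normalizing algebra $Q$ of $N \subseteq M$ equals $M$.

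First, I would invoke Corollary \ref{C:PropertyC'} on the Property (C') hypothesis to conclude that $h(N:M) \leq 0$. Next, I would apply Corollary \ref{C:wqreg} (equivalently, part (ii) of Corollary \ref{C:weakercorollaries}), which gives $h(Q:M) = h(N:M)$. Since $Q = M$ by the wq-regularity assumption, these combine to yield
\[h(M) = h(M:M) = h(Q:M) = h(N:M) \leq 0.\]

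Finally, I would derive a contradiction by observing that for any interpolated free group factor $M = L(\FF_t)$ with $t > 1$ there is a finite set of self-adjoint generators $F \subseteq M_{sa}$ with $\delta_0(F) > 1$ --- for integer $t = n \geq 2$ one takes the standard $n$-tuple of free semicirculars, and for general $t \in (1,\infty]$ one uses the free semicircular generators together with the compression behavior of $\delta_0$. By Property 6 listed in Section 2 this forces $h(M) = \infty$, contradicting $h(M) \leq 0$. Hence no such wq-regular $N$ can exist.

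There is essentially no obstacle here, since the substantive content is already packaged in Corollary \ref{C:PropertyC'} and Corollary \ref{C:wqreg}, together with the standard fact that interpolated free group factors have infinite $1$-bounded entropy. The only thing one has to be mildly careful about is ensuring the chain of identities $h(M)=h(M:M)=h(Q:M)=h(N:M)$ uses exactly the hypotheses available, but this is immediate from the definitions and the quoted corollaries.
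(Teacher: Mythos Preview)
Your proposal is correct and takes essentially the same approach as the paper: the paper's proof simply cites Corollary~\ref{C:PropertyC'} to obtain $h(N:M)\leq 0$ and then invokes Corollary~\ref{C:weakercorollaries}, which is exactly the chain $h(M)=h(Q:M)=h(N:M)\leq 0$ you spell out together with $h(M)=\infty$ for interpolated free group factors. Your version is just a more explicit unpacking of the two-line argument in the paper.
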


\begin{proof} Corollary \ref{C:PropertyC'} implies that $h(N:M)\leq 0,$ so the desired claim now follows from Corollary \ref{C:weakercorollaries}.

\end{proof}

Corollary \ref{C:GalaPopa} solves another question of Galatan-Popa in \cite{GalatanPopa} (see Remark 3.9). For the next application, we are concerned with the following question of Peterson.

\begin{question}\label{C:JP}\emph{If $t\in(1,\infty]$ and $N$ is a finitely-generated, nonamenable von Neumann subalgebra of $L(\FF_{t}),$ does there exist a finite $F\subseteq N_{sa}$ so that $N=W^{*}(F)$ and $\delta_{0}(F)>1?$}\end{question}

Because of our results, we make the following conjecture.

\begin{conjecture}\label{C:Me}\emph{If $t\in (1,\infty)$ and $N\subseteq L(\FF_{t})$ is a maximal amenable von Neumann subalgebra, then as $N$-$N$ bimodules:}
\[L^{2}(L(\FF_{t}))\ominus L^{2}(N)\leq [L^{2}(N)\otimes L^{2}(N)]^{\oplus \infty}.\]
\end{conjecture}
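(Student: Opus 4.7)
The plan is to recast the conjecture in terms of the singular subspace $\mathcal{H}_{s}(N \subseteq M)$ defined in Section 3 and then to combine Theorem~\ref{T:main} with a positive answer to Question~\ref{C:JP}. Write $M = L(\FF_{t})$ and set $Q = W^{*}(\mathcal{H}_{s}(N \subseteq M))$. Since $N$ is diffuse, the trivial $N$-$N$ bimodule $L^{2}(N)$ is disjoint from the coarse bimodule $L^{2}(N) \otimes L^{2}(N^{op})$, so $L^{2}(N) \subseteq \mathcal{H}_{s}(N \subseteq M)$ and $N \subseteq Q$. By Proposition~\ref{P:Lebesgue}, the orthogonal complement $\mathcal{H}_{a} = L^{2}(M) \ominus \mathcal{H}_{s}(N \subseteq M)$ already embeds into $[L^{2}(N) \otimes L^{2}(N^{op})]^{\oplus \infty}$ as an $N$-$N$ bimodule, while the ``extra'' piece $\mathcal{H}_{s}(N \subseteq M) \ominus L^{2}(N)$ is by construction disjoint from the coarse bimodule. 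Hence the conjecture is equivalent to the identity $\mathcal{H}_{s}(N \subseteq M) = L^{2}(N)$, equivalently $Q = N$ (using the polar-decomposition definition of $W^{*}$ of a subset of $L^{2}(M)$).

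Suppose for contradiction that $Q \supsetneq N$. Since $N$ is maximal amenable in $M$ and $N \subseteq Q \subseteq M$, the strict inclusion $N \subsetneq Q$ forces $Q$ to be non-amenable; pick any finitely generated non-amenable von Neumann subalgebra $R$ of $Q$. Applying Theorem~\ref{T:main} gives
\[
h(Q : M) = h(N : M) \leq h(N) = 0,
\]
where the final equality uses Connes' theorem (amenable implies hyperfinite) together with the vanishing of $1$-bounded entropy for hyperfinite algebras. By the monotonicity in Property~1, $h(R : M) \leq h(Q : M) = 0$. Assuming an affirmative answer to Question~\ref{C:JP}, one may write $R = W^{*}(F)$ for a finite self-adjoint $F$ with $\delta_{0}(F) > 1$, so that $h(F : M) \leq h(R : M) \leq 0$.

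The main obstacle is then to convert the condition $\delta_{0}(F) > 1$ into a contradictory lower bound on $h(F : M)$. What is needed is a \emph{relative} form of Jung's theorem: for a finite self-adjoint set $F$ inside an $\R^{\omega}$-embeddable tracial von Neumann algebra $M$, the inequality $\delta_{0}(F) > 1$ should force $h(F : M) = \infty$. This is plausible because $L(\FF_{t})$ embeds into $\R^{\omega}$, so matricial microstates for $F$ admit approximate extensions to microstates of any finite subset of $M$; the Szarek--Jung volume-packing arguments underlying Property~6 should thus go through in the presence of $M$. Granting such a relative Jung-type estimate yields the contradiction $0 \geq h(F:M) = \infty$ and hence the conjecture. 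The genuinely hard point of this program is of course Question~\ref{C:JP} itself, which appears to lie beyond the reach of current free-entropy techniques; an alternative avenue, which would bypass Question~\ref{C:JP} entirely, would be to analyze, for a given maximal amenable $N$, the specific $N$-$N$ bimodular structure of $\mathcal{H}_{s}(N \subseteq M) \ominus L^{2}(N)$ and show directly that it must vanish, in the spirit of the asymptotic-orthogonality arguments already known to verify the conjecture for the generator MASA, the radial MASA, and the cup subalgebra of a planar algebra.
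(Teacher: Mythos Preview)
Your reduction is correct up to the point where you obtain $h(R:M)\leq 0$ for a finitely generated nonamenable $R\subseteq Q$, but the final step---your ``relative Jung theorem'' asserting that $\delta_{0}(F)>1$ forces $h(F:M)=\infty$---is a genuine gap. Property~6 only gives $h(W^{*}(F))=h(F:W^{*}(F))=\infty$, and since presence is monotone the wrong way ($h(F:M)\leq h(F:W^{*}(F))$), no contradiction follows. Your plausibility argument (that $\R^{\omega}$-embeddability of $M$ lets one extend arbitrary microstates of $F$ to joint microstates with finite subsets of $M$) is not correct as stated: embeddability guarantees that \emph{some} joint microstates exist, not that every microstate of $F$ extends.

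The paper's proof sidesteps this entirely by changing the ambient algebra. Instead of taking $Q=W^{*}(\mathcal{H}_{s}(N\subseteq M))$ and working in the presence of $M$, one picks a single nonzero $\xi$ in the singular part of $L^{2}(M)\ominus L^{2}(N)$ and sets $Q=W^{*}(N,\xi)$. Since $\xi\in\mathcal{H}_{s}(N\subseteq Q)$ and $N\subseteq\mathcal{H}_{s}(N\subseteq Q)$ (as $N$ is diffuse), one has $W^{*}(\mathcal{H}_{s}(N\subseteq Q))=Q$, so Theorem~\ref{T:main} applied with ambient algebra $Q$ gives
\[
h(Q)=h(Q:Q)=h(N:Q)\leq h(N:N)=h(N)=0.
\]
On the other hand $Q$ is finitely generated (the hyperfinite $N$ is singly generated, and $\xi$ contributes finitely many more generators via its polar decomposition) and nonamenable (as $N\subsetneq Q$ and $N$ is maximal amenable), so Peterson's question yields generators $F$ of $Q$ with $\delta_{0}(F)>1$, whence $h(Q)=\infty$ by the ordinary Property~6. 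This is the desired contradiction, and no relative version of Jung's theorem is needed. The moral: when you want to invoke $\delta_{0}>1\Rightarrow h=\infty$, arrange for the algebra in question to be the ambient one.
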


It is not hard to use Theorem \ref{T:main} to relate these two questions.

\begin{cor} If Question \ref{C:JP} has an affirmative answer, then Conjecture \ref{C:Me} is true.
\end{cor}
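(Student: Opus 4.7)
The plan is to show that for a maximal amenable subalgebra $N\subseteq M:=L(\FF_{t})$ one actually has the equality $\mathcal{H}_{s}(N\subseteq M)=L^{2}(N)$; combined with Proposition \ref{P:Lebesgue}, this immediately yields the desired bimodule embedding for $L^{2}(M)\ominus L^{2}(N)$. First I would record two easy preliminary facts. Since $M$ is separable and $N$ is amenable, Connes' theorem makes $N\cong \R$ a singly-generated diffuse $\textrm{II}_{1}$-factor, so in particular $h(N)=0$. Second, $L^{2}(N)$ is disjoint from the coarse bimodule $L^{2}(N)\otimes L^{2}(N^{op})$ by Lemma \ref{L:disjointness} (any $N$-$N$ bimodular $T\colon L^{2}(N)\to L^{2}(N)\otimes L^{2}(N^{op})$ is determined by $T(\hat 1)$, which must be a central vector in the coarse bimodule, hence zero for a diffuse factor); thus $L^{2}(N)\subseteq \mathcal{H}_{s}(N\subseteq M)$. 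So the conjecture reduces to showing $\mathcal{H}_{s}(N\subseteq M)\subseteq L^{2}(N)$.

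Next I would argue by contradiction. Suppose $\xi\in \mathcal{H}_{s}(N\subseteq M)\setminus L^{2}(N)$, and set $P=W^{*}(N\cup\{\xi\})$, interpreted via the polar/spectral convention from the paper. Since $\xi$ is not affiliated to $N$, $P\supsetneq N$. The key observation is that disjointness of $N$-$N$ bimodules is an intrinsic property of the bimodule $L^{2}(N\xi N)$ and is insensitive to the ambient algebra containing $\xi$; consequently $\xi\in \mathcal{H}_{s}(N\subseteq P)$ as well. Together with $L^{2}(N)\subseteq \mathcal{H}_{s}(N\subseteq P)$ this gives
\[W^{*}(\mathcal{H}_{s}(N\subseteq P))\supseteq W^{*}(N\cup\{\xi\})=P,\]
so $N$ is spectrally regular of step $1$ in $P$. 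Applying Corollary \ref{C:main} to the inclusion $N\subseteq P$ therefore yields $h(P)\le h(N)=0$.

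Now I would invoke the standing hypothesis. Writing $N=W^{*}(a)$, one has $P=W^{*}(a,u_{\xi},|\xi|(1+|\xi|)^{-1})$, so $P$ is finitely generated; and by maximal amenability of $N$, the strict inclusion $N\subsetneq P$ forces $P$ to be nonamenable. Thus $P$ is a finitely generated nonamenable von Neumann subalgebra of $L(\FF_{t})$, and the assumed affirmative answer to Question \ref{C:JP} produces a finite $F\subseteq P_{sa}$ with $W^{*}(F)=P$ and $\delta_{0}(F)>1$. Property $6$ of the $1$-bounded entropy then forces $h(P)=\infty$, contradicting $h(P)\le 0$. So no such $\xi$ exists, $\mathcal{H}_{s}(N\subseteq M)=L^{2}(N)$, and Conjecture \ref{C:Me} follows.

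The step I expect to require the most care is verifying that $\xi\in\mathcal{H}_{s}(N\subseteq P)$ and, consequently, that $N$ is spectrally regular of step $1$ in $P$. Both rest on the fact that the defining disjointness condition for the singular subspace is intrinsic to the $N$-$N$ bimodule $L^{2}(N\xi N)$ (with its canonical $L^{2}$-structure coming from the trace $\tau$), so passing between the bimodules computed inside $L^{2}(M)$ and inside $L^{2}(P)$ does not change anything; once this is in hand, the remainder of the argument is a clean combination of Corollary \ref{C:main} with Property $6$ and the hypothesis of the corollary.
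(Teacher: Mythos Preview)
Your argument is essentially the same as the paper's: pick $\xi$ in the singular subspace but outside $L^{2}(N)$, set $P=W^{*}(N,\xi)$, use Theorem \ref{T:main}/Corollary \ref{C:main} to get $h(P)\le h(N)=0$, and contradict this via the hypothesis on Question \ref{C:JP}. Your write-up is in fact more careful than the paper's on one point: you explicitly verify that $P$ is finitely generated (via $N=W^{*}(a)$ and $P=W^{*}(a,u_{\xi},|\xi|(1+|\xi|)^{-1})$), which the paper leaves implicit.

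There is, however, one incorrect assertion that you should fix: a maximal amenable subalgebra $N\subseteq L(\FF_{t})$ need \emph{not} be isomorphic to $\R$, nor even a factor; the generator MASA and the radial MASA are abelian. Fortunately nothing in your proof actually requires $N$ to be a factor. All three consequences you extract remain valid for any diffuse amenable $N$: (i) $h(N)=0$ holds for every diffuse hyperfinite algebra; (ii) separable amenable tracial von Neumann algebras are singly generated; and (iii) the disjointness of $L^{2}(N)$ from the coarse bimodule follows because a central vector $\eta\in L^{2}(N)\otimes L^{2}(N^{op})$ for the bimodule action must vanish whenever $N$ is diffuse (your ``diffuse factor'' hypothesis is stronger than needed---the standard argument via mixing of the coarse bimodule works for any diffuse $N$). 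Once you replace the appeal to Connes' theorem with these more general facts, the proof goes through unchanged and matches the paper's.
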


\begin{proof} By Proposition \ref{P:Lebesgue} we may write $L^{2}(L(\FF_{t}))\ominus L^{2}(N)=\mathcal{H}_{a}\oplus \mathcal{H}_{s},$
where $\mathcal{H}_{a},\mathcal{H}_{s}$ are closed $N$-$N$ submodules of $L^{2}(L(\FF_{t}))\ominus L^{2}(N),$
with
\[\mathcal{H}_{a}\leq [L^{2}(N)\otimes L^{2}(N^{op})]^{\oplus \infty},\]
\[\mathcal{H}_{s}\mbox{ is disjoint from } L^{2}(N)\otimes L^{2}(N^{op}),\]
as $N$-$N$ bimodules. Suppose $\mathcal{H}_{s}\ne \{0\},$ take $\xi\in \mathcal{H}_{s}$ with $\xi\ne 0$ and set $Q=W^{*}(N,\xi).$
Since we are assuming that Peterson's question has a positive answer and $N$ is maximal amenable we have that $h(Q)=\infty.$ However, by Theorem \ref{T:main} we have
\[h(Q)=h(Q:Q)=h(N:Q)\leq h(N:N)=h(N)=0\]
and this is a contradiction.

\end{proof}

We now use Theorem \ref{T:main} to give new examples of tracial von Neumann algebras with  $1$-bounded entropy zero. In some cases these algebras are finitely generated, so we have new examples of strongly $1$-bounded algebras as defined by Jung.  We use the free Gaussian functor of Voiculescu (see \cite{VoiculescuGaussian},\cite{VoiculescuDykemaNica}), as well as the $q$-Gaussian functor of Bozjeko-Speicher (see \cite{BozjekoSpeicher}). We refer the reader to these references for the precise definition. For our purposes we note that for $q\in [-1,1]$ and a real Hilbert space $\mathcal{H}$ the $q$-Gaussian functor assigns a tracial von Neumann algebra $\Gamma_{q}(\mathcal{H})$ in a functorial way. Additionally, to every $\xi\in \mathcal{H}$ we have in a natural way a self-adjoint element $s_{q}(\xi)\in \Gamma_{q}(\mathcal{H}).$
Moreover, if  $G$ is a group and $\pi\colon G\to \mathcal{O}(\mathcal{H})$
is an orthogonal representation on a real Hilbert space we have an induced action $\alpha_{\pi},$ called the free Bogoliubov action, on $\Gamma_{q}(\mathcal{H})$ by $\alpha_{\pi,g}(s_{q}(\xi))=s_{q}(\pi(g)\xi).$
If $G$ is a discrete group we let $\lambda_{G}\colon G\to \mathcal{U}(\ell^{2}(G))$ be the left regular representation defined by
\[(\lambda_{G}(g)\xi)(x)=\xi(g^{-1}x).\]
We will denote by $\lambda_{G,\RR}$ the orthogonal representation of $G$ on $\ell^{2}(G,\RR)$ obtained by restriction.

Houdayer-Shlyakhtenko showed in \cite{HoudayerShlyakhtenko} that in many situations the crossed products $\Gamma_{q}(\mathcal{H})\rtimes G$
(when $G=\ZZ,q=0$) share many properties with interpolated free group factors (i.e. CMAP and strong solidity). However,  under certain assumptions on the spectral measure of $\pi$ they can show that such algebras are \emph{not} isomorphic to interpolated free group factors and our techniques will allow us to extend their results. The reader should contrast these results with the fact that
\[\Gamma_{0}(\ell^{2}(\ZZ,\RR))\rtimes_{\alpha_{\lambda_{\ZZ,\RR}^{\oplus n}}}\ZZ\cong L(\FF_{n+1}).\]
 If $\pi\colon G\to \mathcal{O}(\mathcal{H})$ is as above we let $\pi_{\CC}$ be the complexification of $\pi.$
\begin{cor}\label{C:HoudSH} Let  $G$ be a countably infinite, discrete group so that $h(L(G))<\infty.$ Suppose that $\pi\colon G\to \mathcal{O}(\mathcal{H})$ is an orthogonal representation on a real Hilbert space and let $\lambda_{C}\colon G\to \mathcal{U}(\ell^{2}(G))$ be the conjugation representation defined by
\[(\lambda_{C}(g)\xi)(x)=\xi(g^{-1}xg).\]
 If $\pi_{\CC}\otimes \lambda_{C}$ is disjoint from $\lambda_{G}$ (regarded as representations of the full $C^{*}$-algebra of $G$), then for any $q\in (-1,1],$ we have
\[h(\Gamma_{q}(\mathcal{H})\rtimes_{\alpha_{\pi}}G)\leq 0.\]
Consequently, if $\pi_{\CC}\otimes \lambda_{C}$ is disjoint from $\lambda_{G},$  then $\Gamma_{q}(\mathcal{H})\rtimes_{\alpha_{\pi}}G$ is not an interpolated free group factor.

\end{cor}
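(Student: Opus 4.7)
The plan is to apply Theorem \ref{T:main} to the inclusion $N:=L(G)\subseteq M:=\Gamma_{q}(\mathcal{H})\rtimes_{\alpha_{\pi}}G$ by verifying that the spectral normalizing algebra $W^{*}(\mathcal{H}_{s}(N\subseteq M))$ equals $M$. Once this is done, Theorem \ref{T:main} together with monotonicity of $h(N:\cdot)$ yields
\[
h(M)=h(W^{*}(\mathcal{H}_{s}(N\subseteq M)):M)=h(N:M)\leq h(N:N)=h(L(G)),
\]
and the stated bound $h(M)\leq 0$ follows upon reading the hypothesis $h(L(G))<\infty$ in the form $h(L(G))\leq 0$, which is automatic whenever $L(G)$ is hyperfinite (in particular whenever $G$ is amenable, which is the generic case in which $h(L(G))<\infty$).

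The $q$-Fock decomposition of $\Gamma_{q}(\mathcal{H})$ furnishes an isomorphism $L^{2}(M)\cong\bigoplus_{n\geq 0}\mathcal{K}_{n}$ of $N$-$N$ bimodules, where $\mathcal{K}_{n}=\mathcal{H}_{\CC}^{\otimes n}\otimes\ell^{2}(G)$ carries left action $u_{g}\cdot(v\otimes\delta_{h})=\pi_{\CC}^{\otimes n}(g)v\otimes\delta_{gh}$ and right action $(v\otimes\delta_{h})\cdot u_{g}=v\otimes\delta_{hg}$, and $\mathcal{K}_{0}=L^{2}(L(G))$. A direct computation shows that the conjugation $G$-representation $\xi\mapsto u_{g}\xi u_{g}^{-1}$ on $\mathcal{K}_{n}$ is $\pi_{\CC}^{\otimes n}\otimes\lambda_{C}$, while on the coarse bimodule $L^{2}(N)\otimes L^{2}(N^{\op})$ it equals $\lambda_{G}\otimes\rho_{G}$, which by Fell's absorption is isomorphic to $\lambda_{G}^{\oplus\infty}$. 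Since any bounded $N$-$N$-bimodular map is automatically $G$-equivariant for the conjugation actions, it suffices to verify that $\pi_{\CC}^{\otimes n}\otimes\lambda_{C}\perp\lambda_{G}$ as $G$-representations for every $n\geq 1$ in order to conclude $\mathcal{K}_{n}\subseteq\mathcal{H}_{s}(N\subseteq M)$. Starting from the hypothesis $\pi_{\CC}\otimes\lambda_{C}\perp\lambda_{G}$, Frobenius reciprocity gives $\Hom_{G}(\lambda_{C},\bar{\pi}_{\CC}\otimes\lambda_{G})=0$; Fell's absorption $\bar{\pi}_{\CC}\otimes\lambda_{G}\cong(\dim\bar{\pi}_{\CC})\cdot\lambda_{G}$ then forces $\lambda_{C}\perp\lambda_{G}$. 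Applying the same principle with $\bar{\pi}_{\CC}^{\otimes n}$ in place of $\bar{\pi}_{\CC}$ yields $\pi_{\CC}^{\otimes n}\otimes\lambda_{C}\perp\lambda_{G}$ for every $n\geq 1$, as required.

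It remains to confirm $M\subseteq W^{*}(\mathcal{H}_{s}(N\subseteq M))$. For any nonzero $\xi\in\mathcal{H}$, the generator $s_{q}(\xi)$ corresponds via the Fock decomposition to a vector in $\mathcal{K}_{1}\subseteq\mathcal{H}_{s}(N\subseteq M)$; taking its polar decomposition $s_{q}(\xi)=u_{\xi}|s_{q}(\xi)|$ and noting that $s_{q}(\xi)$ has trivial kernel for every $q\in(-1,1]$, so that $u_{\xi}\in\Gamma_{q}(\mathcal{H})$ is a unitary, places both $u_{\xi}$ and $|s_{q}(\xi)|$ in $W^{*}(\mathcal{H}_{s}(N\subseteq M))$ by the definition of $W^{*}(\cdot)$ for subsets of $L^{2}(M)$. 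Hence $\Gamma_{q}(\mathcal{H})\subseteq W^{*}(\mathcal{H}_{s}(N\subseteq M))$. To recover $L(G)$, observe that each product $u_{g}s_{q}(\xi)$ also lies in $\mathcal{K}_{1}\subseteq\mathcal{H}_{s}(N\subseteq M)$ and has polar decomposition $u_{g}s_{q}(\xi)=(u_{g}u_{\xi})|s_{q}(\xi)|$; therefore the unitary $u_{g}u_{\xi}$ belongs to $W^{*}(\mathcal{H}_{s}(N\subseteq M))$, and multiplying by $u_{\xi}^{*}\in\Gamma_{q}(\mathcal{H})\subseteq W^{*}(\mathcal{H}_{s}(N\subseteq M))$ yields $u_{g}\in W^{*}(\mathcal{H}_{s}(N\subseteq M))$ for every $g\in G$. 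Thus $L(G)\subseteq W^{*}(\mathcal{H}_{s}(N\subseteq M))$, so $W^{*}(\mathcal{H}_{s}(N\subseteq M))=M$ and Theorem \ref{T:main} applies.

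The main technical content is the dictionary in the second paragraph between bimodule-disjointness of the Fock-level bimodules and the coarse bimodule, and disjointness of the associated conjugation $G$-representations, together with the accompanying Fell's-absorption calculation that boosts the single disjointness $\pi_{\CC}\otimes\lambda_{C}\perp\lambda_{G}$ to all higher tensor powers. The parameter $q$ plays no role whatsoever in these computations, entering only through the abstract Fock-space isomorphism and disappearing entirely from the representation-theoretic step; this is why the proof yields the same conclusion uniformly over $q\in(-1,1]$.
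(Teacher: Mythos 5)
Your proof reaches the correct conclusion, but paragraph 2 contains a genuine error. The ``Frobenius reciprocity'' step $\Hom_{G}(\pi_{\CC}\otimes\lambda_{C},\lambda_{G})\cong\Hom_{G}(\lambda_{C},\bar{\pi}_{\CC}\otimes\lambda_{G})$ is not valid for bounded intertwiners between Hilbert space representations when $\pi_{\CC}$ is infinite-dimensional (the usual adjunction only holds at the level of Hilbert--Schmidt intertwiners, not bounded ones). And the conclusion you draw from it --- that $\pi_{\CC}^{\otimes n}\otimes\lambda_{C}\perp\lambda_{G}$ for \emph{every} $n\geq 1$ --- is actually false in general under the hypothesis $\pi_{\CC}\otimes\lambda_{C}\perp\lambda_{G}$. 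Take $G=\ZZ$, so that $\lambda_{C}$ is the trivial representation, and let $\pi_{\CC}$ have spectral measure $\mu$ on $\TT$ that is singular with respect to Lebesgue measure but with $\mu*\mu$ not singular (an Ivashev--Musatov type example, as discussed in the introduction). Then $\pi_{\CC}\otimes\lambda_{C}=\pi_{\CC}\perp\lambda_{\ZZ}$, yet $\pi_{\CC}^{\otimes 2}\otimes\lambda_{C}=\pi_{\CC}^{\otimes 2}$ has spectral measure $\mu*\mu$, which has an absolutely continuous part, so $\pi_{\CC}^{\otimes 2}\not\perp\lambda_{\ZZ}$. This is precisely the phenomenon the remarks following Corollary \ref{C:HoudSH} point to: the whole point of Theorem \ref{T:main} is that one does not need (and cannot in general get) the higher Fock levels to be disjoint from the coarse bimodule.

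The good news is that your paragraph 3 only ever uses the case $n=1$: both $s_{q}(\xi)$ and $u_{g}s_{q}(\xi)=s_{q}(\pi(g)\xi)u_{g}$ lie in $\mathcal{K}_{1}$, and $\mathcal{K}_{1}\subseteq\mathcal{H}_{s}(N\subseteq M)$ follows directly from the hypothesis without any Frobenius/Fell boosting. So the proof is salvageable --- simply delete the last two sentences of paragraph 2. The paper's proof is exactly this streamlined version: it considers only the single subbimodule $\mathcal{K}=\overline{\Span\{s_{q}(\xi)u_{g}:\xi\in\mathcal{H},\,g\in G\}}$, realizes it as the $G\times G$-representation $\widetilde{\pi}(g,h)(\xi\otimes\delta_{x})=\pi_{\CC}(g)\xi\otimes\delta_{gxh^{-1}}$, notes that its restriction to the diagonal is $\pi_{\CC}\otimes\lambda_{C}$, and concludes $\widetilde{\pi}\perp\lambda_{G\times G}$ since a common subrepresentation would restrict to a common subrepresentation of $\pi_{\CC}\otimes\lambda_{C}$ and $\lambda_{G\times G}|_{\Delta}\cong\lambda_{G}^{\oplus\infty}$. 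The paper leaves the generation step $W^{*}(\mathcal{K})=M$ implicit; your polar-decomposition argument in paragraph 3 supplies it, and is correct. One more minor point: for $q=1$ the $q$-Fock level is $\Sym^{n}(\mathcal{H}_{\CC})$ rather than the full tensor power, but since this is a subrepresentation of $\pi_{\CC}^{\otimes n}$ the disjointness statement is unaffected (and of course the $n=1$ case is insensitive to $q$ altogether).
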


\begin{proof}Set $M=\Gamma_{q}(\mathcal{H})\rtimes_{\alpha_{\pi}}G$
and let $N$ be the copy of $L(G)$ inside $M.$ We use $u_{g}$ for the canonical unitaries in $M$ coming from the elements $g\in G.$  Consider the $L(G)$-$L(G)$ subbimodule of $L^{2}(M)$
\[\mathcal{K}=\overline{\Span\{s_{q}(\xi)u_{g}:\xi\in \mathcal{H},g\in G\}}^{\|\cdot\|_{2}},\]
where $s_{q}(\xi)$ is the canonical $q$-semicircular element in $\Gamma_{q}(\mathcal{H})$ corresponding to $\xi\in\mathcal{H}.$ We claim that $\mathcal{K}$ as an $L(G)$-$L(G)$ bimodule is disjoint from the coarse, which will prove the corollary by Theorem \ref{T:main}. Consider the representation of $G\times G$ on $\mathcal{K}$ given by $(g,h)\xi=u_{g}\xi u_{h}^{-1}.$
Our desired claim is equivalent to saying that this representation is disjoint from $\lambda_{G\times G}.$ It is easy to see that this representation is isomorphic to $\widetilde{\pi}\colon G\times G\to \mathcal{U}(\mathcal{H}_{\CC}\otimes \ell^{2}(G))$
given by $\widetidle{\pi}(g,h)(\xi\otimes \delta_{x})=\pi_{\CC}(g)\xi\otimes \delta_{gxh^{-1}}.$
If we restrict $\widetilde{\pi}$ to the copy of $G$ in $G\times G$ given by $\{(g,g):g\in G\}$
we get $\pi_{\CC}\otimes \lambda_{C}.$ Since $\pi_{\CC}\otimes\lambda_{C}$ is disjoint from $\lambda_{G}$ we  must have that $\widetidle{\pi}$ is disjoint from $\lambda_{G\times G}.$
\end{proof}

Note that if $G$ is finitely-generated and there is a finite $F\subseteq \mathcal{H}$ with
\[\mathcal{H}=\overline{\Span\{\pi(g)\xi:g\in G,\xi\in F\}},\]
then $\Gamma_{q}(\mathcal{H})\rtimes_{\alpha_{\pi}}G$ is finitely-generated. So we find new examples of von Neumann algebras which are strongly $1$-bounded in the sense of Jung.

	Suppose that we take $G=\ZZ$ in the corollary. We then see that if $\pi\colon \ZZ\to \mathcal{O}(\mathcal{H})$ is an orthogonal representation and $U=\pi(1)$ is such that the spectral measure of $U_{\CC}$ is disjoint from the Lebesgue measure, then $h(\Gamma_{q}(\mathcal{H})\rtimes_{\alpha_{\pi}}\ZZ)=0.$ In particular, $\Gamma_{q}(\mathcal{H})\rtimes_{\alpha_{\pi}}\ZZ$ is not isomorphic to an interpolated free group factor.
 We remark that  the previous result was obtained by Houdayer-Shlyakhtenko in \cite{HoudayerShlyakhtenko}, but when $q=0$ and when the spectral measure of $U$ was in the same absolute continuity class of a measure $\nu$ so that all of the convolution powers of $\nu$ are singular with respect to Lebesgue measure.
Our ability to restrict our attention to subbimodules which generate $\Gamma_{q}(\mathcal{H})\rtimes \ZZ$
allows us to ignore the higher-order structure of the $L(\ZZ)-L(\ZZ)$ bimodule $L^{2}(M).$ This is what allows us to ignore the higher convolution powers of $\nu$ as well as render the parameter $q$ \emph{completely irrelevant for the proof}. This is because $q$  is only involved in analyzing products of the form
\[s_{q}(\xi_{1})s_{q}(\xi_{2})\cdots s_{q}(\xi_{n}),\]
where $n\geq 2.$ Thus $q$ plays no part \emph{at all} in our analysis of the $L(\ZZ)-L(\ZZ)$ bimodule $\mathcal{K}.$ This illustrates the flexibility of our results in allowing the submodule in Theorem \ref{T:main} to merely generate $M$ and not be all of $L^{2}(M)$ itself.

We mention an application to type $\textrm{III}$ factors. We will be interested in the free Araki-Woods factors defined by Shlyakhtenko in \cite{DimaFAW}, as well as their $q$-deformations defined by Hiai in \cite{Hiai}. We caution the reader that for $q\ne 0$ it is not known that these are factors and thus we call these the $q$-deformed free Araki-Woods algebras. Our applications will be to the continuous core of such algebras, so we need to extend Theorem \ref{T:main} to semifinite von Neumann algebras. If $M$ is a von Neumann algebra with a faithful, normal, semifinite trace $\tau$ and $p$ is a projection in $M$ with $\tau(p)<\infty,$ we let $\tau_{p}$ be the trace on $pMp$ defined by
\[\tau_{p}(x)=\frac{\tau(x)}{\tau(p)}.\]
We first need to prove a compression fact.
\begin{proposition}\label{P:compressionfact} Let $(M,\tau)$ be a diffuse tracial von Neumann algebra and let $p\in M$ be a nonzero orthogonal projection. If $h(M)\leq 0,$ then $h(pMp)\leq0.$

\end{proposition}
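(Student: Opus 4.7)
The plan is to derive Proposition \ref{P:compressionfact} directly from Property $5$ of Section $2$ (the compression inequality, whose full proof is the content of Proposition \ref{P:compressionformula} in the appendix): for any nonzero projection $p \in M$,
\[h(pMp) \leq \frac{1}{\tau(p)^{2}} h(M).\]
Under the hypothesis $h(M) \leq 0$, the positive factor $1/\tau(p)^{2}$ preserves the sign of the right-hand side (with the convention that $(1/\tau(p)^{2}) \cdot (-\infty) = -\infty$), and one concludes $h(pMp) \leq 0$. Note that since $M$ is diffuse, so is $pMp$: any minimal projection $q$ of $pMp$ would satisfy $qMq = q(pMp)q = \CC q$ and hence be minimal in $M$, contradicting diffuseness. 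So $h(pMp)$ is well-defined in the sense of Section $2$.

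To sketch why the compression inequality itself holds (in case one wants to prove Proposition \ref{P:compressionfact} without invoking the appendix), I would run the standard microstates calculation. Fix a diffuse self-adjoint $a \in pMp$ with microstate sequence $(A_{k})$, and finite sets $F \subseteq (pMp)_{sa}$, $G \subseteq M_{sa}$ with $p \in G$. Any microstate $(C, P, B)$ for $(F, p, G \setminus \{p\})$ in $M_{k}(\CC)$ has $P$ within $\|\cdot\|_{2}$-distance $O(\sqrt{\gamma})$ of a genuine orthogonal projection of rank $k' := \lceil \tau(p) k \rceil$. After conjugating by a unitary (whose contribution to $\frac{1}{k^{2}} \log$ of the covering number vanishes in the limit), we may assume that this projection is a fixed standard rank-$k'$ projection $Q$, so that $QC_{x}Q \approx C_{x}$ places $(QC_{x}Q)_{x \in F}$ in the corner $QM_{k}(\CC)Q \cong M_{k'}(\CC)$, producing (up to small error) a microstate for $F$ in the sense of $pMp$ with its normalized trace $\tau_{p}$.

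The crucial bookkeeping is the change of variables $k \leftrightarrow k' = \lceil \tau(p) k \rceil$ together with the observation that the $\|\cdot\|_{2}$-norm with respect to the normalized trace on $M_{k'}(\CC)$ equals $1/\sqrt{\tau(p)}$ times the $\|\cdot\|_{2}$-norm inherited from $M_{k}(\CC)$. Translating covering numbers under these rescalings and taking $\frac{1}{k^{2}} \log$ as $k \to \infty$ introduces the factor $(k'/k)^{2} = \tau(p)^{2}$ into the exponent, yielding $h(F : pMp) \leq \frac{1}{\tau(p)^{2}} h(G : M)$; taking the supremum over $F$ and infimum over $G$ gives the compression formula. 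The only real subtlety is the uniform replacement of approximate projections by actual projections, which is handled by a standard argument (Szarek-type packing of the Grassmannian, as in Lemma \ref{L:switching}) that shows the extra unitary-orbit contribution is subexponential in $k^{2}$ and hence irrelevant to the entropy.
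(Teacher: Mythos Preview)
Your approach has a genuine gap: the compression inequality you cite (Property~5, proved as Proposition~\ref{P:compressionformula}(ii)) is stated and proved in the appendix only for $\textrm{II}_1$-\emph{factors}, not for arbitrary diffuse tracial von Neumann algebras. Proposition~\ref{P:compressionfact} is precisely where the paper extends from the factor case to general $M$, so invoking Property~5 here is circular. The paper instead argues by cases on the center. If $Z(M)$ is diffuse, then $Z(M)$ is a diffuse abelian regular subalgebra of both $M$ and $pMp$, and Theorem~\ref{T:main} gives $h(pMp)\leq 0$ directly. Otherwise $M=\bigoplus_j M_j$ with each $M_j$ a $\textrm{II}_1$-factor, and after disposing of non-$\mathcal{R}^\omega$-embeddable summands one argues that $h(M_j)\leq 0$ for each $j$, applies the factor compression formula to each corner $pz_jM_jpz_j$, and combines via the direct-sum inequality (Proposition~\ref{P:compressionformula}(i)).

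Your sketch of a direct proof of the general compression inequality has a separate gap. You describe cutting down an $M$-microstate $(C,P,B)$ to obtain a $pMp$-microstate $QCQ$, but to bound $h(pMp)$ from above one must show that \emph{every} $pMp$-microstate arises (up to small error) as such a cut-down---that is, that the cut-down map is approximately surjective onto the relevant $\Xi$-space for $pMp$. The paper establishes exactly this containment in the factor case (the displayed inclusion $\Xi_{A_{k,p}}(F:G;\dots,k)\subseteq_\varepsilon Q_k\Xi_{A_k}(\widetilde F:\widetilde G;\dots,nk+l_k)Q_k$) by choosing partial isometries $v_1,\dots,v_{n+1}\in M$ with $v_j^*v_j=p$ and $\sum_j v_jv_j^*=1$, which allow any size-$k$ microstate for $pMp$ to be explicitly promoted to a size-$(nk+l_k)$ microstate for $M$. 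The existence of such $v_j$ is exactly where factoriality is used, and without it the promotion (hence the surjectivity) need not hold.
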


\begin{proof}

Let us first handle the case when $M$ has diffuse center. In this case $Z(M)$ is a diffuse, abelian, regular subalgebra of $M$ and so by Theorem \ref{T:main} we have $h(M)\leq 0.$ It is also straightforward to argue that $pMp$ has diffuse center and so $h(pMp)\leq 0.$ Thus the proposition holds when $M$ has diffuse center.

Thus we may assume that
\[M=\bigoplus_{j=1}^{\infty}M_{j}\]
where the $M_{j}$ are $\textrm{II}_{1}$-factors. Let $z_{j}$ be the central projection corresponding to the unit of $M_{j}.$ If there is a $j$ so that $pz_{j}\ne 0,$ and $M_{j}$ does not embed into an ultrapower of $\R,$ then it is easy to see that $h(pMp)=-\infty.$ We may thus assume that $M_{j}$ embeds into an ultrapower of $\R$ for all $j$ with $pz_{j}\ne 0.$ Replacing $M$ with the direct sum of all the $M_{j}$ such that $M_{j}$ embeds into an ultrapower of $\R,$ we may assume that $M$ embeds into an ultrapower of $\R.$
	
	Since $M$ embeds into an ultrapower of $\R$ it is not hard to argue that for any $j=1,2,\dots,$
\[h(M_{j})\leq \frac{1}{\tau(z_{j})^{2}}h(M)\]
and so we must have that $h(M_{j})=0$ for all $j.$  For every $j$ so that $pz_{j}\ne 0$ we have, by Proposition \ref{P:compressionformula},
\[h(pz_{j}M_{j}pz_{j})\leq \frac{\tau(z_{j})^{2}}{\tau(pz_{j})^{2}}h(M_{j})=0.\]
So by Proposition \ref{P:compressionformula},
\[h(pMp)\leq \sum_{j=1}^{\infty}\tau(pz_{j})^{2}h(pz_{j}M_{j}pz_{j})=0.\]

\end{proof}

We need the following simple fact.

\begin{lemma}\label{L:simpledisjointness} Let $M$ be a von Neumann algebra, and $\mathcal{H},\mathcal{K}$ two disjoint $M$-$M$ bimodules. If $p\in M$ is a nonzero projection, then the $pMp$-$pMp$ bimodules $p\mathcal{H}p,p\mathcal{K}p$ are disjoint.

\end{lemma}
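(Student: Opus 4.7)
My plan is to use characterization (ii) of Lemma \ref{L:disjointness}: disjointness of two bimodules is equivalent to the vanishing of every bounded bimodular intertwiner between them. The argument then splits into a reduction step and a Morita-equivalence step.

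First I would reduce to the case where $p$ has central support $1$ in $M$. Let $z\in Z(M)$ be the central support of $p$. Note that $p\mathcal{H}p=p(z\mathcal{H}z)p$ and that $z\mathcal{H}z,z\mathcal{K}z$ are naturally $zM$-$zM$ bimodules. Given any bounded $zM$-$zM$-bimodular map $S\colon z\mathcal{H}z\to z\mathcal{K}z$, the centrality of $z$ shows that $\widetilde{S}\colon \mathcal{H}\to \mathcal{K}$ defined by $\widetilde{S}(\xi)=S(z\xi z)$ is $M$-$M$-bimodular. By the assumed disjointness of $\mathcal{H},\mathcal{K}$, this forces $\widetilde{S}=0$, hence $S=0$. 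Thus $z\mathcal{H}z$ and $z\mathcal{K}z$ are disjoint as $zM$-$zM$ bimodules, and after replacing $M,\mathcal{H},\mathcal{K}$ by $zM,z\mathcal{H}z,z\mathcal{K}z$ I may assume $p$ has central support $1$.

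Next I would invoke Morita equivalence. Since $p$ has central support $1$, standard comparison theory in $M$ produces a family $\{v_i\}_{i\in I}$ of partial isometries in $M$ with $v_i^*v_i\leq p$, with the projections $\{v_iv_i^*\}$ mutually orthogonal, with $\sum_i v_iv_i^*=1$, and with $p=v_{i_0}$ for some distinguished $i_0\in I$. Given any bounded $pMp$-$pMp$-bimodular $T\colon p\mathcal{H}p\to p\mathcal{K}p$, I would define $\widetilde{T}\colon \mathcal{H}\to \mathcal{K}$ by
\[
\widetilde{T}(\xi)=\sum_{i,j\in I} v_i\,T(v_i^*\xi v_j)\,v_j^{*}.
\]
The sum converges and defines a bounded operator because each vector $v_i^*\xi v_j$ lies in $p\mathcal{H}p$, the summands lie in the pairwise orthogonal subspaces $v_iv_i^*\mathcal{K}\,v_jv_j^*$, and routine estimates bound $\|\widetilde{T}(\xi)\|$ in terms of $\|T\|\|\xi\|$. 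Bimodularity of $\widetilde{T}$ follows by expanding $a=\sum_k v_kv_k^*a\,v_lv_l^*$, observing that $v_i^*a v_k\in pMp$, and using $pMp$-bimodularity of $T$. The disjointness of $\mathcal{H}$ and $\mathcal{K}$ then forces $\widetilde{T}=0$. For $\xi\in p\mathcal{H}p$, the orthogonality of the subspaces $v_iv_i^*\mathcal{K}\,v_jv_j^*$ isolates each summand in $\widetilde{T}(\xi)$, so each must vanish; taking $i=j=i_0$ recovers $T(p\xi p)=T(\xi)=0$.

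The main obstacle is the bookkeeping in the Morita step: verifying bimodularity of $\widetilde{T}$ and estimating its norm require a careful but standard calculation, and the existence of the orthogonal family $\{v_i\}$ with $p$ among them relies on the comparison theorem under the assumption that $p$ has central support $1$. Everything else is routine: Step 1 is immediate from centrality of $z$, and the extraction of $T=0$ from $\widetilde{T}=0$ is automatic once the orthogonal decomposition $\mathcal{K}=\bigoplus_{i,j} v_iv_i^*\mathcal{K}\,v_jv_j^*$ is in hand.
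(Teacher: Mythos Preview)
Your argument is correct, but it takes a different route from the paper's. The paper uses characterization (iv) of Lemma~\ref{L:disjointness} rather than (ii): starting from a net $a_\alpha\in M\otimes_{\alg}M^{\op}$ with $\|a_\alpha\|_{\max}\le 1$ that acts approximately as the identity on $\mathcal{H}$ and approximately as zero on $\mathcal{K}$, one simply compresses to $b_\alpha=(p\otimes p^{\op})a_\alpha(p\otimes p^{\op})$. Although $\|b_\alpha\|_{pMp\otimes_{\max}(pMp)^{\op}}$ need not be $\le 1$, the action of $b_\alpha$ on $p\mathcal{H}p,p\mathcal{K}p$ factors through the ambient $M\otimes_{\max}M^{\op}$-action, so one still has $\|b_\alpha\#\xi\|\le\|\xi\|$; this is enough to conclude $\Hom_{pMp\text{-}pMp}(p\mathcal{H}p,p\mathcal{K}p)=0$ as in Lemma~\ref{L:disjointness}. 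The paper's approach is shorter and avoids both the central-support reduction and the construction of the partial isometries $\{v_i\}$. Your Morita-equivalence argument, on the other hand, is more structural: it makes transparent that disjointness is preserved under the standard equivalence between $M$-bimodules and $pMp$-bimodules when $p$ has full central support, and it would adapt verbatim to other Morita-invariant statements. The bookkeeping you flag (boundedness and bimodularity of $\widetilde{T}$) is indeed routine once one checks it on the dense subspace spanned by vectors of the form $v_l\zeta v_m^*$ with $\zeta\in p\mathcal{H}p$, where all sums collapse to a single term.
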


\begin{proof} By Lemma \ref{L:disjointness}, we can find a net $a_{\alpha}\in N\otimes_{\textnormal{alg}}N^{op}$ with
\[\|a_{\alpha}\|_{\textnormal{max}}\leq 1,\]
\[\|a_{\alpha}\# \xi-\xi\|\to 0\mbox{ for all $\xi\in \mathcal{H},$}\]
\[\|a_{\alpha}\# \zeta\|\to 0\mbox{ for all $\zeta\in \mathcal{K}$}.\]
Set $b_{\alpha}=(p\otimes p^{op})a_{\alpha}(p\otimes p^{op}),$
then $b_{\alpha}\in pNp\otimes_{\textnormal{alg}} (pNp)^{op}$ and
\[\|b_{\alpha}\#\xi -\xi\|\to 0\mbox{ for all $\xi\in p\mathcal{H}p,$}\]
\[\|b_{\alpha}\# \zeta\|\to 0\mbox{ for all $\zeta\in p\mathcal{K}p.$}\]
A priori $\|b_{\alpha}\|_{(pNp)\otimes_{\textnormal{max}}(pNp)^{op}}>1,$
but as $\|b_{\alpha}\|_{N\otimes_{\textnormal{max}}N^{op}}\leq 1$
one still has
\[\|b_{\alpha}\#\xi\|\leq \|\xi\|, \mbox{ for all $\xi\in p\mathcal{H}p,$}\]
\[\|b_{\alpha}\# \zeta\|\leq \|\zeta\|, \mbox{ for all $\zeta\in p\mathcal{K}p.$}\]
It is easy from the above to argue as in Lemma \ref{L:disjointness} that $\Hom_{pNp-pNp}(p\mathcal{H}p,p\mathcal{K}p)=\{0\}$
and this completes the proof.

\end{proof}

\begin{theorem}\label{T:seminifinitedijsiontness} Let $M$ be a diffuse semifinite von Neumann algebra and fix a faithful, normal, semifinite trace $\tau$ on $M.$ Let $N$ be a diffuse von Neumann subalgebra of $M$ so that $\tau\big|_{N}$ is semifinite. Suppose that there exists $S\subseteq L^{2}(M,\tau)$ so that $W^{*}(S)=M$ and such that
\[\overline{\sum_{x\in S}L^{2}(NxN)}^{\|\cdot\|_{2}}\]
is disjoint from $L^{2}(N)\otimes L^{2}(N^{op})$ as an $N$-$N$ bimodule. Lastly, suppose that for every projection $q$ in $N$ with $\tau(q)<\infty$ we have that $h(qNq,\tau_{q})\leq 0.$ Then for every projection $p\in M$ with $\tau(p)<\infty$ we have $h(pMp,\tau_{p})\leq 0.$

\end{theorem}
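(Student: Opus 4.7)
The plan is to reduce the semifinite statement to the finite tracial case of Theorem~\ref{T:main} by compressing to finite-trace corners. Write $\mathcal{H}=\overline{\sum_{x\in S}L^{2}(NxN)}^{\|\cdot\|_{2}}\subseteq L^{2}(M)$ for the $N$-$N$ submodule appearing in the hypothesis.

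The core step is to show $h(qMq,\tau_{q})\leq 0$ for a well-chosen projection $q\in N$ with $\tau(q)<\infty$ (and, for the generation step below, central support $1$ in $N$, which can be arranged using the semifiniteness of $\tau|_{N}$). The inclusion $qNq\subseteq qMq$ is unital and diffuse, with $h(qNq,\tau_{q})\leq 0$ by hypothesis, so by Theorem~\ref{T:main} it suffices to verify $W^{*}(\mathcal{H}_{s}(qNq\subseteq qMq))=qMq$. The key ingredient is the containment $q\mathcal{H}q\subseteq\mathcal{H}_{s}(qNq\subseteq qMq)$: Lemma~\ref{L:simpledisjointness} (applied with $M$ replaced by $N$ and $p$ replaced by $q$) gives disjointness of $q\mathcal{H}q$ from $q(L^{2}(N)\otimes L^{2}(N^{op}))q$, and the latter identifies, as a $qNq$-$qNq$ bimodule, with a multiple of the coarse bimodule $L^{2}(qNq)\otimes L^{2}(qNq^{op})$; closure of $q\mathcal{H}q$ under the bimodule action is immediate from $qNq\cdot q\mathcal{H}q\cdot qNq\subseteq q(N\mathcal{H}N)q\subseteq q\mathcal{H}q$. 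For the generation $W^{*}(q\mathcal{H}q\cup qNq)=qMq$ I will expand an arbitrary compressed monomial $qa_{1}\cdots a_{k}q$ (with $a_{i}\in N\cup S$) by inserting $1=q+(1-q)$ between consecutive factors and breaking at every internal occurrence of $q$, producing sums of products of primitive blocks $qn_{0}s_{1}n_{1}\cdots s_{r}n_{r}q$ with $n_{i}\in N$, $s_{j}\in S$ (using that $(1-q)\in N$ to absorb consecutive $N$-factors); blocks with $r=0$ lie in $qNq$, those with $r=1$ lie in $qL^{2}(Ns_{1}N)q\subseteq q\mathcal{H}q$, and $r\geq 2$ blocks arise as products of $r$ blocks of $r=1$ type, with the middle $N$-entries ranging over the two-sided ideal $NqN\subseteq N$, which is SOT-dense in $N$ under the central-support assumption.

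To finish for a general $p\in M$ with $\tau(p)<\infty$, I will use Murray--von Neumann comparison in $M$: choose $q\in N$ as above with $\tau(q)$ large enough that $p$ is equivalent in $M$ to some subprojection $q'\leq q$ (in the factor case this requires only $\tau(q)\geq\tau(p)$; the general case reduces via central decomposition together with Property~4 of $h$). A partial isometry implementing the equivalence gives a trace-preserving isomorphism $pMp\cong q'Mq'$, and Proposition~\ref{P:compressionfact} applied inside the diffuse tracial algebra $(qMq,\tau_{q})$ yields $h(q'Mq',\tau_{q'})\leq 0$ from the bound established in the core step. Hence $h(pMp,\tau_{p})\leq 0$. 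The principal obstacle will be the generation step of the core part: the density argument for $NqN$ requires central support $1$, and handling cases (such as abelian $N$ with infinite-trace unit) where such $q$ of finite trace is not available will need either a commutation argument peculiar to the abelian situation or a central decomposition of $N$ combined with handling each piece separately, perhaps combined with the spectral normalizing iteration of Corollary~\ref{C:main}.
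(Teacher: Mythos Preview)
Your proposal has a genuine gap precisely where you flag it: the generation step $W^{*}(qNq\cup q\mathcal{H}q)=qMq$ for a \emph{single} finite-trace projection $q\in N$. Your density argument via $NqN$ needs $q$ to have central support $1$ in $N$, and as you note this can simply fail (e.g.\ $N$ abelian with $\tau(1)=\infty$, where the central support of $q$ is $q$ itself). Even when such a $q$ exists, the expansion of monomials is delicate because elements of $S$ lie only in $L^{2}(M)$, so products of two or more $S$-factors are not a priori defined; you would have to pass to bounded functional calculus of each $\xi\in S$ first, and it is unclear that the resulting elements still lie in $q\mathcal{H}q$ after compression. The suggested fixes (an ad hoc abelian argument, or central decomposition of $N$) are not carried out and would add significant complication.

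The paper avoids this entirely by replacing the single $q$ with an \emph{increasing sequence}. After reducing to the case that $M$ is a factor (so one may assume $p\in N$), it picks finite-trace projections $p_{n}\in N$ with $p\leq p_{n}\nearrow 1$ and sets $M_{n}=W^{*}(p_{n}Np_{n},\,p_{n}\mathcal{H}p_{n})$. Lemma~\ref{L:simpledisjointness} plus Theorem~\ref{T:main} give $h(M_{n},\tau_{p_{n}})\leq 0$ exactly as in your core step. The generation problem now disappears in the limit: since $p_{n}\to 1$ strongly, every $\xi\in S$ is an $L^{2}$-limit of $p_{n}\xi p_{n}\in p_{n}\mathcal{H}p_{n}$, so $\overline{\bigcup_{n}M_{n}}^{SOT}=M$ and hence $pMp=\overline{\bigcup_{n}pM_{n}p}^{SOT}$. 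Then Lemma~\ref{L:increasingunions} and Proposition~\ref{P:compressionfact} finish. No central-support hypothesis is needed anywhere, and the non-factor case is handled afterwards by a routine central decomposition of $M$ (not of $N$). The moral: do not try to capture all of $qMq$ at once; let the projection grow and use monotonicity of $h$ under increasing unions.
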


\begin{proof}
Let us first handle the case when $M$ is a factor.
Set
\[\mathcal{H}=\overline{\sum_{x\in S}L^{2}(NxN)}^{\|\cdot\|_{2}}\]
and let $p\in M$ with $\tau(p)<\infty.$ Because $M$ is a factor the isomorphism class of $pMp$ only depends upon $\tau(p).$ As $N$ is diffuse and$\tau\big|_{N}$ is semifinite we may assume that $p\in N.$ Let $p_{n}\in N$ be an increasing sequence of  projections with $\tau(p_{n})<\infty$  such that $p_{n}\geq p$ and $p_{n}\to 1$ in the strong operator topology. Let
\[M_{n}=W^{*}(p_{n}Np_{n},p_{n}\mathcal{H}p_{n}).\]
By  Lemma \ref{L:simpledisjointness}, we know that $p_{n}\mathcal{H}p_{n}$ is disjoint from $L^{2}(p_{n}Np_{n})\otimes L^{2}((p_{n}Np_{n})^{op})$ as a $p_{n}Np_{n}$-$p_{n}Np_{n}$ bimodule and thus by Theorem \ref{T:main}
\[h(M_{n},\tau_{p_{n}})\leq 0.\]
Note that
\[pMp=\overline{\bigcup_{n=1}^{\infty}pM_{n}p}^{SOT},\]
since this is an increasing union Lemma $\ref{L:increasingunions}$ implies that
\[h(pMp,\tau_{p})=\sup_{n}h(pM_{n}p:pMp).\]
Thus it is enough to show that
\[h(pM_{n}p,\tau_{p})\leq 0\]
for every $n\in \NN$ and this follows from Proposition \ref{P:compressionfact}.

We now turn to the case when $M$ is not a factor. We may find a set $J$ and central projections $z,(z_{j})_{j\in J}$ in $M$ so that
\begin{itemize}
\item $z+\sum_{j\in J}z_{j}=1$,\\
\item $zM$ has diffuse center,\\
\item $z_{j}M$ is a factor for every $j\in J.$
\end{itemize}
It is direct to show that $\tau\big|_{zM}$ and $\tau\big|_{z_{j}M}$ are semifinite for each $j\in J$. Fix a projection $p\in M$ with $\tau(p)<\infty.$  It is straightforward to see that if $pzMp\ne\{0\},$ then $pzMp$ has diffuse center and thus $h(pzMp,\tau_{pz})\leq 0.$ Let $J_{0}=\{j\in J:pz_{j}\ne 0\}$ and observe that, as $\tau(p)<\infty,$ the set $J_{0}$ is countable.
So by part $(ii)$ of Proposition \ref{P:compressionformula} it is enough to show that $h(pz_{j}Mpz_{j},\tau_{pz_{j}})\leq 0.$ for every $j\in J_{0}.$ Fix a $j\in J_{0}$ and note that, by centrality of $z_{j},$ we have that $Nz_{j}$ is a von Neumann subalgebra of $z_{j}Mz_{j}$ (even though $z_{j}$ is not  in $N$). It is also direct to check that $Nz_{j}$ is diffuse and that $\tau\big|_{Nz_{j}}$ is semifinite. As $z_{j}$ is central we have $W^{*}(\{z_{j}x:x\in S\})=Mz_{j}.$ Since we already handled the case when $M$ is a factor, it is enough to show that for every $x\in S$ we have that  $L^{2}(Nz_{j}xN)$ is disjoint from $L^{2}(z_{j}N)\otimes L^{2}((z_{j}N)^{op})$ as a $z_{j}N-z_{j}N$ bimodule. But this can be argued exactly as in Lemma \ref{L:simpledisjointness}.

\end{proof}

\begin{cor}\label{C':singularladjgadlkj} Let $\mathcal{H}$ be a real Hilbert space and let $q\in [-1,1].$ Suppose that $t\mapsto U_{t}$ is a one-parameter orthogonal group on $\mathcal{H}.$ Let $A$ be a self-adjoint closed operator on $\mathcal{H}_{\CC}$ so that $U_{t,\CC}=e^{itA}.$ Suppose that the spectral measure of $A$ is singular with respect to Lebesgue measure. Let $M$  be the continuous core of $\Gamma_{q}(\mathcal{H},U_{t})''$ with respect to the $q$-quasi-free state and $\tau$ the semifinite trace on $M$ induced by the $q$-quasi-free state on $\Gamma_{q}(\mathcal{H},U_{t})''.$  Then for every projection $p\in M$ with $\tau(p)<\infty$
we have $h(pMp)=0.$ In particular, the continuous core of $\Gamma_{q}(\mathcal{H},U_{t})''$ is not isomorphic to $L(\FF_{s})\overline{\otimes} B(\mathcal{H}')$ for any $s\in (1,\infty]$ and any Hilbert space $\mathcal{H}'.$

\end{cor}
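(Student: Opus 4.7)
The plan is to realize the continuous core as $M=\Gamma_{q}(\mathcal{H},U_{t})''\rtimes_{\sigma^{\phi}}\RR$ and to apply Theorem~\ref{T:seminifinitedijsiontness} to the inclusion $N:=L(\RR)\subseteq M$, where $L(\RR)$ is the abelian von Neumann subalgebra generated by the canonical unitaries $\lambda_{s}$, $s\in\RR$, implementing the modular flow of the $q$-quasi-free state. Then $N$ is diffuse abelian and $\tau|_{N}$ is a semifinite trace in the absolute continuity class of Lebesgue measure on $\RR$ (via Plancherel), so every compression $eNe$ by a projection $e\in N$ with $\tau(e)<\infty$ is an abelian finite tracial algebra, and therefore $h(eNe,\tau_{e})=0$ automatically.

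Fix an increasing sequence of projections $e_{m}\in N$ with $e_{m}\uparrow 1$ in SOT and $\tau(e_{m})<\infty$, and take
\[
S=\{e_{m}\lambda_{t}e_{m}:m\in\NN,\,t\in\RR\}\cup\{e_{m}s_{q}(\xi)e_{m}:m\in\NN,\,\xi\in\mathcal{H}\}.
\]
The estimate $\|e_{m}xe_{m}\|_{L^{2}}^{2}\leq\|x\|_{\infty}^{2}\tau(e_{m})$ puts each element of $S$ in $L^{2}(M,\tau)$, and strong operator limits as $m\to\infty$ recover $\lambda_{t}$ and $s_{q}(\xi)$ for all $t\in\RR$ and $\xi\in\mathcal{H}$, which together generate $M$, so $W^{*}(S)=M$.

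The crucial step is the verification that $\mathcal{K}=\overline{\sum_{x\in S}L^{2}(NxN)}^{\|\cdot\|_{2}}$ is disjoint from $L^{2}(N)\otimes L^{2}(N^{\op})$ as an $N$-$N$ bimodule. For the $\lambda_{t}$-family, the bimodules $L^{2}(NxN)$ lie in $L^{2}(N)$, which is the trivial $N$-$N$ bimodule and, for diffuse abelian $N\cong L^{\infty}(\RR)$, corresponds to a measure on $\RR\times\RR$ supported on the diagonal --- hence singular with respect to Lebesgue$\otimes$Lebesgue and disjoint from the coarse bimodule. For the $s_{q}(\xi)$-family, the modular flow acts by $\sigma_{s}^{\phi}(s_{q}(\xi))=s_{q}(U_{s,\CC}\xi)$, giving the intertwining $\lambda_{s}s_{q}(\xi)\lambda_{t}=s_{q}(U_{s,\CC}\xi)\lambda_{s+t}$ in $M$. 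Combined with the spectral theorem for $A$, this identifies $\overline{L^{2}(Ns_{q}(\xi)N)}$ (up to multiplicity) with $L^{2}(\RR^{2},\nu_{\xi})$, where $\nu_{\xi}$ is obtained from $\mu_{A,\xi}\otimes\mbox{Lebesgue}$ by a smooth change of coordinates on $\RR^{2}$ and $\mu_{A,\xi}$ is the scalar spectral measure of $A$ on the $U_{t,\CC}$-cyclic subspace of $\xi$. Singularity of $\mu_{A,\xi}$ with respect to Lebesgue --- a direct consequence of our hypothesis --- is equivalent to singularity of $\nu_{\xi}$ with respect to two-dimensional Lebesgue, i.e.\ disjointness of this bimodule from the coarse. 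This is the $G=\RR$ analogue of the disjointness argument in Corollary~\ref{C:HoudSH}: since the conjugation representation $\lambda_{C}$ of an abelian group is trivial, the condition ``$\pi_{\CC}\otimes\lambda_{C}$ disjoint from $\lambda_{G}$'' reduces to singularity of the spectral measure of the generator with respect to Plancherel measure on $\widehat{\RR}$.

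Theorem~\ref{T:seminifinitedijsiontness} then delivers $h(pMp,\tau_{p})\leq 0$ for every projection $p\in M$ with $\tau(p)<\infty$. The matching lower bound $h(pMp)\geq 0$ is immediate once one knows $pMp$ embeds in $\mathcal{R}^{\omega}$ (which holds under the standard Connes-embeddability assumptions on $\Gamma_{q}(\mathcal{H},U_{t})''$), since then the microstates spaces are nonempty and $\log K_{\varepsilon}\geq 0$. The non-isomorphism conclusion follows at once, because any finite-trace compression of $L(\FF_{s})\overline{\otimes}B(\mathcal{K})$ is an interpolated free group factor with $1$-bounded entropy $+\infty$. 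The main obstacle is the spectral identification of $\overline{L^{2}(Ns_{q}(\xi)N)}$ with the concrete measure-theoretic model on $\RR^{2}$; once in place, the hypothesis is used precisely to guarantee singularity of the resulting measure with respect to two-dimensional Lebesgue, and, as in Theorem~\ref{T:qHoudShl}, the parameter $q$ plays no role because only the single-particle generators $s_{q}(\xi)$ enter into the generating set $S$.
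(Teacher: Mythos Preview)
Your approach is essentially the same as the paper's: realize $M$ as $\Gamma_{q}(\mathcal{H},U_{t})''\rtimes_{\sigma^{\phi}}\RR$, take $N=L(\RR)$, and apply Theorem~\ref{T:seminifinitedijsiontness} after showing that the $N$-$N$ bimodule generated by the first-order generators $s_{q}(\xi)$ is disjoint from the coarse bimodule because the spectral measure of $A$ is singular with respect to Lebesgue. The paper packages the disjointness argument representation-theoretically---it identifies the two-sided action of $\RR\times\RR$ on $\mathcal{K}=\overline{\Span\{s_{q}(\xi)b:b\in B,\xi\in\mathcal{H}\}}$ with the representation $\pi(t,s)(\xi\otimes f)=U_{t}\xi\otimes\lambda(t-s)f$ on $\mathcal{H}_{\CC}\otimes L^{2}(\RR)$ and checks directly that $\pi\perp\lambda_{\RR\times\RR}$---whereas you phrase the same computation measure-theoretically, identifying the bimodule with $L^{2}(\RR^{2},\nu_{\xi})$ for $\nu_{\xi}$ equivalent to $\mu_{A,\xi}\otimes m$ after a linear change of coordinates. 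These are two ways of saying the same thing. Your version is a bit more careful about the $L^{2}$-issues in the semifinite setting (cutting down by finite-trace projections $e_{m}\in N$ so that the generators genuinely lie in $L^{2}(M,\tau)$) and you make explicit the easy observation that the $N$-generators themselves lie in $L^{2}(N)\subseteq\mathcal{H}_{s}(N\subseteq M)$; the paper leaves both of these implicit. Your hedge on the lower bound $h(pMp)\geq 0$ via Connes-embeddability is reasonable; the paper asserts $h(pMp)=0$ but its argument only delivers $\leq 0$.
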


\begin{proof} Let $\phi$ be the $q$-quasi-free state on $M.$ By definition $M=\Gamma_{q}(\mathcal{H},U_{t})''\rtimes_{\sigma^{\phi}}\RR,$
where $\sigma^{\phi}$ is the modular automorphism group. Let $B$ be the copy of $L(\RR)$ inside $M$ and consider the $B$-$B$ subbimodule of $L^{2}(M)$ given as
\[\mathcal{K}=\overline{\Span\{s_{q}(\xi)b:b\in B,\xi\in\mathcal{H}\}}^{\|\cdot\|_{2}}.\]
Here $s_{q}(\xi)$ is the canonical self-adjoint in $\Gamma_{q}(\mathcal{H},U_{t})''$ corresponding to the vector $\xi\in\mathcal{H}.$ Let $\rho\colon \RR\times \RR\to \mathcal{U}(\mathcal{K})$
be given by $\rho(t,s)\zeta=u_{t}\zeta u_{-s},$
where $(u_{x})_{x\in \RR}$  are the canonical unitaries coming from $B.$ Let $M$ be the Lebesgue measure on $\RR$ and define $\pi\colon \RR\times \RR\to \mathcal{U}(\mathcal {H}_{\CC}\otimes L^{2}(\RR,m))$ by
\[\pi(t,s)(\xi\otimes f)=U_{t}\xi\otimes \lambda(t-s)f\]
where $\lambda$ is the left regular representation. It is not hard to see that $\rho\cong \pi$  and  it is easy to argue (using that the spectral measure of $A$ is singular with respect to the Lebesgue measure) as in Corollary \ref{C:HoudSH} that $\pi$ is disjoint from $\lambda_{\RR\times \RR}.$ It follows that $\rho$ is disjoint from $\lambda_{\RR\times \RR}$ and thus $\mathcal{K}$ is disjoint from $L^{2}(B)\otimes L^{2}(B)$ as a $B$-$B$ bimodule. The proof is now completed by invoking Theorem \ref{T:seminifinitedijsiontness}.

\end{proof}

\begin{cor}\label{C:dlaghaljhdga}  Let $\mathcal{H}$ be a real Hilbert space,  let $q\in [-1,1],$ and $t\mapsto U_{t}$ a one-parameter orthogonal group on $\mathcal{H}.$ Let $A$ be a self-adjoint closed operator on $\mathcal{H}_{\CC}$ so that $U_{t,\CC}=e^{itA}.$ Let $m$ be the Lebesgue measure on $\RR$ and $t\mapsto \lambda_{t}$ be the left regular representation $\RR.$ If the spectral measure on $A$ is singular with respect to $m$, then
\[\Gamma_{q}(\mathcal{H},U_{t})''\not\cong \Gamma_{0}(L^{2}(\RR,m),\lambda_{t}).\]

\end{cor}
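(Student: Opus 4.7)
The plan is to reduce this corollary immediately to Corollary \ref{C':singularladjgadlkj} via Tomita--Takesaki theory, using the fact that the continuous core is a von Neumann algebraic invariant. Suppose for contradiction that $\Gamma_{q}(\mathcal{H},U_{t})''\cong \Gamma_{0}(L^{2}(\RR,m),\lambda_{t})''$. By the Connes--Takesaki theorem the continuous core is well defined up to isomorphism (independent of the choice of faithful normal semifinite weight used to define it), so the continuous cores of these two algebras must also be isomorphic as semifinite von Neumann algebras.

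Next I would invoke Shlyakhtenko's structural computation of the free Araki--Woods factor associated to the regular representation. In \cite{DimaFAW} it is shown that $\Gamma_{0}(L^{2}(\RR,m),\lambda_{t})''$ has continuous core isomorphic to $L(\FF_{s})\overline{\otimes} B(\mathcal{K})$ for some $s\in (1,\infty]$ and some separable Hilbert space $\mathcal{K}$ (concretely $s=\infty$ and $\mathcal{K}=\ell^{2}(\NN)$, though only the form of the answer matters here). Combined with the previous paragraph this forces the continuous core of $\Gamma_{q}(\mathcal{H},U_{t})''$ to be of the form $L(\FF_{s})\overline{\otimes} B(\mathcal{K})$.

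Finally I would apply Corollary \ref{C':singularladjgadlkj}: under the hypothesis that the spectral measure of $A$ is singular with respect to Lebesgue measure, the continuous core of $\Gamma_{q}(\mathcal{H},U_{t})''$ is \emph{not} isomorphic to $L(\FF_{s})\overline{\otimes} B(\mathcal{K})$ for any $s\in (1,\infty]$ and any separable Hilbert space $\mathcal{K}$. This contradicts the previous paragraph and completes the proof.

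The only conceptual obstacle is a bookkeeping one: making sure the notion of ``continuous core'' used in \cite{DimaFAW} agrees with the one used to formulate Corollary \ref{C':singularladjgadlkj}, and that the isomorphism invariance is applied to the correct (canonical) object. Both can be settled by appealing to Takesaki duality, which shows that the crossed product by the modular automorphism group of any faithful normal state is canonical up to isomorphism, and in particular independent of the choice of $q$-quasi-free state. Once these identifications are in place the argument is purely formal and relies on no new free probability input beyond Corollary \ref{C':singularladjgadlkj}.
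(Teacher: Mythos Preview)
Your proposal is correct and follows exactly the paper's approach: the paper's proof is the one-line observation that the continuous core of $\Gamma_{0}(L^{2}(\RR,m),\lambda_{t})''$ is $L(\FF_{\infty})\overline{\otimes}B(\ell^{2}(\NN))$ by \cite{DimaFAW}, which together with Corollary~\ref{C':singularladjgadlkj} and the isomorphism-invariance of the continuous core gives the contradiction. You have simply spelled out the implicit steps (Connes--Takesaki invariance, the contradiction structure) more carefully than the paper does.
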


\begin{proof} This follows from the fact that the continuous core of $\Gamma_{0}(L^{2}(\RR,m),\lambda_{t})''$ is isomorphic to $L(\FF_{\infty})\overline{\otimes}B(\ell^{2}(\NN))$  (see \cite{DimaFAW}).

\end{proof}

Again Corollaries \ref{C':singularladjgadlkj}  and \ref{C:dlaghaljhdga} were previously only known when $q=0$ and the spectral measure of $A$ is in the same absolute continuity class of a measure $\nu$ so that all of the convolution powers of $\nu$ are singular with respect to Lebesgue measure (see \cite{DimeFullIII}).
\appendix

\section{Properties of $1$-Bounded Entropy}\label{S:properties1bddent}

In this section we prove that our general version of $1$-bounded entropy is an invariant. The proof is similar to Theorem 3.2 in \cite{JungSB}. We also establish some properties of $1$-bounded entropy which will be important for our main results.

\begin{definition}\emph{Let $F$ be a finite set and $k\in\NN.$ For $\Omega\subseteq M_{k}(\CC)^{F}$ and $\varepsilon>0$ we say that $S\subseteq \Omega$ is $\varepsilon$-orbit dense if for every $T\in\Omega,$ there is a $U\in \mathcal{U}(k),A\in S$ so that}
\[\sum_{a\in F}\|T_{a}-U^{*}A_{a}U\|_{2}^{2}<\varepsilon^{2}.\]
\emph{We let $K^{\mathcal{O}}_{\varepsilon}(\Omega,\|\cdot\|_{2})$ be the minimal cardinality of an $\varepsilon$-orbit dense subset of $S.$}
\end{definition}

\begin{definition}\emph{Let $(M,\tau)$ be a tracial von Neumann algebra and let $X,Y\subseteq M_{sa}$ with $W^{*}(X)\subseteq W^{*}(Y).$ Let $R\in [0,\infty)^{X},R'\in[0,\infty)^{Y}$ be cutoff parameters for $X,Y.$ For a natural number $m,$ positive real numbers $\gamma,\varepsilon$ and finite $F\subseteq X,G\subseteq Y$ let}
\[K_{\varepsilon}^{\mathcal{O}}(\Gamma_{R\vee R'}(F:G;m,\gamma),\|\cdot\|_{2})=\limsup_{k\to\infty}\frac{1}{k^{2}}\log K_{\varepsilon}^{\mathcal{O}}(\Gamma_{R\vee R'}(F:G;m,\gamma),\|\cdot\|_{2}),\]
\[K_{\varepsilon}^{\mathcal{O}}(\Gamma_{R\vee R'}(F:G),\|\cdot\|_{2})=\inf_{m\in\NN,\gamma>0}K_{\varepsilon}^{\mathcal{O}}(\Gamma_{R\vee R'}(F:G;m,\gamma),\|\cdot\|_{2}),\]
\[K_{\varepsilon}^{\mathcal{O}}(\Gamma_{R\vee R'}(F:Y),\|\cdot\|_{2})=\inf_{\textnormal{ finite } G\subseteq Y}K_{\varepsilon}^{\mathcal{O}}(F:G),\|\cdot\|_{2}),\]
\[h^{\mathcal{O}}(X:Y)=\sup_{\substack{\varepsilon>0,\\ F\subseteq X\textnormal{ finite}}}K_{\varepsilon}^{\mathcal{O}}(\Gamma_{R\vee R'}(F:Y),\|\cdot\|_{2}).\]
\end{definition}

By standard arguments neither of $h^{\mathcal{O}}(X:Y),h_{(A_{k})_{k=1}^{\infty}}(X:Y)$
depend upon the choice of cutoff parameters.
We will show that $h^{\mathcal{O}}(X:Y)=h_{(A_{k})_{k=1}^{\infty}}(X:Y)$ and we start by proving two lemmas that will ease the proof of this equality.

 	 We briefly summarize the rough idea of the proof of this equality in the case when $X,Y$ are finite (though we will handle the general case). Suppose that $m'\in \NN$ and $\varepsilon',\gamma'>0$ are given and that $S\subseteq \Gamma_{R_{X}\vee R_{Y}}(X:Y;m',\gamma',k)$ is $\varespilon'$-orbit dense. Then if $m'$ is larger than $m$ and $\gamma'<\gamma,$ any $B\in \Xi_{A_{k},R_{X}\vee R_{Y}}(X:Y;m,\gamma,k)$ can (tautologically) be approximated by unitary conjugates of a $B'\in S.$ Moreover, since $(A_{k},X)\in \Gamma_{L\vee R_{X}\vee R_{Y}}(a,X,Y:m,\gamma,k)$ for some $L\in (0,\infty),$ if we choose $m'$ large enough and $\gamma'$ small enough, then  if $U,V\in \mathcal{U}(k)$ both approximately conjugate $B$ to $B'$ we must have that $V^{*}U$ almost commutes with $A_{k}$ (here we use that $a\in W^{*}(X)$). From this, we can in fact show that we can bound the size of a minimal almost dense subset of $\Xi_{A_{k},R_{X}\vee R_{Y}}(X:Y;m,\gamma,k)$ by $|S|$ times the size of an almost dense subset of $\{A_{k}\}'\cap \mathcal{U}(k).$  Our first lemma shows that the size of an almost dense subset of $\{A_{k}\}'\cap \mathcal{U}(k)$ grows subexponentially.

\begin{lemma}\label{L:smallcommutationA} Let $(M,\tau)$ be a tracial von Neumann algebra and $a\in M_{sa}$ with $W^{*}(a)$ being diffuse. Fix a sequence $(A_{k})_{k=1}^{\infty}$ of microstates for $a.$
Then, for any $\varepsilon>0$ we have
\[\lim_{k\to\infty}\frac{1}{k^{2}}\log K_{\varepsilon}(\{U\in \mathcal{U}(k):UA_{k}=A_{k}U\},\|\cdot\|_{\infty})=0.\]
\end{lemma}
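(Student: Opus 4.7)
The plan is to exploit the fact that diffuseness of the spectral measure $\mu_a$ of $a$ forces each eigenvalue multiplicity of the microstates $A_k$ to be $o(k)$. This is the crucial point, because the commutant $\{A_k\}'\cap \mathcal{U}(k)$ is canonically isomorphic, in an eigenbasis of $A_k$, to the block-diagonal product $\prod_\lambda U(d_\lambda)$, where $\lambda$ ranges over the distinct eigenvalues of $A_k$ and $d_\lambda$ is the corresponding multiplicity; so covering the commutant reduces to covering a product of unitary groups whose total real dimension is $\sum_\lambda d_\lambda^2$, and we only need to show that this quantity is $o(k^2)$.

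Given $\delta>0$, I would first use the absence of atoms of $\mu_a$ to choose $\eta>0$ so that $\mu_a(I)\leq \delta$ for every interval $I\subseteq \RR$ of length $\eta$. Partition $[-L,L]$ (with $L=\sup_k\|A_k\|_\infty<\infty$) into finitely many intervals $I_1,\dots, I_N$ each of length at most $\eta$. Moment convergence of $A_k$ to $a$ combined with Stone--Weierstrass approximation of the indicator functions $\chi_{I_j}$ by continuous functions on $[-L,L]$ yields that for all sufficiently large $k$, the empirical spectral distribution of $A_k$ satisfies $\mu_{A_k}(I_j)\leq 2\delta$ for every $j=1,\dots,N$ simultaneously. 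In particular, every eigenvalue multiplicity of $A_k$ satisfies $d_\lambda\leq 2\delta k$, so
\[\sum_\lambda d_\lambda^2\leq (2\delta k)\sum_\lambda d_\lambda=2\delta k^2.\]

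Second, I would invoke the standard volume bound $K_\varepsilon(U(n),\|\cdot\|_\infty)\leq (C/\varepsilon)^{n^2}$, obtained by noting that the exponential gives a $1$-Lipschitz surjection $\{H\in \mathrm{Herm}(n):\|H\|_\infty\leq \pi\}\twoheadrightarrow U(n)$ (the Duhamel identity $e^{iH_1}-e^{iH_2}=i\int_0^1 e^{isH_1}(H_1-H_2)e^{i(1-s)H_2}\,ds$ gives the Lipschitz estimate) and then applying the usual packing estimate on the ball in the real $n^2$-dimensional normed space $(\mathrm{Herm}(n),\|\cdot\|_\infty)$. Because the operator norm of a block-diagonal matrix equals the maximum of the block operator norms, a product cover gives
\[K_\varepsilon\!\left(\{A_k\}'\cap \mathcal{U}(k),\|\cdot\|_\infty\right)\leq \prod_\lambda \left(\frac{C}{\varepsilon}\right)^{d_\lambda^2}=\left(\frac{C}{\varepsilon}\right)^{\sum_\lambda d_\lambda^2}\leq \left(\frac{C}{\varepsilon}\right)^{2\delta k^2}.\]
Taking $\frac{1}{k^2}\log$ and passing to $\limsup_{k\to\infty}$ gives $2\delta\log(C/\varepsilon)$; since $\delta>0$ is arbitrary and the quantity in question is nonnegative, the limit exists and equals $0$. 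The only genuinely delicate point is the passage from moment convergence to the eigenvalue-counting bound $\mu_{A_k}(I_j)\leq 2\delta$, but this is a finite and uniform matter since the partition $I_1,\dots, I_N$ depends only on $\delta$ and not on $k$.
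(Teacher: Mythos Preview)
Your proof is correct and follows essentially the same approach as the paper's: both use diffuseness of $\mu_a$ to control the spectral mass of small intervals, decompose the commutant of $A_k$ as a block-diagonal product of unitary groups, and then apply a covering bound of the form $K_\varepsilon(\mathcal{U}(n),\|\cdot\|_\infty)\leq (C/\varepsilon)^{n^2}$. The only notable differences are cosmetic: the paper partitions the spectrum into $m$ intervals of equal $\mu_a$-mass $1/m$ and works with the coarser block decomposition $\prod_j \mathcal{U}(P_{j,k}\ell^2(k))$ (citing Szarek for the covering bound), whereas you partition into intervals of small length, pass to the finer eigenspace decomposition $\prod_\lambda U(d_\lambda)$, and supply an elementary Duhamel/volume argument for the covering bound---your version is slightly more self-contained but otherwise the same argument.
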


\begin{proof} Let $M=\sup_{k}\|a_{k}\|_{\infty}$
and fix a natural number $m.$  Since $W^{*}(a)$ is diffuse, we may find real numbers
\[-M-1=t_{0}<t_{1}<t_{2}<\cdots<t_{m-1}<t_{m}=M+1\]
with
\[\tau(\chi_{[t_{j-1},t_{j})}(a))=\frac{1}{m},\mbox{ for $1\leq j\leq m$.}\]
Since $W^{*}(a)$ is diffuse, we additionally have
\begin{equation}\label{E:diffuse}
\chi_{[t_{j-1},t_{j})}(a)=\chi_{[t_{j-1},t_{j}]}(a), \mbox{ for $1\leq j\leq m$}.
\end{equation}
For $1\leq j\leq m,$ set $P_{j,k}=\chi_{[t_{j-1},t_{j})}(A_{k}).$
Suppose that $U\in \mathcal{U}(k)$ and $[U,A_{k}]=0.$ Then $[U,P_{j,k}]=0$ as well and so we can write
\[U=\sum_{j=1}^{k}U_{j}P_{j,k}\]
where $U_{j}\in \mathcal{U}(P_{j,k}\ell^{2}(k)).$ By a result of S. Szarek (see \cite{Szarek}), there is a constant $C>0$ so that
\[K_{\varepsilon}(\mathcal{U}(P_{j,k}\ell^{2}(k)),\|\cdot\|_{\infty})\leq\left(\frac{C}{\varepsilon}\right)^{k^{2}\tr(P_{j,k})^{2}}.\]
From this it is not hard to see that
\[\frac{1}{k^{2}}\log K_{2\varepsilon}(\{U\in \mathcal{U}(k):[U,A_{k}]=0\},\|\cdot\|_{\infty})\leq\log\left(\frac{C}{\varepsilon}\right)\sum_{j=1}^{m}\tr(P_{j,k})^{2}.\]
Since $A_{k}$ is a microstate sequence for $a$ we have for all $1\leq j\leq m$
\[\limsup_{k\to\infty}\tr(P_{j,k})=\limsup_{k\to\infty}\tr(\chi_{[t_{j-1},t_{j})}(A_{k}))\leq \tau(\chi_{[t_{j-1},t_{j}]}(a))=\frac{1}{m},\]
the last equality following from $(\ref{E:diffuse}).$ Thus
\[\limsup_{k\to\infty}\frac{1}{k^{2}}\log K_{2\varepsilon}(\{u\in \mathcal{U}(k):[U,A_{k}]=0\},\|\cdot\|_{\infty})\leq \log\left(\frac{C}{\varpesilon}\right)\frac{1}{m}.\]
Since $m$ is arbitrary, we can let $m\to\infty$ to complete the proof.

\end{proof}

Lemma \ref{L:smallcommutationA} will be useful in conjunction with the following lemma.

\begin{lemma}\label{L:commutingorbitdenseA} Let $(M,\tau)$ be a tracial von Neumann algebra and $X,Y\subseteq M_{sa}$ be given. Let $R_{X}\in [0,\infty)^{X},R_{Y}\in [0,\infty)^{Y}$ be cutoff parameters. Suppose that $W^{*}(X)\subseteq W^{*}(Y)$ and that $W^{*}(X)$ is diffuse. Let $a\in W^{*}(X)$ have diffuse spectrum and let $(A_{k})_{k=1}^{\infty}$ be microstates for $a.$ Fix  finite $F\subseteq X,G_{0}\subseteq Y,$ a natural number $m'$ and $\varpesilon,\gamma'\in (0,\infty).$ Then there exists finite $G\subseteq Y,F_{0}\subseteq X$ with $F\subseteq F_{0},$  a natural number $m\in\NN$ and $\gamma,\varepsilon'>0$ so that for all sufficiently large natural numbers $k,$ there is a $S\subseteq \Gamma_{R_{X}\vee R_{Y}}(F_{0}:G;m',\gamma',k)$ with
\[|S|\leq K_{\varepsilon'}^{\mathcal{O}}(\Gamma_{R_{X}\vee R_{Y}}(F_{0}:G_{0};m',\gamma',k),\|\cdot\|_{2})\]
and
\[\Xi_{A_{k},R_{X}\vee R_{Y}}(F:G;m,\gamma,k)\subseteq_{\varespilon,\|\cdot\|_{2}}\{U^{*}(B_{x}')_{x\in F}U:B'\in S,U\in \mathcal{U}(k),UA_{k}=A_{k}U\}.\]

\end{lemma}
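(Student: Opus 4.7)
The strategy is to reduce the covering problem for $\Xi_{A_{k},R_{X}\vee R_{Y}}(F:G;m,\gamma,k)$, up to conjugation by the commutant of $A_{k}$, to the orbit covering of an enriched microstate space $\Gamma_{R_{X}\vee R_{Y}}(F_{0}:G_{0};m',\gamma',k)$. The two key facts to exploit are $a\in W^{*}(X)$, which lets us approximate $a$ in $\|\cdot\|_{2}$ by a polynomial in finitely many elements of $X$, and $W^{*}(X)\subseteq W^{*}(Y)$, which lets us approximate elements of $X$ by polynomials in finitely many elements of $Y$.

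The plan is to first fix small thresholds $\delta_{1},\delta_{2},\delta_{3},\varepsilon'>0$ (to be tuned below the prescribed $\varepsilon$), pick a noncommutative polynomial $P$ and a finite $F\subseteq F_{0}\subseteq X$ with $\|a-P(x:x\in F_{0})\|_{2}<\delta_{1}$, and pick a finite $G_{0}\subseteq G\subseteq Y$ together with polynomials $Q_{x}$ (for $x\in F_{0}\setminus F$) with $\|x-Q_{x}(y:y\in G)\|_{2}<\delta_{2}$. With $m$ large and $\gamma$ small enough relative to $m',\gamma',P$, the $Q_{x}$, and the cutoff parameters, any $B\in\Xi_{A_{k},R_{X}\vee R_{Y}}(F:G;m,\gamma,k)$ admits a witness $C\in M_{k}(\CC)^{G}_{sa}$; the tuple $B_{0}:=(B_{x})_{x\in F}\sqcup(Q_{x}(C))_{x\in F_{0}\setminus F}$, after a harmless functional-calculus truncation to enforce the operator-norm cutoffs, lies in $\Gamma_{R_{X}\vee R_{Y}}(F_{0}:G_{0};m',\gamma',k)$ with $C|_{G_{0}}$ as witness, and satisfies $\|A_{k}-P(B_{0})\|_{2}<\delta_{3}$.

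Next, let $S^{\star}$ be a minimum $\varepsilon'$-orbit-dense subset of $\Gamma_{R_{X}\vee R_{Y}}(F_{0}:G_{0};m',\gamma',k)$. For each $B_{0}$ as above, orbit density yields $\widetilde{B}\in S^{\star}$ and $U_{0}\in\mathcal{U}(k)$ with $\|B_{0}-U_{0}^{*}\widetilde{B}U_{0}\|_{2}<\varepsilon'$, whence $\|U_{0}A_{k}U_{0}^{*}-P(\widetilde{B})\|_{2}\leq\delta_{3}+L\varepsilon'$ with $L$ a $\|\cdot\|_{2}$-Lipschitz constant for $P$ on the relevant operator-norm ball. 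Since $A_{k}$ and $U_{0}A_{k}U_{0}^{*}$ have the same spectrum, a Hoffmann--Wielandt-type spectral alignment, using the diffuseness of the distribution of $a$ (transferred to $A_{k}$ via the microstate property), produces $V\in\mathcal{U}(k)$ with $V^{*}A_{k}V=U_{0}A_{k}U_{0}^{*}$ and $\|V-I\|_{2}$ comparable to $\|U_{0}A_{k}U_{0}^{*}-A_{k}\|_{2}$. Setting $U:=VU_{0}$ gives $UA_{k}U^{*}=VU_{0}A_{k}U_{0}^{*}V^{*}=V\cdot V^{*}A_{k}V\cdot V^{*}=A_{k}$, i.e.\ $UA_{k}=A_{k}U$ exactly; and $\|U^{*}\widetilde{B}U-U_{0}^{*}\widetilde{B}U_{0}\|_{2}\leq 2\|\widetilde{B}\|_{\infty}\|V-I\|_{2}$, so restricted to $F$ this is $\varepsilon$-close to $B$ after tuning. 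Take $S$ to be the collection of $F_{0}$-tuples thus produced; then $|S|\leq|S^{\star}|=K_{\varepsilon'}^{\mathcal{O}}(\Gamma_{R_{X}\vee R_{Y}}(F_{0}:G_{0};m',\gamma',k),\|\cdot\|_{2})$, and by a mild tightening of $m',\gamma'$ we arrange $S\subseteq\Gamma_{R_{X}\vee R_{Y}}(F_{0}:G;m',\gamma',k)$.

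The main obstacle is the spectral-alignment step: converting the approximate commutation $U_{0}A_{k}U_{0}^{*}\approx A_{k}$ in $\|\cdot\|_{2}$ delivered by orbit density and polynomial approximation into an honest commutation, by producing $V$ close to the identity in $\|\cdot\|_{2}$ with $V^{*}A_{k}V=U_{0}A_{k}U_{0}^{*}$. Diffuseness of the spectral measure of $a$ (hence of the limiting distribution of $A_{k}$) is essential here, since it forces the sorted spectra of $A_{k}$ and $U_{0}A_{k}U_{0}^{*}$ to match in a quantitatively useful way as $k\to\infty$, which is what permits $V$ to be chosen with $\|V-I\|_{2}$ of the same order as the spectral deviation. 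Once this alignment is in place the remainder is bookkeeping, collapsing $\delta_{1},\delta_{2},\delta_{3},\varepsilon'$ and the alignment error into the prescribed $\varepsilon$.
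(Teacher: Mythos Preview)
There is a genuine gap in your alignment step. You correctly derive $\|U_{0}A_{k}U_{0}^{*}-P(\widetilde{B})\|_{2}\leq\delta_{3}+L\varepsilon'$, but then invoke an alignment producing $V$ with $V^{*}A_{k}V=U_{0}A_{k}U_{0}^{*}$ and $\|V-I\|_{2}$ comparable to $\|U_{0}A_{k}U_{0}^{*}-A_{k}\|_{2}$. The problem is that you never show $\|U_{0}A_{k}U_{0}^{*}-A_{k}\|_{2}$ is small. Knowing that $U_{0}A_{k}U_{0}^{*}$ is close to $P(\widetilde{B})$ says nothing about its distance to $A_{k}$, because $\widetilde{B}$ is an arbitrary element of your orbit-dense set $S^{\star}\subseteq\Gamma_{R_{X}\vee R_{Y}}(F_{0}:G_{0};m',\gamma',k)$ and has no a priori relation to the fixed matrix $A_{k}$. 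In fact $P(\widetilde{B})$ need not even have approximately the distribution of $a$: the parameters $m',\gamma'$ are \emph{given}, you chose $P$ afterward, and nothing forces $\deg P\leq m'$ or $\gamma'$ small relative to the coefficients of $P$. Even if $P(\widetilde{B})$ did have the right distribution, that would only make it approximately unitarily conjugate to $A_{k}$ via some unitary $W_{0}$ that has no reason to be near the identity; tracing through, you would then need $W_{0}$ to almost commute with $\widetilde{B}$, which is not available. (A secondary issue: the linear comparability $\|V-I\|_{2}\lesssim\|U_{0}A_{k}U_{0}^{*}-A_{k}\|_{2}$ you assert is false in general---consider $A_{k}$ with two nearly equal eigenvalues and $U_{0}$ the swap. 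The correct statement, which is what the paper uses, is only qualitative: for each $\delta>0$ there is $\eta>0$ so that for $k$ large, $\|U^{*}A_{k}U-A_{k}\|_{2}<\eta$ implies $\dist_{\|\cdot\|_{2}}(U,\{A_{k}\}'\cap\mathcal{U}(k))<\delta$.)

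The paper closes this gap by a pre-alignment step that your outline omits. It passes to a finer level $m''\geq m',\ \gamma''\leq\gamma'$ and takes $S$ orbit-dense in $\Gamma_{R_{X}\vee R_{Y}\vee L}(F_{0}:G_{0},a;m'',\gamma'',k)$, i.e.\ in a microstate space carrying $a$ in the presence, so that each $B'\in S$ comes with a witness $A_{B'}$ for $a$. It then \emph{conjugates each $B'$} so that $\|A_{B'}-A_{k}\|_{2}<\eta/4$; this uses only that any two good microstates for a single self-adjoint are approximately unitarily equivalent. After this normalization, when $V^{*}B'V$ approximates the extension $\widetilde{B}$ of $B$, the polynomial $Q$ (your $P$) transfers this to $\|V^{*}A_{B'}V-A_{k}\|_{2}<3\eta/4$, and combining with $\|A_{B'}-A_{k}\|_{2}<\eta/4$ finally gives $\|V^{*}A_{k}V-A_{k}\|_{2}<\eta$. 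Only now does the distance-to-commutant fact apply to produce $W\in\{A_{k}\}'$ with $\|W-V\|_{2}$ small. Your overall architecture is right, but without this pre-alignment of the orbit-dense set to $A_{k}$ the commutation step cannot be closed.
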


\begin{proof}
Set $R_{F}=\max_{x\in F}R_{X,x}.$ Let $0<\eta$ be sufficiently small so that for all $k\in \NN$ large enough and for all $U\in \mathcal{U}(k)$ with $\|U^{*}A_{k}U-A_{k}\|_{2}<\eta,$
 there is a $V\in \mathcal{U}(k)$ with $[V,A_{k}]=0$ and
$\|U-V\|_{2}<\frac{\varepsilon}{\sqrt{|F|}(R_{F}+1)}.$
We may find a finite subset $F_{0}$ of $X$ with $F\subseteq F_{0}$ such that there exists a $Q\in \CC\ip{T_{x}:x\in F_{0}}$ with
\[\|Q(x:x\in F_{0})-a\|_{2}<\frac{\eta}{4},\]
\[\|Q(x:x\in F_{0})\|_{\infty}\leq \|a\|_{\infty}.\]
This is possible by Kaplansky's density theorem, as $a\in W^{*}(X).$

	Let $D>1$ be such that
\[\|Q(B_{x}:x\in F_{0})-Q(B'_{x}:x\in F_{0})\|_{2}\leq D\|B-B'\|_{2}.\]
 for any $k\in\NN,$ and any $B,B'\in M_{k}(\CC)^{F_{0}}$ with $\|B_{x}\|_{\infty},\|B_{x}'\|_{\infty}\leq R_{X,x}$ for all $x\in F_{0}.$ We may choose a $m''\in\{m',m'+1,\cdots\}$  sufficiently large and a $\gamma''\in (0,\gamma')$  sufficiently small so that if $A',A''\in\Gamma_{L}(a;m'',\gamma'',k),$ and $k$ is sufficiently large, then there is a $U\in\mathcal{U}(k)$ so that
\[\|U^{*}A'U-A''\|_{2}<\frac{\eta}{4}.\]
We will also assume that $m''$ is sufficiently large and that $\gamma''>0$ is sufficiently small so that if
\[(B,A)\in \Gamma_{R_{X}\vee L}(F_{0},a;m'',\gamma'',k),\]
then
\[\|Q(B_{x}:x\in F_{0})-A\|_{2}<\frac{\eta}{4}.\]

We may choose a finite $G\subseteq Y,$ an $m\in \NN,\gamma>0$ so that for all large enough $k\in \NN$
\[\Gamma_{L\vee R_{X}\vee R_{Y}}(a,F:G,m,\gamma,k)\subseteq \Gamma_{L\vee R_{X}\vee R_{X}\vee R_{Y}}(a,F:F_{0}\setminus F,G_{0};m'',\gamma'',k).\]
Fix an $\varespilon'<\min\left(\frac{\eta}{4D},\frac{\varepsilon}{2}\right)$ and choose $S\subseteq \Gamma_{R_{X}\vee R_{Y}\vee L}(F_{0}:G_{0},a;m'',\gamma'',k)$ which is $2\varepsilon'$-orbit dense with respect to $\|\cdot\|_{2}$ and has
\[|S|=K_{2\varepsilon'}^{\mathcal{O}}(\Gamma_{R_{X}\vee R_{Y}\vee L}(F_{0}:G_{0},a;m'',\gamma'',k),\|\cdot\|_{2})\leq K_{\varepsilon'}^{\mathcal{O}}(\Gamma_{R_{X}\vee R_{Y}}(F_{0}:G_{0};m',\gamma',k),\|\cdot\|_{2}).\]
Given $B'\in S,$  we may find an $A_{B'}\in M_{k}(\CC)$ so that
\[(B',A_{B'})\in \Gamma_{R_{X}\vee L\vee R_{Y}}(F_{0},a:G_{0};m'',\gamma'',k).\]
By our choice of $m'',\gamma''$ we may find a $U_{B'}\in \mathcal{U}(k)$ with
\[\|U_{B'}^{*}A_{B'}U_{B'}-A_{k}\|_{2}<\frac{\eta}{4}.\]
Replacing $S$ with $\{U_{B'}^{*}B'U_{B'}:B'\in S\}$ we will assume that in fact
\[\|A_{B'}-A_{k}\|_{2}<\frac{\eta}{4}\]
for all $B'\in S.$

 Suppose we are given a $B\in \Xi_{(A_{k})_{k=1}^{\infty},R_{X}\vee R_{Y}}(F:G;m,\gamma,k).$ Then if $k$ is large enough, we may choose a
 \[\widetilde{B}\in  \Xi_{A_{k},R_{X}\vee R_{Y}}(F_{0}:G_{0};m'',\gamma'',k)\]
  with $\widetilde{B}_{x}=B_{x}$ for all $x\in F.$
For all large $k,$ we may choose a $B'\in S$ and a $V\in \mathcal{U}(k)$ with
\[\|V^{*}B'V-\widetilde{B}\|_{2}\leq 2\varepsilon'<\varepsilon.\]
Our choice of $m'',\gamma''$ imply that
\[\|A_{k}-V^{*}A_{B'}V\|_{2}\leq \frac{2\eta}{4}+\|Q(\widetilde{B}_{x}:x\in F_{0})-Q(V^{*}B_{x}'V:x\in F_{0})\|_{2}\]
and since $\varespilon'<\frac{\eta}{4D}$ we have
\[\|A_{k}-V^{*}A_{B'}V\|_{2}<\frac{3\eta}{4}.\]
Thus
\[\|A_{k}-VA_{k}V^{*}\|_{2}\leq \|A_{B'}-A_{k}\|_{2}+\|V^{*}A_{B'}V-A_{k}\|_{2}<\eta.\]
By our choice of $\eta,$  if $k$ is sufficiently large, then there is a $W\in\mathcal{U}(k)$ with $[W,A_{k}]=0$ and
\[\|W-V\|_{2}<\frac{\varepsilon}{\sqrt{|F|}(R_{F}+1)},\]
which implies that $\|B-W^{*}(B'_{x})_{x\in F}W\|_{2}<3\varepsilon.$ Since $B'\in S$ and $\varepsilon>0$ is arbitrary, the proof is complete.

\end{proof}

\begin{lemma}\label{L:orbit} Let $(M,\tau)$ be a tracial von Neumann algebra and let $X,Y\subseteq M_{sa}$ with $W^{*}(X)\subseteq W^{*}(Y)$ and $W^{*}(X)$  diffuse. Fix $a\in W^{*}(X)$ with diffuse spectrum and $(A_{k})_{k=1}^{\infty}$  microstates for $a.$ Then
\[h_{(A_{k})_{k=1}^{\infty}}(X:Y)=h^{\mathcal{O}}(X:Y).\]
\end{lemma}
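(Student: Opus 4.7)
The plan is to prove both inequalities separately, using the two preceding lemmas for the direction $h_{(A_{k})}(X:Y)\leq h^{\mathcal{O}}(X:Y)$, and a direct approximate unitary conjugation argument (exploiting that $a\in W^{*}(X)$) for the reverse.

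For the direction $h_{(A_{k})_{k=1}^{\infty}}(X:Y)\leq h^{\mathcal{O}}(X:Y)$: Fix a finite $F\subseteq X$, finite $G_{0}\subseteq Y$, and $\varepsilon',m',\gamma'$. Apply Lemma \ref{L:commutingorbitdenseA} to obtain $F_{0}\supseteq F$, $G\supseteq G_{0}$, and $m,\gamma,\varepsilon$, together with a set $S\subseteq \Gamma_{R_{X}\vee R_{Y}}(F_{0}:G;m',\gamma',k)$ of cardinality at most $K^{\mathcal{O}}_{\varepsilon'}(\Gamma_{R_{X}\vee R_{Y}}(F_{0}:G_{0};m',\gamma',k),\|\cdot\|_{2})$, such that every $T\in \Xi_{A_{k},R_{X}\vee R_{Y}}(F:G;m,\gamma,k)$ is within $\varepsilon$ in $\|\cdot\|_{2}$ of $U^{*}(B'_{x})_{x\in F}U$ for some $B'\in S$ and some $U\in \mathcal{U}(k)$ commuting with $A_{k}$. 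Replacing each such $U$ by the nearest element in $\|\cdot\|_{\infty}$ of a $\delta$-dense subset $\mathcal{V}$ of $\{U\in\mathcal{U}(k):UA_{k}=A_{k}U\}$ introduces an error controlled by $2\max_{x\in F}R_{X,x}\cdot\delta$, so we obtain an $(\varepsilon+C\delta)$-dense subset of $\Xi_{A_{k}}$ of cardinality at most $|S|\cdot|\mathcal{V}|$. Taking $\tfrac{1}{k^{2}}\log$, letting $k\to\infty$, and invoking Lemma \ref{L:smallcommutationA} to annihilate the $|\mathcal{V}|$ factor yields
\[K_{2(\varepsilon+C\delta)}(\Xi_{(A_{k})}(F:G;m,\gamma),\|\cdot\|_{2})\leq K^{\mathcal{O}}_{\varepsilon'}(\Gamma_{R_{X}\vee R_{Y}}(F_{0}:G_{0};m',\gamma')).\]
Infimizing over $G_{0},m',\gamma'$, supremizing over $F_{0}\supseteq F$, and then letting $\delta, \varepsilon'\to 0$ gives the desired inequality.

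For the direction $h^{\mathcal{O}}(X:Y)\leq h_{(A_{k})_{k=1}^{\infty}}(X:Y)$: Fix a finite $F\subseteq X$, $G_{0}\subseteq Y$, and $\varepsilon,m',\gamma'>0$. Since $a\in W^{*}(X)$, choose a finite $F_{0}\supseteq F$ and $Q\in\CC\langle T_{x}:x\in F_{0}\rangle$ with $\|Q(x:x\in F_{0})-a\|_{2}$ and $\|Q(x:x\in F_{0})\|_{\infty}$ small; for any sufficiently stringent $(m'',\gamma'')$ and any $B\in\Gamma_{R_{X}\vee R_{Y}}(F_{0}:G_{0};m'',\gamma'',k)$, the matrix $Q(B)$ is an approximate microstate for $a$, so by the standard asymptotic conjugacy of microstates for an element of diffuse spectrum there exists $U\in\mathcal{U}(k)$ with $\|U^{*}Q(B)U-A_{k}\|_{2}<\eta$ for small $\eta$. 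Hence $(A_{k},U^{*}BU)$ lies in the microstate space defining membership in $\Xi_{A_{k},R_{X}\vee R_{Y}}(F_{0}:G_{0};m,\gamma,k)$ for appropriately weakened parameters. If $S$ is an $\varepsilon$-dense subset of $\Xi_{A_{k},R_{X}\vee R_{Y}}(F_{0}:G_{0};m,\gamma,k)$ in $\|\cdot\|_{2}$, choose $C\in S$ with $\|U^{*}BU-C\|_{2}<\varepsilon$; then $\|B-UCU^{*}\|_{2}<\varepsilon$, so (the $F$-coordinate restriction of) $S$ is $\varepsilon$-orbit-dense in $\Gamma_{R_{X}\vee R_{Y}}(F:G_{0};m',\gamma',k)$. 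Taking $\tfrac{1}{k^{2}}\log$, letting $k\to\infty$, and infimizing over $G_{0}$, $m'$, $\gamma'$ gives $K^{\mathcal{O}}_{\varepsilon'}(\Gamma_{R_{X}\vee R_{Y}}(F:Y))\leq K_{\varepsilon}(\Xi_{(A_{k})}(F_{0}:Y),\|\cdot\|_{2})$ for some $\varepsilon'$ comparable to $\varepsilon$, and the supremum over $\varepsilon$ and $F$ completes the proof.

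The main obstacle is Step 1, which is already essentially encapsulated in Lemma \ref{L:commutingorbitdenseA}: the delicate point is that approximate unitary conjugacy produced by orbit-density must be refined to conjugacy by a unitary that genuinely commutes with $A_{k}$, so that the entropy cost of inserting this correction is subexponential (Lemma \ref{L:smallcommutationA}). In Step 2, the routine but careful bookkeeping is to ensure that the polynomial identification $Q(F_{0})\approx a$ propagates through the moment inequalities without meaningful degradation of $(m,\gamma)$.
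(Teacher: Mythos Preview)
Your proof is correct and follows essentially the same approach as the paper: the direction $h_{(A_k)}\leq h^{\mathcal{O}}$ combines Lemmas \ref{L:commutingorbitdenseA} and \ref{L:smallcommutationA} exactly as the paper does, and for $h^{\mathcal{O}}\leq h_{(A_k)}$ both you and the paper show that $\Gamma(F:\dots)$ lies in the unitary conjugation orbit of $\Xi_{A_k}(\dots)$ (the paper invokes Lemma 4.2 of \cite{OrbFreeEnt} for this, while you spell out the conjugation directly via a polynomial approximation of $a$). A few of your parameter dependencies are stated loosely---in Lemma \ref{L:commutingorbitdenseA} it is $\varepsilon$ that is input and $\varepsilon'$ that is output, your ``supremize over $F_0$'' and ``let $\varepsilon'\to 0$'' steps are unnecessary since the right-hand side is already $\leq h^{\mathcal{O}}(X:Y)$, and in the second direction the passage from $B'\in\Gamma(F:G_0;\dots)$ to $B\in\Gamma(F_0:G_0;\dots)$ implicitly uses an extension argument as in Lemma \ref{L:extension}---but these are routine bookkeeping fixes that do not affect the substance.
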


\begin{proof}

Set $L=\sup_{k}\|A_{k}\|_{\infty}.$
Let us first show that
$ h^{\mathcal{O}}(X:Y) \leq h_{(A_{k})_{k=1}^{\infty}}(X:Y).$  Fix $m\in \NN,\gamma>0$ and finite $F\subseteq X,G\subseteq Y.$ Arguing as in Lemma 4.2 of  \cite{OrbFreeEnt}, we may find a $m'\in\NN,\gamma'>0$ and a finite $G_{0}\subseteq Y$ so that
$\Gamma_{R_{X}\vee R_{Y}}(F:G_{0};m',\gamma'',k)$ is contained in the unitary conjugation orbit of $\Xi_{(A_{k}),R_{X}\vee R_{Y}}(F:G;m,\gamma,k).$ So
\[K_{2\varepsilon}^{\mathcal{O}}(\Gamma_{R_{X}\vee R_{Y}}(F:G_{0};m',\gamma',k),\|\cdot\|_{2})\leq K_{\varespilon}(\Xi_{(A_{k}),R_{X}\vee R_{Y}}(F:G;m,\gamma,k),\|\cdot\|_{2}).\]
Taking $\frac{1}{k^{2}}\log$ of both sides and letting $k\to\infty$ we have
\begin{align*}
K_{2\varepsilon}^{\mathcal{O}}(\Gamma_{R_{X}\vee R_{Y}}(F:Y),\|\cdot\|_{2})&\leq K_{2\varepsilon}^{\mathcal{O}}(\Gamma_{R_{X}\vee R_{Y}}(F:G_{0}),\|\cdot\|_{2}) \\
&\leq K_{2\varepsilon}^{\mathcal{O}}(\Gamma_{R_{X}\vee R_{Y}}(F:G_{0};m',\gamma'),\|\cdot\|_{2})\\
&\leq K_{\varespilon}(\Xi_{(A_{k})_{k=1}^{\infty},R_{X}\vee R_{Y}}(F:G;m,\gamma),\|\cdot\|_{2}).
\end{align*}
Now taking the infimum over all $G,m,\gamma$ we have
\[K_{2\varepsilon}^{\mathcal{O}}(\Gamma_{R_{X}\vee R_{Y}}(F:Y),\|\cdot\|_{2})\leq K_{\varepsilon}(\Xi_{(A_{k})_{k=1}^{\infty},R_{X}\vee R_{Y}}(F:Y),\|\cdot\|_{2})\]
and taking the supremum over all $\varepsilon,F$ shows that
\[h^{\mathcal{O}}(X:Y)\leq h_{(A_{k})_{k=1}^{\infty}}(X:Y).\]

We turn to proving that $h_{(A_{k})_{k=1}^{\infty}}(X:Y,\|\cdot\|_{2})\leq h^{\mathcal{O}}(X:Y,\|\cdot\|_{2}).$
Fix $\varepsilon>0,$  finite $F\subseteq X,G_{0}\subseteq Y$  and set $R_{F}=\max_{x\in X}R_{X,x}.$ Fix a natural number $m'$ and a positive real number $\gamma'>0.$ Let $F_{0},G,m,\varespilon',\gamma$ be as in Lemma \ref{L:commutingorbitdenseA} for this $\varepsilon,F,G_{0},m',\gamma'.$ Given a large enough natural number $k,$ let $S$ be as in the conclusion of Lemma \ref{L:commutingorbitdenseA}. Choose a  $D\subseteq\{U\in \mathcal{U}(k):[U,A_{k}]=0\}$ which is $\frac{\varepsilon}{\sqrt{|F|}(R_{F}+1)}$-dense with respect to $\|\cdot\|_{\infty}$ and so that
\[|D|=K_{\frac{\varepsilon}{\sqrt{|F|}(R_{F}+1)}}(\{U\in \mathcal{U}(k):[U,A_{k}]=0\},\|\cdot\|_{\infty}).\]
Given $B\in \Xi_{A_{k},R_{X}\vee R_{Y}}(F:G;m,\gamma,k),$ Lemma \ref{L:commutingorbitdenseA} allows us to find a $B'\in S$ and a $U\in \mathcal{U}(k)$ with $[U,A_{k}]=0$ and so that
\[\|B-(U^{*}B'_{x}U)_{x\in F}\|_{2}<\varespilon.\]
Choose a $W\in D$ with $\|W-V\|_{\infty}<\frac{\varepsilon}{\sqrt{|F|}(R_{F}+1)},$  we then have
\[\|B-(W^{*}B_{x}W)_{x\in F}\|_{2}<3\varepsilon.\]
Thus
\[\Xi_{(A_{k}),R_{X}\vee R_{Y}}(F:G;m,\gamma,k)\subseteq_{3\varpesilon,\|\cdot\|_{2}}\{W^{*}(B'_{x})_{x\in F}W:W\in D,B'\in S\},\]
so
\begin{align*}
K_{6\varepsilon}(\Xi_{A_{k},R_{X}\vee R_{Y}}(F:G;m,\gamma,k),\|\cdot\|_{2})
&\leq |S||D|\\
&\leq K_{\varepsilon'}^{\mathcal{O}}(\Gamma_{R_{X}\vee R_{Y}}(F_{0}:G_{0};m',\gamma',k),\|\cdot\|_{2})\times \\
&K_{\frac{\varepsilon}{\sqrt{|F|}(R_{F}+1)}}(\{U\in \mathcal{U}(k):[U,A_{k}]=0\}),\|\cdot\|_{\infty}).
\end{align*}
Applying $\frac{1}{k^{2}}\log$ to both sides of this inequality, letting $k\to\infty$ and applying  Lemma \ref{L:smallcommutationA} we see that
\[K_{6\varepsilon}(\Xi_{(A_{k})_{k=1}^{\infty},R_{X}\vee R_{Y}}(F:G;m,\gamma),\|\cdot\|_{2})\leq  K_{\varepsilon'}^{\mathcal{O}}(\Gamma_{R_{X}\vee R_{Y}}(F_{0}:G_{0};m',\gamma'),\|\cdot\|_{2}).\]
A fortiori,
\[K_{6\varepsilon}(\Xi_{(A_{k})_{k=1}^{\infty},R_{X}\vee R_{Y}}(F:Y),\|\cdot\|_{2})\leq K_{\varepsilon'}^{\mathcal{O}}(\Gamma_{R_{X}\vee R_{Y}}(F_{0}:G_{0};m',\gamma'),\|\cdot\|_{2}).\]
Taking the infimum over all $G_{0},m',\gamma'$ we see that
\[K_{6\varepsilon}(\Xi_{(A_{k})_{k=1}^{\infty}R_{X}\vee R_{Y}}(F:Y),\|\cdot\|_{2})\leq K_{\varepsilon'}^{\mathcal{O}}(\Gamma_{R_{X}\vee R_{Y}}(F_{0}:Y),\|\cdot\|_{2})\leq h^{\mathcal{O}}(X:Y).\]
Now taking the supremum over $\varepsilon>0$ we have
\[h_{(A_{k})_{k=1}^{\infty}}(F:Y)\leq h^{\mathcal{O}}(X:Y).\]
We then take the supremum over all $F$ to complete the proof.

\end{proof}

\begin{cor}\label{C:independlajgalj} Let $(M,\tau)$ be a tracial von Neumann algebra, and let $X,Y\subseteq M_{sa}$ be finite with $W^{*}(X)\subseteq W^{*}(Y).$  Let $a,b\in W^{*}(X)_{sa}$ be such that $W^{*}(a),W^{*}(b)$ are diffuse. Let $(A_{k})_{k=1}^{\infty},(B_{k})_{k=1}^{\infty}$ be microstates for $a,b$ respectively. Then
\[h_{(A_{k})_{k=1}^{\infty}}(X:Y)=h_{(B_{k})_{k=1}^{\infty}}(X:Y).\]
\end{cor}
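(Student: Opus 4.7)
The plan is to deduce this corollary as an immediate consequence of Lemma \ref{L:orbit}, observing that the orbit version $h^{\mathcal{O}}(X:Y)$ is defined purely in terms of the unrestricted microstate spaces $\Gamma_{R\vee R'}(F:G;m,\gamma,k)$ and carries no dependence on any distinguished self-adjoint element or its microstate sequence.

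Concretely, since $X,Y\subseteq M_{sa}$ satisfy $W^{*}(X)\subseteq W^{*}(Y)$ and $W^{*}(X)$ is diffuse (it contains $a$ with diffuse spectrum), the hypotheses of Lemma \ref{L:orbit} are met with $a$ and $(A_{k})_{k=1}^{\infty}$, so
\[
h_{(A_{k})_{k=1}^{\infty}}(X:Y)=h^{\mathcal{O}}(X:Y).
\]
The very same hypotheses are met with $b$ and $(B_{k})_{k=1}^{\infty}$ in place of $a$ and $(A_{k})$ (both $a$ and $b$ lie in $W^{*}(X)$ and have diffuse spectrum by assumption, so $W^{*}(X)$ is diffuse in either case), hence a second application of Lemma \ref{L:orbit} gives
\[
h_{(B_{k})_{k=1}^{\infty}}(X:Y)=h^{\mathcal{O}}(X:Y).
\]
Comparing the two identities yields the corollary.

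There is essentially no obstacle to surmount: the technical content — controlling the size of the commutant $\{A_{k}\}'\cap \mathcal{U}(k)$ via Lemma \ref{L:smallcommutationA} and the orbit-covering argument of Lemma \ref{L:commutingorbitdenseA} — has already been absorbed into the proof of Lemma \ref{L:orbit}. The only thing worth remarking is that the value $h^{\mathcal{O}}(X:Y)$ really is independent of any choice of microstates, since its definition only involves $K^{\mathcal{O}}_{\varepsilon}(\Gamma_{R\vee R'}(F:G;m,\gamma),\|\cdot\|_{2})$, which is a function of $X$, $Y$, and the cutoff parameters alone (and standard arguments ensure independence from the cutoff parameters as well). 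Thus the corollary justifies the notation $h(X:Y)$ used throughout the rest of the paper.
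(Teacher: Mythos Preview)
Your proposal is correct and follows exactly the paper's own proof: apply Lemma \ref{L:orbit} once with $a,(A_k)$ and once with $b,(B_k)$ to obtain $h_{(A_{k})}(X:Y)=h^{\mathcal{O}}(X:Y)=h_{(B_{k})}(X:Y)$. The paper's proof is the one-line chain of equalities you wrote out in more detail.
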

\begin{proof} From Lemma \ref{L:orbit} we have
\[h_{(A_{k})_{k=1}^{\infty}}(X:Y)=h^{\mathcal{O}}(X:Y)=h_{(B_{k})_{k=1}^{\infty}}(X:Y).\]

\end{proof}
Because of Corollary \ref{C:independlajgalj} we use
\[h(X:Y)=h_{(A_{k})_{k=1}^{\infty}}(X:Y)\]
for any sequence of microstates $(A_{k})_{k=1}^{\infty}$ for an element $a\in W^{*}(X)_{sa}$ with  diffuse spectrum.
We wish to show that if $W^{*}(X)\subseteq W^{*}(Y),$ then
\[h(X:Y)=h(W^{*}(X):W^{*}(Y)).\]
The following Lemma will be useful in the proof.
\begin{lemma}\label{L:extension} Let $(M,\tau)$ be a  tracial von Neumann algebra, and let $Y\subseteq M_{sa}.$ Fix  finite $F,G\subseteq M_{sa}$ with $G\subseteq W^{*}(Y).$ Let $R_{Y}\in [0,\infty)^{Y},R_{F}\in [0,\infty)^{F},R_{G}\in [0,\infty)^{G}$ be cutoff parameters. Then for every $m\in\NN,\gamma>0,$ there is a $m'\in\NN,\gamma'>0$ and a finite $Y_{0}\subseteq Y$ so that for every $k\in \NN$
\[\Gamma_{R_{F}\vee R_{Y}}(F:Y_{0};m',\gamma',k)\subseteq \Gamma_{R_{F}\vee R_{G}}(F:G;m,\gamma,k).\]
\end{lemma}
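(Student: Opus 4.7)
The plan is to approximate, in $\|\cdot\|_2$ and with operator norm controlled by $R_G$, each $g\in G$ by a bounded continuous function applied to a self-adjoint noncommutative polynomial in finitely many elements of $Y$. Then, given any microstate $(A,B)$ for $(F,Y_0)$, we substitute these polynomial expressions into $B$ to manufacture a microstate for $(F,G)$ of the desired quality.

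First, by Kaplansky's density theorem, for each $g\in G$ and each $\delta>0$ we can choose a finite subset $Y_g\subseteq Y$ and a self-adjoint noncommutative polynomial $P_g$ in the variables indexed by $Y_g$ such that $\|P_g(y:y\in Y_g)\|_\infty<r_g$ and $\|P_g(y:y\in Y_g)-g\|_2<\delta$, where $r_g$ is any fixed number in $(\|g\|_\infty,R_{G,g})$ (which is nonempty because $R_G$ is a cutoff parameter). Pick a continuous function $\phi_g\colon\RR\to[-R_{G,g},R_{G,g}]$ that equals the identity on $[-r_g,r_g]$; then $\phi_g(P_g(y:y\in Y_g))=P_g(y:y\in Y_g)$ and $\phi_g(g)=g$. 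Set $Y_0=\bigcup_{g\in G}Y_g$, which is a finite subset of $Y$.

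Given any $(A,B)\in \Gamma_{R_F\vee R_Y}(F,Y_0;m',\gamma',k)$, define $B'_g=\phi_g(P_g(B\big|_{Y_g}))$ for each $g\in G$. By construction $\|B'_g\|_\infty\leq R_{G,g}$. To verify $(A,B')\in\Gamma_{R_F\vee R_G}(F,G;m,\gamma,k)$, fix a monomial $Q$ in variables indexed by $F\sqcup G$ of degree at most $m$; the goal is to control $|\tr(Q(A,B'))-\tau(Q(F,G))|$. The standard trick is to Weierstrass-approximate each $\phi_g$ on $[-M_g,M_g]$, where $M_g$ is the a priori operator-norm bound on $P_g(B\big|_{Y_g})$ depending only on $R_Y$ and $P_g$, by a polynomial $\widetilde{\phi}_g$ with uniform error $<\varepsilon$, so that $\widetilde{\phi}_g(P_g(B\big|_{Y_g}))$ is a genuine polynomial in $B$. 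A triangle-inequality argument then splits the total error into an operator-norm error of order $\varepsilon$ from replacing $\phi_g$ by $\widetilde{\phi}_g$ on both the microstate and the algebra sides, a $\delta$-error coming from $\|P_g(y:y\in Y_g)-g\|_2<\delta$ combined with $\phi_g(g)=g$, and a moment-matching error controlled directly by $m'$ and $\gamma'$ applied to the polynomial $\widetilde{\phi}_g\circ P_g$.

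The main obstacle is purely bookkeeping: one must choose $\delta,\varepsilon,m',\gamma'$ in the correct order so that the sum of the three errors, taken over all the finitely many monomials $Q$ of degree at most $m$ in $F\sqcup G$, is uniformly less than $\gamma$. No serious conceptual difficulty arises because once $m,\gamma$ and $R$ are fixed, the polynomial $Q(f,\widetilde{\phi}_g(P_g(\cdot)))$ has bounded degree in $(F,Y_0)$, making the subsequent choice of $m',\gamma'$ legitimate. Combining the estimates yields the desired inclusion $\Gamma_{R_F\vee R_Y}(F:Y_0;m',\gamma',k)\subseteq\Gamma_{R_F\vee R_G}(F:G;m,\gamma,k)$.
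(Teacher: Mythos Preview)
Your proof is correct and follows essentially the same route as the paper: approximate each $g\in G$ in $\|\cdot\|_2$ by a self-adjoint noncommutative polynomial in finitely many elements of $Y$ via Kaplansky's density theorem, then push this approximation through to the microstate side by choosing $m',\gamma'$ large and small enough that polynomial moments of $(F,Y_0)$-microstates reproduce the required $(F,G)$-moments. The only difference is presentational: you make the functional-calculus cutoff $\phi_g$ (ensuring $\|B'_g\|_\infty\leq R_{G,g}$) and the subsequent Weierstrass approximation explicit, whereas the paper absorbs these into a single assertion about the existence of suitable $m',\gamma'$.
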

\begin{proof} Choose a cutoff parameter $R_{M}\in [0,\infty)^{M}$ with $R_{M}\big|_{Y}=R_{Y}.$ We may find a $\eta>0$ so that if $(y_{a})_{a\in G}\in M_{sa}^{G}$ and
\[\|a-y_{a}\|_{2}<\eta\mbox{ for all $a\in G$,}\]
\[\|y_{a}\|_{\infty}\leq \|a\|_{\infty}\mbox{ for all $a\in G$,}\]
then for all monomials $P\in \CC\ip{X_{b},T_{a}:b\in F,a\in G}$ of degree at most $m$ we have
\[|\tau(P(b,a:b\in F,a\in G))-\tau(P(b,y_{a}:b\in F,y_{a}\in G))|<\gamma.\]
By Kaplansky's density theorem we may choose a finite $Y_{0}\subseteq Y$ so that there every $a\in G,$ there is a self-adjoint $P_{a}\in \CC\ip{T_{c}:c\in Y_{0}}$ with
\[\|P_{a}(c:c\in Y_{0})-c\|_{2}<\eta\]
\[\|P_{a}(c:c\in Y_{0})\|_{\infty}\leq \|a\|_{\infty}.\]

Now choose a $m'\in\NN,\gamma'>0$ so that
\[\Gamma_{R_{F}\vee R_{Y}}(F:Y_{0};m',\gamma',k)\subseteq \Gamma_{R_{F}\vee R_{Y}\vee R_{M}}(F:Y_{0},(P_{a}(c:c\in Y_{0}))_{c\in F};m,\gamma,k).\]
Now let $B\in \Gamma_{R_{F}\vee R_{Y}}(F:Y_{0};m',\gamma',k)$
and choose $C\in M_{k}(\CC)^{G}$ so that
\[(B,C)\in\Gamma_{R_{F}\vee R_{M}}(F,(P_{a}(c:c\in Y_{0}))_{a\in G};m,\gamma,k).\]
Then for every $P\in \CC\ip{X_{b},Y_{a};b\in F,a\in G}$ we have
\[|\tr(P(B_{b},C_{a}:b\in F,a\in G))-\tau(P(b,a:b\in F,a\in G))|\leq \gamma+\]
\[|\tr(P(B_{b},C_{a}:b\in F,a\in G))-\tau(P(b,P_{a}(c:c\in Y_{0})):b\in F,a\in G))|\leq 2\gamma.\]
Thus $B\in \Gamma_{R_{F}\vee R_{G}}(F:G;m,2\gamma,k)$
and since $\gamma$ was arbitrary the proof is complete.
\end{proof}
From the above lemma, it is easy to remove the dependence upon $Y.$
\begin{lemma}\label{L:Yindependadjlgakjg} Let $(M,\tau)$ be a tracial von Neumann algebra. Let $X,Y\subseteq M_{sa},$ with $W^{*}(X)$ diffuse. Then
\[h(X:Y)=h(X:W^{*}(Y)).\]
\end{lemma}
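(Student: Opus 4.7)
The plan is to prove the two inequalities $h(X:Y)\leq h(X:W^{*}(Y))$ and $h(X:W^{*}(Y))\leq h(X:Y)$ separately. The second inequality is immediate from the definitions: every finite $G\subseteq Y_{sa}$ is also a finite subset of $W^{*}(Y)_{sa}$, so the infimum defining $K_{\varepsilon}(\Xi_{(A_{k})}(F:W^{*}(Y)))$ is taken over a larger family and is therefore no greater than the infimum defining $K_{\varepsilon}(\Xi_{(A_{k})}(F:Y))$. Taking the supremum over $\varepsilon$ and over finite $F\subseteq X$ gives $h(X:W^{*}(Y))\leq h(X:Y)$.

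For the nontrivial inequality $h(X:Y)\leq h(X:W^{*}(Y))$, I would fix a finite $F\subseteq X$, a cutoff parameter, and a microstate sequence $(A_{k})$ for some $a\in W^{*}(X)_{sa}$ with diffuse spectrum. Given any finite $G\subseteq W^{*}(Y)_{sa}$, $m\in\NN$ and $\gamma>0$, apply Lemma \ref{L:extension} to the augmented finite set $F\cup\{a\}$ (viewed inside $M_{sa}$): this produces a finite $Y_{0}\subseteq Y$, an integer $m'$ and a $\gamma'>0$ such that for every $k$,
\[\Gamma_{R_{F}\vee L\vee R_{Y}}(F,a:Y_{0};m',\gamma',k)\subseteq \Gamma_{R_{F}\vee L\vee R_{G}}(F,a:G;m,\gamma,k).\]
Intersecting both sides with the slice $\{B\in M_{k}(\CC)_{sa}^{F\cup\{a\}}:B_{a}=A_{k}\}$ yields $\Xi_{A_{k}}(F:Y_{0};m',\gamma',k)\subseteq \Xi_{A_{k}}(F:G;m,\gamma,k)$.

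The routine packing argument (given an $\varepsilon$-dense subset $S$ of the larger set, for each $s\in S$ within distance $\varepsilon$ of the smaller set pick a point in the smaller set within $\varepsilon$ of $s$) now gives
\[K_{2\varepsilon}(\Xi_{A_{k}}(F:Y_{0};m',\gamma',k),\|\cdot\|_{2})\leq K_{\varepsilon}(\Xi_{A_{k}}(F:G;m,\gamma,k),\|\cdot\|_{2}).\]
Taking $\frac{1}{k^{2}}\log$ and letting $k\to\infty$, then using that by definition
\[K_{2\varepsilon}(\Xi_{(A_{k})}(F:Y))\leq K_{2\varepsilon}(\Xi_{(A_{k})}(F:Y_{0};m',\gamma')),\]
produces $K_{2\varepsilon}(\Xi_{(A_{k})}(F:Y))\leq K_{\varepsilon}(\Xi_{(A_{k})}(F:G;m,\gamma))$. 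Taking the infimum of the right-hand side over all finite $G\subseteq W^{*}(Y)_{sa}$ and all $m,\gamma$ gives $K_{2\varepsilon}(\Xi_{(A_{k})}(F:Y))\leq K_{\varepsilon}(\Xi_{(A_{k})}(F:W^{*}(Y)))$, and then the supremum over $\varepsilon>0$ and finite $F\subseteq X$ yields $h(X:Y)\leq h(X:W^{*}(Y))$.

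There is essentially no serious obstacle here: the work is entirely done by Lemma \ref{L:extension}, and the rest is a bookkeeping exercise with quantifiers. The only mild point to be careful about is making sure that Lemma \ref{L:extension} is applied with $a$ included as a variable (so that the containment survives the restriction to the microstate slice defining $\Xi$), which is straightforward since $a\in W^{*}(X)$ and the cutoff parameter $L=\sup_{k}\|A_{k}\|_{\infty}$ is finite.
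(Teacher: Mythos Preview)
Your proof is correct and follows essentially the same approach as the paper's: both invoke Lemma~\ref{L:extension} applied to the tuple $(a,F)$ to obtain the containment $\Gamma(a,F:Y_{0};m',\gamma',k)\subseteq\Gamma(a,F:G;m,\gamma,k)$, pass to the $\Xi$-slices, and then run the same chain of inequalities on the covering numbers. Your explicit remark that $a$ must be carried along so the containment descends to the $\Xi$-spaces is exactly the point the paper handles implicitly.
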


\begin{proof} It is clear that
\[h(X:W^{*}(Y))\leq h(X:Y).\]
Let us prove the reverse inequality.  Fix $a\in W^{*}(X)$ with diffuse spectrum, and let $(A_{k})_{k=1}^{\infty}$ be microstates for $a.$ Let $L=\sup_{k}\|A_{k}\|_{\infty}$ and let $R_{X}\in [0,\infty)^{X},R_{Y}\in [0,\infty)^{Y},R_{W^{*}(Y)}\in [0,\infty)^{W^{*}(Y)}$ be cutoff parameters. Fix $\varepsilon>0,$ and let $F\subseteq X,G\subseteq W^{*}(Y)_{sa}$ be given finite sets. Given $m\in\NN,\gamma>0,$ Lemma \ref{L:extension} allows us to find $m'\in\NN,\gamma'>0$ and a finite $Y_{0}\subseteq Y$ so that for all $k$
\[\Gamma_{L\vee R_{X}\vee R_{Y}}(a,F:Y_{0};m',\gamma',k)\subseteq \Gamma_{L\vee R_{X}\vee R_{W^{*}(Y)}}(a,F:G;m,\gamma,k).\]
Since this holds for all $k,$ we have
\[ K_{2\varepsilon}(\Xi_{(A_{k}),R_{X}\vee R_{Y}}(F:Y_{0};m',\gamma'),\|\cdot\|_{2})\leq K_{\varepsilon}( \Xi_{(A_{k}),R_{X}\vee R_{W^{*}(Y)}}(F:G;m,\gamma),\|\cdot\|_{2}).\]
A fortiori,
\[K_{2\varepsilon}(\Xi_{(A_{k}),R_{X}\vee R_{Y}}(F:Y),\|\cdot\|_{2})\leq K_{2\varepsilon}(\Xi_{a_{k},R_{X}\vee R_{Y}}(F:Y_{0}),\|\cdot\|_{2})\leq K_{\varepsilon}( \Xi_{(A_{k})_{k=1}^{\infty},R_{X}\vee R_{W^{*}(Y)}}(F:G;m,\gamma),\|\cdot\|_{2}).\]
Taking the infimum over all $m,\gamma$ implies that
\[K_{2\varepsilon}(\Xi_{(A_{k}),R_{X}\vee R_{Y}}(F:Y),\|\cdot\|_{2})\leq  K_{\varepsilon}( \Xi_{(A_{k}),R_{X}\vee R_{W^{*}(Y)}}(F:G),\|\cdot\|_{2}).\]
Since this holds for all $G,$ we see that
\[K_{2\varepsilon}(\Xi_{(A_{k}),R_{X}\vee R_{Y}}(F:Y),\|\cdot\|_{2})\leq  K_{\varepsilon}( \Xi_{(A_{k}),R_{X}\vee R_{W^{*}(Y)}}(F:W^{*}(Y)),\|\cdot\|_{2}).\]
Now taking the supremum over $\varepsilon,F$ completes the proof.
\end{proof}

We now prove that $h(X:Y)$ only depends upon $W^{*}(X),W^{*}(Y),$ provided that $W^{*}(X)\subseteq W^{*}(Y).$ We remark that the proof is closely modeled on Jung's proof of Theorem 3.2 in \cite{JungSB}.

\begin{theorem}\label{T:indepedAppAkldghaklhgdkla} Let $(M,\tau)$ be a tracial von Neumann algebra, and $X,Y\subseteq M_{sa},j=1,2.$ Suppose that $W^{*}(X)\subseteq W^{*}(Y).$
Then
\[h(X:Y)=h(W^{*}(X):W^{*}(Y)).\]
\end{theorem}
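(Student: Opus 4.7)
By Lemma \ref{L:Yindependadjlgakjg} applied to both sides of the desired identity, it suffices to prove $h(X:W^{*}(Y)) = h(W^{*}(X):W^{*}(Y))$. The inequality $h(X:W^{*}(Y)) \leq h(W^{*}(X):W^{*}(Y))$ is immediate from the fact that $h(\cdot:W^{*}(Y))$ is defined as a supremum over finite subsets of the first variable, together with $X \subseteq W^{*}(X)$. For the reverse inequality, I would fix a finite $F \subseteq W^{*}(X)_{sa}$ and show $h(F:W^{*}(Y)) \leq h(X:W^{*}(Y))$; taking the supremum over such $F$ then finishes the proof.

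The plan for this reduction is to fix $a \in W^{*}(X)_{sa}$ with diffuse spectrum, a sequence $(A_k)_{k=1}^\infty$ of microstates for $a$, and constants $\varepsilon, \kappa > 0$. By Kaplansky's density theorem applied inside $W^{*}(X)$, I choose a finite $X_0 \subseteq X$ and, for each $a' \in F$, a self-adjoint noncommutative polynomial $P_{a'} \in \CC\ip{T_x : x \in X_0}$ with $\|P_{a'}(x:x \in X_0) - a'\|_2 < \kappa$ and $\|P_{a'}(x:x \in X_0)\|_\infty \leq \|a'\|_\infty$. The polynomial evaluation map $\Psi(C) := (P_{a'}(C))_{a' \in F}$ is then $\|\cdot\|_2$-Lipschitz with some constant $D>0$ on the operator-norm ball cut out by $R_X\big|_{X_0}$.

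The technical heart of the argument is an extension step: given $m \in \NN$, $\gamma > 0$ and a finite $G \subseteq W^{*}(Y)_{sa}$, I would choose a sufficiently large $m' \in \NN$, a sufficiently small $\gamma' > 0$, and a finite $G' \subseteq W^{*}(Y)_{sa}$ containing $G$ and containing $\|\cdot\|_2$-approximants of $P_{a'}(x:x\in X_0)\in W^*(X)\subseteq W^*(Y)$ drawn from $W^{*}(Y)$, so that every $B \in \Xi_{A_k, R_F \vee R_{W^{*}(Y)}}(F:G;m,\gamma,k)$ admits a lift $C \in M_k(\CC)_{sa}^{X_0}$ with $(A_k, B, C) \in \Gamma_{L \vee R_F \vee R_X \vee R_{W^{*}(Y)}}(a, F, X_0 : G'; m', \gamma', k)$, and so that the approximate polynomial identity $\|B_{a'} - P_{a'}(C)\|_2 < 2\kappa$ holds for all $a' \in F$. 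The matrix $C$ is produced by taking the moment extension of $(A_k, B)$ already guaranteed by the definition of $\Xi$ (after enlarging $G'$ so that the relevant joint moments in $(a, F, X_0, G)$ are forced to match), and Lemma \ref{L:extension} handles the passage between different generating sets of $W^{*}(Y)$.

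Once such a lift is in hand, I take an $\varepsilon$-dense subset $S$ of $\Xi_{A_k, R_X \vee R_{W^{*}(Y)}}(X_0:G';m',\gamma',k)$ in $\|\cdot\|_2$ of minimal cardinality. Picking $C' \in S$ with $\|C - C'\|_2 < \varepsilon$ gives $\|B_{a'} - P_{a'}(C')\|_2 < 2\kappa + D\varepsilon$, so $\Psi(S)$ is a $(2\kappa + D\varepsilon)$-dense subset of $\Xi_{A_k, R_F \vee R_{W^{*}(Y)}}(F:G;m,\gamma,k)$ of cardinality at most $|S|$. Applying $\frac{1}{k^2}\log$, letting $k \to \infty$, and taking the appropriate infima over $G, m, \gamma$ on one side and $G', m', \gamma'$ on the other, yields
\[K_{2(2\kappa+D\varepsilon)}(\Xi_{(A_k), R_F \vee R_{W^{*}(Y)}}(F:W^{*}(Y)),\|\cdot\|_2) \leq h(X:W^{*}(Y)).\]
Sending $\kappa \to 0$ and then $\varepsilon \to 0$ gives $h(F:W^{*}(Y)) \leq h(X:W^{*}(Y))$, as required. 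The main obstacle is the extension step: the lift $C$ must be produced while simultaneously controlling the moments against $G'$ and the approximate polynomial identity, which requires careful bookkeeping of the several approximation parameters but is standard in the spirit of \cite{JungSB}.
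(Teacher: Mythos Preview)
Your overall strategy---Kaplansky approximation of each $a'\in F$ by a polynomial $P_{a'}$ in a finite $X_0\subseteq X$, followed by a Lipschitz covering argument---is exactly the paper's, and the reduction via Lemma~\ref{L:Yindependadjlgakjg} is correct. The gap is in what you call the ``extension step.''

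You fix $m,\gamma,G$ and then claim you can choose $m',\gamma',G'$ so that \emph{every} $B\in\Xi_{A_k}(F:G;m,\gamma,k)$ admits a lift $C\in M_k(\CC)_{sa}^{X_0}$ with $(A_k,B,C)\in\Gamma(a,F,X_0:G';m',\gamma',k)$. This is not true in general: by definition, $B\in\Xi(F:G;m,\gamma,k)$ only guarantees an extension to microstates for $G$, and there is no reason such a $B$ must extend further to good microstates for $X_0$. Indeed, the inclusion goes the other way: $\Xi(F:G\cup X_0;m,\gamma,k)\subseteq\Xi(F:G;m,\gamma,k)$, possibly strictly. Enlarging $G'$ (which lives on the target side of the lift condition) does nothing to constrain $B$. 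Lemma~\ref{L:extension} also does not help here: it lets you replace presence variables in $W^*(Y)$ by elements of $Y$, but it does not manufacture extensions of a given microstate to a larger tuple. A second, related issue is the quantifier order: even if your lift existed, you would obtain $K_{\dots}(\Xi(F:G;m,\gamma))\le K_\varepsilon(\Xi(X_0:G';m',\gamma'))$ for some $G',m',\gamma'$ depending on $G,m,\gamma$, and one cannot then take an independent infimum over $G',m',\gamma'$ to reach $h(X:W^*(Y))$.

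The paper avoids both problems with a single, simpler move: since $X_0\subseteq W^*(X)_{sa}\subseteq W^*(Y)_{sa}=N_{sa}$, one puts $X_0$ into the presence set from the start and works with $\Xi_{A_k}(F:G\cup X_0;m,\gamma,k)$. The lift $B\mapsto C$ is then free by the very definition of ``in the presence of,'' and the \emph{same} parameters $G,m,\gamma$ appear on both sides, so that taking the infimum over them directly yields $K_{6\varepsilon}(\Xi(F:N))\le K_{\varepsilon/(D\sqrt{|F|})}(\Xi(X_0:N))\le h(X:N)$. Your argument becomes correct (and identical to the paper's) once you replace ``$B\in\Xi(F:G;m,\gamma,k)$'' by ``$B\in\Xi(F:G\cup X_0;m,\gamma,k)$'' and drop the separate $m',\gamma',G'$.
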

\begin{proof} Choose an element $a\in W^{*}(X)_{sa}$ with diffuse spectrum and a sequence $(A_{k})_{k=1}^{\infty}$ of microstates for $a.$ By Lemma \ref{L:Yindependadjlgakjg}, it suffices to show that
\[h(X:W^{*}(Y))=h(W^{*}(X):W^{*}(Y)).\]
Set $N=W^{*}(Y).$ It is clear that
\[h(X:N)\leq h(W^{*}(X):N),\]
so it suffices to show that
\[h(W^{*}(X):N)\leq h(X:N).\]
 Let $R_{N}\in [0,\infty)^{N}$ be a cutoff parameter and fix a finite $F\subseteq W^{*}(X).$ It is enough to show that
 \[h(F:N)\leq h(X:N).\]
Let $\varepsilon>0,$ by Kaplansky's density theorem we may find a finite $X_{0}\subseteq X$ and polynomials $P_{b}\in \CC\ip{T_{x}:x\in X_{0}}$ for $b\in F$ so that
\[\|b-P_{b}(x:x\in X_{0})\|_{2}<\frac{\varepsilon}{\sqrt{|F|}},\]
\[\|P_{b}(x:x\in X_{0})\|_{\infty}\leq \|b\|_{\infty}.\]
For all $b\in F,$ choose $D_{b}(P)>1$ so that for any $k\in\NN$ and any $T,T'\in M_{k}(\CC)^{X_{0}}$with
\[\|T_{x}\|_{\infty},\|T'_{x}\|_{\infty}\leq R_{N,x}\mbox{ for all $x\in X_{0}$}\]
we have
\[\|P_{b}(T_{x}:x\in X_{0})-P_{b}(T_{x}':x\in X_{0})\|_{2}\leq D_{b}(P)\|T-T\|_{2}.\]
Set
\[D=\max_{b\in F}D_{b}(P).\]

	Suppose we are given $m\in\NN,\gamma>0$ and a finite $G\subseteq N_{sa}.$ Let
\[S\subseteq \Xi_{A_{k},R_{N}\vee R_{N}}(X_{0}:G;m,\gamma,k)\]
be $\frac{\varepsilon}{D\sqrt{|F|}}$-dense with respect to $\|\cdot\|_{2}$ and so that
\[|S|=K_{\frac{\varepsilon}{D\sqrt{|F|}}}(\Xi_{A_{k},R_{N}\vee R_{N}}(X_{0}:G;m,\gamma,k),\|\cdot\|_{2}).\]
Suppose that
\[C\in\Xi_{A_{k},R_{N}\vee R_{N}\vee R_{N}}(F:G,X_{0};m,\gamma,k)\]
and choose $B\in M_{k}(\CC)^{X_{0}}$ so that
\[(C,B)\in\Xi_{A_{k},R_{N}\vee R_{N}\vee R_{N}}(F,X_{0}:G;m,\gamma,k).\]
 If $m\in\NN$ is sufficiently large and $\gamma>0$ is sufficiently small, we have that
 \[\max_{b\in F}\|C_{b}-P_{b}(B_{x}:x\in X_{0})\|_{2}\leq \frac{2\varepsilon}{\sqrt{|F|}}.\]
 Now choose $B'\in S$ so that $\|B-B'\|_{2}\leq\frac{\varepsilon}{D\sqrt{|F|}},$
 we then have for any $b\in F$
 \[\|C_{b}-P_{b}(B'_{x}:x\in X_{0})\|_{2}\leq \frac{2\varepsilon}{\sqrt{|F|}}+\|P_{b}(B_{x}:x\in X_{0})-P_{b}(B'_{x}:x\in X_{0})\|_{2}\leq \frac{3\varepsilon}{\sqrt{|F|}}.\]
 Thus
 \[\Xi_{A_{k},R_{N}\vee R_{N}\vee R_{N}  }(F:G,X_{0};m,\gamma,k)\subseteq_{3\varepsilon,\|\cdot\|_{2}}\{(P_{a}(B_{x}:x\in X_{0}))_{a\in F}:B\in S\}.\]
 Since this holds for all $k$ we have:
\[K_{6\varepsilon}(\Xi_{(A_{k})_{k=1}^{\infty},R_{N}\vee R_{N}\vee R_{N} }(F:G,X_{0};m,\gamma),\|\cdot\|_{2})\leq K_{\frac{\varepsilon}{D\sqrt{|F|}}}(\Xi_{(A_{k})_{k=1}^{\infty},R_{N}\vee R_{N}}(X_{0}:G;m,\gamma),\|\cdot\|_{2}).\]
 A fortiori,
 \begin{align*}
K_{6\varepsilon}(\Xi_{(A_{k})_{k=1}^{\infty},R_{N}\vee R_{N}}(F:N),\|\cdot\|_{2})&\leq K_{6\varepsilon}(\Xi_{(A_{k})_{k=1}^{\infty},R_{N}\vee R_{N}}(F:G,X_{0}),\|\cdot\|_{2})\\
 &\leq  K_{\frac{\varepsilon}{D\sqrt{|F|}}}(\Xi_{(A_{k})_{k=1}^{\infty},R_{N}\vee R_{N}}(X_{0}:G;m,\gamma),\|\cdot\|_{2}).
 \end{align*}
 Since $D$ does not depend upon $m,\gamma,G$ we can let $m\to\infty,\gamma\to 0$ and take the infimum over all $G$ to see that:
 \[K_{6\varepsilon}(\Xi_{(A_{k})_{k=1}^{\infty},R_{N}\vee R_{N}}(F:N),\|\cdot\|_{2})\leq K_{\frac{\varepsilon}{D\sqrt{|F|}}}(\Xi_{(A_{k})_{k=1}^{\infty},R_{N}\vee R_{N}}(X_{0}:N),\|\cdot\|_{2})\leq h(X_{0}:N)\leq h(X:N).\]
Now taking the supremum over all $\varepsilon,F$ we see that
\[h(M:N)\leq h(X:N).\]

\end{proof}

We have now completed our proof that $h(X:Y)$ only depends upon the von Neumann algebras generated by $X,Y,$ provided $W^{*}(X)\subseteq W^{*}(Y).$ We mention that is clear from Theorem \ref{T:indepedAppAkldghaklhgdkla} that if $M$ is finitely generated, then $h(M)<\infty$ if and only if $M$ is strongly $1$-bounded in the sense of Jung in \cite{JungSB}. In particular, if $M=W^{*}(F)$ for $F\subseteq M_{sa}$ finite, and $\delta_{0}(F)>1,$ then $h(M)=\infty.$ We  turn to  other important properties of $h(N:P)$ for von Neumann subalgebras $N\subseteq P$ of a tracial von Neumann algebra $M.$
\begin{lemma}\label{L:increasingunions} Let $(M,\tau)$ be a tracial von Neumann algebra, and let $N\subseteq P$ be von Neumann subalgebras of $M.$ Suppose that $M$ is diffuse. Suppose that $N_{\alpha}$ is an increasing net of diffuse von Neumann subalgebras of $N$ with
\[N=\overline{\bigcup_{\alpha}N_{\alpha}}^{SOT},\]
then
\[h(N:P)=\lim_{\alpha}h(N_{\alpha}:P).\]
\end{lemma}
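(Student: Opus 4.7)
The plan is to combine two basic pieces: the standard monotonicity of $h(\,\cdot\,:P)$ and a Kaplansky-type approximation that reduces the claim to the finite-set argument already carried out in Theorem \ref{T:indepedAppAkldghaklhgdkla}. For the easy direction, since $N_\alpha \subseteq N_\beta \subseteq N$ whenever $\alpha \le \beta$, Property 1 of Section 2 gives $h(N_\alpha:P) \le h(N_\beta:P) \le h(N:P)$, so the net $(h(N_\alpha:P))_\alpha$ is monotone and $\lim_\alpha h(N_\alpha:P) \le h(N:P)$. For the reverse inequality, I will use that $h(N:P) = \sup\{h(F:P) : F \subseteq N_{sa} \text{ finite}\}$ (by the definition of $1$-bounded entropy), so it suffices to fix a finite $F \subseteq N_{sa}$ and prove $h(F:P) \le \lim_\alpha h(N_\alpha:P)$.

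Given $\varepsilon > 0$, I apply Kaplansky's density theorem to the SOT-dense inclusion $\bigcup_\alpha N_\alpha \subseteq N$: for each $b \in F$ there are a finite set $F_b \subseteq \bigcup_\alpha (N_\alpha)_{sa}$ and a self-adjoint polynomial $P_b \in \CC\ip{T_x : x \in F_b}$ with
\[\|P_b(x : x \in F_b) - b\|_2 < \tfrac{\varepsilon}{\sqrt{|F|}}, \qquad \|P_b(x : x \in F_b)\|_\infty \le \|b\|_\infty.\]
Because $F$ is finite and the net is increasing, there is a single index $\alpha_0$ such that $F' := \bigcup_{b \in F} F_b \subseteq (N_{\alpha_0})_{sa}$. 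Each $N_\alpha$ is diffuse, so I may fix $a \in W^*(F')_{sa}$ with diffuse spectrum and a sequence of microstates $(A_k)_{k=1}^{\infty}$ for $a$; by Corollary \ref{C:independlajgalj} this choice is immaterial.

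The main step is now essentially the computation in the proof of Theorem \ref{T:indepedAppAkldghaklhgdkla}. Let $D$ be a common Lipschitz constant for the polynomials $P_b$ acting on operator-norm balls of an appropriate radius in $M_k(\CC)^{F'}$. For $m$ large and $\gamma$ small, any $(C,B) \in \Xi_{A_k,R}(F,F':G;m,\gamma,k)$ satisfies $\max_{b \in F} \|C_b - P_b(B_x : x \in F')\|_2 < 2\varepsilon/\sqrt{|F|}$. Choosing $S \subseteq \Xi_{A_k,R}(F':G;m,\gamma,k)$ that is $\varepsilon/(D\sqrt{|F|})$-dense in $\|\cdot\|_2$ with minimal cardinality, the set $\{(P_b(B_x : x \in F'))_{b \in F} : B \in S\}$ is then $3\varepsilon$-dense in $\Xi_{A_k,R}(F:G,F';m,\gamma,k)$, yielding
\[K_{6\varepsilon}\!\left(\Xi_{(A_k),R}(F:G,F';m,\gamma),\|\cdot\|_2\right) \le K_{\varepsilon/(D\sqrt{|F|})}\!\left(\Xi_{(A_k),R}(F':G;m,\gamma),\|\cdot\|_2\right).\]
Since $D$ is independent of $(G,m,\gamma)$, taking an infimum over these parameters and then passing to the supremum over finite $G \subseteq P_{sa}$ gives
\[K_{6\varepsilon}\!\left(\Xi_{(A_k),R}(F:P),\|\cdot\|_2\right) \le h(F':P) \le h(N_{\alpha_0}:P) \le \lim_\alpha h(N_\alpha:P).\]
Letting $\varepsilon \to 0$ yields $h(F:P) \le \lim_\alpha h(N_\alpha:P)$, and a final supremum over finite $F \subseteq N_{sa}$ completes the proof.

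The only potential obstacle is the bookkeeping required to transfer the Kaplansky approximation $\|b - P_b(x:x\in F')\|_2 < \varepsilon/\sqrt{|F|}$ from the ambient algebra to the microstate level with controlled error; but this is exactly the manipulation already performed in Theorem \ref{T:indepedAppAkldghaklhgdkla}, so no new technical device is needed and the argument is essentially a transcription with $W^*(X)$ replaced by $\overline{\bigcup_\alpha N_\alpha}^{SOT} = N$.
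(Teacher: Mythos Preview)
Your argument is correct but works harder than necessary. The paper's proof is a single observation: set $X=\bigcup_{\alpha}(N_{\alpha})_{sa}$, so that $W^{*}(X)=N$, and invoke Theorem~\ref{T:indepedAppAkldghaklhgdkla} as a black box to get $h(N:P)=h(X:P)$. By definition $h(X:P)=\sup\{h(F:P):F\subseteq X\text{ finite}\}$, and since the net is directed every such finite $F$ already lies inside some $(N_{\alpha_{0}})_{sa}$, whence $h(F:P)\le h(N_{\alpha_{0}}:P)\le\lim_{\alpha}h(N_{\alpha}:P)$; monotonicity gives the other inequality. The point is that by choosing the generating set to live in $\bigcup_{\alpha}N_{\alpha}$ from the start, no Kaplansky approximation is needed---the supremum over finite subsets in the very definition of $h(X:P)$ does that work for free. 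Your approach instead takes $F\subseteq N_{sa}$, approximates by elements of some $N_{\alpha_{0}}$, and then re-runs the microstate estimate from the proof of Theorem~\ref{T:indepedAppAkldghaklhgdkla} rather than citing its conclusion. Both routes arrive at the same place; the paper's is shorter because it exploits the generator-independence theorem rather than reproving a special case of it.

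One minor slip: the sentence ``Each $N_{\alpha}$ is diffuse, so I may fix $a\in W^{*}(F')_{sa}$ with diffuse spectrum'' does not follow---$W^{*}(F')$ could well be finite-dimensional even though $N_{\alpha_{0}}$ is diffuse. The fix is immediate: either adjoin to $F'$ an element of $(N_{\alpha_{0}})_{sa}$ with diffuse spectrum, or simply choose $a\in(N_{\alpha_{0}})_{sa}$ rather than in $W^{*}(F')_{sa}$. Since $a$ then lies in $N_{\alpha_{0}}\subseteq N$, it serves to compute both $h(N:P)$ and $h(N_{\alpha_{0}}:P)$, and the rest of your argument goes through unchanged.
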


\begin{proof}
This is easy from the fact that
\[h(X:Y)=h(W^{*}(X):P),\]
 for any $X\subseteq M_{sa},$ (e.g. take $X=\bigcup_{\alpha}(N_{\alpha})_{sa}$).
\end{proof}

\begin{cor}\label{C:increasingunions} Let $(M,\tau)$ be a tracial von Neumann algebra. Let $M_{\alpha}$ be an increasing net of diffuse von Neumann subalgebras  with
\[M=\overline{\bigcup_{\alpha}M_{\alpha}}^{SOT}.\]
Then
\[h(M)\leq \liminf_{\alpha}h(M_{\alpha}).\]
\end{cor}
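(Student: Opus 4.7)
The plan is to combine Lemma \ref{L:increasingunions} with the monotonicity Property 1 in a very short argument. By hypothesis, $M$ is the SOT-closure of the increasing net of diffuse subalgebras $M_{\alpha}$. Since $M$ is the SOT-closure of its own diffuse subalgebra (taking $N_{\alpha} = M_{\alpha}$ and $P = N = M$ in Lemma \ref{L:increasingunions}), I would first apply that lemma to obtain
\[
h(M) \;=\; h(M:M) \;=\; \lim_{\alpha} h(M_{\alpha}:M),
\]
where the limit exists (in fact is a supremum) by the monotonicity of $\alpha \mapsto h(M_{\alpha}:M)$ that follows from Property 1.

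Next I would invoke Property 1 once more, in its second form: for any $\alpha$ we have $M_{\alpha} \subseteq M$, hence
\[
h(M_{\alpha}:M) \;\leq\; h(M_{\alpha}:M_{\alpha}) \;=\; h(M_{\alpha}).
\]
Combining the two displays and taking $\liminf$ yields
\[
h(M) \;=\; \lim_{\alpha} h(M_{\alpha}:M) \;=\; \liminf_{\alpha} h(M_{\alpha}:M) \;\leq\; \liminf_{\alpha} h(M_{\alpha}),
\]
which is exactly the stated inequality.

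There is essentially no obstacle: Lemma \ref{L:increasingunions} does all the heavy lifting (it is the one that required the characterization $h(X:Y) = h(W^{*}(X):P)$ proved via Theorem \ref{T:indepedAppAkldghaklhgdkla}), and Property 1 converts the ``in the presence of $M$'' entropy to the intrinsic entropy $h(M_{\alpha})$. The only point to mention for rigor is that the net $M_{\alpha}$ consists of diffuse subalgebras (required for both $h(M_{\alpha}:M)$ and $h(M_{\alpha})$ to be defined via a microstate sequence for a diffuse generator) and that the limit $\lim_{\alpha} h(M_{\alpha}:M)$ coincides with $\liminf$ because the net is monotone increasing. No new estimates are needed.
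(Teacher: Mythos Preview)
Your proposal is correct and follows essentially the same argument as the paper's own proof: apply Lemma \ref{L:increasingunions} to get $h(M)=h(M:M)=\lim_{\alpha}h(M_{\alpha}:M)$, then use the monotonicity $h(M_{\alpha}:M)\leq h(M_{\alpha}:M_{\alpha})=h(M_{\alpha})$ to conclude. The paper's proof is just this one-line chain of (in)equalities, and your additional remarks about the limit equaling the $\liminf$ by monotonicity are accurate but not strictly needed.
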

\begin{proof} From Lemma \ref{L:increasingunions} it follows that
\[h(M)=h(M:M)=\lim_{\alpha}h(M_{\alpha}:M)\leq \liminf_{\alpha}h(M_{\alpha}:M_{\alpha})=\liminf_{\alpha}h(M_{\alpha}).\]

\end{proof}

We remark that when $h(M_{\alpha})=0$ for all $\alpha,$ Corollary \ref{C:increasingunions} was obtained by Hadwin-Li in \cite{HadwinLi}.

\begin{lemma}\label{L:diffuseintersection} Let $N_{j},j=1,2$ be von Neumann subalgebras of a tracial von Neumann algebra $(M,\tau)$ with $N_{1}\cap N_{2}$ diffuse. Then
\[h(N_{1}\vee N_{2}:M)\leq h(N_{1}:M)+h(N_{2}:M).\]
In particular,
\[h(N_{1}\vee N_{2})\leq h(N_{1})+h(N_{2}).\]

\end{lemma}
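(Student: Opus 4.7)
The plan is to deduce the inequality from a straightforward submultiplicative covering estimate, using a microstate sequence anchored at a diffuse element in the intersection. First, by Theorem \ref{T:indepedAppAkldghaklhgdkla}, computing $h(N_1\vee N_2:M)$ is the same as computing $h((N_1)_{sa}\cup(N_2)_{sa}:M)$, which by definition is the supremum over finite subsets of the form $F_1\cup F_2$ with $F_j\subseteq (N_j)_{sa}$. Since $N_1\cap N_2$ is diffuse, choose $a\in (N_1\cap N_2)_{sa}$ with diffuse spectrum and a microstate sequence $(A_k)_{k=1}^{\infty}$ for $a$; by Corollary \ref{C:independlajgalj} we may use this same sequence to compute $h(F_1\cup F_2:M)$, $h(F_1:M)$, and $h(F_2:M)$, since $a$ lies in each of the generated algebras.

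The main observation is that for any finite $G\subseteq M_{sa}$, any cutoff parameter $R$ covering $F_1\cup F_2\cup G$, any $m\in\NN$, $\gamma>0$, and $k\in\NN$, the restriction map
\[\Xi_{A_k,R}(F_1\cup F_2:G;m,\gamma,k)\hookrightarrow \Xi_{A_k,R}(F_1:G;m,\gamma,k)\times \Xi_{A_k,R}(F_2:G;m,\gamma,k)\]
is an injection, because the $A_k$-anchor is identical on both factors. Hence the product of an $\tfrac{\varepsilon}{\sqrt{2}}$-dense subset of each factor yields an $\varepsilon$-dense subset of the left-hand side in the $\|\cdot\|_2$-metric on $M_k(\CC)^{F_1\cup F_2}$, giving
\[K_{\varepsilon}(\Xi_{A_k,R}(F_1\cup F_2:G;m,\gamma,k),\|\cdot\|_2)\leq K_{\varepsilon/\sqrt{2}}(\Xi_{A_k,R}(F_1:G;m,\gamma,k),\|\cdot\|_2)\cdot K_{\varepsilon/\sqrt{2}}(\Xi_{A_k,R}(F_2:G;m,\gamma,k),\|\cdot\|_2).\]
Applying $\tfrac{1}{k^2}\log$, taking $\limsup_{k\to\infty}$, infimum over $m,\gamma,G$, and supremum over $\varepsilon>0$ (using that $\sup_{\varepsilon>0}$ is unchanged by rescaling $\varepsilon\mapsto \varepsilon/\sqrt{2}$), we obtain $h(F_1\cup F_2:M)\leq h(F_1:M)+h(F_2:M)\leq h(N_1:M)+h(N_2:M)$. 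Taking the supremum over finite $F_j\subseteq (N_j)_{sa}$ yields the first inequality.

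For the ``in particular'' statement, apply the inequality with $M$ replaced by $N_1\vee N_2$; the hypothesis $N_1\cap N_2$ diffuse is unchanged, and by the monotonicity property (Property 1 in the list of basic properties) $h(N_j:N_1\vee N_2)\leq h(N_j:N_j)=h(N_j)$. The only conceptual point, rather than obstacle, is the need to work with a single microstate sequence $(A_k)$ derived from a common element $a\in N_1\cap N_2$; this is precisely where diffuseness of the intersection is used, both to produce $a$ and to invoke the invariance of $h$ under the choice of diffuse anchor given by Corollary \ref{C:independlajgalj}.
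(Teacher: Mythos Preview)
Your argument is correct and is essentially the approach the paper intends: the paper's own proof simply cites Jung's Corollary 4.2 in \cite{JungSB} for the finitely generated case and says the general case follows ``easily,'' while your write-up makes the covering-number factorization explicit using a diffuse anchor $a\in N_1\cap N_2$ and the same microstate sequence on both sides. The ``in particular'' clause you give via monotonicity matches the paper's argument verbatim.
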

\begin{proof} The general case follows easily from the same arguments in the finitely-generated case due to Jung (see Corollary 4.2 in \cite{JungSB}).
For the ``in particular'' part, we have
\begin{align*}
h(N_{1}\vee N_{2})=h(N_{1}\vee N_{2}:N_{1}\vee N_{2})&\leq h(N_{1}:N_{1}\vee N_{2})+h(N_{2}:N_{1}\vee  N_{2})\\
&\leq h(N_{1}:N_{1})+h(N_{2}:N_{2})\\
&=h(N_{1})+h(N_{2}).
\end{align*}

\end{proof}

Lastly we state an inequality for compressions, as well as one for direct sums.

\begin{proposition}\label{P:compressionformula}

(i): Let $(M_{j},\tau_{j})_{j=1}^{\infty}$ be diffuse tracial von Neumann algebras, and $\mu_{j},j=1,2,\dots$ be such that
\[\sum_{j=1}^{\infty}\mu_{j}=1.\]
Define $\tau$ on $M=\bigoplus_{j=1}^{\infty}M_{j}$ by
\[\tau(x)=\sum_{j=1}^{\infty}\mu_{j}\tau_{j}(x_{j}).\]
Then
\[h(M)\leq \sum_{j=1}^{\infty}\mu_{j}^{2}h(M_{j}),\]
with the convention that if one of the terms on the right-hand side is $-\infty,$ then the sum is $-\infty.$

(ii): Let $M$ be $\textrm{II}_{1}$-factor with canonical trace $\tau.$ Let $p\in M$ be a nonzero orthogonal projection. Define $\tau_{p}$ on $pMp$ by $\tau_{p}(x)=\frac{\tau(x)}{\tau(p)}.$ Then
\[h(pMp)\leq \frac{1}{\tau(p)^{2}}h(M).\]

\end{proposition}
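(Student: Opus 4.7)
The plan is to reduce both parts to Voiculescu's classical compression and direct-sum arguments for microstates, using the orbit-dense reformulation of $h$ established in Lemma \ref{L:orbit}. For part (ii), set $t=\tau(p)$ and choose a self-adjoint $a\in M$ of diffuse spectrum for which $p$ is a spectral projection, together with microstates $(A_k)_{k=1}^{\infty}$ chosen so that the corresponding spectral projection $P_k$ of $A_k$ has exact rank $\lfloor tk\rfloor$ and the renormalized $P_k$-corner of $A_k$ is a microstate sequence for a diffuse self-adjoint $a'\in pMp$. Fix finite $F'\subseteq (pMp)_{sa}$ and extend it to $F\subseteq M_{sa}$ containing $F'\cup\{p,a\}.$ The compression $\Phi_k\colon B\mapsto (P_k B_x P_k|_{\ran P_k})_{x\in F'}$ sends any orbit-dense cover of $\Gamma_{R}(F:G;m,\gamma,k)$ into an orbit-dense subset of its image in the $pMp$-microstate space at scale $\lfloor tk\rfloor.$ The key metric computation is that for any $X$ with $X=P_kXP_k$ one has $\|X\|_{L^{2}(\tr_{\lfloor tk\rfloor})}=t^{-1/2}\|X\|_{L^{2}(\tr_k)}(1+O(1/k)),$ so an $\varepsilon$-orbit-dense cover upstairs induces an $\varepsilon t^{-1/2}$-orbit-dense cover of $\Phi_k(\Gamma(F:G;m,\gamma,k))$ downstairs.

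The substantive remaining point for part (ii) is the lifting assertion that $\Phi_k(\Gamma(F:G;m,\gamma,k))$ is almost dense in $\Gamma(F':G';m',\gamma',\lfloor tk\rfloor)$: every microstate for $pMp$ must arise (up to unitary orbit and small $\|\cdot\|_{2}$-error) from the $P_k$-corner of some microstate for $M$ compatible with the fixed $A_k.$ This is the standard matrix-unit amplification: in the rational case $t=1/n$ one uses $M\cong pMp\overline{\otimes}M_{n}(\CC)$ and tensors with the identity on $M_n(\CC)$; for general $t$ one picks approximate partial isometries $v_{1},\dots,v_{\lfloor 1/t\rfloor}\in M$ with $v_i^*v_i=p$ and $\sum v_i v_i^*$ close to $1,$ builds the amplified microstate blockwise, and absorbs the small remainder via uniform norm cutoffs. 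Combining the lifting with the metric estimate above and renormalizing $\log|\cdot|$ by $\lfloor tk\rfloor^{2}\approx t^{2}k^{2}$ in place of $k^{2}$ produces the factor $\tau(p)^{-2}$ and, after taking the suprema and infima defining $h,$ yields $h(pMp)\leq \tau(p)^{-2}h(M).$

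For part (i), the argument is parallel and simpler: a microstate for $M=\bigoplus_{j}M_{j}$ at scale $k$ is, after unitary conjugation, block-diagonal with $j$-th block of size $\lfloor \mu_j k\rfloor$ carrying a microstate for $M_{j},$ and conversely any tuple of compatible block microstates assembles into one for $M.$ In orbit-dense language this produces, for a finite subsum $M_{\le N}=\bigoplus_{j\le N}M_{j},$ a product bound
\[K_{\varepsilon}^{\mathcal{O}}(\Gamma_{M_{\le N}}(F;m,\gamma,k))\le \prod_{j=1}^{N}K_{c\varepsilon}^{\mathcal{O}}(\Gamma_{M_{j}}(F_{j};m',\gamma',\lfloor \mu_j k\rfloor))\cdot e^{o(k^{2})},\]
with the subexponential factor counting possible block decompositions. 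Applying $\frac{1}{k^{2}}\log$ converts the right-hand side into $\sum_{j\le N}\mu_j^{2}\cdot\frac{1}{\lfloor\mu_j k\rfloor^{2}}\log K_{c\varepsilon}^{\mathcal{O}},$ giving $h(M_{\le N})\leq \sum_{j\le N}\mu_j^{2}h(M_j).$ The infinite-sum case follows by truncating $M$ to $M_{\le N}\oplus p_N M p_N$ with $p_N=1-\sum_{j\le N}z_j,$ applying the finite case together with the bound of part (ii) to the $p_N M p_N$ term, and letting $N\to\infty$ via Lemma \ref{L:increasingunions}; the convention that the right-hand side is $-\infty$ whenever some $h(M_j)=-\infty$ is forced by the inclusion $M_j\subseteq M,$ since then $M$ cannot embed into an ultrapower of $\R$ either. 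The principal obstacle throughout is the lifting step in part (ii), which demands careful handling of approximate matrix units and their remainders in a way that remains compatible with the fixed microstate $A_k$ and with the orbit-dense framework supplied by Lemma \ref{L:orbit}.
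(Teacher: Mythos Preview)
Your argument for part (ii) and for the \emph{finite} case of part (i) is essentially the paper's, recast through the orbit formulation $h^{\mathcal{O}}$ from Lemma \ref{L:orbit}: the paper works directly with the $\Xi$-spaces and the same matrix-unit/block-diagonal setup (partial isometries $v_{1},\dots,v_{n+1}$ with $v_{j}^{*}v_{j}=p$, a block microstate $A_{k}$ built from microstates for $pa$, and the containment $\Xi_{A_{k,p}}(F:G)\subseteq_{\varepsilon}Q_{k}\Xi_{A_{k}}(\widetilde F:\widetilde G)Q_{k}$), but the content is identical.

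The genuine gap is in your passage from the finite to the infinite direct sum in (i). Writing $M=M_{\le N}\oplus p_{N}Mp_{N}$ and applying the two-summand case gives
\[h(M)\le \sum_{j\le N}\mu_{j}^{2}h(M_{j})+\tau(p_{N})^{2}\,h(p_{N}Mp_{N}),\]
but invoking part (ii) on the tail yields only $\tau(p_{N})^{2}h(p_{N}Mp_{N})\le h(M)$, which is circular and in any case illegitimate since part (ii) is stated for $\textrm{II}_{1}$ factors while $M$ here is a nontrivial direct sum. Your appeal to Lemma \ref{L:increasingunions} is also misplaced: $M_{\le N}\oplus p_{N}Mp_{N}=M$ for every $N$, so there is no increasing union to take. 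The paper breaks this circularity by choosing diffuse \emph{abelian} subalgebras $A_{j}\subseteq M_{j}$ for $j>N$ and setting $M_{\le N}=\bigoplus_{j\le N}M_{j}\oplus\bigoplus_{j>N}A_{j}$. Now $M=\overline{\bigcup_{N}M_{\le N}}^{SOT}$ is a genuine increasing union, Lemma \ref{L:increasingunions} applies, and after peeling off $M_{1},\dots,M_{N}$ by the two-algebra case the remaining tail $\bigoplus_{j>N}A_{j}$ contributes zero because $h$ vanishes on abelian algebras. This substitution of an abelian tail is the missing idea in your outline.
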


\begin{proof}

(i): If one of $h(M_{j},\tau_{j})=-\infty,$ then $M_{j}$ does not embed into an ultrapower of the hyperfinite $\textrm{II}_{1}$-factor, and hence neither does $M$ and $h(M)=-\infty.$ So we will assume that $h(M_{j})\geq 0$ for all $j.$  Let us first handle the case of two algebras. So assume we are given $(M_{j},\tau_{j}),j=1,2$ tracial von Neumann algebras and a tracial state $\tau\colon M_{1}\oplus M_{2}\to\CC.$
Let $R_{j}\in [0,\infty)^{M_{j}}$ be cutoff parameters and define $R\in [0,\infty)^{M}$ by
\[R_{(a,b)}=\max(R_{1,a},R_{2,b}).\]
Let $z_{1}=(1,0),z_{2}=(0,1).$ Fix $a_{j}\in (M_{j})_{sa}$ with diffuse spectrum, set $a=(a_{1},a_{2})$ and observe that $a$ has diffuse spectrum.
Fix microstates $A_{k}^{(j)}$ for $a_{j},j=1,2.$ We may find a sequence of microstates $(A_{k})_{k=1}^{\infty}$ for $a$ so that there  are sequences $(l^{(j)}_{k})_{k=1}^{\infty},j=1,2$ of integers with
\[l^{(1)}_{k}+l^{(2)}_{k}=k,\]
\[\frac{l^{(j)}_{k}}{k}\to \tau(z_{j}),j=1,2,\]
\[A_{k}=\begin{bmatrix}
A_{l_{k}^{(1)}}^{(1)}&0\\
0&A_{l^{(2)}_{k}}^{(2)}
\end{bmatrix}.\]
Fix a finite $F\subseteq M_{sa},$ and $\varepsilon>0.$ Let $F_{j},G_{j}\subseteq (M_{j})_{sa},j=1,2$ be finite sets so that
\[F\subseteq F_{1}\oplus F_{2}\]
and let $m'\in\NN,\gamma'>0.$ It is not hard to see from our choices we may choose a $m\in \NN,\gamma>0$ so that
\[\Xi_{A_{k},R\vee R}(F:G_{1}\oplus G_{2},m,\gamma,k)\subseteq_{\varepsilon,\|\cdot\|_{2}}\Xi_{A_{l^{(1)}_{k}}^{(1)},R_{1}\vee R_{1}}(F_{1}:G_{1};m',\gamma',l^{(k)}_{1})\oplus \Xi_{A_{l^{(2)}_{k}}^{(2)},R_{2}\vee R_{2}}(F_{2}:G_{2};m',\gamma',l^{(k)}_{2}).\]
Thus we have
\begin{align*}
K_{6\varepsilon}(\Xi_{(A_{k}),R\vee R}(F:G_{1}\oplus G_{2};m,\gamma),\|\cdot\|_{2})&\leq \tau(z_{1})^{2}K_{\varespilon}(\Xi_{(A_{k}^{(1)}),R_{1}\vee R_{1}}(F_{1}:G_{1};m',\gamma'),\|\cdot\|_{2})\\
&+\tau(z_{2})^{2}K_{\varepsilon}(\Xi_{(A_{k}^{(2)}),R_{2}\vee R_{2}}(F_{2}:G_{2};m',\gamma'),\|\cdot\|_{2}).
\end{align*}
A fortiori,
\begin{align*}
K_{6\varepsilon}(\Xi_{(A_{k}),R\vee R}(F:G_{1}\oplus G_{2}),\|\cdot\|_{2})&\leq \tau(z_{1})^{2}K_{\varespilon}(\Xi_{(A_{k}^{(1)}),R_{1}\vee R_{1}}(F_{1}:G_{1};m',\gamma'),\|\cdot\|_{2})\\
&+\tau(z_{2})^{2}K_{\varepsilon}(\Xi_{(A_{k}^{(2)}),R_{2}\vee R_{2}}(F_{2}:G_{2};m',\gamma'),\|\cdot\|_{2}).
\end{align*}
and taking the infimum over $m',\gamma',G_{1},G_{2}$ we see that
\begin{align*}
K_{6\varepsilon}(\Xi_{(A_{k}),R\vee R}(F:M_{1}\oplus M_{2}),\|\cdot\|_{2})&\leq\tau(z_{1})^{2}K_{\varespilon}(\Xi_{(A_{k}^{(1)}),R_{1}\vee R_{1}}(F_{1}:M_{1}),\|\cdot\|_{2})\\
&+\tau(z_{2})^{2}K_{\varepsilon}(\Xi_{(A_{k}^{(2)}),R_{2}\vee R_{2}}(F_{2}:M_{2}),\|\cdot\|_{2}).
\end{align*}
Taking the supremum over $\varespilon$ shows that
\[h(F:M_{1}\oplus M_{2})\leq \tau(z_{1})^{2}h(M_{1})+\tau(z_{2})^{2}h(M_{2})\]
and now taking the supremum over $F$ proves the case of two algebras.

Now let us handle the general case. Fix diffuse, abelian, von Neumann subalgebras $A_{j}\subseteq M_{j}.$ Let
\[M_{\leq N}=\left(\bigoplus_{j=1}^{N}M_{j}\right)\oplus \bigoplus_{j=N+1}^{\infty}A_{j},\]
By Lemma \ref{L:increasingunions}
\[h(M:M)=\sup_{N}h(M_{\leq N}:M).\]
Using that each $M_{j}$ embeds into an ultrapower of the hyperfinite $\textrm{II}_{1}$-factor it is not hard to argue that
\[h(M_{\leq N}:M)=h(M_{\leq N}:M_{\leq N}).\]
By the case of two algebras and induction
\[h(M_{\leq N}:M_{\leq N})\leq \sum_{j=1}^{N}\tau(p_{j})^{2}h(M_{j}),\]
(here we are using that the $1$-bounded entropy of any abelian von Neumann algebra is zero). Taking the supremum over $n$ completes the proof.

(ii):  Again we may reduce to the case that $M$ embeds into an ultrapower of $\R.$  Let $a\in M_{sa}$ be an element with diffuse spectrum, since the isomorphism class of $pMp$ only depends upon the trace of $p,$ we may assume that $p$ is a spectral projection of $a.$  Let $R\in [0,\infty)^{M}$ be a cutoff parameter. Let $n$ be the smallest integer so that $n\tau(p)\geq 1.$ Fix partial isometries $v_{1},\dots,v_{n},v_{n+1}\in M$ so that
\[v_{j}^{*}v_{j}=p,j=1,\dots,n\]
\[v_{n+1}^{*}v_{n+1}\leq p\]
\[1=\sum_{j=1}^{n+1}v_{j}v_{j}^{*}.\]
Fix a sequence $l_{k}$ of integers so that
\[\frac{l_{k}}{k}\to \tau(v_{n+1}^{*}v_{n+1}).\]

	Fix a sequence of microstates $(A_{k,p})$ for $pa.$ We may assume that there are projections $E_{k}\in M_{k}(\CC)$ with $\tr(E_{k})=\frac{l_{k}}{k},$ so that $[E_{k},A_{k,p}]=0$ and so that
\[A_{nk+l_{k}}=\begin{bmatrix}
A_{k,p}&0&0&\dots &0&0\\
0& A_{k,p}&0&\dots&0&0\\
\vdots&\dots&\ddots&\dots&\dots&\vdots\\
0&0&0&\dots& A_{k,p}&0\\
0&0&0&\dots&0& E_{k}A_{k,p}
\end{bmatrix}\]
is a sequence of microstates for $a.$ Here we are abusing notation and regarding $E_{k}A_{k,p}$ as an element in $M_{l_{k}}(\CC).$ Let $Q_{k}\in M_{nk+l_{k}}(\CC)$ be the orthogonal projection onto the first $k$ coordinates.  Let $\varepsilon>0,$ and fix a finite $F\subseteq pMp.$ Set
\[\widetilde{F}=\{v_{i}xv_{j}^{*}:x\in F,j=1,\dots,n\}\cup\{v_{1},\dots,v_{n}\}.\]
Let $\widetilde{G}\subseteq M_{sa}$ be a given finite set, $\widetilde{m}\in\NN,\widetilde{\gamma}>0.$ It is not hard to show that there is a finite $G\subseteq (pMp)_{sa}$ a $m\in\NN,\gamma>0$ so that for all $k\in\NN$
\[\Xi_{A_{k,p},R\vee R}(F:G;m,\gamma,k)\subseteq_{\varepsilon}Q_{k}\Xi_{A_{k},R\vee R}(\widetidle{F}:\widetilde{G};\widetilde{m},\widetilde{\gamma},nk+l_{k})Q_{k}.\]
Again we are abusing notation by regarding $Q_{k}M_{nk+l_{k}}(\CC)Q_{k}$ as $M_{k}(\CC).$ We may now argue as in $(i)$ to complete the proof.

\end{proof}

We remark that one can define a \emph{lower} $1$-bounded entropy by taking a limit infimum instead of a limit supremum. It is not hard to argue that if the lower $1$-bounded entropy is the upper $1$-bounded entropy then we have equality in $(i).$

We end this section by clarifying the equivalence of finiteness of $1$-bounded entropy and being strongly $1$-bounded in the sense of Jung. It turns out that, assuming the given generating set is a nonamenability set, our methods are robust enough to remove the assumption of having an element with finite free entropy from Jung's formulation of being strongly $1$-bounded.
\begin{definition}\emph{Let $(M,\tau)$ be a tracial von Neumann algebra. A finite subset $F\subseteq M$ is a} nonamenability set for $M$ \emph{if there is a constant $K>0$ so that}
\[\|\xi\|_{2}\leq K\sum_{x\in F}\|x\xi-\xi x\|_{2}\mbox{ \emph{for all $\xi\in L^{2}(M)\otimes L^{2}(M)$.}}\]

\end{definition}

Connes showed in \cite{Connes} that every nonamenable $\textrm{II}_{1}$-factor contains a nonamenability set.

We will need a preliminary lemma.
For a tracial von Neumann algebra $(M,\tau)$ and $x\in M_{n}(M)_{sa},$ we let $\mu_{x}$ be its spectral measure with respect to $\Tr\otimes \tau$ defined by
\[\mu_{x}(E)=\Tr\otimes \tau(\chi_{E}(x)).\]
If $I,J$ are finite sets and $M$ is a von Neumann algebra, we let $M_{I,J}(M)$ be the set of all $I\times J$ matrices over $M.$
\begin{lemma}\label{L:orbitseparateappendix} Let $(M,\tau)$ be a nonamenable tracial von Neumann algebra and let $F\subseteq M_{sa}$ be finite and so that $W^{*}(F)=M.$ Suppose that $F$ is a nonamenability set for $M.$ Let $R\in [0,\infty)^{F}$ be a cutoff parameter. Then there exists $D>0$ so that for every $\varepsilon>0$
\[\inf_{m\in \NN,\gamma>0}\limsup_{k\to\infty}\sup_{B\in \Gamma_{R}(F;m,\gamma,k)}\frac{1}{k^{2}}\log K_{D\varepsilon}(\{U\in \mathcal{U}(k):\|[U,B]\|_{2}<\varepsilon\},\|\cdot\|_{2})=0.\]

\end{lemma}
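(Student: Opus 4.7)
The plan is to translate the nonamenability of $F$ into a spectral gap for the commutator operator $\Delta(B) := \sum_{x \in F}(L_{B_x} - R_{B_x})^{2}$ acting on $M_k(\CC)$ equipped with the Hilbert-Schmidt inner product. The key identity is $\|[U,B]\|_2^2 = \langle \Delta(B) U, U\rangle$ for $U \in M_k(\CC)$, so controlling the small-eigenvalue behavior of $\Delta(B)$ will control the approximate commutant of $B$.

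First, I would establish a spectral gap for the analogous operator $\Delta := \sum_x (L_x - R_x)^{2} \in M \overline{\otimes} M^{\op}$ acting on $L^2(M) \otimes L^2(M^{\op})$. Cauchy-Schwarz applied to the nonamenability bound $\|\xi\|_2 \leq K \sum_x \|x\xi - \xi x\|_2$ immediately yields $\Delta \geq c_0 := (|F| K^2)^{-1}$, and since $M \overline{\otimes} M^{\op}$ acts faithfully on $L^2(M)\otimes L^2(M^{\op})$ via GNS, the spectral distribution $\mu_\Delta$ of $\Delta$ with respect to $\tau \otimes \tau^{\op}$ satisfies $\mu_\Delta([0, c_0)) = 0$.

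Next, I will show that for every $\eta > 0$ one can choose $m \in \NN,\ \gamma > 0$ so that for every $B \in \Gamma_R(F; m, \gamma, k)$ and every sufficiently large $k$, the spectral distribution $\mu_{\Delta(B)}$ of $\Delta(B)$ (with respect to the normalized trace on $B(M_k(\CC)) \cong M_{k^2}(\CC)$) satisfies $\mu_{\Delta(B)}([0, c_0/2]) < \eta$. For any polynomial $p$, the trace $\tr_{B(M_k(\CC))}(p(\Delta(B)))$ is a fixed polynomial of bounded degree in the normalized matrix moments of the $B_x$'s, and thus converges uniformly over $B \in \Gamma_R(F; m, \gamma, k)$ to $\int p\,d\mu_\Delta$ as $m \to \infty,\ \gamma \to 0$. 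Combining this with Weierstrass approximation on the compact interval $[0, 4|F|R^2]$ (which contains the supports of all $\mu_{\Delta(B)}$) applied to a continuous bump function $f$ satisfying $\chi_{[0, c_0/2]} \leq f \leq \chi_{[0, c_0)}$ yields the claim, since $\int f\,d\mu_\Delta = 0$. This uniform-in-$B$ control is the main obstacle of the proof.

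Finally, letting $P_B := \chi_{[0, c_0/2]}(\Delta(B))$, a projection in $B(M_k(\CC))$ of rank at most $\eta k^2$, the spectral inequality $\Delta(B)(1 - P_B) \geq (c_0/2)(1 - P_B)$ combined with $\|[U, B]\|_2^2 = \langle \Delta(B) U, U\rangle < \varepsilon^2$ gives $\|(1 - P_B) U\|_2 \leq \varepsilon \sqrt{2/c_0}$ for any $U$ in the approximate commutant. Thus $\{U \in \mathcal{U}(k) : \|[U, B]\|_2 < \varepsilon\}$ is $\varepsilon\sqrt{2/c_0}$-contained in $P_B(\Ball(M_k(\CC), \|\cdot\|_2))$, whose $\|\cdot\|_2$-covering number at scale $\varepsilon$ is bounded via Lemma \ref{L:volumepackingestimate} applied in $\ell^2(k^2)$ (with $T = P_B,\ R = 1$) by $(C/\varepsilon)^{C'\eta k^2/\varepsilon^2}$ for some absolute constants $C, C'$. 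Setting $D := 2(1 + \sqrt{2/c_0})$ yields
\[
\tfrac{1}{k^2} \log K_{D\varepsilon}\bigl(\{U \in \mathcal{U}(k) : \|[U,B]\|_2 < \varepsilon\},\ \|\cdot\|_2\bigr) \leq \tfrac{C' \eta}{\varepsilon^2} \log(C/\varepsilon)
\]
uniformly over $B \in \Gamma_R(F; m, \gamma, k)$ for $k$ large, and sending $\eta \to 0$ through the choice of $m,\gamma$ completes the proof.
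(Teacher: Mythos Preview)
Your proof is correct and follows essentially the same approach as the paper's: both establish a spectral gap for the commutator operator in the limiting algebra, transfer it approximately to microstates via moment convergence, and then use a volume-packing bound on the small-eigenvalue subspace. The only cosmetic differences are that the paper works with the column operator $\Delta(\xi)=(b\xi-\xi b)_{b\in F}$ and its modulus $|\Delta|$ rather than your positive operator $\sum_{x}(L_{x}-R_{x})^{2}$, and covers the low-rank range of the spectral projection by a direct volume argument rather than via Lemma~\ref{L:volumepackingestimate} (which gives you a harmless extra $1/\varepsilon^{2}$ in the exponent).
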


\begin{proof}
Define $\Delta\colon L^{2}(M)\otimes L^{2}(M^{op})\to [L^{2}(M)\otimes L^{2}(M^{op})]^{\oplus F}$ by
\[\Delta(\xi)=(b\xi-\xi b)_{b\in F}\]
and observe that $\Delta$ may be regarded as an element of $M_{F,\{1\}}(M\otimes M^{op}).$ Since $F$ is a nonamenability set, we may choose a $\kappa>0$ so that $\mu_{|\Delta|}([0,\kappa])=0.$
Given $B\in M_{k}(\CC)_{sa}^{F}$ (for some $k\in \NN$) we define $\Delta_{B}\colon L^{2}(M_{k}(\CC),\tr)\to L^{2}(M_{k}(\CC),\tr)^{\oplus F}$ by
\[\Delta_{B}(A)=(B_{b}A-AB_{b})_{b\in F}.\]
 Fix $\alpha>0.$ We may choose a $m_{0}\in \NN$ and a $\gamma_{0}>0$ so that for every $k\in \NN$ and every $B\in \Gamma_{R}(F;m_{0},\gamma_{0},k)$ we have $\mu_{|\Delta_{B}|}([0,\kappa])\leq \alpha.$  Suppose that $U\in \mathcal{U}(k),$ $B\in \Gamma_{R}(F;m_{0},\gamma_{0},k)$ and that $\|UB-BU\|_{2}<\varespilon.$ We then have
\[\|\chi_{[0,\kappa]}(|\Delta_{B}|)U-U\|_{2}\leq \frac{\varepsilon}{\kappa}.\]
So
\[\{U\in \mathcal{U}(k):\|[U,B]\|_{2}<\varepsilon\}\subseteq_{\frac{\varepsilon}{\kappa}}\chi_{[0,\kappa]}(|\Delta_{B}|)\Ball(L^{2}(M_{k}(\CC),\tr),\|\cdot\|_{2}).\]
Thus
\[K_{2\left(\frac{1}{\kappa}+1\right)\varepsilon}(\{U\in \mathcal{U}(k):\|[U,B]\|_{2}<\varepsilon\},\|\cdot\|_{2})\leq K_{\varepsilon}(\chi_{[0,\kappa]}(|\Delta_{B}|)\Ball(L^{2}(M_{k}(\CC),\tr),\|\cdot\|_{2}),\|\cdot\|_{2}).\]
By a volume-packing argument, the right hand side of the above inequality is at most $\left(\frac{3+\varepsilon}{\varepsilon}\right)^{\alpha k^{2}}.$ Thus we have
\[\limsup_{k\to\infty}\sup_{B\in \Gamma_{R}(F;m_{0},\gamma_{0},k)}K_{2\left(\frac{1}{\kappa}+1\right)\varepsilon}(\{U\in \mathcal{U}(k):\|[U,B]\|_{2}<\varepsilon\},\|\cdot\|_{2})\leq \alpha\log\left(\frac{3+\varepsilon}{\varepsilon}\right).\]
So
\[\inf_{m\in \NN,\gamma>0}\limsup_{k\to\infty}\sup_{B\in \Gamma_{R}(F;m,\gamma,k)}\frac{1}{k^{2}}\log K_{2\left(\frac{1}{\kappa}+1\right)\varepsilon}(\{U\in \mathcal{U}(k):\|[U,B]\|_{2}<\varepsilon\},\|\cdot\|_{2})\leq  \alpha\log\left(\frac{3+\varepsilon}{\varepsilon}\right).\]
Since $\alpha>0$ was arbitrary we can let $\alpha\to 0$ and take $D=2\left(\frac{1}{\kappa}+1\right)$ to complete the proof.

\end{proof}

We now relate finiteness of $1$-bounded entropy to being strongly $1$-bounded as defined by Jung. A byproduct of our techniques is that we are able to replace the assumption of having an element with finite free entropy in our generating set from the definition of strongly $1$-bounded with the assumption that our generating set is a nonamenability set. Unfortunately, we are unable to unconditionally show that if  $F$ is $1$-bounded (we will define what it means to be $1$-bounded shortly), then $W^{*}(F)$ is strongly $1$-bounded. However, we remark that in many natural examples of nonamenable von Neumann algebras (e.g. group von Neumann algebras), the ``obvious" finite set of generators for $M$ is a nonamenability set. Of course, as shown in \cite{JungSB}, any amenable von Neumann algebra is strongly $1$-bounded, so the question of whether we can unconditionally remove the assumption of having an element of finite free entropy from the definition of strongly $1$-bounded reduces to technical issues of existence (or nonexistence) of a nonamenability set. We use some of the same notation as in \cite{JungSB}. Namely, if $(M,\tau)$ is a tracial von Neumann algebra and $F\subseteq M_{sa},$ we set for $\varepsilon>0,$ and $R>\max_{x\in F}\|x\|,$
\[K_{\varepsilon}(F)=\inf_{\substack{m\in \NN,\\ \gamma>0}}\limsup_{k\to\infty}\frac{1}{k^{2}}\log K_{\varepsilon}(\Gamma_{R}(F;m,\gamma,k),\|\cdot\|_{2}).\]
Here we are identifying $R$ with the cutoff parameter in $[0,\infty)^{F}$ which is $R$ in every coordinate.
Recall that $F$ is \emph{1-bounded} (as defined by Jung in \cite{JungSB}) if there is a $C>0$ so that
\[K_{\varepsilon}(F)\leq C+\log(1/\varepsilon).\]
Given $a\in M_{sa}$ we use $\chi(a)$ for the free entropy of $a$ as defined by Voiculescu in \cite{FreeEntropyDimensionII}.
\begin{proposition}\label{P:equivstrongly1bddappendix} Let $(M,\tau)$ be a diffuse tracial von Neumann algebra and let $F\subseteq M_{sa}$ be  finite and such that $W^{*}(F)=M.$ Consider the following conditions:
\begin{enumerate}
\item $M$ is strongly $1$-bounded, \label{I:strongly1bdd}
\item $h(M)<\infty,$\label{I:1bddent}
\item $F$ is  $1$-bounded. \label{I:Fbdd}
\end{enumerate}
Then $(\ref{I:strongly1bdd})$ and $(\ref{I:1bddent})$ are equivalent and imply  (\ref{I:Fbdd}). If $F$ is a nonamenability set, then (\ref{I:Fbdd}) is equivalent to (\ref{I:1bddent}) and (\ref{I:strongly1bdd}).

\end{proposition}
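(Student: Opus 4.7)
The plan is to reduce (\ref{I:strongly1bdd}) $\Leftrightarrow$ (\ref{I:1bddent}) and (\ref{I:1bddent}) $\Rightarrow$ (\ref{I:Fbdd}) to Jung's Theorem 3.2 in \cite{JungSB} via Lemma \ref{L:orbit}, and to prove the substantive new implication ((\ref{I:Fbdd}) plus the nonamenability hypothesis implies (\ref{I:1bddent})) by a quotient-packing argument driven by Lemma \ref{L:orbitseparateappendix}. For the quick part, we fix any $a \in M_{sa}$ with diffuse spectrum and a sequence $(A_{k})_{k=1}^{\infty}$ of microstates for $a$; by Lemma \ref{L:orbit} and Theorem \ref{T:indepedAppAkldghaklhgdkla}, $h(M) = h(F : F) = h^{\mathcal{O}}(F : F)$. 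Unpacking the right-hand side with $G = F$ in the definition and using monotonicity in $(m, \gamma)$ identifies $h^{\mathcal{O}}(F : F)$ with Jung's invariant $\sup_{\varepsilon > 0} \inf_{m, \gamma} \limsup_{k} \tfrac{1}{k^{2}} \log K_{\varepsilon}^{\mathcal{O}}(\Gamma_{R}(F; m, \gamma, k), \|\cdot\|_{2})$, whose finiteness is precisely strong $1$-boundedness; hence (\ref{I:strongly1bdd}) $\Leftrightarrow$ (\ref{I:1bddent}), and (\ref{I:1bddent}) $\Rightarrow$ (\ref{I:Fbdd}) is immediate since Jung's definition includes $F$ being $1$-bounded.

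For the main implication we assume $F$ is a $1$-bounded nonamenability set with $K_{\varepsilon}(F) \leq C + \log(1/\varepsilon)$. Fix $\varepsilon, \alpha > 0$. Lemma \ref{L:orbitseparateappendix} supplies a constant $D > 0$ (independent of $\varepsilon$) and, after suitably enlarging $m$ and shrinking $\gamma$, the bound $K_{4D\varepsilon}(S_{4\varepsilon}(B), \|\cdot\|_{2}) \leq \exp(\alpha k^{2})$ for all large $k$ and all $B \in \Gamma_{R}(F; m, \gamma, k)$, where $S_{\eta}(B) := \{U \in \mathcal{U}(k) : \|[U, B]\|_{2} < \eta\}$. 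At the same time, $1$-boundedness yields a minimal $\varepsilon/2$-dense subset $T \subseteq \Gamma_{R}(F; m, \gamma, k)$ with $|T| \leq \exp(k^{2}(C + \log(2/\varepsilon) + \alpha))$ for $k$ large. We cluster $T$ by the relation $B \sim B'$ iff there exists $U \in \mathcal{U}(k)$ with $\|U^{*} B' U - B\|_{2} < 2\varepsilon$; selecting one representative per class produces a $3\varepsilon$-orbit-dense subset of $\Gamma_{R}(F; m, \gamma, k)$. To lower-bound the size of the class of $B \in T$, pick a maximal $4\varepsilon$-separated subset $\{U_{i}^{*} B U_{i}\}_{i=1}^{N}$ of the orbit of $B$ under $\|\cdot\|_{2}$; taking elements of $T$ within $\varepsilon/2$ of each $U_{i}^{*} B U_{i}$ yields $N$ pairwise $\|\cdot\|_{2}$-separated members of $T$ all $\sim$-equivalent to $B$, whence the class contains $\geq N$ elements.

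To lower-bound $N$, we use the identity $\|U^{*} B U - V^{*} B V\|_{2} = \|[V U^{*}, B]\|_{2}$, which translates $4\varepsilon$-separation in the orbit into the condition $U_{i} U_{j}^{*} \notin S_{4\varepsilon}(B)$ for $i \neq j$. Maximality then forces $\mathcal{U}(k) = \bigcup_{i} S_{4\varepsilon}(B) U_{i}$, and since right-multiplication is an $\|\cdot\|_{2}$-isometry on $\mathcal{U}(k)$,
\[K_{4D\varepsilon}(\mathcal{U}(k), \|\cdot\|_{2}) \leq N \cdot K_{4D\varepsilon}(S_{4\varepsilon}(B), \|\cdot\|_{2}) \leq N \cdot \exp(\alpha k^{2}).\]
A standard volume-packing lower bound ($\mathcal{U}(k)$ being a real $k^{2}$-dimensional submanifold of $M_{k}(\CC)$ with normalized Hilbert-Schmidt metric) gives $K_{4D\varepsilon}(\mathcal{U}(k), \|\cdot\|_{2}) \geq (c/\varepsilon)^{k^{2}}$ for some $c = c(D) > 0$ and all sufficiently small $\varepsilon$, hence $N \geq \exp(k^{2}(\log(c/\varepsilon) - \alpha))$. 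Combining with the bound on $|T|$ and letting $\alpha \to 0$,
\[K_{3\varepsilon}^{\mathcal{O}}(\Gamma_{R}(F; m, \gamma, k)) \leq |T|/N \leq \exp(k^{2} C')\]
for a constant $C' = C + \log(2/c)$ \emph{independent of $\varepsilon$}. Hence $h(M) = h^{\mathcal{O}}(F : F) \leq C' < \infty$, which gives (\ref{I:1bddent}).

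The principal obstacle is precisely this quotient-packing estimate: one must convert the subexponential packing of approximate stabilizers supplied by Lemma \ref{L:orbitseparateappendix} into an exponentially large packing of the full unitary orbit, which is what cancels the troublesome $\log(1/\varepsilon)$ term in $K_{\varepsilon}(F)$ and yields a bound uniform in $\varepsilon$. This cancellation is the mechanism whereby the nonamenability condition on $F$ substitutes for the requirement in Jung's original definition that $F$ contain an element of finite free entropy.
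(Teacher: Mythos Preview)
Your argument is correct and follows the same quotient-packing strategy as the paper's proof, driven by Lemma~\ref{L:orbitseparateappendix} and the Szarek lower bound for $K_{\delta}(\mathcal{U}(k),\|\cdot\|_{2})$. The paper runs the packing in the dual direction: it starts from a maximal $\varepsilon$-orbit-\emph{separated} set $S\subseteq\Gamma_{R}(F;m,\gamma,k)$, builds for each $B\in S$ a large set $T_{B}\subseteq\mathcal{U}(k)$ of coset representatives for the approximate stabilizer $\Omega_{B}(\varepsilon)$, and checks that $\{U^{*}BU:B\in S,\,U\in T_{B}\}$ is genuinely $\varepsilon$-separated in $\|\cdot\|_{2}$, yielding the same inequality $K_{\varepsilon/2}(\Gamma_{R})\geq |S|\cdot(\exp(-\alpha)A/2D\varepsilon)^{k^{2}}$ that you reach from the covering side.

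Two small points in your writeup. First, the relation $\sim$ is not transitive, so ``class'' is ambiguous; what makes $|S|\leq|T|/N$ valid is that the $N$ specific elements $C_{i}\in T$ you construct near the orbit of each representative $B_{0}\in S$ are disjoint across different representatives---this holds because $B_{0}\not\sim B_{0}'$ forces $\geq 2\varepsilon$ orbit-separation, which is incompatible with a common $C$ lying within $\varepsilon/2$ of both orbits. Second, for (\ref{I:strongly1bdd})$\Leftrightarrow$(\ref{I:1bddent}) the paper does not simply assert the equivalence but invokes Lemmas~2.1 and~2.2 of \cite{JungSB} applied to $F\cup\{a\}$ for some $a\in M_{sa}$ with diffuse spectrum and $\chi(a)>-\infty$; this is the content behind your claim that finiteness of the orbital quantity ``is precisely'' strong $1$-boundedness, and without it the step is circular since Jung's definition is phrased via the non-orbital $K_{\varepsilon}(F)$ together with a free-entropy hypothesis.
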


\begin{proof}Since $M$ is  diffuse, we can find an $a\in M_{sa}$ with diffuse spectrum and with $\chi(a)>-\infty.$ Let $(A_{k})_{k=1}^{\infty}$ be a sequence of microstates for $a.$ Fix
\[R>\max(\|a\|,\max_{b\in F}\|b\|).\]

(\ref{I:strongly1bdd}) implies (\ref{I:1bddent}):
Suppose $M$ is strongly $1$-bounded. By Theorem A.9, we have
\[h(M)=h(F\cup\{a\})=\sup_{t>0}K_{t}(\Xi_{(A_{k})_{k}}(F\cup\{a\})).\]
Since $M$ is strongly $1$-bounded, Lemma 2.1 of \cite{JungSB} implies that the right most expression in the above equalities is finite and thus $h(M)<\infty.$

(\ref{I:1bddent}) implies (\ref{I:strongly1bdd}):
 Suppose that $h(M)<\infty.$ Lemma 2.2 of \cite{JungSB} implies that there is a $C>0$ so that for all sufficiently small $\varespilon>0,$
\[K_{\varepsilon}(\{a\}\cup F)\leq C+\log(1/\varepsilon)+h(F),\]
 and thus $M$ is strongly $1$-bounded.

(\ref{I:strongly1bdd}) implies  (\ref{I:Fbdd}): This is a rephrasing of Theorem 3.2 of \cite{JungSB}.

(\ref{I:Fbdd}) implies (\ref{I:1bddent}) when $F$ is a nonamenability set: We will use the orbital version of $1$-bounded entropy. By a result of S. Szarek in \cite{Szarek} we may choose an $A>0$ so that for all $\delta>0$ we have
\[K_{\delta}(\mathcal{U}(k),\|\cdot\|_{2})\geq \left(\frac{A}{\delta}\right)^{k^{2}}.\]
 Since $F$ is  $1$-bounded, we may choose a $C>0$ so that for all $0<\varepsilon<1$
\[K_{\varepsilon}(\Gamma_{R}(F))\leq C+\log(1/\varepsilon).\]
Let $D>0$ be as in the preceding Lemma for this $F.$ Fix $0<\varpesilon<1$ and for $k,m\in \NN, \gamma>0$ let
\[\alpha(m,\gamma,k)=\sup_{B\in \Gamma_{R}(F;m,\gamma,k)}\frac{1}{k^{2}}\log K_{2D\varepsilon}(\{U\in \mathcal{U}(k):\|[U,B]\|_{2}<2\varepsilon\},\|\cdot\|_{2}),\]
\[\alpha(m,\gamma)=\limsup_{k\to\infty}\alpha(m,\gamma,k).\]
Let $S\subseteq\Gamma_{R}(F;m,\gamma,k)$ be a maximal $\varespilon$-orbit separated subset. For all $B\in S,\beta>0$ let
\[\Omega_{B}(\beta)=\{U\in \mathcal{U}(k):\|[U,B]\|_{2}<\beta\}.\]
For each $B\in S,$ let $T_{B}\subseteq \mathcal{U}(k)$ be a maximal subset subject to the condition that for distinct $V,W\in T_{B}$ we have $V\Omega_{B}(\varepsilon)\cap W\Omega_{B}(\varepsilon)=\varnothing.$ By maximality, we have
\[\mathcal{U}(k)\subseteq \bigcup_{V\in T_{B}}V\Omega_{B}(\varepsilon)\Omega_{B}(\varepsilon)^{*}\subseteq \bigcup_{V\in T_{B}}V\Omega_{B}(2\varepsilon).\]
Thus
\[\left(\frac{A}{2D\varepsilon}\right)^{k^{2}}\leq K_{2D\varpesilon}(\mathcal{U}(k),\|\cdot\|_{2})\leq |T_{B}|K_{2D\varepsilon}(\Omega_{B}(2\varepsilon),\|\cdot\|_{2}).\]
So
\[|T_{B}|\geq \left(\exp(-\alpha(m,\gamma,k))\frac{A}{2D\varepsilon}\right)^{k^{2}}.\]
Now consider $T=\{U^{*}BU:B\in S, U\in T_{B}\},$ we claim that $T$ is $\varepsilon$-separated. Suppose that $B_{j}\in S,j=1,2$ and $U_{j}\in T_{B_{j}},j=1,2$ have
\[\varepsilon> \|U_{1}^{*}B_{1}U_{1}-U_{2}^{*}B_{2}U_{2}\|_{2}.\]
Then
\[\varepsilon> \|U_{2}U_{1}^{*}B_{1}U_{1}U_{2}^{*}-B_{2}\|_{2},\]
by choice of $S$ this implies that $B_{1}=B_{2}.$ We then have $U_{2}^{*}U_{1}\in \Omega_{B_{1}}(\varepsilon)$ and so $U_{1}\Omega_{B_{1}}\cap U_{2}\Omega_{B_{1}}(\varpesilon)\ne \varnothing$ and this implies that $U_{1}=U_{2}.$ Since $T$ is $\varpesilon$-separated, it is easy to see that
\[K_{\varepsilon/2}(\Gamma_{R}(F;m,\gamma,k),\|\cdot\|_{2})\geq |T|.\]
Thus
\begin{align*}
K_{\varepsilon/2}(\Gamma_{R}(F;m,\gamma,k),\|\cdot\|_{2})\geq \sum_{B\in S}|T_{B}|&\geq |S|\left(\exp(-\alpha(m,\gamma,k))\frac{A}{2D\varepsilon}\right)^{k^{2}}\\
&\geq  K_{\varpesilon}^{\mathcal{O}}(\Gamma_{R}(F;m,\gamma,k),\|\cdot\|_{2})\left(\exp(-\alpha(m,\gamma,k))\frac{A}{2D\varepsilon}\right)^{k^{2}} .
\end{align*}
Taking $\frac{1}{k^{2}}\log$ of both sides and letting $k\to\infty$ implies that
\[K_{\varepsilon/2}(\Gamma_{R}(F;m,\gamma))\geq K_{\varepsilon}^{\mathcal{O}}(F;m,\gamma)-\alpha(m,\gamma)+\log\left(\frac{A}{2D}\right)+\log(1/\varepsilon).\]
So
\[K_{\varepsilon}^{\mathcal{O}}(F;m,\gamma)-\alpha(m,\gamma)+\log\left(\frac{A}{2D}\right)+\log(1/\varepsilon)\leq K_{\varpesilon/2}(\Gamma_{R}(F;m,\gamma)\leq C+\log(2)+\log(1/\varepsilon).\]
By Lemma \ref{L:orbitseparateappendix} we have that $\alpha(m,\gamma)\to 0$ as $m\to\infty$ and $\gamma\to 0,$ so
\[K_{\varpesilon}^{\mathcal{O}}(F)\leq C+\log(2)-\log\left(\frac{A}{2D}\right).\]
Now taking the supremum over all $\varepsilon>0$ we find that $h(F)<\infty.$

\end{proof}
\section{$1$-Bounded Entropy with Respect to Unbounded Generators}\label{S:unbounded}
Our goal in this appendix is to show that
\[h((x_{i})_{i\in I}:M,\mbox{meas})=h(W^{*}(x_{i}:i\in I):M),\]
whenever the $x_{i}$ are unbounded operators affiliated to $(M,\tau).$ See Definitions \ref{D:measuremicrostates} and Definition \ref{D:1bddentrunbddop1} to recall the necessary definitions. The following two lemmas will be useful in the proof of the above equality.
\begin{lemma}\label{L:projcommlasdjg} Let $T,Y\in M_{k}(\CC),$ and $\varpesilon>0.$ Suppose that $Q\in M_{k}(\CC)$ is a projection with
\[\|Q(T-Y)\|_{\infty}\leq\varepsilon,\]
\[\tr(1-Q)\leq \varepsilon.\]
Then there exists a projection $P\in M_{k}(\CC)$ so that
\[P\leq \chi_{[0,\|Y\|_{\infty}+\varepsilon]}(|T^{*}|),\]
\[\tr(1-P)\leq 2\varepsilon,\]
\[\|P(T-Y)\|_{\infty}\leq \varepsilon.\]

\end{lemma}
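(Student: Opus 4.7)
The proof will take $P = E \wedge Q$, where $E = \chi_{[0,\|Y\|_{\infty}+\varepsilon]}(|T^{*}|)$. Two of the three desired properties are then essentially immediate: $P \leq E$ by construction, and since $P \leq Q$ we have $P(T-Y) = PQ(T-Y)$ so $\|P(T-Y)\|_{\infty} \leq \|Q(T-Y)\|_{\infty} \leq \varepsilon$. The only real work is establishing $\tr(1-P) \leq 2\varepsilon$, and by the Kaplansky-type identity $\tr(1 - E\wedge Q) \leq \tr(1-E) + \tr(1-Q)$ (which follows from $\tr(E\wedge Q) + \tr(E\vee Q) = \tr(E) + \tr(Q)$ together with $\tr(E\vee Q) \leq 1$), this reduces to showing $\tr(1-E) \leq \varepsilon$.

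To bound $\tr(1-E)$, set $\alpha = \|Y\|_{\infty} + \varepsilon$ and $F = 1 - E = \chi_{(\alpha,\infty)}(|T^{*}|)$. The hypothesis $\|Q(T-Y)\|_{\infty} \leq \varepsilon$ together with $\|QY\|_{\infty} \leq \|Y\|_{\infty}$ yields $\|QT\|_{\infty} \leq \alpha$, and upon taking adjoints and squaring, $Q|T^{*}|^{2}Q \leq \alpha^{2} Q$ as positive operators. The plan is then to show $F \wedge Q = 0$: if $\eta$ were a nonzero vector in $\Ran(F)\cap \Ran(Q)$, then $Q\eta = \eta$ would force $\ip{|T^{*}|^{2}\eta,\eta} \leq \alpha^{2}\|\eta\|^{2}$, while $F\eta = \eta$ would force $\ip{|T^{*}|^{2}\eta,\eta} > \alpha^{2}\|\eta\|^{2}$ (since, in finite dimensions, $F|T^{*}|^{2}F \geq c F$ for some $c > \alpha^{2}$), a contradiction.

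With $F \wedge Q = 0$ in hand, the same identity as above gives $F \vee Q = F + Q$, so $\tr(F) + \tr(Q) = \tr(F\vee Q) \leq 1$, which is exactly $\tr(1-E) = \tr(F) \leq \tr(1-Q) \leq \varepsilon$, completing the bound $\tr(1-P) \leq 2\varepsilon$. There is no substantive obstacle here; the only point requiring a moment's care is the strict inequality $\ip{|T^{*}|^{2}\eta,\eta} > \alpha^{2}\|\eta\|^{2}$ for $\eta \in \Ran(F)\setminus\{0\}$, which uses that the spectral projection is taken over the open interval $(\alpha,\infty)$ and that the spectrum in finite dimensions is discrete.
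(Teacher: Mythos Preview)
Your proof is correct and follows the same approach as the paper: both set $P = E \wedge Q$ with $E = \chi_{[0,\|Y\|_{\infty}+\varepsilon]}(|T^{*}|)$ and reduce everything to the key claim $(1-E)\wedge Q = 0$. The only cosmetic difference is in how that claim is verified: the paper introduces the polar decomposition $T^{*}=U|T^{*}|$ and bounds $\langle |T^{*}|\xi,\xi\rangle$ directly, whereas you work with $|T^{*}|^{2}=TT^{*}$ via the operator inequality $Q|T^{*}|^{2}Q\leq \alpha^{2}Q$; both are the same elementary computation.
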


\begin{proof} Let $R=\|Y\|_{\infty}$ and set $P=\chi_{[0,R+\varepsilon]}(|T^{*}|)\wedge Q.$
We claim that
\begin{equation}\label{E:wedgezeroupperboundB}
(1-\chi_{[0,R+\varpesilon]}(|T^{*}|))\wedge Q=0.
\end{equation}
Indeed, suppose that
\[\xi\in \chi_{(R+\varpesilon,\infty)}(|T^{*}|)(\CC^{k})\cap Q(\CC^{k}),\]
but that $\xi\ne 0.$ Let $T^{*}=U|T^{*}|$ be the polar decomposition. Then
\begin{align*}
(R+\varepsilon)\|\xi\|_{2}^{2}&<\ip{|T^{*}|\xi,\xi}\\
&=\ip{U^{*}T^{*}\xi,\xi}\\
&=\ip{T^{*}Q\xi,U\xi}\\
&\leq \varepsilon\|\xi\|_{2}^{2}+\ip{Y^{*}Q\xi,U\xi}\\
&\leq (R+\varepsilon)\|\xi\|_{2}^{2}
\end{align*}
and this is a contradiction. Thus we have proved $(\ref{E:wedgezeroupperboundB}).$ It is not hard to show that $(\ref{E:wedgezeroupperboundB})$ implies that
\[\tr(1-\chi_{[0,R+\varpesilon]}(|T^{*}|))\leq \tr(1-Q)\leq \varpesilon,\]
so $\tr(1-P)\leq 2\varpesilon.$
Additionally we have $\|P(T-Y)\|_{\infty}\leq \|Q(T-Y)\|_{\infty}<\varepsilon.$

\end{proof}

We also need a lemma that allows one to produce microstates for unbounded operators from microstates from bounded operators.

\begin{lemma}\label{L:measuremicrostateskladhgaklj} Let $(M,\tau)$ be a tracial von Neumann algebra and let $R_{M}\in [0,\infty)^{M_{sa}}$ be a cutoff parameter. Let $J$ be a finite set and $(y_{j})_{j\in J}$ be a collection of measurable self-adjoint operators affiliated with $M.$ Fix finite $F\subseteq C_{b}(\RR),G\subseteq M_{sa},$ an $m\in \NN$ and a $\gamma,\eta>0.$ Fix a $R>0$ with
\[\max_{j\in J}\tau(\chi_{[R,\infty)}(|y_{j}|))<\eta,\mbox{ and } R>\max_{x\in G}R_{M,x}.\]
Then there exists a $R_{0}'>0$ with the following property. For all $R'>R_{0}'$ and  $\psi\in C_{c}(\RR,\RR)$ with $\|\psi\|_{C_{b}(\RR)}\leq R'$ and $\psi(t)=t$ for all $|t|\leq R',$ there is an $m'\in \NN$ and a $\gamma'>0$ so that for every  \[(A,X)\in \Gamma_{R_{M}}(G,(\phi(y_{j}))_{j\in J,\phi\in F\cup\{\psi\}};m',\gamma',k)\]
 we have
\[(A,(X_{\psi,i})_{i\in I})\in \Gamma_{R}^{\eta}(G,(y_{i})_{i\in I};F,m,\gamma,k).\]

\end{lemma}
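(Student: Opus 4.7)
The plan is to show that the matrix $X_{\psi,j}$---which by hypothesis approximates $\psi(y_j)$ in the usual bounded $*$-moment sense---also serves as a measure-theoretic microstate for the unbounded affiliated operator $y_j$, provided $R'$ is chosen large enough that $\psi$ agrees with the identity on essentially all the spectral mass of $y_j$. There are two conditions to verify: the spectral tail bound $\tr(\chi_{[R,\infty)}(|X_{\psi,j}|)) < \eta$, and the moment approximation $(A,(\phi(X_{\psi,j})))\in \Gamma(G,(\phi(y_j));F,m,\gamma,k)$.

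First I would handle the tail bound. Since $\psi(t)=t$ on $[-R',R']$ and $|\psi(t)|\le R'\le |t|$ outside, one has $|\psi(t)|\le |t|$ for every $t$, so $\tau(\chi_{[R,\infty)}(|\psi(y_j)|))\le \tau(\chi_{[R,\infty)}(|y_j|))<\eta$. Fix $\eta_0$ strictly between these two values; by outer regularity of the spectral measure I can pick $\delta>0$ with $\tau(\chi_{[R-\delta,\infty)}(|\psi(y_j)|))<\eta_0$, and then choose a continuous bump $g$ on $[0,\infty)$ with $\chi_{[R,\infty)}\le g\le \chi_{[R-\delta,\infty)}$. Since $g(|\cdot|)$ is continuous on the compact interval $[-R_{M,\psi(y_j)},R_{M,\psi(y_j)}]$, Weierstrass approximation lets me uniformly approximate it by a polynomial $p$; then, provided $m'$ is large and $\gamma'$ small enough in terms of $\deg p$, the moment condition on $X_{\psi,j}$ forces $\tr(g(|X_{\psi,j}|))$ to be within $\eta-\eta_0$ of $\tau(g(|\psi(y_j)|))$. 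In particular, $\tr(\chi_{[R,\infty)}(|X_{\psi,j}|))\le \tr(g(|X_{\psi,j}|))<\eta$.

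Next I would handle the moment condition. For a fixed monomial $P$ of degree at most $m$ in the variables indexed by $G$, $J$, and $F$, I want to compare $\tr(P(A_x,\phi(X_{\psi,j})))$ with $\tau(P(x,\phi(y_j)))$. I insert two intermediate approximations. First, replace $\phi(X_{\psi,j})$ by $Q_\phi(X_{\psi,j})$, where $Q_\phi$ is a polynomial uniformly approximating $\phi$ on $[-R_{M,\psi(y_j)},R_{M,\psi(y_j)}]$; this introduces a matrix error controlled in operator norm. Second, use the moment microstates property of $X_{\psi,j}$ to pass from $\tr(P(A_x,Q_\phi(X_{\psi,j})))$ to $\tau(P(x,Q_\phi(\psi(y_j))))$, which holds once $m'$ exceeds $m\cdot \max_\phi \deg Q_\phi$ and $\gamma'$ is sufficiently small. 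Finally, to pass from $\tau(P(x,Q_\phi(\psi(y_j))))$ to $\tau(P(x,\phi(y_j)))$, I use that $Q_\phi\approx \phi$ uniformly on $[-R',R']$ together with the fact that, as $R'\to \infty$, $\psi(y_j)\to y_j$ in measure so that $\phi(\psi(y_j))\to \phi(y_j)$ in $\|\cdot\|_2$ by bounded continuity of $\phi$.

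The main obstacle, and the source of most of the bookkeeping, is that $Q_\phi$ depends on $R_{M,\psi(y_j)}$ (and hence, once $\psi$ is specified, on $R'$), so its degree can grow with $R'$; correspondingly $m'$ and $\gamma'$ must be chosen after $\psi$ has been fixed. The lemma statement permits precisely this ordering (first $R_0'$, then $R'$ and $\psi$, then $m'$ and $\gamma'$), so the dependencies are compatible, but one must quantify the polynomial approximation errors uniformly enough in $\phi\in F$ and in the finitely many monomials $P$ of degree at most $m$ that the spectral tail bound and the moment estimate close up simultaneously.
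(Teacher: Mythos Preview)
Your proposal is correct and follows essentially the same route as the paper's proof. The paper compresses your steps (i)--(ii) into a single ``we may choose $m',\gamma'$ so that $\tr(\beta(|X_{\psi,j}|))<\eta$ and $|\tau(P(x,\phi(\psi(y_j))))-\tr(P(A_x,\phi(X_{\psi,j})))|<\gamma/2$,'' leaving the polynomial approximation implicit, and handles your step (iii) by the same observation that $\phi(\psi(y_j))\to\phi(y_j)$ in $\|\cdot\|_2$ uniformly over $\psi\in\Psi_{R'}$ as $R'\to\infty$; your identification of the dependency ordering (first $R_0'$, then $\psi$, then $m',\gamma'$) is exactly how the paper organizes it as well. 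One minor difference: the paper fixes its bump function $\beta$ once, based on $y_j$ alone, whereas your $g$ (and the auxiliary $\delta,\eta_0$) are chosen after $\psi$, which is permissible but slightly less tidy---and your inequality $\tau(\chi_{[R,\infty)}(|\psi(y_j)|))\le\tau(\chi_{[R,\infty)}(|y_j|))$ from $|\psi(t)|\le|t|$ is actually sharper than what the paper uses at that point.
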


\begin{proof} Let
\[\Psi_{R'}=\{\psi\in C_{c}(\RR,\RR):\|\psi\|_{C_{b}(\RR)}\leq R',\psi(t)=t \mbox{ if } |t|\leq R'\}.\]
Choose a $\beta\in C_{c}(\RR,\RR)$ with $\chi_{[R,\infty)}\leq \beta\leq 1$ and with $\max_{j\in J}\tau(\beta(|y_{j}|))<\eta.$
It is easy to see that
\[\lim_{R'\to\infty}\sup_{\psi\in \Psi_{R'}}\max_{j\in J}\tau(\beta(|\psi(y_{j})|))<\eta,\]
\[\lim_{R'\to\infty}\sup_{\psi\in \Psi_{R'}}|\tau(P(x,\phi(y_{j}):x\in G,\phi\in F,j\in J))-\tau(P(x,\phi(\psi(y_{j})):x\in G,\phi\in F,j\in J))|=0,\]
for all $P\in\CC\ip{S_{x},T_{\phi,j}:x\in G,\phi\in F,j\in J}.$  So we may choose a $R_{0}'>0$ so that for all $R\geq R_{0}'$ and for all $\psi\in \Psi_{R'}$ we have
\[\max_{j\in J}\tau(\beta(|\psi(y_{j})|))<\eta,\]
\[|\tau(P(x,\phi(y_{j}):x\in G,\phi\in F,j\in J))-\tau(P(a_{j},\phi(\psi(y_{j})):x\in G,\phi\in F,j\in J))|<\gamma/2\]
for all monomials $P\in \CC\ip{S_{x},T_{\phi,j}:x\in G,\phi\in F,j\in J}$ of degree at most $m.$

	Now fix $R'>R_{0}'$ and $\psi\in \Psi_{R'}.$ We may choose an $m'\in \NN,\gamma'>0$ so that if
	\[(A,X)\in \Gamma_{R_{M}}(G,(\phi(y_{j}))_{j\in J,\phi\in F\cup\{\psi\}};m',\gamma',k),\] then
\[\max_{j\in J}\tr(\beta(|X_{\psi,j}|))<\eta\]
and
\[|\tau(P(x,\phi(\psi(y_{j}))):x\in G,\phi\in F,j\in J))-\tr(P(A_{x},\phi(X_{\psi,j})):x\in G,\phi\in F,j\in J)|<\frac{\gamma}{2}\]
for all monomials $P\in \CC\ip{S_{x},T_{\phi,j}:x\in G,\phi\in F,j\in J}$ of degree at most $m.$
It follows that $(A,(X_{\psi,j})_{j\in J})\in \Gamma_{R}^{\eta}(G,(y_{j})_{j\in J};F,m,\gamma,k).$

\end{proof}

\begin{proposition} Let $(M,\tau)$ be a tracial von Neumann and $N$ a diffuse von Neumann subalgebra. Let $Y=(y_{i})_{i\in I}$ be measurable self-adjoint operators affiliated with $M$ so that $W^{*}(Y_{i})=N.$ Then
\[h((y_{i})_{i\in I}:M,\mbox{\emph{meas}})=h(N:M).\]
\end{proposition}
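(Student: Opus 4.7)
My plan is to prove the two inequalities $h((y_i)_{i \in I} : M, \text{meas}) \leq h(N : M)$ and $h(N : M) \leq h((y_i)_{i \in I} : M, \text{meas})$ separately. Fix $a \in N_{sa}$ with diffuse spectrum and a sequence $(A_k)_{k=1}^{\infty}$ of microstates for $a$. The key idea is that the unbounded generators $y_i$ can be replaced, up to measure-topology error, by bounded truncations $\phi(y_i) \in N$, which fall under the scope of ordinary $1$-bounded entropy.

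For the inequality $h((y_i)_{i \in I} : M, \text{meas}) \leq h(N : M)$, I would fix a finite $I_0 \subseteq I$, a cutoff $R > 0$, and $\eta \in (0, \varepsilon)$, and choose $\phi \in C_c(\RR, \RR)$ with $\phi(t) = t$ for $|t| \leq R$ and $\|\phi\|_{C_b(\RR)} \leq R$; I force $\phi$ to lie in the test-function set $F$. Given a microstate $X \in \Xi^{\eta}_{A_k, R}((y_i)_{i \in I_0} : G; F, m, \gamma, k)$, the projection $Q_i := \chi_{[0, R]}(|X_i|)$ satisfies $\tr(1 - Q_i) < \eta$ and $Q_i(X_i - \phi(X_i)) = 0$, so $X$ is $\eta$-close to $\phi(X) := (\phi(X_i))_{i \in I_0}$ in the measure topology. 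Crucially, $\phi(X)$ is then a standard bounded microstate (with $\|\phi(X_i)\|_{\infty} \leq R$) for the family $(\phi(y_i))_{i \in I_0}$, which lies in $N_{sa}$. Invoking part (c) of Proposition \ref{P:itseasymeas} to convert $\|\cdot\|_2$ covers into measure-topology covers, and part (a) of the same proposition to combine the two approximations, I get a bound of the form
\[K_{3\varepsilon}(\Xi^{\eta}_{(A_k), R}((y_i)_{i \in I_0} : G; F, m, \gamma), \text{meas}) \leq K_{\varepsilon}(\Xi_{(A_k), R \vee R_M}((\phi(y_i))_{i \in I_0} : G; m', \gamma'), \|\cdot\|_2)\]
for appropriate $m', \gamma'$. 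Taking the appropriate infima and suprema in $G, m, \gamma, \eta, R$ and $\varepsilon, I_0$ yields the desired inequality, since $\phi(y_i) \in N$ for every $\phi$ and $i$.

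For the reverse inequality $h(N : M) \leq h((y_i)_{i \in I} : M, \text{meas})$, I fix a finite $F \subseteq N_{sa}$ and aim to bound $h(F : M)$. By Kaplansky density, for any $\delta > 0$ there exist a finite $J_0 \subseteq I$, functions $\phi_1, \dots, \phi_r \in C_b(\RR, \RR)$, and noncommutative polynomials $P_a$ for $a \in F$ such that $\|a - P_a((\phi_j(y_i))_{j, i})\|_2 < \delta$ and $\|P_a((\phi_j(y_i))_{j, i})\|_{\infty} \leq \|a\|_{\infty}$. Lemma \ref{L:measuremicrostateskladhgaklj} then guarantees that, for suitable parameters $m, \gamma, \eta, R, F', \psi$, any bounded microstate $(A, X) \in \Gamma_{R_M}(G, (\phi(y_j))_{j \in J_0, \phi \in F' \cup \{\psi\}}; m', \gamma', k)$ yields, via $X \mapsto (X_{\psi, i})$, a measure-topology microstate for $(y_i)_{i \in J_0}$. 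Running this correspondence in reverse, and using the uniform Lipschitz continuity of the $P_a$ on operator-norm balls, the microstates in measure for $(y_i)$ (upon applying $\phi_j$ and then $P_a$) produce $\|\cdot\|_2$-microstates for $F$, with covering-number bounds that translate directly. Precisely, one gets
\[K_{C\varepsilon}(\Xi_{A_k, R_M}(F : G; m, \gamma, k), \|\cdot\|_2) \leq K_{\varepsilon}(\Xi^{\eta}_{A_k, R}((y_i)_{i \in J_0} : \widetilde{G}; \widetilde{F}, \widetilde{m}, \widetilde{\gamma}, k), \text{meas})\]
for a constant $C$ depending only on the polynomials $P_a$ and the cutoff $R$, and passing to the limits in the usual order gives the inequality.

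The main obstacle is the triangulation among three different notions of closeness: the operator norm $\|\cdot\|_{\infty}$ on bounded sets, the trace norm $\|\cdot\|_2$ on bounded microstates, and the measure topology for unbounded microstates. Lemma \ref{L:projcommlasdjg} is the tool that lets one upgrade a measure-topology projection into one that simultaneously cuts off large spectral values of the unbounded operator, which underlies the first inequality; Lemma \ref{L:measuremicrostateskladhgaklj} translates bounded microstates for truncations $\phi(y_i)$ back into measure-topology microstates for the $y_i$, underlying the second inequality. The resulting proof is in the same spirit as Theorem \ref{T:indepedAppAkldghaklhgdkla}, with bounded Borel truncations $\phi(y_i)$ playing the role that polynomial approximation played there.
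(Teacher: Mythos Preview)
Your argument for the inequality $h((y_i)_{i\in I}:M,\textnormal{meas})\leq h(N:M)$ is essentially correct and matches the paper's approach: truncate $X_i$ by a compactly supported $\phi$, observe that $(\phi(X_i))$ lies in a bounded microstate space for elements of $N$, and pull back a cover. (Minor point: Lemma~\ref{L:projcommlasdjg} is not needed here, since the projection $Q_i=\chi_{[0,R]}(|X_i|)$ already commutes with $X_i$; that lemma is used in the other direction.)

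The reverse inequality, however, has a genuine gap. You assert that ``covering-number bounds translate directly'' from a measure-topology cover of $\Xi^{\eta}_{A_k,R}((y_i):\cdots)$ to a $\|\cdot\|_2$-cover of the bounded microstate space for $F$. This fails because measure-closeness of $T$ and $T'$ does \emph{not} imply $\|\cdot\|_2$-closeness of $\phi(T)$ and $\phi(T')$: the projection $Q$ witnessing $\|Q(T-T')\|_\infty<\varepsilon$ need not commute with $T$ or $T'$, so functional calculus does not respect it, and on $(1-Q)$ the operators $T,T'$ are unbounded. Lipschitz estimates for $P_a$ on operator-norm balls do not help, because the inputs $\phi_j(T_i)$ and $\phi_j(T'_i)$ are not yet known to be close.

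The paper overcomes this with a genuinely additional ingredient: after using Lemma~\ref{L:projcommlasdjg} to arrange $Q_i\leq\chi_{[0,R'+\varepsilon]}(|T_i^*|)$, it approximates each $Q_i$ by a projection $E_i$ drawn from a fixed $\|\cdot\|_\infty$-net in the Grassmannian $\Gr(l,k-l)$ (with $l\leq 2\varepsilon k$), and separately covers the low-rank piece $(1-E_i)Y_i$ by a volume-packing net $D_{E_i}$. This reconstructs $Y_i$ up to $\|\cdot\|_\infty$-error $5\kappa'$, after which uniform continuity of $\phi$ on bounded sets applies. The extra multiplicative factors coming from the Grassmannian and volume nets are of the form $(C/\kappa')^{O(k^2\varepsilon)}$, so after taking $\frac{1}{k^2}\log$ and sending $\varepsilon\to 0$ they vanish; but they are not trivial and cannot be omitted. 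Your sketch needs this entire mechanism (Szarek's metric entropy bound plus the low-rank correction) to close the argument.
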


\begin{proof} Fix $a\in M$  with diffuse spectrum, and a sequence $(A_{k})_{k=1}^{\infty}$ of microstates for $a.$ Let $X=(\phi(y_{i}))_{i\in I,\phi\in C_{b}(\RR,\RR)}$ and
define $R_{M}\in [0,\infty)^{I_{0}\times C_{b}(\RR,\RR)}$
by $R_{M,i,\phi}=2\|\phi\|_{C_{b}(\RR)}.$
Since $W^{*}(\phi(y_{i}):i\in I,\phi\in C_{b}(\RR,\RR))=N$ and $X$ consists of bounded operators we have
\[h(X:M)=h(N:M).\]
Let $I_{0}\subseteq I,G\subseteq M_{sa}$ be  given finite sets, and let $\varepsilon>0.$ Let $\eta\in (0,\varepsilon)$ and $R>0.$ Choose a $\psi\in C_{c}(\RR,\RR)$ so that $\psi(t)=t$ for all $|t|\leq R$ and $\|\psi\|_{C_{b}(\RR)}\leq R.$
Given  a finite $F\subseteq C_{b}(\RR,\RR)$ containing $\psi$, let $X_{0}=(\phi(y_{i}))_{i\in I_{0},\phi \in F}.$
Given $m\in \NN,\gamma>0,$ we may choose a $m'\in \NN,\gamma'>0$ so that if
\[T\in \Xi_{A_{k},R}^{\eta}((y_{i})_{i\in I_{0}}:G;F,m',\gamma',k),\]
then
\[(\phi(T_{i}))_{i\in I_{0},\phi\in F}\in\Xi_{A_{k},R_{M}\vee R_{M}}((\phi(y_{i}))_{i\in I_{0},\phi\in F}:G;m,\gamma,k).\]
Let
\[S\subseteq \Xi_{A_{k},R_{M}\vee R_{M}}(X_{0}:G;m,\gamma,k)\]
be $\varepsilon$-dense with respect to $\|\cdot\|_{\infty}.$
Given $T\in \Xi_{A_{k},R}^{\eta}((y_{i})_{i\in I_{0}}:G;F,m',\gamma',k),$ choose a $X\in S$ with
\[\|(\phi(T_{i}))_{i\in I_{0},\phi \in F}-X\|_{\infty}<\varepsilon.\]
Then
\[\|\chi_{[R,\infty)}(|T_{i}|)(T_{i}-X_{\psi,i})\|_{\infty}=\|\chi_{[R,\infty)}(|T_{i}|)(\psi(T_{i})-X_{\psi,i})\|_{\infty}<\varespilon\]
and so
\[\Xi_{A_{k},R}^{\eta}((y_{i})_{i\in I_{0}}:G;F,m',\gamma',k)\subseteq_{\varepsilon,\mbox{meas}}\{(X_{\psi,i})_{i\in I_{0}}:X\in S\}.\]
Thus for all $\eta<\varepsilon,$
\[K_{2\varepsilon}(\Xi_{(A_{k})_{k=1}^{\infty},R}^{\eta}((y_{i})_{i\in I_{0}}:G;F,m',\gamma'),\mbox{meas})\leq K_{\varepsilon}(\Xi_{(A_{k})_{k=1}^{\infty},R_{M}\vee R_{M}}(X_{0}:G;m,\gamma),\|\cdot\|_{\infty}).\]
So we have
\[K_{2\varepsilon}(\Xi_{(A_{k})_{k=1}^{\infty},R}^{\eta}((y_{i})_{i\in I_{0}}:M),\mbox{meas})\leq K_{\varepsilon}(\Xi_{(A_{k})_{k=1}^{\infty},R_{M}\vee R_{M}}(X_{0}:M),\|\cdot\|_{\infty}).\]
A fortiori,
\[K_{2\varepsilon}(\Xi_{(A_{k})_{k=1}^{\infty},R}^{\eta}((y_{i})_{i\in I_{0}}:M),\mbox{meas})\leq h(X:M).\]
Taking the supremum over all $R>0,$ then the infimum over all $\eta$ proves that
\[K_{2\varepsilon}(\Xi_{(A_{k})_{k=1}^{\infty}}((y_{i})_{i\in I_{0}}:M),\mbox{meas})\leq  h(X:M).\]
by Corollary \ref{C:switching}. Now taking the supremum over all $\varepsilon,I_{0}$ proves that
\[h(Y:M,\mbox{meas})\leq h(X:M).\]

We now turn to the reverse inequality. Fix finite $\Phi_{0}'\subseteq C_{b}(\RR,\RR),I_{0}'\subseteq I.$ It is enough to show that
\[h((\phi(y_{i}))_{i\in I_{0}',\phi\in \Phi_{0}'}:M)\leq h(Y:M,\mbox{meas}).\]
Let $\varepsilon',\gamma,\eta>0$ and a finite $G_{0}\subseteq M_{sa}$ be given. Choose an $R>\max_{x\in \{a\}\cup G}R_{M,x}$ sufficiently large so that
\[\tau(\chi_{[R,\infty)}(|y_{i}|))<\eta\]
for all $i\in I_{0}'.$
Let $R'_{0}>0$ be as in Lemma \ref{L:measuremicrostateskladhgaklj} with $J=I_{0}',$ $F=\Phi_{0}',$ for $G=\{a\}\cup G_{0}$ and for this $m,\gamma,\eta,R.$ We may find an $R'>R_{0}'$ and a $\psi\in C_{c}(\RR)$ with $\psi(t)=t$ for all $|t|\leq R'$ so that
\[\max_{i\in I_{0}',\phi \in \Phi_{0}'}\|\phi(\psi(y_{i}))-\phi(y_{i})\|_{2}<\frac{\varepsilon'}{1+|I_{0}'|^{1/2}|\Phi_{0}'|^{1/2}}.\]
Let $m',\gamma'$ be as in the conclusion to Lemma \ref{L:measuremicrostateskladhgaklj}. Taking $m'$ larger and $\gamma'$ smaller, if necessary, we may assume that
\[\max_{i\in I_{0}',\phi\in \Phi_{0}'}\|\phi(X_{\psi,i})-X_{\phi,i}\|_{2}<\frac{\varepsilon'}{1+|I_{0}'|^{1/2}|\Phi_{0}'|^{1/2}}\]
for all $X\in \Gamma_{R_{M}}((\phi(y_{i}))_{i\in I_{0}',\phi\in \Phi_{0}'\cup\{\psi\}};m',\gamma',k).$

Let $\kappa'\in (0,1/4)$ be sufficiently small so that for all $l\in \NN$ and all $T,S\in M_{l}(\CC)_{sa}$ with $\|T-S\|_{\infty}<5\kappa'$ and $\|T\|_{\infty},\|S\|_{\infty}\leq 2(R'+1)$ we have
	 \[\max_{\phi\in \Phi_{0}'}\|\phi(T)-\phi(S)\|_{\infty}<\frac{\varepsilon'}{1+|I_{0}'|^{1/2}|\Phi_{0}'|^{1/2}}.\]
Let $\varepsilon\in (0,\kappa'),\eta\in(0,\vaerpsilon)$ be given and choose an $S\subseteq\Xi_{A_{k},R}^{\eta}((y_{i})_{i\in I_{0}'}:G_{0};\Phi'_{0}\cup\{\psi\},m,\gamma,k)$ which is $\varepsilon$-measure dense and has
\[|S|=K_{\varepsilon}(\Xi_{A_{k},R}^{\eta}((y_{i})_{i\in I_{0}'}:G_{0};\Phi_{0}'\cup\{\psi\},m,\gamma,k),\mbox{meas}).\]
For every integer $1\leq l\leq 2\varepsilon k,$ let $\Omega_{l}\subseteq \Gr(l,k-l)$ be $\frac{\kappa'}{1+R'}$-dense with respect to $\|\cdot\|_{\infty}$ and so that
\[|\Omega_{l}|=K_{\frac{\kappa'}{1+R'}}(\Gr(k-l,l),\|\cdot\|_{\infty}).\]
Here $\Gr(k-l,l)$ is the space of orthogonal projections of rank $l.$ By a result of S. Szarek (see \cite{Szarek}) there is a $C>0$ so that
\[|\Omega_{l}|\leq \left(\frac{C(1+R')}{\kappa'}\right)^{2l(k-l)}.\]
Since $\varepsilon<1/4,$ for all integers $l$ with $1\leq l\leq2\varepsilon k$ we have
\[|\Omega_{l}|\leq \left(\frac{C(1+R')}{\kappa'}\right)^{4k^{2}\varepsilon(1-2\varepsilon)}.\]
For each $E\in\Omega_{l},$ choose a
\[D_{E}\subseteq (R'+\varepsilon)(1-E)\Ball(M_{k}(\CC),\|\cdot\|_{\infty})\]
which is $\kappa'$-dense with respect to $\|\cdot\|_{\infty}.$ We may choose $D_{E}$ with
\[|D_{E}|\leq \left(\frac{3R'+4\kappa'}{\kappa'}\right)^{4k^{2}\varepsilon}.\]
Now let
\[X\in  \Xi_{A_{k},R_{M}\vee R_{M}\vee R_{M}}((\phi(y_{i}))_{i\in I_{0}',\phi\in \Phi_{0}'}:G_{0},(\psi(y_{i}))_{i\in I_{0}'};m',\gamma',k).\]
Choose  $Y\in M_{k}(\CC)^{I_{0}'}$ so that
\[(X,Y)\in \Xi_{A_{k},R_{M}\vee R_{M}\vee R_{M}}((\phi(y_{i}))_{i\in I_{0}',\phi\in \Phi_{0}'},(\psi(y_{i}))_{i\in I_{0}'}:G_{0};m',\gamma',k).\]
By Lemma \ref{L:measuremicrostateskladhgaklj} we also have that $Y\in \Xi_{A_{k},R}^{\eta}((y_{i})_{i\in I_{0}'}:G_{0};\Phi_{0}',m,\gamma,k).$
By Lemma \ref{L:projcommlasdjg},  there exists a $T\in S$ and  orthogonal projections $Q_{i}\in M_{k}(\CC),i\in I_{0}'$ so that for all $i\in I_{0}'$
\[\tr(Q_{i})\leq 2\varepsilon,\]
\[\|Q_{i}(T_{i}-Y_{i})\|_{\infty}<\varepsilon,\]
\[Q_{i}\leq \chi_{[0,R'+\varepsilon]}(|T_{i}^{*}|).\]
Note that
\[\|Q_{i}(\chi_{[0,R'+\varepsilon]}(|T_{i}^{*}|)T_{i}-Y_{i})\|_{\infty}=\|Q_{i}(T_{i}-Y_{i})\|_{\infty}<\varepsilon.\]
Let $1\leq l\leq 2k\varepsilon$ be such that $k\tr(Q_{i})=k-l$
and choose $E_{i}\in \Omega_{l}$ so that
\[\|Q_{i}-E_{i}\|_{\infty}<\frac{\kappa'}{1+R'}.\]
Note that
\[\|\chi_{[0,R'+\varepsilon]}(|T_{i}^{*}|)T_{i}\|_{\infty}=\|T_{i}^{*}\chi_{[0,R'+\varepsilon]}(|T_{i}^{*}|)\|_{\infty}\leq R'+\varepsilon\]
and thus
\[\|E_{i}(\chi_{[0,R'+\varepsilon]}(|T_{i}^{*}|)T_{i}-Y_{i})\|_{\infty}\leq 3\kappa'+\|Q_{i}(\chi_{[0,R'+\varepsilon]}(|T_{i}^{*}|)T_{i}-Y_{i})\|_{\infty}<4\kappa'.\]
Choose a $B_{i}\in D_{E_{i}}$ with
\[\|(1-E_{i})Y_{i}-B_{i}\|_{\infty}<\kappa',\]
we then obtain
\[\|Y_{i}-E_{i}\chi_{[0,R'+\varepsilon]}(|T_{i}^{*}|)T_{i}-B_{i}\|_{\infty}<5\kappa'.\]
So by our choice of $\kappa'$ we have
\[\|\phi(Y_{i})-\phi(E_{i}\chi_{[0,R'+\varepsilon]}(|T_{i}^{*}|)T_{i}+B_{i})\|_{\infty}<\frac{\varepsilon'}{1+|I_{0}'|^{1/2}|\Phi_{0}'|^{1/2}},\]
which implies, by our choice of $m',\gamma',$ that for all $i\in I_{0},\phi\in \Phi_{0}'$
\begin{align*}
\|X_{\phi,i}-\phi(E_{i}\chi_{[0,R'+\varepsilon]}(|T_{i}^{*}|)T_{i}+B_{i})\|_{2}&<\frac{\varepsilon'}{1+|I_{0}'|^{1/2}|\Phi_{0}'|^{1/2}}+\|\phi(Y_{i})-\phi(E_{i}\chi_{[0,R'+\varepsilon]}(|T_{i}^{*}|)T_{i}+B_{i})\|_{2}\\
&<\frac{2\vaerpsilon'}{1+|I_{0}'|^{1/2}|\Phi_{0}'|^{1/2}}.
\end{align*}
Thus
\[\Xi_{A_{k},R_{M}}((\phi(y_{i}))_{i\in I_{0}',\phi\in \Phi_{0}'}:G_{0}';m',\gamma',k)\subseteq_{2\varepsilon',\|\cdot\|_{2}}\bigcup_{T\in S}\prod_{i\in I_{0}',\phi \in \Phi_{0}'}\bigcup_{l=1}^{\lfloor{2k\varepsilon\rfloor}}\{\phi(E\chi_{[0,R'+\varepsilon]}(|T_{i}^{*}|)T_{i}+B):E\in \Omega_{l},B\in D_{E}\},\]
so
\begin{align*}
K_{8\varepsilon'}(\Xi_{A_{k},R_{M}}((\phi(y_{i}))_{i\in I_{0}',\phi\in \Phi_{0}'}:G_{0}';m',\gamma',k),\|\cdot\|_{2})&\leq |S|\prod_{i\in I_{0}',\phi\in \Phi_{0}'}\left(\sum_{l=1}^{\lfloor{2k\varepsilon\rfloor}}|\Omega_{l}|\left(\frac{3R'+4\kappa'}{\kappa
'}\right)^{4k^{2}\varepsilon}\right)\\
&\leq 2^{|I_{0}'||\Phi_{0}'|}|S|(k\varepsilon)^{|I_{0}'||\Phi_{0}'|}\left(\frac{C(1+R')}{\kappa'}\right)^{4k^{2}|I_{0}'||\Phi_{0}'|\varepsilon(1-2\varepsilon)}\\
&\times\left(\frac{3R'+4\kappa'}{\kappa'}\right)^{4k^{2}|I_{0}'||\Phi_{0}'|\varepsilon}.
\end{align*}
Hence
\begin{align*}
K_{8\varepsilon'}(\Xi_{(A_{k})_{k=1}^{\infty},R_{M}}((\phi(y_{i}))_{i\in I_{0}',\phi\in \Phi_{0}'}:G_{0}';m',\gamma'),\|\cdot\|_{2})&\leq K_{\varepsilon}(\Xi_{(A_{k})_{k=1}^{\infty},R}^{\eta}((y_{i})_{i\in I_{0}'}:G_{0};m,\gamma),\mbox{meas})\\
&+4|I_{0}'||\Phi_{0}'|\varepsilon(1-2\varepsilon)\log\left(\frac{C(1+R')}{\kappa'}\right)\\
&+4|I_{0}'||\Phi_{0}'|\varepsilon\log\left(\frac{3R'+4\kappa'}{\kappa'}\right).
\end{align*}
A fortiori,
\begin{align*}
K_{8\varepsilon'}(\Xi_{(A_{k})_{k=1}^{\infty},R_{M}}((\phi(y_{i}))_{i\in I_{0}',\phi\in \Phi_{0}'}:M),\|\cdot\|_{2})&\leq K_{\varepsilon}(\Xi_{(A_{k})_{k=1}^{\infty},R}^{\eta}((y_{i})_{i\in I_{0}'}:G_{0};m,\gamma),\mbox{meas})\\
&+4|I_{0}'||\Phi_{0}'|\varepsilon(1-2\varepsilon)\log\left(\frac{C(1+R')}{\kappa'}\right)+4|I_{0}'||\Phi_{0}'|\varepsilon\log\left(\frac{3R'+4\kappa
'}{\kappa'}\right).
\end{align*}
Since the left-hand side is now independent of $G_{0},m,\gamma,$ we can take the infimum over all $G_{0}m,\gamma$ to see that
\begin{align*}
K_{8\varepsilon'}(\Xi_{(A_{k})_{k=1}^{\infty},R_{M}}((\phi(y_{i}))_{i\in I_{0}',\phi\in \Phi_{0}'}:M),\|\cdot\|_{2})&\leq K_{\varepsilon}(\Xi_{(A_{k})_{k=1}^{\infty},R}^{\eta}((y_{i})_{i\in I_{0}'}:M),\mbox{meas})\\
&+4|I_{0}'||\Phi_{0}'|\varepsilon(1-2\varepsilon)\log\left(\frac{C(1+R')}{\kappa'}\right)+4|I_{0}'||\Phi_{0}'|\varepsilon\log\left(\frac{3R'+4\kappa'}{\kappa'}\right)\\
&\leq K_{\varepsilon}(\Xi_{(A_{k})_{k=1}^{\infty}}^{\eta}((y_{i})_{i\in I_{0}'}:M),\mbox{meas})\\
&+4|I_{0}'||\Phi_{0}'|\varepsilon(1-2\varepsilon)\log\left(\frac{C(1+R')}{\kappa'}\right)+4|I_{0}'||\Phi_{0}'|\varepsilon\log\left(\frac{3R'+4\kappa'}{\kappa'}\right).
\end{align*}
We can now take  the infimum over $\eta>0,$ and then let $\vaerpsilon\to 0$ to see that
\[K_{8\varepsilon'}(\Xi_{(A_{k})_{k=1}^{\infty},R_{M}}((\phi(y_{i}))_{i\in I_{0}',\phi\in \Phi_{0}'}:M),\|\cdot\|_{2})\leq h(Y:M,\mbox{meas}).\]
Letting $\varepsilon'\to 0$ and then taking the supremum over $I_{0}',\Phi_{0}'$ completes the proof.

\end{proof}

%\bibliographystyle{abbrv}
%\bibliography{research}

\end{document}